\title[KP solitons and total positivity for the Grassmannian]
{KP solitons and total positivity for the Grassmannian}
\author{Yuji Kodama and Lauren Williams} 
\date{\today}
\thanks{The first author was partially
supported by NSF grants DMS-0806219 and DMS-1108813.  The second author was 
partially supported by the NSF grant DMS-0854432 and an 
Alfred Sloan Fellowship.}
\address{Department of Mathematics, Ohio State University,
Columbus, OH 43210}
\email{kodama@math.ohio-state.edu}
\address{Department of Mathematics, University of California,
Berkeley, CA 94720-3840}
\email{williams@math.berkeley.edu}
\subjclass[2000]{}
\newcommand{\rotxc}[1]{\begin{sideways}#1\end{sideways}}
\newcommand{\invert}[1]{\rotxc{\rotxc{#1}}}
\def\Le{\hbox{\invert{$\Gamma$}}}
\def\vblack(#1, #2)#3{\cnode*[linecolor=black](#1, #2){3}{#3}}
\def\vwhite(#1,#2)#3{\cnode[linecolor=black,fillcolor=white,fillstyle=solid](#1,#2){3}{#3}}
\def\tbox(#1,#2)#3{
\x=#1 \y=#2 
\multiply\x by 12 
\multiply\y by 12 
\z=\x \t=\y
\advance\z by 12 
\advance\t by 12 
\psline(\x,\y)(\x,\t)(\z,\t)(\z,\y)(\x,\y)
\advance\x by 6
\advance\y by 6 
\rput(\x,\y){{\bf #3}}}
\def\proof{\par{\it Proof}. \ignorespaces}
\def\endproof{{\ \vbox{\hrule\hbox{%
     \vrule height1.3ex\hskip0.8ex\vrule}\hrule }}\par}
\theoremstyle{definition}
\theoremstyle{remark}
\numberwithin{equation}{section}
\let\trueint=\int
\let\truesum=\sum
\def\int{\mathop{\textstyle\trueint}\limits}
\def\sum{\mathop{\textstyle\truesum}\limits}
\def\S{{\mathcal S}}
\def\rank{\mathop{\rm rank}\nolimits}
\def\t{{\bf t}}
\def\a{{\bf a}}
\newtheorem{theorem}{Theorem}[section]
\newtheorem{definition}[theorem]{Definition}
\newtheorem{proposition}[theorem]{Proposition}
\newtheorem{lemma}[theorem]{Lemma}
\newtheorem{example}[theorem]{Example}
\newtheorem{corollary}[theorem]{Corollary}
\newtheorem{remark}[theorem]{Remark}
\newtheorem{algorithm}[theorem]{Algorithm}
\newcommand{\R}{\mathbb R}
\newcommand{\Grkn}{(Gr_{k,n})_{\geq 0}}
\newcommand{\Grkntop}{(Gr_{k,n})_{> 0}}
\newcommand{\ttt}{\mathbf{t}}
\DeclareMathOperator{\CC}{\mathcal C}
\DeclareMathOperator{\GL}{GL}
\DeclareMathOperator{\M}{\mathcal M}
\DeclareMathOperator{\SSS}{\mathcal S}
\newcommand{\thmrefer}[1]{\renewcommand\thetheorem
  {\protect\ref{#1}}\addtocounter{theorem}{-1}}
\begin{document}

\begin{abstract}
Soliton solutions of the KP equation have been studied since 1970,
when Kadomtsev and Petviashvili proposed a two-dimensional nonlinear dispersive
wave equation now known as the KP equation.
It is well-known that one can use
the Wronskian method to construct a soliton solution
to the KP equation
from each point of the real Grassmannian $Gr_{k,n}$.
More recently, several authors \cite{BK,K04, BC06, CK1,CK3}
have studied the {\it regular}
solutions that one obtains in this way:
these come from points of the totally non-negative part of the
Grassmannian $(Gr_{k,n})_{\geq 0}$. 

In this paper we exhibit a surprising 
connection between the theory of
total positivity for the Grassmannian, 
and the structure of regular soliton solutions to the KP equation.
By exploiting this connection, we obtain 
new insights into the structure of KP solitons,
as well as new interpretations of the combinatorial objects
indexing cells of $(Gr_{k,n})_{\geq 0}$ \cite{Postnikov}.
In particular, 
we completely classify  the spatial patterns of the soliton solutions 
coming from $(Gr_{k,n})_{\geq 0}$ when the absolute
value  of the time parameter 
is sufficiently large.
We demonstrate an intriguing connection between soliton graphs
for $(Gr_{k,n})_{>0}$ and the {\it cluster algebras} of Fomin and Zelevinsky \cite{FZ},
and we use this connection
to solve the {\it inverse problem} for generic KP solitons coming from 
$(Gr_{k,n})_{>0}$.
Finally we construct all the soliton graphs
for $(Gr_{2,n})_{>0}$ using the triangulations of an $n$-gon.
\end{abstract}

\maketitle

\setcounter{tocdepth}{1}
\tableofcontents

\section{Introduction}

The KP equation is a two-dimensional
nonlinear dispersive wave equation given by
\begin{equation}\label{eq:KP}
\frac{\partial}{\partial x}\left(-4\frac{\partial u}{\partial t}+6u\frac{\partial u}{\partial x}+\frac{\partial^3u}{\partial x^3}\right)+3\frac{\partial^2u}{\partial y^2}=0,
\end{equation}
where $u=u(x,y,t)$ represents the wave amplitude at the point $(x,y)$ in the $xy$-plane for fixed time $t$.
The equation was proposed by Kadomtsev and Peviashvili
in 1970 to study the transversal stability of the soliton solutions of the Korteweg-de Vries (KdV)
equation \cite{KP70}.  The KP equation can also be used to describe shallow water waves, and
in particular, the equation provides an excellent model for the resonant interaction of those waves
(see \cite{K10} for recent progress). The equation has  a rich
mathematical structure, and is now considered
to be the prototype of an integrable nonlinear
dispersive wave equation with two spatial dimensions (see for example  \cite{NMPZ84,AC91,D91,MJD00,H04}).

One of the main breakthroughs in the KP theory was given by Sato \cite{Sato}, who 
realized that solutions of the KP equation could be written in terms of points on an 
infinite-dimensional Grassmannian.  The present paper deals with a real, finite-dimensional
version of the Sato theory; in particular, we are interested in solutions that 
are regular in the entire $xy$-plane, where they are localized along
certain rays.  
We call such solution \emph{line-soliton solution}, and they
can be constructed from a point $A$ of the real Grassmannian
\cite{Sato,Sa79,FN,H04}.
In this paper, we denote by $u_A(x,y,t)$ the solution associated to $A$.


Recently several authors
have worked on classifying the regular line-soliton solutions
\cite{BK,K04, BC06, CK1,CK3}.  These solutions come
from points of the {\it totally non-negative part of the Grassmannian}, that is,
those points of the real Grassmannian 
whose Pl\"ucker coordinates are all non-negative.  They found 
a large variety of soliton solutions which were previously overlooked by those using the
Hirota method of a perturbation expansion \cite{H04}.
In the generic situation, the asymptotic pattern at $y\to \pm \infty$ of the solution
consists of $n$ line-solitons.
However, because of the nonlinearity in the KP equation, the interaction pattern of the soliton solutions are very complex. Figure \ref{contour-example} illustrates the time evolution of the pattern of a line-soliton solution.
Each figure shows the contour plot of the solution at a fixed time $t$
in the $xy$-plane with $x$ in the horizontal
and $y$ in the vertical directions.
\begin{figure}[h]
\centering
\includegraphics[height=1.7in]{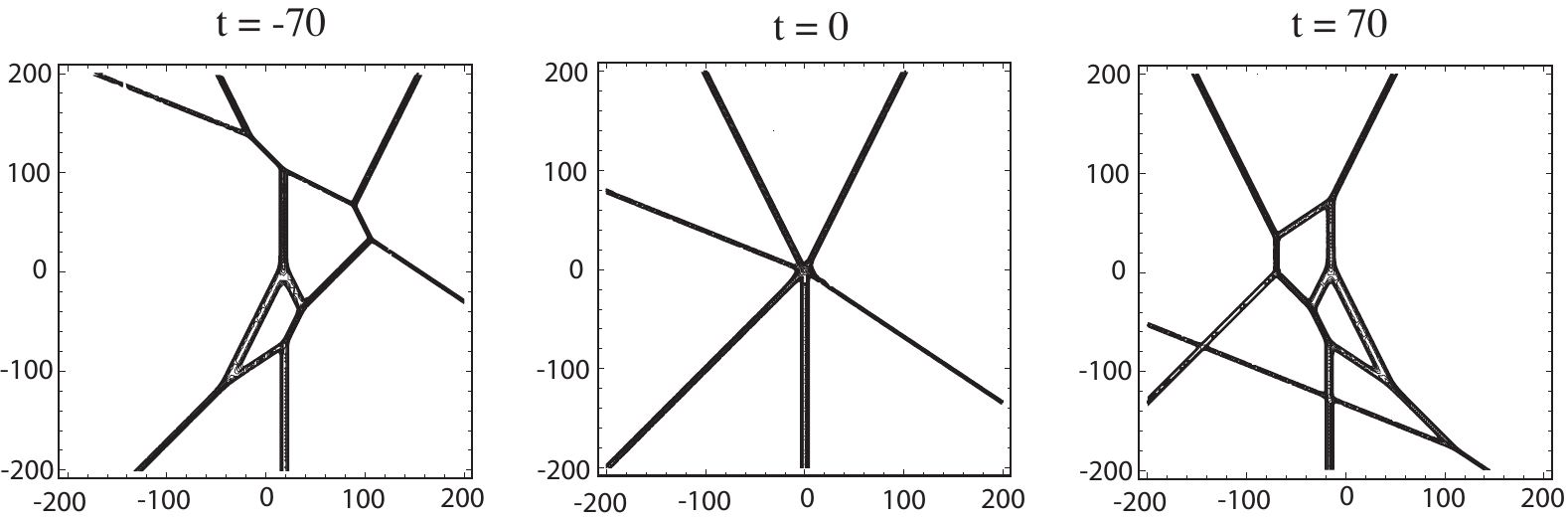}
\caption{Time evolution of the spatial pattern of a soliton solution. The figures illustrate the contour plots of the solution (see Example \ref{exercise}
to reconstruct the figures).
\label{contour-example}}
\end{figure}
One of the main goals of this paper is to give a combinatorial classification
of the patterns generated by the line-soliton solutions as in the figures.

Recently Postnikov \cite{Postnikov} studied the totally non-negative part of the 
Grassmannian $(Gr_{k,n})_{\geq 0}$ from a combinatorial point of view.  
Total positivity has attracted a lot of interest in the last two decades, largely due to work 
of Lusztig \cite{Lusztig2, Lusztig3}, who introduced 
the totally positive and non-negative parts of real reductive groups and
flag varieties (of which the Grassmannian is an important example).
Postnikov gave a decomposition of $(Gr_{k,n})_{\geq 0}$ into {\it positroid cells}, by 
specifying which Pl\"ucker coordinates  are strictly positive
and which are zero.
He also introduced several remarkable families of combinatorial objects, including
{\it decorated permutations, $\Le$-diagrams, plabic graphs,} and
{\it Grassmann necklaces,}
in order to index the cells and describe their properties.

We are interested in the wave pattern generated by a soliton solution $u_A(x,y,t)$
in the $xy$-plane for fixed time $t$.  We then consider a \emph{contour plot} $\CC(u_A,t)$ for each $t$ in the $xy$-plane which
is a {\it tropical curve}  approximating
the positions in the plane where the corresponding wave has a peak, see Figure
\ref{contour-example}.

While the local interactions of arbitrary contour plots are extremely 
complicated, it is possible to  understand their asymptotic structure
for $|y|\gg0$.  Moreover, if we take the limit as the time
variable $t$ goes to infinity (or more generally, some of the symmetry parameters of the KP equation, denoted by
$\t=(t_3, t_4, t_5, \dots)$ with $t_3=t$, also go to infinity),  and rescale $x$ and $y$ accordingly, 
we obtain an 
\emph{asymptotic contour plot}, whose combinatorial structure
is much more tractable.  
We then associate to each asymptotic contour plot a {\it soliton graph} 
by forgetting the metric structure of the pattern but remembering the topological
structure.

In this paper we establish a tight connection between 
total positivity on the Grassmannian and the regular soliton solutions of the KP equation.
This allows us to apply machinery from total positivity to understand 
soliton solutions of the KP equation. In particular:
\begin{itemize}
\item  we
classify the soliton graphs and asymptotic contour plots
coming from $(Gr_{k,n})_{\geq 0}$  when $t \to \pm \infty$ 
(Theorems \ref{t<<0} and \ref{t>>0}); 
\item we demonstrate an intriguing connection between soliton graphs
for $(Gr_{k,n})_{>0}$ and the {\it cluster algebras} of Fomin and Zelevinsky \cite{FZ}
(Theorem \ref{soliton-cluster});
\item we solve the {\it inverse problem} for KP solitons coming from 
$(Gr_{k,n})_{>0}$, and from $(Gr_{k,n})_{\geq 0}$ when $|t|\gg 0$ (Theorems \ref{inverse1}
and \ref{inverse2}).
\item we classify all soliton graphs and asymptotic contour
plots coming from $(Gr_{2,n})_{>0}$, and show 
that these soliton graphs are in bijection with triangulations of a 
polygon 
(Theorem \ref{theorem:maintriangulation}).
\end{itemize}

Note that prior to our work almost nothing was known about the classification of soliton graphs,
except in the cases of $(Gr_{1,n})_{>0}$ \cite{DMH}, and $(Gr_{2,4})_{\geq 0}$ \cite{CK3}.

In the other direction, we give a KP soliton interpretation to nearly all of 
Postnikov's combinatorial objects, as well as a new characterization of 
{\it reduced plabic graphs} (Theorem \ref{th:reduced}).

The structure of the paper is as follows.
In Sections \ref{sec:TP} and \ref{soliton-background} we provide background on total positivity
on the Grassmannian, and soliton solutions to the KP equation.  In Section \ref{sec:solgraph} 
we explain how to associate soliton graphs to soliton solutions of the KP equation.
In the next four sections (Sections \ref{permutations}, \ref{sec:necklace}, \ref{sec:sol=plabic},
and \ref{plabic-soliton})
we explain the relationships  between combinatorial objects labeling positroid cells
and the corresponding soliton solutions.  In particular, we explain how
(decorated) permutations and Grassmann necklaces control 
the asympototics of soliton graphs when $|y|\gg0$, and how $\Le$-diagrams control the soliton graphs
at $|t|\gg0$.  We also explain the connection between plabic graphs and soliton graphs.
In Section
\ref{X-crossing} we explain how the existence of $X$-crossings
in contour plots corresponds to ``two-term" Pl\"ucker relations.
In Section \ref{Reduced-Cluster} we prove that generically, the dominant exponentials 
labeling the regions of a soliton graph for 
$(Gr_{k,n})_{>0}$ comprise a  {\it cluster} for the {\it cluster algebra} of $Gr_{k,n}$.  
In Section \ref{sec:inverse} 
we address the inverse problem for regular soliton solutions to the KP equation.
Finally, in Section \ref{sec:triangulation}, we completely classify the 
soliton graphs coming from solutions $u_A$  for $A\in (Gr_{2,n})_{>0}$,
and 
construct them all using triangulations of an $n$-gon.  

The present paper provides proofs of the results announced in 
\cite{KW11}.
In the sequel to this work \cite{KW12} 
we have extended many of the results of the present paper 
from the non-negative part of the Grassmannian to the real Grassmannian.  
In a future paper we plan to make a detailed study of the relationship
between {\it cluster transformations} and the evolution of soliton graphs.

\textsc{Acknowledgements:}
The authors are grateful for
the hospitality of the math departments at UC Berkeley and Ohio State,
where some of this work was carried out.  They are also grateful
to Sara Billey, and to an anonymous referee, 
whose comments helped them to greatly 
improve the exposition.

\section{Total positivity for the Grassmannian}\label{sec:TP}

In this section we review the Grassmannian $Gr_{k,n}$
and Postnikov's decomposition of its non-negative part $(Gr_{k,n})_{\geq 0}$
into positroid cells
\cite{Postnikov}.
Note that our conventions slightly differ from those of \cite{Postnikov}.

The real Grassmannian $Gr_{k,n}$ is the space of all
$k$-dimensional subspaces of $\R^n$.  An element of
$Gr_{k,n}$ can be viewed as a full-rank $k\times n$ matrix modulo left
multiplication by nonsingular $k\times k$ matrices.  In other words, two
$k\times n$ matrices represent the same point in $Gr_{k,n}$ if and only if they
can be obtained from each other by row operations.
Let $\binom{[n]}{k}$ be the set of all $k$-element subsets of $[n]:=\{1,\dots,n\}$.
For $I\in \binom{[n]}{k}$, let $\Delta_I(A)$
denote the maximal minor of a $k\times n$ matrix $A$ located in the column set $I$.
The map $A\mapsto (\Delta_I(A))$, where $I$ ranges over $\binom{[n]}{k}$,
induces the {\it Pl\"ucker embedding\/} $Gr_{k,n}\hookrightarrow \mathbb{RP}^{\binom{n}{k}-1}$.

For $\M\subseteq \binom{[n]}{k}$, the \emph{matroid stratum}
$S_{\mathcal{M}}$ is the set of elements of $Gr_{k,n}$
represented by all $k \times n$ matrices $A$ with 
$\Delta_I(A) \neq 0$ for $I\in \mathcal{M}$
and $\Delta_J(A) = 0$ for $J \notin \mathcal{M}$.  
The decomposition
of $Gr_{k,n}$ into the strata $S_{\mathcal M}$ is called the 
the \emph{matroid stratification}. 

\begin{definition} 
The \emph{totally non-negative Grassmannian} $(Gr_{k,n})_{\geq 0}$
(respectively, \emph{totally positive Grassmannian} $\Grkntop$)
is the subset of  $Gr_{k,n}$
that can be represented by $k\times n$ matrices $A$
with all 
$\Delta_I(A)$ non-negative (respectively, positive).
\end{definition}

Postnikov \cite{Postnikov} studied the
decomposition of $(Gr_{k,n})_{\geq 0}$ induced by 
the matroid stratification.  
More specifically,
for $\M\subseteq \binom{[n]}{k}$,
he defined the {\it positroid cell\/} $S_\mathcal{M}^{tnn}$ as
the set of elements of $(Gr_{k,n})_{\geq 0}$ represented by all $k\times n$ matrices $A$ with
$\Delta_I(A)>0$
for $I\in \mathcal{M}$ and 
$\Delta_J(A)=0$ for $J\not\in \mathcal{M}$.  It turns out that 
each nonempty $S_{\mathcal{M}}^{tnn}$ is actually a cell \cite{Postnikov},
and that this decomposition of 
$(Gr_{k,n})_{\geq 0}$ is 
a CW complex \cite{PSW}. Note that
$\Grkntop$ is a positroid cell; it is the unique 
positroid cell in $\Grkn$ of top dimension $k(n-k)$.
Postnikov showed that the cells of $(Gr_{k,n})_{\geq 0}$ are naturally labeled 
by (and in bijection
with) the following combinatorial objects \cite{Postnikov}:
\begin{itemize}
\item Grassmann necklaces $\mathcal I$ of type $(k,n)$
\item decorated permutations $\pi^{:}$ on $n$ letters with $k$ weak excedances 
\item equivalence classes of {\it reduced plabic graphs} of type $(k,n)$
\item $\Le$-diagrams of type $(k,n)$.
\end{itemize}

For the purpose of studying solitons, we are  interested only 
in the {\it irreducible}  positroid cells.

\begin{definition}
We say that a positroid cell 
$S_{\mathcal M}^{tnn}$ is \emph{irreducible} if 
the reduced-row echelon matrix $A$ of any point in the cell
has the following properties:
\begin{itemize}
\item Each column of $A$ contains at least one nonzero element.
\item Each row of $A$ contains at least one nonzero element in 
addition to the pivot.
\end{itemize}
\end{definition}

The irreducible positroid cells 
are indexed by:
\begin{itemize}
\item irreducible Grassmann necklaces $\mathcal I$ of type $(k,n)$
\item derangements $\pi$ on $n$ letters with $k$  excedances 
\item equivalence classes of {\it irreducible 
reduced plabic graphs} of type $(k,n)$
\item irreducible $\Le$-diagrams of type $(k,n)$.
\end{itemize}

We now review the definitions of  these objects and 
some bijections among them.

\begin{definition}
An  \emph{irreducible Grassmann necklace of type 
$(k,n)$} is a sequence $\mathcal I = (I_1,\dots,I_n)$ of 
subsets $I_r$ of $[n]$ of size $k$  such that, for $i\in [n]$, 
$I_{i+1}=(I_i \setminus \{i\}) \cup \{j\}$ for some $j\neq i$.
(Here indices $i$
are taken modulo $n$.)  
\end{definition}

\begin{example}\label{ex1}
$\mathcal I = (1257, 2357, 3457, 4567, 5678, 6789, 1789, 1289, 1259)$ is
an example of a Grassmann necklace of type $(4,9)$. 
\end{example}

\begin{definition}
A \emph{derangement} $\pi=(\pi(1),\dots,\pi(n))$
is a permutation $\pi \in S_n$ which has no fixed points.
An {\it excedance} of $\pi$ 
is a pair $(i,\pi(i))$ such that 
$\pi(i)>i$.  We call $i$ the {\it excedance position} and 
$\pi(i)$ the {\it excedance value}.  Similarly,
a {\it nonexcedance} is a pair $(i,\pi(i))$ such that $\pi(i)<i$.
\end{definition}

\begin{remark}
A \emph{decorated permutation} is a permutation in which 
fixed points are colored with one of two colors. 
Under the bijection between positroid cells and decorated permutations,
the  irreducible positroid cells correspond to derangements,
i.e. those decorated permutations which have no fixed points.
\end{remark}

\begin{example}\label{ex2}
The derangement $\pi=(6,7,1,2,8,3,9,4,5)\in S_9$ has 
excedances in positions $1, 2, 5, 7$.
\end{example}
\begin{definition}\label{def:plabic}
A \emph{plabic graph}
is a planar undirected graph $G$ drawn inside a disk
with $n$ \emph{boundary vertices\/} $1,\dots,n$ placed in counterclockwise
order
around the boundary of the disk, such that each boundary vertex $i$ 
is incident
to a single edge.\footnote{The convention of \cite{Postnikov} 
was to place the boundary vertices in clockwise order.}  Each internal vertex
is colored black or white.  See Figure \ref{plabic} for an example.
\end{definition}

\begin{figure}[h]
\centering
\includegraphics[height=1.25in]{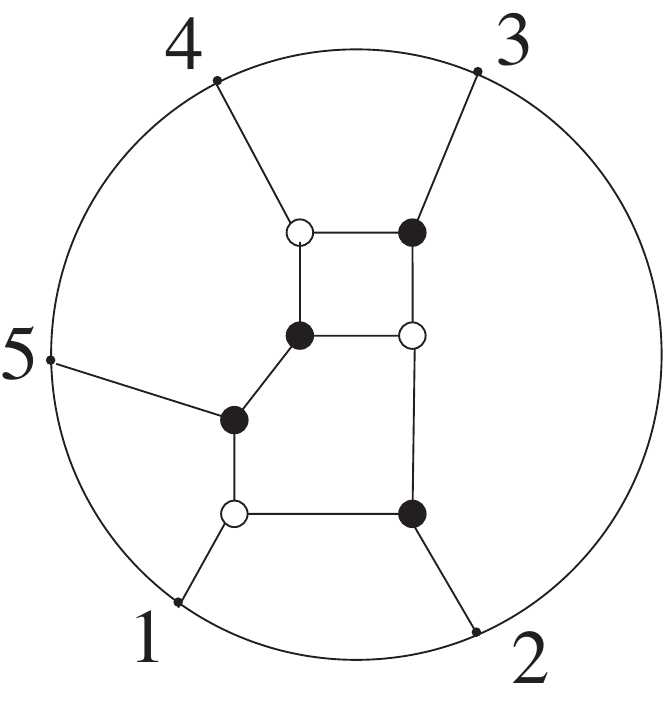}
\hskip 1cm
\includegraphics[height=1.25in]{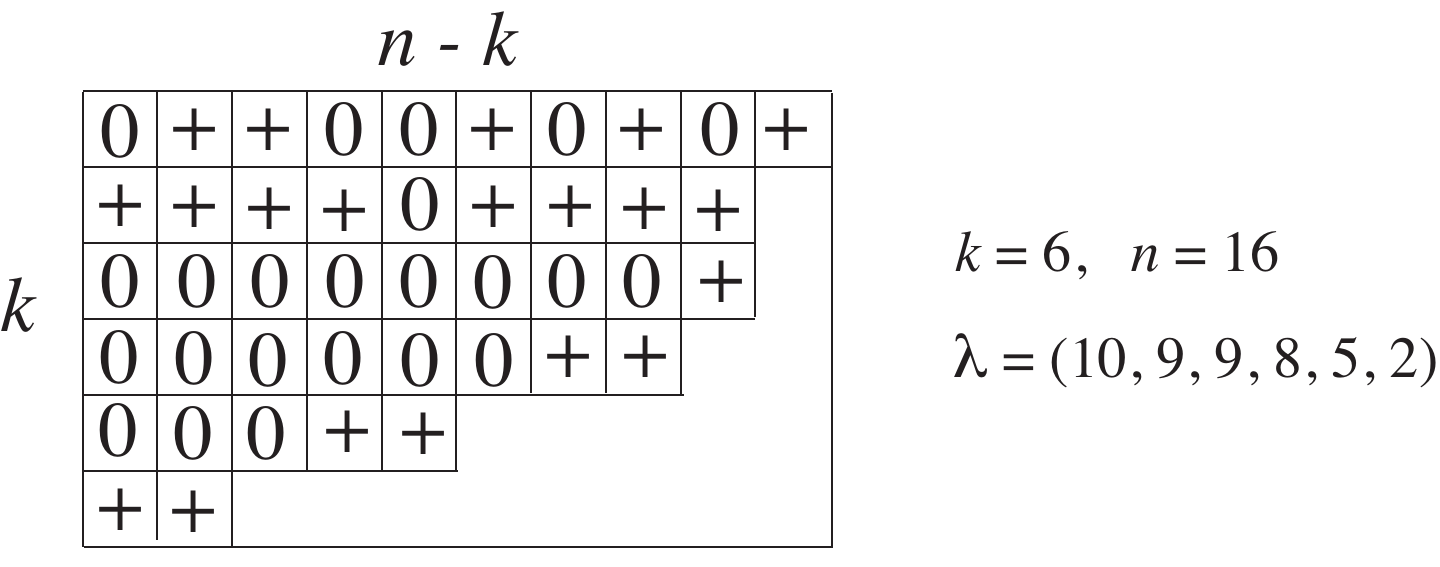}
\caption{A plabic graph, and an irreducible Le-diagram $L=(\lambda,D)_{k,n}$.}
\label{plabic}
\end{figure}

\begin{definition}
Fix $k$ and $n$. Let 
$Y_{\lambda}$ denote the Young diagram of the partition $\lambda$.  A {\it $\Le$-diagram}
(or Le-diagram)
$L=(\lambda, D)_{k,n}$ of type $(k,n)$ 
is a Young diagram $Y_{\lambda}$ contained in a $k \times (n-k)$ rectangle
together with a filling $D: Y_{\lambda} \to \{0,+\}$ which has the
{\it $\Le$-property}:
there is no $0$ which has a $+$ above it in the same column and a $+$ to its
left in the same row.
A $\Le$-diagram is \emph{irreducible} if each row and each column 
contains at least one $+$.
See the right of  Figure \ref{plabic} for
an example of an irreducible  $\Le$-diagram.
\end{definition}

\begin{theorem}\cite[Theorem 17.2]{Postnikov}\label{necklace}
Let $S_{\mathcal M}^{tnn}$ be a positroid
cell in $(Gr_{k,n})_{\geq 0}$. 
For $1 \leq r \leq n$, let $I_r$ be the  element
of  $\mathcal M$ which is lexicographically
minimal with respect to the order
$r < r+1 <  \dots < n < 1 < 2 < \dots r-1$.
Then $\mathcal I(\mathcal M):=(I_1,\dots,I_n)$ is a Grassmann necklace
of type $(k,n)$.
\end{theorem}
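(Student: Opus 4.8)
The plan is to work with the matroid underlying $\mathcal M$. Fix a $k\times n$ matrix $A$ representing a point of $S_{\mathcal M}^{tnn}$, with columns $v_1,\dots,v_n\in\R^k$, so that $I\in\mathcal M$ if and only if $\{v_j : j\in I\}$ is a basis of the column space. Since $A$ has rank $k$ we have $\mathcal M\neq\varnothing$, and since the lexicographic order is a total order on the finite set of $k$-subsets, each $I_r$ is well defined and unique. The key point is that the lex-minimal basis with respect to a linear order is produced by the greedy algorithm: scan the ground set in increasing order and adjoin $v_j$ whenever it is independent of the columns already chosen. It therefore suffices to prove the local statement that, for each $i$, the greedy output $I_{i+1}$ for the order $<_{i+1}$ (in which $i+1<\dots<i-1<i$) is obtained from the greedy output $I_i$ for $<_i$ (in which $i<i+1<\dots<i-1$) by the Grassmann-necklace move.

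My approach is to run the two greedy procedures in parallel. The orders $<_i$ and $<_{i+1}$ induce the same relative order on the sequence $s_1,\dots,s_{n-1}:=i+1,i+2,\dots,i-1$; they differ only in that $<_i$ scans $i$ first while $<_{i+1}$ scans $i$ last. Let $A_t$ and $B_t$ be the index sets selected after scanning $s_1,\dots,s_t$ in the two runs, where $A_0=\{i\}$ if $v_i\neq 0$ (and $A_0=\varnothing$ otherwise) while $B_0=\varnothing$. Writing $\mathrm{span}\,S$ for the linear span of $\{v_j:j\in S\}$, I would carry along the invariant
\[
\mathrm{span}\,A_t=\mathrm{span}\,(B_t\cup\{i\}),
\]
together with the structural fact that $A_t$ and $B_t$ either differ exactly by the element $i$ (with $i\in A_t$, $i\notin B_t$) or differ by a single exchange $i\leftrightarrow j$. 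At each step the two runs adjoin the same $s_t$ unless $v_{s_t}\in\mathrm{span}\,(B_{t-1}\cup\{i\})\setminus\mathrm{span}\,B_{t-1}$, in which case $B$ adjoins $s_t$ but $A$ does not.

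I would then read off the conclusion from the terminal configuration. If $v_i=0$ (so $i\notin I_i$), the two runs coincide, $i$ can never be appended, and $I_{i+1}=I_i$. If $v_i\neq 0$, let $j=s_t$ be the first scanned element, if any, at which the divergence step occurs; at that moment $A$ and $B$ pass to differing by the single exchange $i\leftrightarrow j$, and at the end $<_{i+1}$ cannot re-adjoin $i$, so $I_{i+1}=(I_i\setminus\{i\})\cup\{j\}$ with $j\neq i$. If no such step ever occurs (the case where $i$ is a coloop), then $I_{i+1}=I_i$. In every case this is precisely the defining relation of a Grassmann necklace of type $(k,n)$; moreover, when $S_{\mathcal M}^{tnn}$ is irreducible, condition~(1) excludes loops and condition~(2) excludes coloops, so the divergence step always occurs and $j\neq i$, recovering the irreducible-necklace relation.

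The main obstacle is establishing the invariant, and in particular the claim that once $A_t$ and $B_t$ differ by an exchange no further divergence can happen. This is the crux: at the divergence step $v_{s_t}=\lambda v_i+(\text{a combination of columns of }B_{t-1})$ with $\lambda\neq 0$, whence $v_i\in\mathrm{span}\,(B_{t-1}\cup\{s_t\})=\mathrm{span}\,B_t$. Since spans only grow, $v_i$ stays in $\mathrm{span}\,B_{t'}$ thereafter, so $\mathrm{span}\,(B_{t'}\cup\{i\})=\mathrm{span}\,B_{t'}$ and the two runs make identical adjoin/skip decisions from then on; this also forces $i$ to be dependent on $B_{\mathrm{final}}$, so the final attempt of $<_{i+1}$ to adjoin $i$ fails. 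Verifying these span identities in the three update cases is the only genuine computation, and it is routine linear algebra once the invariant is set up correctly.
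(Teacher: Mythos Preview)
The paper does not give its own proof of this statement; Theorem~\ref{necklace} is quoted from \cite[Theorem~17.2]{Postnikov} and used as a black box. So there is no in-paper argument to compare against.

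Your proof is correct and is essentially the standard matroid-theoretic argument. The two ingredients---that the lexicographically minimal basis for any linear order is the output of the greedy algorithm, and that the greedy outputs for the cyclic shifts $<_i$ and $<_{i+1}$ differ by at most a single exchange---are exactly what one finds in Postnikov's treatment (and in Oh's later exposition). Your parallel-run bookkeeping with the invariant $\mathrm{span}\,A_t=\mathrm{span}(B_t\cup\{i\})$ is a clean way to organize this: before the divergence step the two selections differ by the single element $i$; at the divergence step $v_i$ enters $\mathrm{span}\,B_t$, the spans equalize, and thereafter the two runs make identical decisions. The case split (loop, coloop, generic) is handled correctly, and your final remark that irreducibility rules out loops and coloops, forcing $j\neq i$, matches the paper's restriction to irreducible necklaces.

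One small comment: the paper only states the definition of an \emph{irreducible} Grassmann necklace, while the theorem as written concerns general ones. Your proof covers the general case (allowing $I_{i+1}=I_i$ when $i$ is a loop or coloop), which is the right thing to do.
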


\begin{lemma}\cite[Lemma 16.2]{Postnikov}\label{Postnikov-permutation}
Given an irreducible Grassmann necklace $\mathcal I$, define
a derangement  $\pi=\pi(\mathcal I)$ by requiring that: 
if $I_{i+1} = (I_i \setminus \{i\}) \cup \{j\}$
for $j \neq i$, then 
$\pi(j)=i$.\footnote{Postnikov's convention was to set $\pi(i)=j$ above,
so the permutation we are associating is the inverse one to his.}
Indices are taken modulo $n$.
Then $\mathcal I \to \pi(\mathcal I)$
is a bijection from irreducible
Grassmann necklaces $\mathcal I=(I_1,\dots,I_n)$ of type $(k,n)$
to derangements $\pi(\mathcal I) \in S_n$ with $k$ excedances.
The excedances of 
$\pi(\mathcal I)$ are in positions $I_1$.
\end{lemma}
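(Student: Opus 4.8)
The plan is to construct the map inverse to the claimed $\pi$, prove it is a bijection by a bookkeeping argument tracking the membership of each label around the cyclic sequence $(I_1,\dots,I_n)$, read off the derangement and excedance statements from the resulting cyclic-interval description of the $I_r$, and finally exhibit an explicit inverse construction to upgrade the correspondence to a bijection. First I would settle well-definedness. For each $i$, writing $I_{i+1}=(I_i\setminus\{i\})\cup\{j\}$ with $j\neq i$ and comparing cardinalities ($|I_i|=|I_{i+1}|=k$) forces exactly one of $i\in I_i$, $j\in I_i$ to hold. The degenerate alternative $i\notin I_i$ gives $I_{i+1}=I_i$ and makes $j$ ambiguous; tracking membership as below shows that such an $i$ then lies in no $I_r$, which irreducibility excludes. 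Hence $i\in I_i$ for every $i$, the element $j_i$ is the unique member of $I_{i+1}\setminus I_i$, and $\sigma(i):=j_i$ is a well-defined map $[n]\to[n]$ with $\sigma(i)\neq i$; by definition $\pi=\sigma^{-1}$.

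The key step is the bijectivity of $\sigma$. I would fix a label $m\in[n]$ and track $P_m:=\{r: m\in I_r\}$. From the transition rule, the label $i$ is deleted only at the step $I_i\to I_{i+1}$, and the label $m$ is inserted only at those steps $I_i\to I_{i+1}$ with $j_i=m$. Traversing the cycle once returns $I_1$ to itself, so for each $m$ the number of insertions equals the number of deletions; since $m$ can be deleted at most once (only at step $m$), it is inserted exactly once. This shows $\sigma^{-1}(m)$ is a single index, so $\sigma$ is a bijection, and $\pi$ has no fixed points because $\sigma(i)\neq i$, hence $\pi$ is a derangement. The same analysis pins down $P_m$: the label $m$ enters at step $\pi(m)$ and leaves at step $m$, so $P_m=\{\pi(m)+1,\dots,m\}$ read as a cyclic interval, which I abbreviate $(\pi(m),m]$.

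With this interval description the excedance statement is immediate: $m\in I_1$ iff $1\in P_m$ iff the cyclic interval $(\pi(m),m]$ passes through $1$, which happens exactly when $\pi(m)>m$. Thus the excedance positions are precisely the elements of $I_1$, and there are $|I_1|=k$ of them. To conclude that $\mathcal I\mapsto\pi(\mathcal I)$ is a bijection onto derangements with $k$ excedances, I would write down the inverse: given such a $\pi$, set $I_r:=\{m: r\in(\pi(m),m]\text{ cyclically}\}$. A one-line coverage count shows $|I_r|$ is independent of $r$, since in passing from $r$ to $r+1$ one loses exactly the arc ending at $r$ (namely $m=r$) and gains exactly the arc starting at $r+1$ (namely $m=\pi^{-1}(r)$); hence $|I_r|=|I_1|=k$. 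The identical entering/leaving analysis shows $(I_r)$ is a Grassmann necklace with $I_{r+1}=(I_r\setminus\{r\})\cup\{\pi^{-1}(r)\}$, whose associated permutation is $\pi$, and that this construction is a two-sided inverse of the forward map.

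I expect the main obstacle to be the careful cyclic bookkeeping---handling indices modulo $n$ and the distinguished role of the position $1$ in the interval $P_m$---together with the preliminary subtlety of ruling out the loop case, so that $\sigma$ is genuinely well-defined before any of the counting arguments are run.
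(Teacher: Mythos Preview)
The paper does not supply a proof of this lemma; it is quoted as \cite[Lemma~16.2]{Postnikov} and used as a black box. So there is nothing in the paper to compare your argument against directly.

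That said, your proof is correct and is essentially the standard one. The key device---tracking for each label $m$ the cyclic interval $P_m=(\pi(m),m]$ of positions $r$ with $m\in I_r$---is exactly the right organizing idea: it simultaneously gives bijectivity of $\sigma$ (each $m$ is inserted and deleted exactly once around the cycle), the derangement property, the identification of $I_1$ with the excedance positions, and the explicit inverse $I_r=\{m:r\in(\pi(m),m]\}$. Your verification that $|I_r|$ is constant (lose $r$, gain $\pi^{-1}(r)$ at each step) is clean.

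One small caveat worth flagging: your treatment of the degenerate case $i\notin I_i$ appeals to ``irreducibility'' to exclude it, but the paper's \emph{stated} definition of an irreducible Grassmann necklace is purely the transition rule $I_{i+1}=(I_i\setminus\{i\})\cup\{j\}$ with $j\neq i$, which taken literally does not by itself force $i\in I_i$ (if $i\notin I_i$ one can vacuously choose any $j\in I_i$). Your argument that such an $i$ would then appear in no $I_r$ is correct, and this is exactly the situation the word ``irreducible'' is meant to exclude (it corresponds to a zero column in the matrix, hence a decorated fixed point). So the issue is with the informality of the paper's definition, not with your proof; you might simply say at the outset that for the necklaces under consideration one has $i\in I_i$ for all $i$, which is the intended meaning.
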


\begin{example}\label{ex3}
If $\mathcal I$ and $\pi$ are defined as in Examples \ref{ex1}
and \ref{ex2}, then $\pi(\mathcal I) = \pi$.
\end{example}



\begin{definition}\label{Le2permutation}
Given a $\Le$-diagram $L$ contained in a $k \times (n-k)$ rectangle,
label its southeast border with the numbers
$1,2,\dots,n$, starting at the northeast corner.
Replace each $+$ with an ``elbow" and each $0$ with a ``cross'';
see Figure \ref{pipedream}.  Now travel along each ``pipe" from
southeast to northwest, and label the end of a pipe with the same
number that labeled its origin.  Finally, we define a permutation
$\pi = \pi(L)$ as follows.  If $i$ is the label of a vertical
edge on the southeast border of $L$, then set $\pi(i)$ equal to 
the label of the vertical edge on the other side of that row.
If $i$ is the label of a horizontal edge on the southeast
border of $L$, then set $\pi(i)$ equal to the label of the horizontal
edge on the opposite side of that column.  
See Figure \ref{pipedream}.
\end{definition}
\begin{figure}[h]
\centering
\includegraphics[height=1.3in]{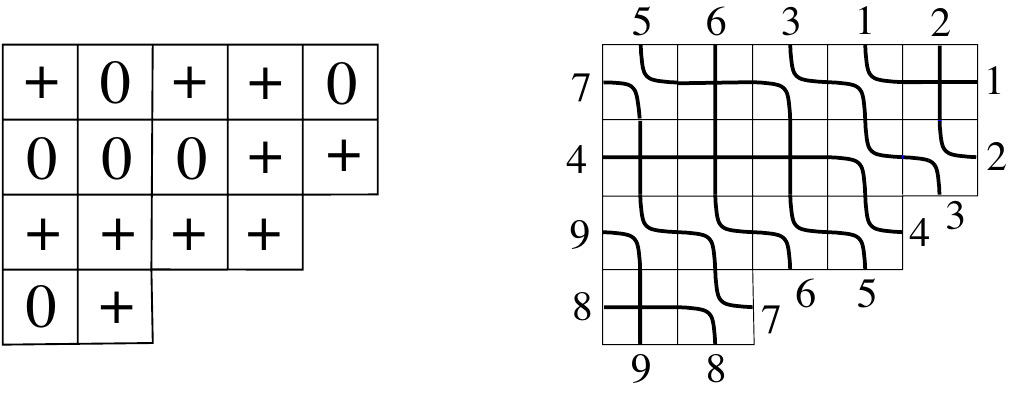}
\caption{The Le-diagram $L$ together with the computation of 
$\pi(L) = (7,4,2,9,1,3,8,6,5)$.}
\label{pipedream}
\end{figure}
\begin{proposition}
The map defined above gives a bijection from irreducible $\Le$-diagrams 
contained
in a $k \times (n-k)$ rectangle to 
derangements on $n$ letters with $k$ excedances.
\end{proposition}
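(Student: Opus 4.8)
The plan is to prove bijectivity by checking that the pipe construction of Definition~\ref{Le2permutation} is well-defined and lands in the right set, and then exhibiting an explicit inverse. The organizing observation is that the array of elbows and crosses is a \emph{wiring diagram}: each of the $n$ labelled edges on the southeast border is the source of a unique pipe, and tracing pipes determines a set map $\pi \colon [n] \to [n]$. Because the whole picture can be read backwards (run each pipe from northwest to southeast), this map is automatically a bijection, i.e. a permutation; so the only substantive points are (a) that $\pi(L)$ has no fixed point, (b) that it has exactly $k$ excedances, and (c) that $L \mapsto \pi(L)$ can be inverted.

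First I would settle (a) and (b). For (a), a fixed point $\pi(i)=i$ can only occur when the pipe issuing from edge $i$ runs straight across its row (if $i$ is a vertical edge) or straight up its column (if $i$ is horizontal), meeting no elbow. The hypothesis that $L$ is \emph{irreducible} — every row and every column contains a $+$ — forces an elbow in that row or column, hence a turn, so no pipe returns to the opposite edge of its own row or column; thus $\pi(L)$ is a derangement. This is the combinatorial shadow of the Remark identifying irreducible cells with fixed-point-free decorated permutations. For (b), I would show that the excedance positions of $\pi(L)$ are exactly the labels carried by the $k$ vertical steps of the southeast border. Since the labels $1,\dots,n$ increase as one moves southwest from the northeast corner and each pipe travels weakly northwest, one compares the label of a source edge with the label deposited at the pipe's terminus; a short monotonicity argument along the staircase shows $\pi(i)>i$ precisely for the vertical (row) edges. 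As there are $k$ of these, $\pi(L)$ has exactly $k$ excedances, so the map takes irreducible $\Le$-diagrams of type $(k,n)$ to derangements with $k$ excedances.

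For surjectivity and injectivity I would construct the inverse directly. Given a derangement $\pi$ with $k$ excedances, its $k$ excedance positions prescribe which of the $n$ boundary steps are vertical and which are horizontal, and this staircase is exactly the southeast border of a unique Young diagram $Y_\lambda \subseteq k\times(n-k)$; this recovers $\lambda$. The filling is then recovered cell by cell: a cell must be a cross where the two pipes through it pass straight and an elbow where they turn, and which of these occurs is dictated by whether the two pipes meeting at that cell are destined to cross (as prescribed by $\pi$) or not. The $\Le$-property is the condition singling out the \emph{reduced} such diagram — no two pipes cross twice — so this reconstruction should yield a genuine $\Le$-diagram, and I would check it is irreducible (each row and column acquires a $+$ because $\pi$ is a derangement, reversing the argument in (a)).

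The main obstacle is this inverse direction: showing that the filling reconstructed from $\pi$ always satisfies the $\Le$-property and is the \emph{unique} $\Le$-diagram of its shape realizing $\pi$. Equivalently, one must prove that among all elbow/cross fillings of $Y_\lambda$ inducing the wiring permutation $\pi$, exactly one avoids the forbidden pattern ``a $0$ with a $+$ above it in its column and a $+$ to its left in its row.'' That pattern is precisely what would let a pair of pipes cross twice, i.e. what destroys reducedness; so the heart of the argument is a reducedness/uniqueness statement for these wiring diagrams, after which the two constructions are checked to be mutually inverse by tracing pipes. As an alternative to building the inverse outright, one could prove injectivity of $L\mapsto\pi(L)$ and then invoke that irreducible $\Le$-diagrams and derangements with $k$ excedances are equinumerous — both count the irreducible positroid cells, via Theorem~\ref{necklace} and Lemma~\ref{Postnikov-permutation} — concluding bijectivity by a counting argument; but the explicit inverse is more informative and is what I would aim to write out.
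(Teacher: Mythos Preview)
Your approach is entirely different from the paper's: the paper does not prove this proposition from scratch at all.  Its proof consists of the single observation that the map of Definition~\ref{Le2permutation} coincides (up to conventions) with maps already shown to be bijections in \cite[Section~2]{SW} and \cite[Corollary~20.1]{Postnikov}.  So the paper's ``proof'' is a proof by citation, while you are trying to build a self-contained argument.

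Your outline for parts (a) and (b) is sound.  The irreducibility hypothesis does exactly what you say for fixed points, and your claim that the excedance positions of $\pi(L)$ are precisely the labels of the vertical (row) steps of the southeast border is correct --- it matches the statement in Lemma~\ref{Postnikov-permutation} that the excedances of $\pi(\mathcal I)$ occur in positions $I_1$, and $I_1$ is the set of pivot (row) indices.  The ``short monotonicity argument'' you allude to is real but deserves to be written out: one checks that a pipe terminating on the west side of a row must have originated from a southeast label strictly larger than the row's own label, and dually for columns.

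You are right to flag the inverse direction as the crux.  The statement you need --- that among all elbow/cross fillings of $Y_\lambda$ inducing a given wiring permutation, exactly one has the $\Le$-property --- is precisely the reducedness/uniqueness theorem that the cited references supply, and it is not a triviality.  Your identification of the forbidden $\Le$-pattern with a double crossing of pipes is the right idea (and in fact the paper uses exactly this observation later, in the last paragraph of the proof of Theorem~\ref{t<<0}); turning it into a proof that a reduced filling exists and is unique requires an inductive or exchange argument that you have not supplied.  Your fallback via equinumeration is legitimate but somewhat circular in this context, since the equinumeration of irreducible $\Le$-diagrams and derangements with $k$ excedances is typically \emph{established} via this very bijection.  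If you want a genuinely independent count you would need a separate bijection or generating-function identity; otherwise you are better off completing the direct inverse.
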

\begin{proof}
This map can be shown to coincide with that from 
\cite[Section 2]{SW}, and up to a convention change,
coincides with 
the map in \cite[Corollary 20.1]{Postnikov}.
\end{proof}

\begin{remark}
Consider a positroid cell $S_{\mathcal M}^{tnn}$, and suppose that 
the Grassmann
necklace $\mathcal I$, the derangement $\pi$, and 
the $\Le$-diagram $L$, 
satisfy $\mathcal I = \mathcal I(\mathcal M)$, and 
$\pi = \pi(\mathcal I) = \pi(L)$.  
Then we also refer to this cell as
$S_{\mathcal M(\mathcal I)}^{tnn}$, $S_{\mathcal M(L)}^{tnn}$, 
$S_{\mathcal I}^{tnn}, S_{\pi}^{tnn}$, $S_{L}^{tnn}$, etc.  
\end{remark}


\section{Soliton solutions to the KP equation}\label{soliton-background}

In this section we explain how to construct a $\tau$-function
$\tau_A(x,y,\t)$ from a 
point of $Gr_{k,n}$, and then how to 
obtain a soliton solution 
to the KP equation from that $\tau$-function.

\subsection{From a point of the Grassmannian to a $\tau$-function.}

We first give a realization of $Gr_{k,n}$ with a 
specific basis of $\mathbb{R}^n$.  
The purpose of making this non-standard choice of basis is to 
identify the Pl\"ucker embedding of a point $A$ of the 
Grassmannian with a particular $\tau$-function, in \eqref{tau}
below.

Choose real parameters $\kappa_i$ such that 
$\kappa_1~<~\kappa_2~\cdots~<\kappa_n$.  In this paper
we will assume that the $\kappa_i$'s are \emph{generic}, 
meaning that:
\begin{itemize}
\item the sums $\sum_{j=1} ^p\kappa_{i_j}$ are all distinct for any $p$ with
$1<p<n$. 
\end{itemize}

We define a set of vectors $\{\mathsf{E}_j^{\bf 0}: j=1,\ldots,n\}$ by
\begin{equation}\label{eq:E}
\mathsf{E}_j^{\bf 0}:=\begin{pmatrix}
1\\ \kappa_j \\ \vdots \\ \kappa_j^n
\end{pmatrix} \,\in\,\mathbb{R}^n.
\end{equation}
Since all $\kappa_j$'s are distinct, the set $\{\mathsf{E}_j^{\bf 0}:j=1,\ldots,n\}$ forms a basis of $\mathbb{R}^n$.
Now define an $n\times n$ matrix $E^{\bf 0}=(\mathsf{E}_1^{\rm 0},\ldots,\mathsf{E}_n^{\bf 0})$ which is
the Vandermonde matrix in the $\kappa_j$'s, and let $A$ be a full-rank  $k\times n$ matrix  
representing a point of $Gr_{k,n}$. 
Then 
the vectors $\{\mathsf{F}_i^{\bf 0}\in\mathbb{R}^n :i=1,\ldots,k\}$
span a $k$-dimensional subspace in $\mathbb{R}^n$, 
where $\mathsf{F}_i^{\bf 0}$ is defined by
\[
\mathsf{F}_i^{\bf 0}:=\sum_{j=1}^na_{i,j}\,\mathsf{E}_j^{\bf 0},\qquad {\rm or}\qquad
(\mathsf{F}_1^{\bf 0},\ldots, \mathsf{F}_k^{\bf 0})={E}^{\bf 0}A^T,
\]
where $A^T$ is the transpose of $A$.
For $I=\{i_1,\ldots,i_k\}$, define the vector
$\mathsf{E}_{I}^{\bf 0}=\mathsf{E}_{i_1}^{\bf 0}\wedge\cdots\wedge \mathsf{E}_{i_k}^{\bf 0}
$.
Then we have a realization of the  Pl\"ucker embedding:
\[
\mathsf{F}_1^{\bf 0}\wedge\cdots\wedge \mathsf{F}_k^{\bf 0} =\sum_{I\in\binom{[n]}{k}}\Delta_I(A)\mathsf{E}_I^{\bf 0}.
\]

In \cite{Sato}, Sato showed that each solution of the KP equation is given by a $\text{GL}_{\infty}$-orbit on the universal Grassmannian. To construct such an orbit
for a finite dimensional Grassmannian, 
we first define multi-time variables $\hat{\bf t}:=(t_1,\ldots, t_m)$ for $m\ge 3$
 which
give a parameterization of the orbit.  Then
we consider a deformation $\mathsf{E}_j^{\hat{\bf t}}$ of the vector $\mathsf{E}_j^{\bf 0}$ for $j=1,\ldots,n$, given by
\[
\mathsf{E}_j^{\hat{\bf t}}:=\mathsf{E}_j^{\bf 0}\exp\theta_j(\hat{\bf t})\qquad\text{with}\qquad \theta_j(\hat{\bf t}):=\sum_{i=1}^m\kappa_j^it_i.
\]
For the KP equation, we identify $t_1=x, t_2=y$ and $t_3=t$, and denote
those flow parameters by
\[
\hat{\mathbf{t}}=(x,y,\mathbf{t}),\qquad\text{with}\qquad \mathbf{t}:=(t_3,\ldots,t_m).
\]

We now define an orbit generated by the 
matrix $E^{\hat{\bf t}}=(\mathsf{E}_1^{\hat{\bf t}},\ldots,\mathsf{E}_n^{\hat{\bf t}})$ on elements of $\GL_n$, 
\[
g^{\hat{\bf t}}:=E^{\hat{\bf t}}g \qquad {\rm for~each}\quad g\in \GL_n. 
\]
Here we identify the $n\times k$ matrix consisting of the first $k$ columns of $g$
with the matrix $A^T$, the transpose of the matrix $A$ parametrizing a point
$Gr_{k,n}$.  Then the Pl\"ucker embedding gives
\[
g\cdot e_1\wedge\cdots\wedge e_k=\sum_{1\le i_1<\cdots<i_k\le n}\Delta_I(A) \,e_{i_1}\wedge\cdots\wedge  e_{i_k}.
\]
where $e_j$ is the $j$-th standard vector in $\R^n$.
Then $g^{\bf t}\cdot e_{1}\wedge\cdots\wedge e_{k}$  defines
a flow (orbit) of the highest weight vector on the 
corresponding fundamental representation 
of ${\GL}_n$.

We now define the \emph{$\tau$-function} as 
\begin{align}\label{tau1}
\tau(x,y,\t):=&\langle e_1\wedge\cdots\wedge e_k,~ \mathsf{F}_1^{\hat{\bf t}}\wedge\cdots\wedge\mathsf{F}^{\hat{\bf t}}_k\rangle\\
=&\langle e_1\wedge\cdots\wedge e_k,~
g^{\hat{\bf t}}\cdot e_{1}\wedge \cdots\wedge e_{k}\rangle,\nonumber
\end{align}
where $\mathsf{F}^{\hat{\bf t}}_j:=g^{\hat{\bf t}}\cdot e_{j}$, and
the bracket $\langle \cdot,\cdot\rangle$ is the usual inner product on the wedge product space $\bigwedge^k\R^n$.
Given 
$I=\{i_1,\ldots,i_k\} \subset [n]$,  we let 
$E_I(\hat{\bf t})$ denote the scalar function
\begin{equation}\label{Efunction}
\begin{array}{lll}
E_I(x,y,\t)&=\langle e_1\wedge\cdots\wedge e_k,\,
\mathsf{E}^{\hat{\bf t}}_{i_1}\wedge \cdots\wedge\mathsf{E}^{\hat{\bf t}}_{i_k}\rangle=
\langle e_1\wedge\cdots\wedge e_k,\,
\mathsf{E}^{\bf 0}_{i_1}\wedge \cdots\wedge\mathsf{E}^{\bf 0}_{i_k}\rangle\,e^{\theta_{i_1} + \cdots + \theta_{i_k}}\\[1.0ex]
&=K_I\,\exp\Theta_I(x,y,\t),
\end{array}
\end{equation}
where $K_I:=\prod_{l<m}(\kappa_{i_m}-\kappa_{i_l})$, and $\Theta_I(x,y,\t)$ is defined by
\begin{equation}\label{KT}
\Theta_I(x,y,\t):=\sum_{j=1}^k(\kappa_{i_j}x+\kappa_{i_j}^2y)+\sum_{j=1}^k\theta_{i_j}(\t).
\end{equation}

Then the $\tau$-function can be written as a sum of exponential terms,
 \begin{equation}\label{tau}
\tau(x,y,\t)=\tau_A(x,y,\t) = \sum_{I\in\binom{[n]}{k}}\Delta_I(A)\,E_I(x,y,\t).
\end{equation}

\begin{remark}
We think of the right-hand side of \eqref{tau} as giving 
the Pl\"ucker
embedding of the Grassmannian $Gr_{k,n}$ into a wedge product space
whose basis is given by the $E_I(x,y,\t)$'s.
\end{remark}

It follows that if  $A\in (Gr_{k,n})_{\ge 0}$, then 
$\tau_A>0$ for all $(x,y,\t)\in\mathbb{R}^m$.  

Note that the $\tau$-function defined in \eqref{tau} can be also
written in the Wronskian form 
\begin{equation}\label{tauA}
\tau_A(x,y,\t)={\rm Wr}(f_1,f_2,\ldots, f_k),
\end{equation}
with the scalar functions $\{f_j:j=1,\ldots,k\}$ given by
\[
(f_1,f_2,\ldots, f_k)^T=A\cdot (E_1,E_2,\ldots,E_n)^T,
\]
where $E_j$ is the exponential function defined by $E_j:=\exp\theta_j(x,y,\t)$.

\subsection{From the $\tau$-function to solutions of the KP equation}

It is  well known
(see \cite{H04, CK1, CK2, CK3})   
that if we set $x=t_1, y=t_2, t=t_3$ (treating the other
$t_i$'s as constants), the $\tau$-function defined in \eqref{tauA} 
gives rise to a soliton solution of the KP equation \eqref{eq:KP},
namely
\begin{equation}\label{KPsolution}
u_A(x,y,\t)=2\frac{\partial^2}{\partial x^2}\ln\tau_A(x,y,\t).
\end{equation}
The other $t_j$'s correspond to the flow parameters of the higher symmetries of the KP equation, and
the set of the symmetries is called the \emph{KP hierarchy} (see e.g. \cite{MJD00}).

It is easy to show that if $A \in \Grkn$, then 
such a solution $u_A(x,y,\mathbf{t})$ is regular for all $t_j\in\mathbb{R}$.
In the sequel to this paper \cite{KW12}, we  show that 
if $u_A(x,y,\t)$ is regular for all $x,y$ and $t=t_3$ (with the 
other $t_i$'s fixed constants), then $A\in \Grkn$.  
(In \cite[Proposition 4.1]{K10}, a weaker statement was proved:
if $u_A(x,y,\mathbf{t})$ is regular for all $t_j\in \mathbb{R}$, 
then $A\in \Grkn$.) 
For this reason we are mainly interested in solutions $u_A(x,y,\ttt)$
of the KP equation which come from points $A$ of 
$\Grkn$.


\section{From soliton solutions to soliton graphs}\label{sec:solgraph}

In this section 
we define certain tropical curves associated with soliton solutions:
\emph{contour plots} and \emph{asymptotic contour plots}.  We also 
define the notion of \emph{soliton graph}.

\subsection{Contour plots}
One can visualize a solution $u_A(x,y,\t)$ in the $xy$-plane 
by drawing level sets of the solution when the coordinates $\t=(t_3,\ldots,t_m)$ are fixed.  
For each $r\in \mathbb{R}$, we denote the corresponding level set by 
\[
C_r(\t):=\{(x,y)\in\mathbb{R}^2: u_A(x,y,\t)=r\}. 
\]
Figure \ref{fig:1soliton} depicts both a three-dimensional image of a solution $u_A(x,y,\t)$ for fixed $\t$,
as well as multiple level sets $C_r$. 
These levels sets are lines parallel to the line of the wave peak.

\begin{example}\label{ex:1soliton}
We compute the soliton solution $u_A(x,y,\t)$ associated to 
the $1\times 2$ matrix $A=(1~a)$ with $a>0$, considered as an element of 
 $(Gr_{1,2})_{> 0}$.
Write $E_1=e^{\theta_1}$ and $aE_2=e^{\theta_2+\ln a}=e^{\tilde{\theta}_2}$.  
Then the  $\tau$-function $\tau_A$ and the soliton solution $u_A$ are given by
\begin{align*}
\tau_A(x,y,\t)&=e^{\theta_1}+e^{\tilde\theta_2} 
=2e^{\frac{1}{2}(\theta_1+\tilde\theta_2)}\cosh\frac{1}{2}(\theta_1-\tilde\theta_2), \text{ and} \\
u_A(x,y,\t)&=\frac{1}{2}(\kappa_1-\kappa_2)^2{\rm sech}^2 \frac{1}{2}(\theta_1-\tilde\theta_2).
\end{align*}
This is a {\it line-soliton solution}, and the peak of the solution (wave crest)
is given by the equation $\theta_1=\tilde\theta_2$, i.e.
\[
x+(\kappa_1+\kappa_2)y+\sum_{j=3}^mh_{j-1}(\kappa_1,\kappa_2)t_j=-\frac{1}{\kappa_2-\kappa_1}\ln a.
\]
where we have used $\kappa_2^{j+1}-\kappa_1^{j+1}=(\kappa_2-\kappa_1)h_j(\kappa_1,\kappa_2)$ with $h_j(\kappa_1,\kappa_2)=\sum_{p+q=j}\kappa_1^p\kappa_2^q$.
\begin{figure}[h]
\begin{center}
\includegraphics[height=1.6in]{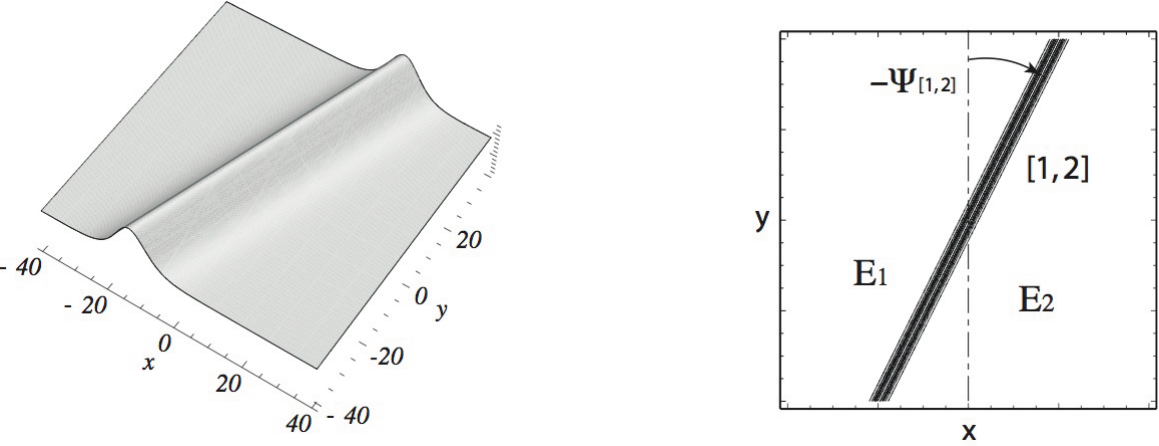}
\par
\end{center}
\caption{A line-soliton solution $u_A(x,y,\t)$ where $A=(1,1) \in (Gr_{1,2})_{\geq 0}$, depicted via
the 3-dimensional profile 
$u_A(x,y,{\bf 0})$, and the level sets of $u_A(x,y,0)$.
$E_i$ represents the dominant exponential in each region.
\label{fig:1soliton}}
\end{figure}

For each fixed $\t$, $\theta_1 = \tilde\theta_2$ gives a line which divides the $xy$-plane into two regions. 
The exponential $E_1$ dominates in the region including $x\ll 0$, and $E_2$ dominates the 
other region where  $x\gg 0$.
We label each region by its dominant  exponential.
Figure \ref{fig:1soliton} depicts 
$u_A(x,y,\t)$,
where  $\t=(0,\ldots,0)$, $a=1$, and 
$(\kappa_1,\kappa_2)=(-1,2)$. 
\end{example}

To study the behavior of $u_A(x,y,\ttt)$ for $A\in S_\mathcal{M}$,
we consider the dominant exponentials at each point $(x,y,\ttt)$,
and we define 
\begin{align*}
\hat{f}_A(x,y,\ttt)&= 
\underset{J\in\mathcal{M}}\max\{\Delta_J(A) E_J(x,y,\ttt)\}=\underset{J\in\mathcal{M}}\max\{\Delta_J(A) K_J
E_{j_1}\dots E_{j_k}\} \\ 
              &=\underset{J\in\mathcal{M}}\max \biggl\{\exp \biggl(\ln\bigl(\Delta_J(A) K_J \bigr) +  \Theta_J(x,y,\t)\biggr)\biggr\}, 
\end{align*}
where $K_J$ and $\Theta_J(x,y,\t)$ are defined in \eqref{Efunction}
and \eqref{KT}.
From (\ref{tau}), we see that 
$\tau_A$ can be approximated by $\hat{f}_A$.

Let $f_A(x,y,\t)$ be the closely related function 
\begin{equation}
{f}_A(x,y,\t) =
              \underset{J\in\mathcal{M}}\max \left\{\ln\bigl(\Delta_J(A)K_J\bigr) +
                    \Theta_J(x,y,\t)\right\}.
\end{equation}
Note that at a given point $(x,y,\t)$, $f_A(x,y,\t)$ is equal to a given term
if and only if 
$\hat{f}_A(x,y,\t)$ is equal to the exponentiated version of that term.

\begin{definition}\label{contour}
Given a solution $u_A(x,y,\t)$ of the KP equation as in 
(\ref{KPsolution}), we define 
$\CC(u_A, \t_0)$ 
for fixed $\t=\t_0$
to be the locus in $\R^2$ where $f_A(x,y,\t=\t_0)$ is not 
linear, and we refer to this as a \emph{contour plot} of the solution $u_A(x,y,\t)$.
\end{definition}

\subsection{Asymptotic contour plots}
Some of this paper will be concerned with the contour plots for large scales of the 
variables 
$(x,y,t_3,\ldots,t_m)$.  In this case,
each of the constant terms
$\ln\bigl(\Delta_J(A)K_J\bigr)$ in $f_A(x,y,\t)$ is negligible. More precisely,
we use rescaled variables $\bar\t:=(\bar{x},\bar{y},\mathbf{a})$ defined by
\[
\hat{\t}=(x,y,\mathbf{t})=s\,(\bar{x},\bar{y},\mathbf{a}),\qquad\text{for some}\quad s\gg 0.
\]
We then approximate $f_A$ by the function
\begin{equation*}
f_{\mathcal{M}}(\bar{x},\bar{y},\mathbf{a}) := \underset{J\in\mathcal{M}}\max \left\{
                    \Theta_J(\bar{x},\bar{y},\mathbf{a}))\right\},
\end{equation*}
which is obtained by taking the limit of $\frac{1}{s}f_A(x,y,\t)$ as $s\to\infty$.

\begin{definition}\label{contourplot-infinity}
We define the \emph{asymptotic contour plot}
$\CC_{\a_0}(\mathcal{M})$ for fixed $\a=\a_0$ to be the locus in $\R^2$ where
$f_{\mathcal{M}}(\bar{x},\bar{y},\a=\a_0)$ is not linear.  
Most of this paper will be concerned with the asymptotic contour plots
$\CC_{\a_0}(\mathcal{M})$ for 
$\a_0 = \pm (1,0,0,\dots)$, which we denote by 
$\mathcal{C}_{\pm}(\mathcal{M})$.  These asymptotic contour plots
are the limits of the finite contour plots
$\CC(u_A,\t_0)$ for $A \in S_{\mathcal{M}}$ and 
$\hat{\t}=s\bar{\t}$ in the limit $s\to\infty$.
\end{definition}


Note that 
each region of the complement of 
$\CC(u_A, \t_0)$ 
is a domain of linearity for $f_A(x,y,\t_0)$, and hence each region is  
naturally associated to a {\it dominant exponential}  
$\Delta_J(A) E_J(x,y,\t_0)$ from the $\tau$-function \eqref{tau}.
We label this region by $E_J$ or $\Theta_J$.
We label regions of the complement of each asymptotic contour plot
in the same way.

A \emph{line-soliton} is a finite or unbounded line segment
in a contour plot (or asymptotic contour plot) which 
represents a balance between two dominant exponentials in 
the $\tau$-function.  Lemma \ref{separating} and 
\eqref{eq-soliton} provide the equation for a line-soliton. 

Each contour plot $\CC(u_A, \t_0)$ and each 
asymptotic contour plot $\CC_{\a_0}(\mathcal{M})$ consists
of line segments, some of 
which have finite length, while others are unbounded
and extend in the $y$ direction to $\pm \infty$.  
The unbounded lines
are all 
line-solitons, which we call {\it unbounded}  line-solitons.
The finite line segments in asymptotic contour plots are
all line-solitons, 
but some of the finite line segments in non-asymptotic contour plots
may represent
{\it phase shifts}, which
have lengths which are determined by
the $\kappa$-parameters (see \cite[page 35]{CK3} for details).

\begin{lemma}\label{separating}\cite[Proposition 5]{CK3}
Consider a line-soliton in a contour plot.
The index sets of the 
dominant exponentials of the $\tau$-function in adjacent regions
of the
contour plot in the $xy$-plane are of the form $\{i,m_2,\dots,m_k\}$ and
$\{j, m_2,\dots,m_k\}$.
\end{lemma}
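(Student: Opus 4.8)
The plan is to reduce the statement to a purely local computation near a generic point of the line segment $\sigma$ separating the two regions, and then to read off the index sets from the structure of the fundamental line-soliton of Example~\ref{ex:1soliton}. By Definition~\ref{contour}, $\sigma$ is a codimension-one piece of the non-linear locus of $f_A$, so at a generic point of $\sigma$ exactly two of the linear functions indexed by $\mathcal{M}$ attain the maximum; call their index sets $J$ and $J'$. Correspondingly, along a short transversal to $\sigma$ the $\tau$-function \eqref{tau} is dominated by just two terms, $\tau_A \approx \Delta_J(A)E_J + \Delta_{J'}(A)E_{J'}$, all other exponentials being exponentially smaller. First I would substitute this two-term approximation into $u_A = 2\,\partial_x^2\ln\tau_A$ and repeat the computation of Example~\ref{ex:1soliton}: writing $\phi := \ln\bigl(\Delta_J(A)E_J\bigr)-\ln\bigl(\Delta_{J'}(A)E_{J'}\bigr)$, one obtains $u_A \approx \tfrac12(\partial_x\phi)^2\,\sech^2(\tfrac12\phi)$, a localized travelling wave whose crest lies along $\phi=\mathrm{const}$.

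Next I would make the phase explicit. Since $E_J = K_J\,E_{j_1}\cdots E_{j_k}$ and $E_i = \exp(\kappa_i x+\kappa_i^2 y+\kappa_i^3 t+\cdots)$, cancelling the common factors coming from $J\cap J'$ gives
\begin{equation*}
\phi = \mathrm{const} + \sum_{\ell\in J\setminus J'}(\kappa_\ell x+\kappa_\ell^2 y+\kappa_\ell^3 t) - \sum_{\ell\in J'\setminus J}(\kappa_\ell x+\kappa_\ell^2 y+\kappa_\ell^3 t).
\end{equation*}
Thus the wave carried by $\sigma$ has spatial covector $w = \bigl(\sum_{\ell\in J\setminus J'}\kappa_\ell-\sum_{\ell\in J'\setminus J}\kappa_\ell,\ \sum_{\ell\in J\setminus J'}\kappa_\ell^2-\sum_{\ell\in J'\setminus J}\kappa_\ell^2\bigr)$ and amplitude $\tfrac12 w_1^2$. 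The hypothesis that $\sigma$ is a genuine line-soliton (rather than a phase shift) means precisely that this wave coincides with the fundamental soliton of Example~\ref{ex:1soliton} for some pair $i<j$; that is, $\phi$ is proportional to $\theta_i-\theta_j$, which forces $w = (\kappa_i-\kappa_j,\ \kappa_i^2-\kappa_j^2)$.

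Finally I would match these identities and invoke the genericity of the $\kappa_i$. Setting $S = J\cap J'$ and $p = |J\setminus J'| = |J'\setminus J|$, the equation $w_1 = \kappa_i-\kappa_j$ reads $\sum_{\ell\in J\setminus J'}\kappa_\ell+\kappa_j = \sum_{\ell\in J'\setminus J}\kappa_\ell+\kappa_i$, an equality between two subset sums of $\kappa$-values of equal size $p+1$. Because the $\kappa_i$ are generic --- in particular all partial sums $\sum_s\kappa_{i_s}$ are distinct --- this equality forces the two index multisets to coincide; combined with the disjointness of $J\setminus J'$ and $J'\setminus J$, this is impossible unless $p=1$, with $J\setminus J' = \{i\}$ and $J'\setminus J = \{j\}$. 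Hence $J = \{i\}\cup S$ and $J' = \{j\}\cup S$, which is the assertion with $\{m_2,\dots,m_k\}=S$. The main obstacle is precisely this last step: genuine line-solitons must be separated from phase-shift segments, where $p\ge 2$ really does occur (for example the central bounded segment in the generic picture for $(Gr_{2,4})_{>0}$ separates regions labelled $\{1,3\}$ and $\{2,4\}$, which differ in two positions), and it is the genericity of the $\kappa_i$, applied to the spatial covector $w$, that certifies a line-soliton cannot carry such a two-step phase. I would therefore present the identification of $\sigma$ with a single pair $(i,j)$ and this genericity input with care, since everything else is a routine $\sech^2$ computation.
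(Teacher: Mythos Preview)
The paper does not give its own proof of this lemma: it is stated with a citation to \cite[Proposition 5]{CK3}, prefaced by the remark that ``the following lemma is based on the assumption that each line separating two regions in the contour plot is a line-soliton, that is, we ignore the phase shifts.'' So there is nothing in the paper to compare your argument against; the authors are importing the result.

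Your write-up captures the right local picture (two dominant exponentials, the $\sech^2$ computation, the phase $\phi$ linear in $x,y,t$), and the genericity step at the end is exactly the correct mechanism once you know the wave covector is $(\kappa_i-\kappa_j,\kappa_i^2-\kappa_j^2)$ for some pair $i<j$. The concern is the step you yourself flag: taking ``$\sigma$ is a genuine line-soliton'' to \emph{mean} ``the local wave coincides with a fundamental $[i,j]$-soliton'' is close to assuming the conclusion, since once you have a distinguished pair $(i,j)$ the rest follows. In the paper this is handled by convention rather than argument --- phase-shift segments (where $|J\setminus J'|\ge 2$ genuinely occurs) are declared negligible at large scale and simply excluded from consideration --- so within the paper's framing your argument is adequate. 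If you want an argument that stands on its own, you would need an independent characterization of which segments are phase shifts (for instance via the dispersion relation \eqref{DispR}: the covector and frequency built from $J\setminus J'$ and $J'\setminus J$ satisfy $D(\mathbf{K},\Omega)=0$ only when $p=1$, for generic $\kappa$'s), rather than importing the pair $(i,j)$ as a hypothesis. One minor technical point: in your final genericity step the multisets $(J\setminus J')\cup\{j\}$ and $(J'\setminus J)\cup\{i\}$ could in principle have repeated elements, and the paper's genericity hypothesis is stated only for sums over \emph{sets}; you should say a word about why repetitions do not arise (or strengthen the genericity assumption).
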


According to Lemma \ref{separating}, 
those two exponential terms have $k-1$ common
phases, so we call the line separating them a {\it line-soliton of type $[i,j]$}, or
simply an \emph{$[i,j]$-soliton}.
Locally we have
\begin{align*}
\tau_A\approx \Delta_I(A)E_I+\Delta_J(A)E_J
&=\left(\Delta_I(A)K_I E_i+\Delta_J(A)K_J E_j\right)\prod_{l=2}^kE_{m_l}\\
&=\left(e^{\theta_i+\ln(\Delta_I(A)K_I)}+e^{\theta_j+\ln(\Delta_J(A)K_J}\right)\prod_{l=2}^{k}E_{m_l},
\end{align*}
so the equation for this line-soliton is
\begin{equation}\label{eq-soliton}
x+(\kappa_i+\kappa_j)y+\sum_{p=3}^mh_{p-1}(\kappa_i,\kappa_j)t_p=-\frac{1}{\kappa_j-\kappa_i}\ln\frac{\Delta_J(A)K_J}{\Delta_I(A)K_I}.
\end{equation}
See also Example \ref{ex:1soliton}.

The equation for a line-soliton
in an asymptotic contour plot 
is the same as in \eqref{eq-soliton}, except that 
the constant term on the right-hand side is $0$ (this is immediate  from
 the definition of asymptotic contour plot).


\begin{remark}\label{slope}
Consider a line-soliton given by (\ref{eq-soliton}) for fixed $\t=(t_3,\ldots,t_m)$.
Compute the angle $\Psi_{[i,j]}$ 
between the line-soliton of type $[i,j]$ and the positive $y$-axis,
measured in the counterclockwise direction, so that the negative $x$-axis
has an angle of $\frac{\pi}{2}$ and the positive $x$-axis has an 
angle of $-\frac{\pi}{2}$. Then $\tan \Psi_{[i,j]} = \kappa_i+\kappa_j$,
so we refer to $\kappa_i+\kappa_j$ as the \emph{slope} of the 
$[i,j]$ line-soliton (see Figure \ref{fig:1soliton}).
Also note that the location of the line depends on 
the ratio of the Pl\"ucker coordinates corresponding to the 
dominant exponentials on either side of the line-soliton.
\end{remark}

We will be interested in the combinatorial structure of asymptotic
contour plots,
that is, the pattern of how line-solitons interact with each other. 
Generically we expect a point at which several line-solitons
meet to have degree $3$; we regard such a point as a trivalent 
vertex.  Three line-solitons meeting at a trivalent vertex
exhibit a {\it resonant interaction} (this corresponds to the 
{\it balancing condition} for a tropical curve),
 see Section \ref{resonance}.  
One may also have two line-solitons which cross over
each other, forming an $X$-shape: we call this
an \emph{$X$-crossing}, but do not regard it as a vertex.
See Figure \ref{contour-soliton} for examples. 
We will give more details about X-crossings in Section \ref{X-crossing}.

\begin{definition}\label{def:generic}
A contour plot $\CC(u_A,\t)$ is called \emph{generic} if there exists
an $\epsilon >0$ such that 
$\CC(u_A,\t')$ has the same topology as 
$\CC(u_A,\t)$ for any $\t'$ satisfying $\|\t-\t'\|<\epsilon$.
Similarly, an asymptotic contour plot $\CC_\a(\M)$ is called \emph{generic} if there exists
an $\epsilon >0$ such that 
$\CC_{\a'}(\M)$ has the same topology as 
$\CC_\a(\M)$ for any $\a'$ satisfying $\|\a-\a'\|<\epsilon$.
Here the norm $\|\cdot\|$ is the usual Euclidian norm in $\R^{m-2}$.
\end{definition}


\subsection{Soliton graphs}

The following notion of {\it soliton graph}
forgets the metric
data of the asymptotic contour plot, but preserves
the data of how line-solitons interact and which exponentials dominate.

\begin{definition}\label{soliton-graph}
Let $\CC_{\a_0}(\mathcal{M})$ be an asymptotic contour plot
with $n$ unbounded
line-solitons.
Color a trivalent
vertex black (respectively, white)
if it has a unique edge extending downwards (respectively, upwards) from it.
Label a region by $E_I$ if the dominant exponential in that region is $\Delta_I E_I$.  Label
each edge (line-soliton) by the {\it type} $[i,j]$ of that
line-soliton.
Preserve the topology of the metric graph,
but forget the metric structure.
Embed the resulting graph with bicolored vertices 
into a disk with $n$ boundary vertices,
replacing each unbounded line-soliton with an edge
that ends at a boundary vertex.
We call this labeled graph
the  \emph{soliton graph} $G_{\a_0}(\mathcal{M})$.

Abusing notation, we will often refer to the edges of $G_{\a_0}(\mathcal{M})$
as {\it line-solitons},
and use the terminology {\it unbounded line-solitons} and {\it unbounded 
regions} 
to refer to the edges and regions incident to the boundary of the disk.
\end{definition}

\begin{figure}[h]
\begin{center}
\includegraphics[height=1.8in]{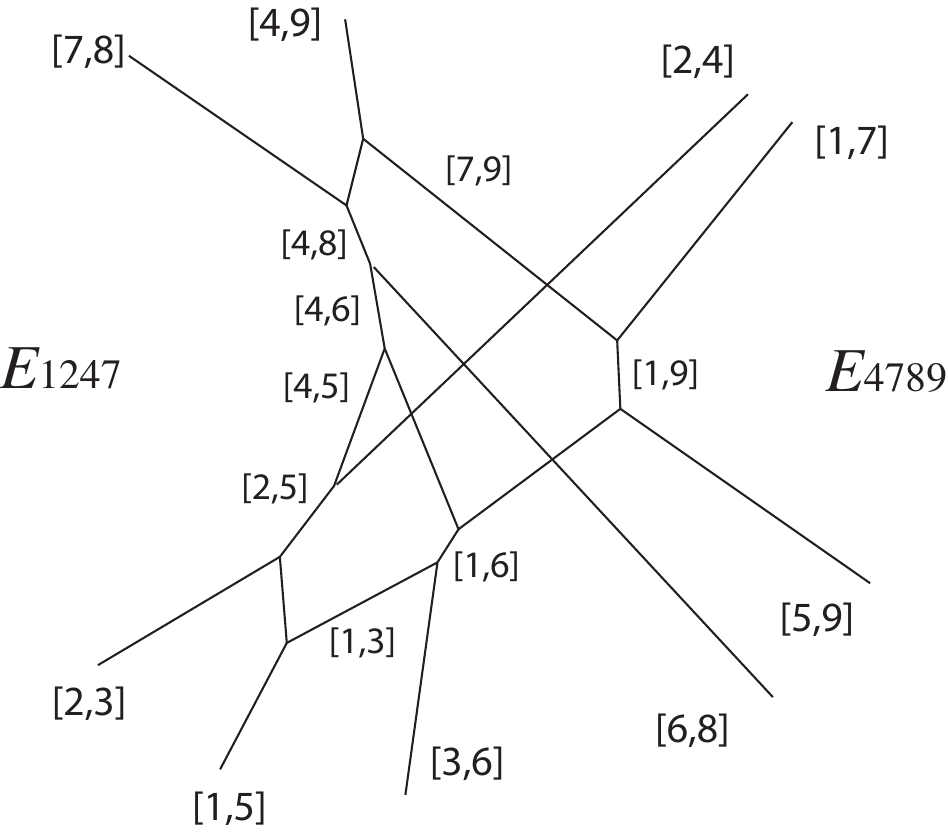}
\hskip 1cm
\includegraphics[height=1.8in]{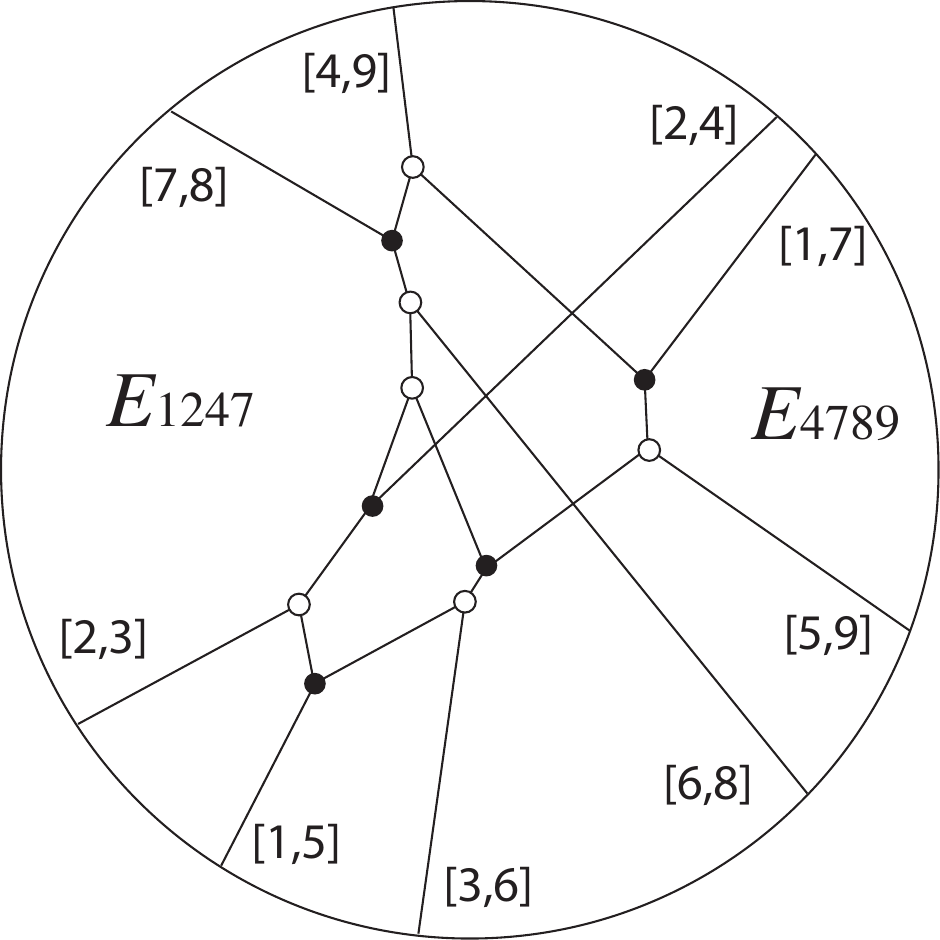}
\end{center}
\caption{
Example of an asymptotic contour plot and the soliton graph associated to
$\S_{\pi}^{tnn}$ with
$\pi=(7,4,2,9,1,3,8,6,5)$. \label{contour-soliton}}
\end{figure}
See Figure \ref{contour-soliton} for an example of
a soliton graph.
Although we have not labeled all regions or all edges,
the remaining labels can be determined using Lemma \ref{separating}.

\subsection{Resonance of line-solitons}\label{resonance}
In this section
 we explain the physical meaning of trivalent vertices in the contour plot.
It follows from Example \ref{ex:1soliton}
that a line-soliton of $[i,j]$-type has the form
\[
u=\frac{1}{2}(\kappa_i-\kappa_j)^2{\rm sech}^2\frac{1}{2}\Theta_{[i,j]}(x,y,\t),
\]
where the phase function $\Theta_{[i,j]}(x,y,\t)$ is given by
\[
\Theta_{[i,j]}(x,y,\t)=(\kappa_j-\kappa_i)x+(\kappa_j^2-\kappa_i^2)y+\sum_{s=3}^m\Omega^{(s)}_{[i,j]}(\kappa)t_s+\Theta_{[i,j]}^0(\kappa,A),
\]
with
\[
\Omega_{[i,j]}^{(s)}(\kappa):=\kappa_j^s-\kappa_i^s,\qquad\text{and}\qquad  \Theta_{[i,j]}^0(\kappa,A):=\ln\frac{\Delta_J(A)K_J}{\Delta_I(A)K_I}.
\]
In particular, the coefficients of $x,y$ and $t=t_3$ are called 
 the wavenumber-vector and the frequency,
and they are given by
\begin{equation}\label{WF}
{\bf K}_{[i,j]}:=(K_{[i,j]}^x,K_{[i,j]}^y)=(\kappa_j-\kappa_i,\kappa_j^2-\kappa_i^2),\qquad \Omega_{[i,j]}=
\kappa_j^3-\kappa_i^3.
\end{equation}

There is an algebraic relation, called the {\it dispersion relation} of the KP equation, among ${\bf K}_{[i,j]}$ and $\Omega_{[i,j]}$, which is given by
\begin{equation}\label{DispR}
D({\bf K}_{[i,j]},\Omega_{[i,j]}):=-4\Omega_{[i,j]}K^x_{[i,j]}+(K_{[i,j]}^x)^4+3(K^y_{[i,j]})^2=0.
\end{equation}
See \cite[Chapter 11.1]{Whitham} for more details.
This implies that if a plane wave of the form $\phi({\bf K}_j\cdot{\bf x}+\Omega_jt)$
is a solution of the KP equation, then ${\bf K}_j$ and $\Omega_j$ must satisfy the dispersion relation.  Note that the wavenumber-vectors and the frequency given in
  \eqref{WF} satisfy \eqref{DispR}, i.e. $D({\bf K}_j,\Omega_j)=0$.
 
 In  wave theory, if 
for two plane waves $\phi_i({\bf K}_i\cdot {\bf x}+\Omega_it)$ for $i=1$ and $2$
we have
 \[
 D({\bf K}_1+{\bf K}_2,\Omega_1+\Omega_2)=0,
 \]
 then as a result, a third wave can be generated.  Moreover, the new wave $\phi_3({\bf K}_3\cdot \mathbf{x}+\Omega_3t)$
 satisfies the so-called {\it resonant conditions},
 \[
 {\bf K}_3={\bf K}_1+{\bf K}_2,\qquad {\rm and}\qquad \Omega_3=\Omega_1+\Omega_2.
 \]
 In the KP dispersion relation, the line-solitons of 
 types $[i,j]$, $[j,\ell]$, and $[i,\ell]$ 
(here $i<j<\ell$) trivially satisfy the resonant conditions, i.e.
\begin{align} \label{balancing}
{\bf K}_{[i,\ell]}={\bf K}_{[i,j]}+{\bf K}_{[j,\ell]},\qquad {\rm and} \qquad \Omega_{[i,\ell]}=\Omega_{[i,j]}+\Omega_{[j,\ell]}.
\end{align}
The resonant relations \eqref{balancing} also hold for the
higher terms $\Omega_{[i,j]}^{(s)}$ for $s=4,\ldots,m$, i.e.
\[
\Omega_{[i,\ell]}^{(s)}=\Omega_{[i,j]}^{(s)}+\Omega_{[j,\ell]}^{(s)}\qquad\text{for}\quad s=4,\ldots,m.
\]
This means that resonant interactions arise quite naturally in the KP hierarchy, and each 3-wave resonant interaction
appears as a trivalent vertex in the contour plot.  At that trivalent vertex, 
since the slope of each soliton is given by
$\tan\Psi_{[i,j]}=\kappa_i+\kappa_j$, those three solitons appear as $[i,j],[j,l]$ and $[i,l]$
in counterclockwise order.  This condition
led  us to discover a new characterization of reduced plabic graphs,
which we describe in Section \ref{Reduced-Cluster}.
Note also that in the contour plot,
one may interpret 
equation (\ref{balancing}) as the {\it balancing condition}
for a tropical curve.

\section{Permutations and soliton asymptotics}\label{permutations}
Given a contour plot $\CC(u_A, \t_0)$ where $A$ belongs
to an irreducible positroid cell, we show that the
labels of the unbounded solitons
allow us to determine which positroid cell $A$ belongs to.
Conversely, given $A$ in the irreducible
positroid cell $S_{\pi}^{tnn}$,
we can predict the asymptotic behavior of the unbounded solitons
in $C(u_A, \t_0)$.

\begin{theorem}\label{perm-asymp}
Suppose $A$ is an element of an irreducible positroid cell in
$(Gr_{k,n})_{\geq 0}$.
Consider the contour plot $\CC(u_A, \t_0)$ for any time $\t_0$.
Then there are $k$ unbounded
line-solitons at $y\gg0$ which are labeled by
pairs $[e_r,j_r]$ with $e_r<j_r$, and there are $n-k$
unbounded line-solitons at $y\ll0$ which are labeled by pairs
$[i_r,g_r]$ with $i_r<g_r$.  We obtain a derangement in $S_n$
with $k$ excedances by setting $\pi(e_r) = j_r$ and
$\pi(g_r) = i_r$.  Moreover, $A$ must belong to
the cell $S_{\pi}^{tnn}$.
\end{theorem}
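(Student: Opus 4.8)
The plan is to reduce the statement to a convex-geometry computation in the limit $|y|\to\infty$ and then match the resulting data with the Grassmann necklace of $\mathcal{M}$ via Theorem \ref{necklace} and Lemma \ref{Postnikov-permutation}. Write $\mathcal{M}$ for the positroid of $A$, and recall from Section \ref{sec:solgraph} that $f_A(x,y,t_0)=\max_{J\in\mathcal{M}} L_J(x,y)$, where the affine function $x\mapsto L_J(x,y)$ has slope $\sum_{i\in J}\kappa_i$ and intercept $\ln(\Delta_J(A)K_J)+y\sum_{i\in J}\kappa_i^2+(\text{terms independent of }x,y)$. For fixed large $|y|$ the intercept is dominated by $y\sum_{i\in J}\kappa_i^2$, so the constants $\ln(\Delta_J(A)K_J)$ (and the dependence on $t_0,t_4,\dots,t_m$) only translate soliton positions and cannot change the combinatorial type of the plot. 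First I would associate to $\mathcal{M}$ the planar configuration $P_J:=\big(\sum_{i\in J}\kappa_i,\ \sum_{i\in J}\kappa_i^2\big)$ for $J\in\mathcal{M}$, and prove that the unbounded regions of $\CC_{t_0}(u_A)$ are exactly the vertices of the convex polygon $\mathrm{conv}\{P_J:J\in\mathcal{M}\}$: the regions meeting $y\gg0$ correspond to the upper hull (where $y\sum_{i\in J}\kappa_i^2$ is maximized for each slope) and those meeting $y\ll0$ to the lower hull, both running between the vertex of smallest slope and that of largest slope. The genericity of the $\kappa_i$ makes all vertices nondegenerate, so this description of the unbounded solitons is exact for any fixed $t_0$ once $|y|$ is large enough.

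Next I would read off the labels. Each edge of the polygon joins $P_J,P_{J'}$ with $J'=(J\setminus\{i\})\cup\{j\}$, which is consistent with Lemma \ref{separating} and realizes a line-soliton of type $[i,j]$, of slope $\kappa_i+\kappa_j$ by Remark \ref{slope}. Along either hull the slope $\sum_{i\in J}\kappa_i$ increases from left to right, so each edge swaps a smaller index out for a larger index in; this prompts, on the upper hull, the tentative assignment $\pi(\text{smaller})=\text{larger}$ (the excedances $\pi(e_r)=j_r$), and on the lower hull the assignment $\pi(\text{larger})=\text{smaller}$ (the nonexcedances $\pi(g_r)=i_r$). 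Rather than verify directly that this recipe defines a well-defined derangement with the correct excedance count, I would obtain well-definedness, the derangement property, and the counts $k$ and $n-k$ all at once by identifying the recipe with the derangement already attached to the cell.

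The hard part will be this final identification of $\pi$ with $\pi(\mathcal{I}(\mathcal{M}))$, which is what yields ``$A\in S_\pi^{tnn}$.'' The natural anchor is the leftmost vertex of the polygon: it is the basis minimizing $\sum_{i\in J}\kappa_i$, which, because the $\kappa_i$ are increasing, is the output of the matroid greedy algorithm and hence the lexicographically minimal element of $\mathcal{M}$, namely $I_1$ of Theorem \ref{necklace} (equivalently, the pivot set of the reduced row-echelon matrix). By Lemma \ref{Postnikov-permutation} the excedance positions of $\pi(\mathcal{I}(\mathcal{M}))$ are exactly $I_1$, so I would first show that the smaller endpoints of the upper edges recover $I_1$, and then match each individual swap to a necklace step using the shifted lexicographic characterization of $I_r$. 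A subtlety to handle carefully is that the cyclic sequence of unbounded regions is \emph{not} literally the Grassmann necklace: for $\pi=(2,1,4,3)$ in $Gr_{2,4}$ one computes $I_3=I_1$, yet the four unbounded regions $13,23,24,14$ are distinct, so the matching must be performed at the level of the excedance/nonexcedance \emph{pairs} $\{e_r,j_r\}$ and $\{i_r,g_r\}$ rather than the region sequence. Once $\pi=\pi(\mathcal{I}(\mathcal{M}))$ is established, the bijection of Lemma \ref{Postnikov-permutation}, together with the fact that $\mathcal{I}(\mathcal{M})$ determines $\mathcal{M}$, forces $A\in S_\pi^{tnn}$, and the irreducibility hypotheses guarantee no fixed points. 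I expect this combinatorial step — translating the edge sequence of $\mathrm{conv}\{P_J\}$ into the cell's derangement — to be the main obstacle.
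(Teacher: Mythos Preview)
Your convex-hull description of the unbounded regions is correct and is in fact the right geometric picture: the unbounded regions of $\CC_{t_0}(u_A)$ are in bijection with the vertices of $\mathrm{conv}\{P_J:J\in\mathcal{M}\}$, and the upper/lower hull edges give the line-solitons at $y\gg0$ and $y\ll0$. Your identification of the leftmost vertex with $I_1$ via the matroid greedy algorithm is also fine.

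The gap is exactly where you locate it, and it is real. You propose to match each hull edge to a ``necklace step,'' but your own example $\pi=(2,1,4,3)$ shows why this cannot be done at the level of region labels: the sequence of hull vertices is \emph{not} the Grassmann necklace outside the TP Schubert case (this is precisely the content of the Remark following Theorem~\ref{necklace-soliton}). Saying that the matching ``must be performed at the level of the pairs $\{e_r,j_r\}$'' is a statement of what you need, not a mechanism for obtaining it. Concretely, from the convex hull you can read off which swaps occur, but nothing in your argument tells you that the set of ``smaller indices'' along the upper hull is exactly $I_1$, or that each swap $[e,j]$ satisfies $j=\pi(e)$ for the cell's derangement. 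The convex hull depends on the $\kappa_i$, while the derangement is purely matroidal; you have not supplied the translation.

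The paper closes this gap differently. Rather than working with the hull, it cites Biondini--Chakravarty (Proposition~\ref{rank-conditions}) for a \emph{rank} characterization of the unbounded solitons: $[e,j]$ appears at $y\gg0$ iff $\rank A(j,\dots,e-1)=\rank A(j+1,\dots,e)=\rank A(j,\dots,e)=\rank A(j+1,\dots,e-1)+1$. This is a matroid-theoretic statement independent of the $\kappa_i$, and it is exactly what one needs to compare with the necklace. The paper's own contribution (Proposition~\ref{perm-prop}) then shows directly that the greedy construction of $I_{i+1}$ from $I_i$ forces these same rank equalities, so the Biondini--Chakravarty pair $[e,j]$ coincides with the pair coming from the necklace step, hence $\pi^{!}=\pi(\mathcal{I}(\mathcal{M}))$. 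If you want to salvage your approach, you would need to extract from the convex-hull picture a $\kappa$-independent criterion for which swaps occur---and that criterion will be the rank condition. In other words, the missing ingredient in your argument is essentially a reproof of Proposition~\ref{rank-conditions}.
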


The first part of this theorem follows from work of
Biondini and Chakravarty 
\cite[Lemma 3.4 and Theorem 3.6]{BC06} (see Proposition \ref{rank-conditions} below) and  
Chakravarty and Kodama
\cite[Prop. 2.6 and 2.9]{CK1}, \cite[Theorem 5]{CK3} (see 
Theorem \ref{soliton-permutation}
below).  In particular, Chakravarty and Kodama
had already associated a derangement $\pi$ to $A$, but it was not 
clear how this $\pi$ was related to the derangement indexing the 
cell containing $A$.
Our contribution is a proof that the derangement $\pi$ is precisely
the derangement labeling the cell $S_{\pi}^{tnn}$ that
$A$ belongs to (see Proposition \ref{perm-prop})\footnote{S. Chakravarty informed us that he also
proved an equivalent proposition.}.
This fact is the first step towards establishing that various other
combinatorial objects in bijection with positroid cells
(Grassmann necklaces, plabic graphs) carry useful information about the
corresponding soliton solutions.

Given a  matrix $A$ with $n$ columns,
let $A(k,\dots,\ell)$ be the submatrix of $A$ obtained
from columns $k, k+1,\dots, \ell-1, \ell$, where the columns
are listed in the circular order
$k, k+1,\dots,n-1, n, 1, 2, \dots,k-1$.
\begin{proposition}\cite[Lemma 3.4]{BC06}\label{rank-conditions}
Let $A$ be a $k \times n$ matrix representing an element in 
an irreducible positroid cell in $(Gr_{k,n})_{\geq 0}$, and consider the contour
plot $\CC(u_A, \t_0)$ for any time $\t_0$.
Then there are $n-k$ unbounded line-solitons at $y\ll0$
and $k$ unbounded line-solitons at $y\gg0$:

There is an unbounded line-soliton of $\CC(u_A,\t_0)$ at $y\ll0$
labeled $[i,g]$ with $i<g$
if and only if
\begin{equation}\label{eq:rank}
\rank A(i,\dots,g-1) = \rank A(i+1,\dots,g) = \rank A(i,\dots,g) = 
\rank A(i+1,\dots,g-1) + 1.
\end{equation}
Moreoever, $g$ is a non-pivot column of $A$.

And there is an unbounded line-soliton of $\CC(u_A,\t_0)$ at $y\gg0$
labeled $[e,j]$ with $e<j$
if and only if
\begin{equation}\label{eq:rank2}
\rank A(j,\dots,e-1) = \rank A(j+1,\dots,e) = \rank A(j,\dots,e) = 
\rank A(j+1,\dots,e-1) + 1.
\end{equation}
Moreoever, $e$ is a pivot column of $A$.
\end{proposition}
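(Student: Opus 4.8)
The plan is to recast the problem as a piece of two-dimensional convex geometry and then read the rank conditions off the matroid of $A$. Fold the fixed time $t$ (and the frozen variables $t_4,\dots,t_m$) into the constants and write $f_A(x,y)=\max_{J\in\mathcal{M}}\Phi_J(x,y)$, where $\Phi_J(x,y)=\ln(\Delta_J(A)K_J)+x\sum_{i\in J}\kappa_i+y\sum_{i\in J}\kappa_i^2$ is the affine function attached to the exponential $\Delta_J(A)E_J$ of \eqref{tau}. Each $\Phi_J$ has gradient $v_J:=\bigl(\sum_{i\in J}\kappa_i,\ \sum_{i\in J}\kappa_i^2\bigr)$, the sum over $i\in J$ of the points $(\kappa_i,\kappa_i^2)$ lying on a convex parabola. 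A region of $\CC_{t_0}(u_A)$ labelled $J$ is the locus where $\Phi_J$ is the unique maximum; by Lemma~\ref{separating} two adjacent regions have index sets differing in one element, so each line-soliton has a type $[i,j]$, its edge is the balance locus $\Phi_{M\cup\{i\}}=\Phi_{M\cup\{j\}}$, and (as in Remark~\ref{slope}) it is perpendicular to $v_{M\cup\{j\}}-v_{M\cup\{i\}}\propto(1,\kappa_i+\kappa_j)$.

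First I would identify the unbounded solitons. Putting $(x,y)=R\omega$ and letting $R\to\infty$ shows that the region dominating in direction $\omega$ is the one whose gradient $v_J$ is extremal in the direction $\omega$; hence the unbounded rays of $\CC_{t_0}(u_A)$ are in bijection with the edges of the convex hull $P:=\mathrm{conv}\{v_J:J\in\mathcal{M}\}$, each ray pointing along the outward normal of its edge. Edges on the upper hull (normal pointing up) carry the solitons escaping to $y\gg0$, those on the lower hull the solitons escaping to $y\ll0$. The ordering $\kappa_1<\dots<\kappa_n$ and the convexity of the parabola now force the shape of these families: a lower-hull vertex minimizes $\sum_{i\in J}(\kappa_i-c)^2$ for some center $c$, i.e.\ it selects the $k$ indices with $\kappa_i$ \emph{closest} to $c$; as $c$ sweeps $\R$ this window slides monotonically from $\{n-k+1,\dots,n\}$ to $\{1,\dots,k\}$ through $n-k$ unit slides, each swapping out a right endpoint $g$ and swapping in a new left index $i<g$, giving a $[i,g]$ soliton. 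Dually the upper hull selects the $k$ indices \emph{farthest} from $c$, i.e.\ co-intervals $\{j,\dots,n,1,\dots,e\}$ wrapping through $n$ and $1$, yielding $k$ solitons of type $[e,j]$ with $e<j$. For the top cell this establishes the counts $n-k$ and $k$ and the inequalities directly; for a general irreducible cell the counts reappear in the next step as the numbers of non-pivot and pivot columns.

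The main step is to match adjacency on $P$ with the rank conditions for an arbitrary irreducible positroid cell, where many $\Delta_J$ vanish and the idealized sliding windows are constrained to the matroid $\mathcal{M}$. Fix the $y\ll0$ case and a type $[i,g]$ with $i<g$; by the above, it occurs as an unbounded bottom soliton exactly when there is a $(k-1)$-set $M$ disjoint from $\{i,g\}$ with $\Delta_{M\cup\{i\}}(A)\neq0$ and $\Delta_{M\cup\{g\}}(A)\neq0$ whose gradients are \emph{consecutive} vertices of the lower hull of $P$. Writing $r=\rank A(i+1,\dots,g-1)$, I would interpret the three equalities of \eqref{eq:rank} as follows: $\rank A(i,\dots,g-1)=r+1$ says column $i$ is independent of the interior block, and $\rank A(i+1,\dots,g)=r+1$ says the same for column $g$ (so each of $i,g$ can play the role of the distinguishing index); while $\rank A(i,\dots,g)=r+1$ says that modulo the interior block columns $i$ and $g$ are parallel. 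This last collapse is precisely the resonance that lets $\Delta_{M\cup\{i\}}E_{M\cup\{i\}}$ and $\Delta_{M\cup\{g\}}E_{M\cup\{g\}}$ balance along a single hull edge, and since it also reads $\rank A(i,\dots,g)=\rank A(i,\dots,g-1)$ it places column $g$ in the span of columns $i,\dots,g-1$, forcing $g$ to be a non-pivot. The $y\gg0$ case is dual: the upper hull involves the wrap-around blocks, so the relevant window is the circular arc $A(j,\dots,e)$ through $n$ and $1$, and the identical argument produces \eqref{eq:rank2} with $e$ forced to be a pivot.

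The hardest part will be the sufficiency direction together with the ``no overtaking'' claim. Given the equalities \eqref{eq:rank}, one must actually produce a single $(k-1)$-set $M$ that works simultaneously for $i$ and for $g$, and verify that $v_{M\cup\{i\}}$ and $v_{M\cup\{g\}}$ are genuinely \emph{consecutive} vertices of the lower hull, so that no third term $\Delta_J E_J$ dominates between them along the soliton ray. I expect to handle this with an exchange/Laplace argument: the rank $r$ lets me choose a basis inside the appropriate block, the two ``rank raised by one'' conditions keep $M\cup\{i\}$ and $M\cup\{g\}$ independent, and the collapse $\rank A(i,\dots,g)=\rank A(i,\dots,g-1)$ is exactly the two-term Pl\"ucker relation that makes these two exponentials tie along an edge of $P$. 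Genericity of the $\kappa_i$ (all relevant $v_J$ in convex position, ties isolated) then excludes intermediate dominant terms and pins down the dominant chain in a general cell, which is the one place where the clean interval picture of the top cell genuinely breaks and must be replaced by the matroid-controlled analysis above.
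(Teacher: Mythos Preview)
The paper does not supply its own proof of this proposition: it is quoted verbatim from Biondini--Chakravarty \cite[Lemma~3.4 and Theorem~3.6]{BC06} and used as a black box in the proof of Proposition~\ref{perm-prop}. So there is nothing here to compare your argument against.

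On the substance of your sketch: the convex-hull reformulation is correct and is a clean way to see the unbounded solitons --- identifying edges of $P=\mathrm{conv}\{v_J:J\in\mathcal{M}\}$ with asymptotic rays, and splitting them into upper and lower hulls according to the sign of the $y$-component of the outward normal, is exactly right. Your reading of the three rank equalities (column $i$ independent of the interior block, column $g$ independent of the interior block, and $i,g$ parallel modulo the interior block) is also correct and is essentially the content of the argument in \cite{BC06}. Where your proposal is thin is precisely where you flag it: the sufficiency direction for a general irreducible cell. You need to \emph{produce} a common $(k-1)$-set $M$ with both $M\cup\{i\}$ and $M\cup\{g\}$ bases of the matroid, and then show these two gradients are \emph{consecutive} on the lower hull, i.e.\ that no third $J\in\mathcal{M}$ has $v_J$ strictly below the segment $[v_{M\cup\{i\}},v_{M\cup\{g\}}]$. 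Your appeal to a ``two-term Pl\"ucker relation'' does not do this: that relation ties the two terms along a line, but it does not by itself exclude a third basis from dominating on that line. In \cite{BC06} this step is handled by a direct analysis of which minors can survive as $y\to\pm\infty$ along lines of fixed slope, using the row-reduced echelon structure of $A$ to control the competing terms; your hull picture would need an analogous domination argument, and as written it is not yet there. The counts $k$ and $n-k$ for a general irreducible cell are likewise asserted rather than derived in your sketch; they follow once one shows that the lower-hull vertices are exactly the $n-k+1$ ``interval-like'' bases determined by the positroid, which again requires the echelon/matroid analysis you defer.
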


\begin{theorem}\cite[Prop. 2.6 and 2.9]{CK1}\cite[Theorem 5]{CK3}\label{soliton-permutation}
Consider an irreducible positroid cell $S_{\mathcal M}^{tnn}$
in $(Gr_{k,n})_{\geq 0}$, and 
let $A$ be a full rank matrix representing a point in that cell.
Use the notation of Proposition \ref{rank-conditions}.
Define $\pi^{!}:=\pi^{!}(\mathcal M)$ by setting
$\pi^{!}(e):=j$ and $\pi^{!}(g):=(i)$ for each pivot $e$ and
non-pivot $g$.  Then
$\pi^{!}$ is a derangement on $n$ letters with $k$ weak excedances.
\end{theorem}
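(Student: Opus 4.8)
The plan is to read the map $\pi^{!}$ straight out of Proposition \ref{rank-conditions} and then verify the required properties one at a time: well-definedness and fixed-point-freeness, the weak excedance count, and finally bijectivity. First I would invoke Proposition \ref{rank-conditions}: for each pivot column $e$ there is exactly one unbounded line-soliton $[e,j]$ at $y\gg0$ with $e<j$, and for each non-pivot column $g$ there is exactly one unbounded line-soliton $[i,g]$ at $y\ll0$ with $i<g$. Since $A$ has $k$ pivot columns and $n-k$ non-pivot columns, this assigns to every index in $[n]$ exactly one value, so $\pi^{!}\colon[n]\to[n]$ is a well-defined self-map.

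The fixed-point-free property and the excedance count are then immediate, independently of bijectivity. A pivot $e$ satisfies $\pi^{!}(e)=j>e$, so $e$ is a strict excedance position, while a non-pivot $g$ satisfies $\pi^{!}(g)=i<g$, so $g$ is a strict non-excedance position; in particular $\pi^{!}(c)\neq c$ for every $c$. Thus the excedance positions are exactly the $k$ pivot columns, and since $\pi^{!}$ has no fixed points its weak excedances coincide with its strict excedances, giving precisely $k$ of them.

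Hence the real content of the theorem is that $\pi^{!}$ is a \emph{bijection}. As $\pi^{!}$ is already a well-defined self-map of the finite set $[n]$, it suffices to prove injectivity, i.e.\ that the $n$ image endpoints (the values $j$ coming from pivots and the values $i$ coming from non-pivots) are pairwise distinct. I would establish this by tracking the index-set labels of the unbounded regions. Going once counterclockwise around the boundary of a large disk, the $n$ unbounded regions carry index sets $J_1,\dots,J_n\in\mathcal M$ that return to $J_1$, and by Lemma \ref{separating} crossing the soliton of type $[a,b]$ between two consecutive unbounded regions replaces one of $a,b$ by the other in the running index set. Consequently each index $c$ enters and leaves this set equally often, and the resonance (trivalence/balancing) structure of Section \ref{resonance} should constrain these entrances and exits so that each $c$ occurs exactly once as an image endpoint, forcing injectivity. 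Alternatively one can argue injectivity purely algebraically, deriving a contradiction from the nested cyclic rank equalities \eqref{eq:rank} and \eqref{eq:rank2} for two solitons sharing an image endpoint.

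The main obstacle is the cyclic bookkeeping inherent in both approaches. The rank conditions are phrased for cyclic intervals $\rank A(a,\dots,b)$, so comparing the intervals attached to two distinct solitons requires careful handling of the wrap-around, and in the topological approach one must check that the entering/leaving record around the disk lines up correctly with the pivot/non-pivot dichotomy so that no two image endpoints collide. Once the distinctness of the image endpoints is established, $\pi^{!}$ is a permutation, and combined with the previous paragraph it is a derangement with exactly $k$ weak excedances, proving the theorem.
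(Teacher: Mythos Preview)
The paper does not prove this theorem: it is stated with citations to \cite[Prop.~2.6 and 2.9]{CK1} and \cite[Theorem~5]{CK3} and is treated as a known result from the literature, so there is no ``paper's own proof'' to compare against. The paper's original contribution in this section is the subsequent Proposition~\ref{perm-prop}, which identifies $\pi^{!}$ with the derangement $\pi$ indexing the positroid cell.

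Regarding your sketch on its own merits: your reduction is sound in outline, and you correctly isolate injectivity as the substantive step. However, there are two genuine gaps. First, well-definedness is not quite as immediate from Proposition~\ref{rank-conditions} as you assert: that proposition tells you there are $k$ solitons at $y\gg0$ with first index a pivot and $n-k$ at $y\ll0$ with second index a non-pivot, but you still need to argue that the $k$ pivot indices appearing are pairwise distinct (and likewise for the non-pivots) before pigeonhole gives exactly one soliton per pivot. This follows from the rank characterization (for fixed $e$ the conditions in \eqref{eq:rank2} pin down a unique $j$), but you should say so. Second, and more seriously, your injectivity argument is only a plan: the topological route via region labels and Lemma~\ref{separating} would require you to prove that each index enters and exits the running index set exactly once around the boundary, and your appeal to ``the resonance structure of Section~\ref{resonance}'' does not supply this; the algebraic route via colliding rank intervals is likewise not carried out. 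Since you yourself flag injectivity as ``the real content,'' the proposal as written is a strategy rather than a proof.
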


\begin{proposition}\label{perm-prop}
Consider an irreducible 
positroid cell $S_{\mathcal M}^{tnn}=S_{\pi}^{tnn}$,
where $\pi = 
\pi({\mathcal I}(\mathcal M))$.
Then 
$\pi^{!}({\mathcal M})=\pi.$ 
\end{proposition}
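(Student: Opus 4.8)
The plan is to express both permutations $\pi = \pi(\mathcal I(\mathcal{M}))$ and $\pi^{!} = \pi^{!}(\mathcal{M})$ purely in terms of the rank function of a matrix $A$ representing the cell, and then to check that the two resulting rank criteria coincide. Throughout I would use that $A$ is irreducible, so the underlying matroid has no loops and no coloops (either would force a fixed point, whereas $\pi$ is a derangement). First I would recall that the Grassmann necklace element $I_r$ of Theorem~\ref{necklace}, being the lexicographically minimal basis in the cyclic order $r < r+1 < \dots < r-1$, is exactly the greedy basis obtained by scanning the columns $r, r+1, \dots$ in that order and keeping a column precisely when it raises the rank. Concretely, $a \in I_r$ iff $\rank A(r,\dots,a) = \rank A(r,\dots,a-1)+1$. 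In particular $r \in I_r$ for every $r$ (no zero columns), and $I_1$ is exactly the set of pivot columns of the reduced row echelon form of $A$.

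Next I would analyze a single necklace step $I_i \to I_{i+1}$, which by definition produces $\pi$ via $\pi(j) = i$ when $I_{i+1} = (I_i \setminus \{i\}) \cup \{j\}$. Introduce the deficit
\[
\delta(a) := \rank A(i,\dots,a) - \rank A(i+1,\dots,a) \in \{0,1\},
\]
which records whether column $i$ is independent from the cyclic block $A(i+1,\dots,a)$. Since column $i$ is nonzero, $\delta$ starts at $1$; since $i$ is not a coloop, $\delta$ equals $0$ by the time $a$ reaches $i-1$. As $\delta$ is nonincreasing and integer-valued, it drops from $1$ to $0$ at a unique index $j$. Comparing the greedy membership criteria for $I_i \setminus \{i\}$ and for $I_{i+1}$ term by term, the two agree at every $a \neq j$ and disagree exactly at $a = j$; this identifies $j$ as the unique entering element, so $\pi(j) = i$ becomes equivalent to the pair of conditions
\[
\rank A(i,\dots,j-1) = \rank A(i+1,\dots,j-1)+1, \qquad \rank A(i,\dots,j) = \rank A(i+1,\dots,j).
\]

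Finally I would show these two equalities are equivalent to the fourfold rank equalities appearing in \eqref{eq:rank} and \eqref{eq:rank2}. Writing $P_4 = \rank A(i+1,\dots,j-1)$, the first condition says column $i$ is independent from $A(i+1,\dots,j-1)$, while the second forces $\rank A(i,\dots,j) = \rank A(i+1,\dots,j)$; since adjoining a single column changes the rank by at most one, a short squeeze using $P_4 \le \rank A(i+1,\dots,j) \le P_4+1$ together with $\rank A(i,\dots,j) \le P_4 + 2$ forces all four ranks in the relevant display to equal $P_4 + 1$, and conversely. It then remains to match cases: by Lemma~\ref{Postnikov-permutation} the excedances of $\pi$ lie in the positions $I_1$, which are precisely the pivots, so a necklace pair $(j,i)$ satisfies $i>j$ exactly when $j$ is a pivot. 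Hence the excedance transitions reproduce \eqref{eq:rank2} (the defining relation of $\pi^{!}$ on a pivot $e$, via Proposition~\ref{rank-conditions} and Theorem~\ref{soliton-permutation}), and the non-excedance transitions reproduce \eqref{eq:rank} (the defining relation of $\pi^{!}$ on a non-pivot $g$), so $\pi = \pi^{!}$ as functions on all of $[n]$. I expect the main obstacle to be the middle step --- correctly extracting the entering element $j$ from the deficit function and handling the cyclic intervals and the no-coloop hypothesis cleanly --- rather than the purely arithmetic rank squeeze of the last step.
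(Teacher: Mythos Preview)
Your proposal is correct and follows essentially the same approach as the paper: both arguments identify the entering element of the necklace step $I_i \to I_{i+1}$ via rank conditions on the cyclic blocks $A(i,\dots,a)$ and $A(i+1,\dots,a)$, then squeeze to obtain the fourfold equality matching Proposition~\ref{rank-conditions}. Your deficit function $\delta(a) = \rank A(i,\dots,a) - \rank A(i+1,\dots,a)$ is a clean way to package what the paper does with its Claims 0 and 1, and your explicit pivot/non-pivot case split makes the final identification with \eqref{eq:rank} versus \eqref{eq:rank2} slightly more transparent than the paper's one-line ``either part'' remark.
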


\begin{proof}
Consider a $k \times n$ matrix $A$ representing an element in 
$S_{\mathcal M}^{tnn}$.  Then all maximal minors of $A$ 
are non-negative, and the column indices of the non-zero minors
are the subsets in $\mathcal M$.
Let us first consider the derangement $\pi=\pi({\mathcal I}(\mathcal M))$.
Let $I_i=\{i=x_1, x_2,\dots,x_{k}\}$ be the lexicographically 
minimal minor in $\mathcal M$
with respect to the total order $i < i+1 < \dots < n < 1 < \dots < i-1$.
Then $I_{i+1}=(I_i\setminus \{i\}) \cup \{j\}$ is obtained from 
$I_i\setminus \{i\}$ by considering the column indices
in the order $i+1, i+2, \dots, n, 1, 2, \dots, i$ and greedily choosing
the earliest index $h$ such that the columns of $A$
indexed by the set $\{x_2,\dots,x_{k}\} \cup \{h\}$
are linearly independent.  
Then $\pi(h)$ is defined to be $i$.

Now consider the ranks of various submatrices of $A$ obtained
by selecting certain columns.

{\it Claim 0.}  $\rank A(i+1,\dots, h-1,h)=
  1+\rank A(i+1,\dots, h-1)$.  This claim follows from 
the way in which we chose $h$ above.

{\it Claim 1.}  $\rank A(i,i+1,\dots,h) = \rank A(i,i+1,\dots,h-1)$.
To prove this claim, we consider two cases.  Either 
$x_1 <_i h <_i x_k$ or $x_1 <_i x_k <_i h$, where $<_i$
is the total order $i<i+1<\dots<n<1<\dots<i-1$.
In the first case, the claim follows, because 
$h$ is not contained in the set $I_i$ but {\it is} contained
in $I_{i+1}$.  In the second case,
$\rank A(i,i+1,i+2,\dots,x_k) = k$, and the index set
$\{i,i+1,\dots,x_k\}$ is a strict subset of 
$\{i,i+1,\dots,h\}$, so 
$\rank A(i,\dots,h) = \rank A(i,\dots,h-1)=k$.

Now let $R = \rank A(i+1,i+2,\dots,h-1)$.  
By Claim 0, $\rank A(i+1,\dots,h) = R+1$.
Therefore we have
$\rank A(i,\dots,h) \geq \rank A(i+1,\dots,h) = R+1$.
By Claim 1, $\rank A(i,\dots,h) = \rank A(i,\dots,h-1)$,
but $\rank A(i,\dots,h-1) \leq R+1$, so $\rank A(i,\dots,h) \leq R+1$.
We now have $\rank A(i,\dots,h) = R+1$.
But also $\rank A(i,\dots,h-1)=\rank A(i,\dots,h)=R+1$.

We have just shown that 
$\rank A(i,i+1,\dots,h-1)= \rank A(i+1,\dots,h-1,h)=\rank A(i,\dots,h)=
  \rank A(i+1,\dots,h-1) +1$.
Comparing these rank conditions to 
either part of 
Proposition \ref{rank-conditions}, and using
Theorem \ref{soliton-permutation}, 
we see that $\pi^{!}(h)=i$.
This shows that $\pi^{!}$ and $\pi$ coincide.
\end{proof}

\begin{remark}
Proposition \ref{perm-prop} is closely related to results on cyclic rank
matrices from \cite{KLS}.
\end{remark}

We now give a concrete algorithm for writing down the asymptotics 
of the soliton solutions of the KP equation.

\begin{figure}
\begin{center}
\includegraphics[height=4.5cm]{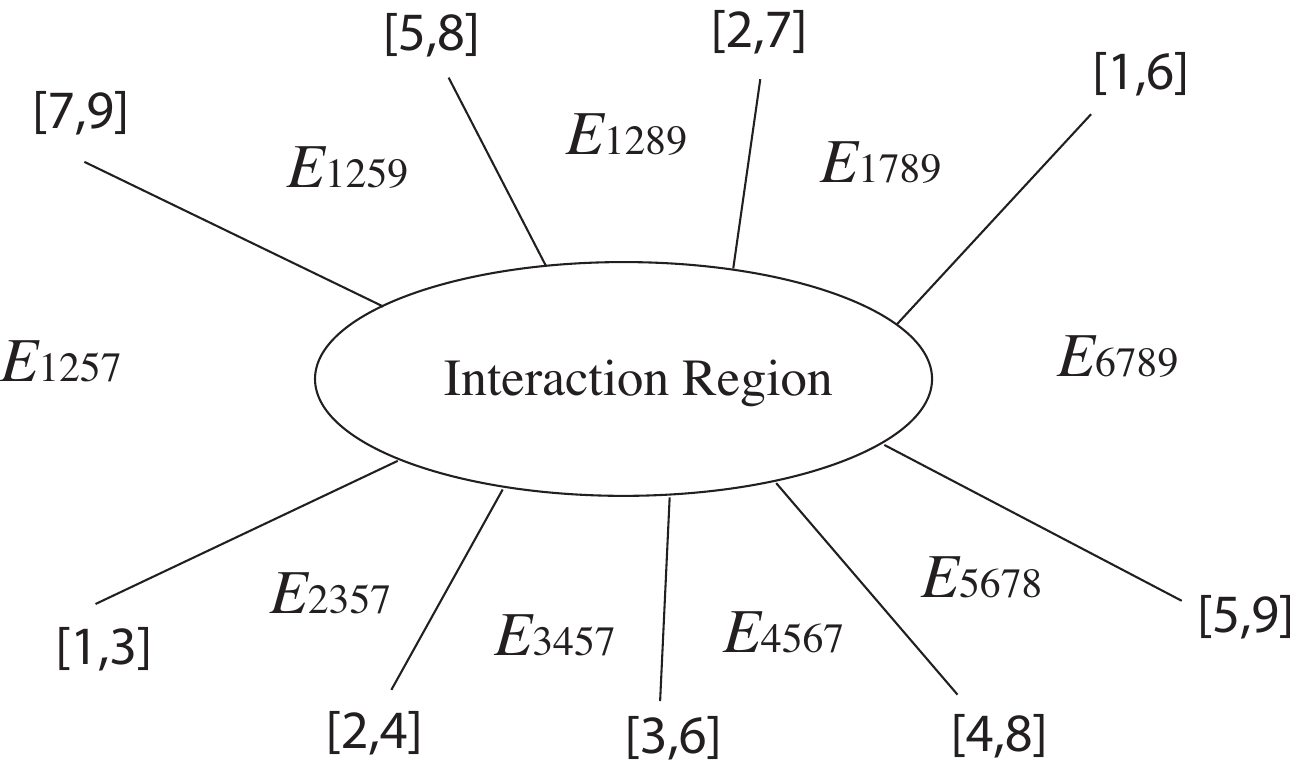}
\end{center}
\caption{Asymptotic line-solitons for $\pi=(6,7,1,2,8,3,9,4,5)$.
Each $E_{ijkl}$ shows the dominant exponential in this region.
\label{671283945}}
\end{figure}

\begin{theorem}\label{algo}
Fix real generic parameters $\kappa_1<\dots < \kappa_n$.  
Let $A$ be a point in an irreducible 
positroid cell
$\SSS_{\pi}^{tnn}$ in $(Gr_{k,n})_{\geq 0}$.
(So $\pi$ has $k$ excedances.)  
For any $\t_0$, the asymptotic behavior of the contour plot
 $\mathcal{C}(u_A,\t_0)$
--  its unbounded line-solitons 
and the dominant exponentials in its unbounded regions -- can  be
read off from $\pi$ as follows.
\begin{itemize}
\item For $y \gg0$, there is an unbounded line-soliton which 
we label $[i,\pi(i)]$ for each
excedance $\pi(i)>i$.  
From left to right,
list these solitons in decreasing order 
of the quantity
$\kappa_i + \kappa_{\pi(i)}$.
\item For $y\ll 0$, there is an unbounded line-soliton  which we label
$[\pi(j),j]$ for each nonexcedance $\pi(j)<j$.  From left to right,
list these solitons in increasing order 
$\kappa_j + \kappa_{\pi(j)}$.
\item Label the unbounded region for $x\ll 0$ with 
the exponential $E_{i_1,\dots,i_k}$,
where $i_1,\dots,i_k$ are the excedance positions of $\pi$.
\item Use Lemma \ref{separating} to label the remaining
unbounded regions of the contour plot.
\end{itemize}
\end{theorem}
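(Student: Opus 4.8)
The plan is to bootstrap off the existence and labeling statements already available, and then supply the two genuinely new ingredients: the left-to-right ordering of the unbounded solitons and the explicit labels of the unbounded regions. First I would invoke Theorem~\ref{perm-asymp} together with Proposition~\ref{perm-prop} (which identifies $\pi^{!}$ with $\pi$): these guarantee that $\mathcal{C}_{t_0}(u_A)$ has exactly $k$ unbounded line-solitons at $y\gg0$, one of type $[i,\pi(i)]$ for each excedance $\pi(i)>i$, and exactly $n-k$ unbounded line-solitons at $y\ll0$, one of type $[\pi(j),j]$ for each nonexcedance $\pi(j)<j$. This disposes of the labeling assertions in the first two bullets and shows that the count of unbounded solitons is correct, leaving only their spatial arrangement to be determined.

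Next I would pin down the ordering using the explicit soliton equation \eqref{eq-soliton}. Dropping the bounded $O(1)$ terms, the $[i,j]$-soliton satisfies $x=-(\kappa_i+\kappa_j)\,y+O(1)$, so $x/y\to-(\kappa_i+\kappa_j)$ as $|y|\to\infty$; equivalently, by Remark~\ref{slope}, the soliton has slope $\kappa_i+\kappa_j$. For $y\gg0$ a larger value of $\kappa_i+\kappa_j$ forces a more negative $x$, hence a position further to the left, so reading left to right the solitons appear in decreasing order of $\kappa_i+\kappa_j$; for $y\ll0$ the sign of $y$ reverses and the order becomes increasing. The genericity hypothesis (distinctness of the pairwise sums $\kappa_i+\kappa_j$) makes all these slopes distinct, so for $|y|$ large enough the solitons no longer recross and the stated order is well defined and $t_0$-independent.

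For the unbounded regions I would argue directly from the definition of the dominant exponential. In the region $x\ll0$ the linear function $f_A$ is maximized by the index set $J\in\mathcal{M}$ minimizing $\sum_{i\in J}\kappa_i$, since the coefficient of $x$ is $\sum_{i\in J}\kappa_i$ and $x\to-\infty$. Because $\kappa_1<\dots<\kappa_n$, the greedy algorithm for a minimum-weight basis of the matroid $\mathcal{M}$ (scanning indices in increasing order) produces exactly the lexicographically minimal basis, which is the necklace element $I_1$; by Lemma~\ref{Postnikov-permutation} this is precisely the set of excedance positions of $\pi$, giving the third bullet, with uniqueness of the minimizer ensured by genericity. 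The fourth bullet then follows formally: starting from the region labeled $I_1=\{i_1,\dots,i_k\}$ at $x\ll0$ and sweeping rightward along $y\gg0$ and along $y\ll0$, Lemma~\ref{separating} dictates that crossing an $[i,j]$-soliton swaps the single index $i\leftrightarrow j$ in the region label, which determines every remaining unbounded region.

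The main obstacle I anticipate is not any single computation but the asymptotic bookkeeping that makes the picture rigorous: I must check that for $|y|\gg0$ the unbounded solitons have genuinely stabilized into the slope-sorted order (no further crossings, and no spurious bounded segments reaching to infinity), and, more delicately, that the region labels produced by propagating Lemma~\ref{separating} along the top boundary and along the bottom boundary are mutually consistent and actually coincide with the dominant exponentials of $f_A$. Verifying this consistency amounts to confirming that the sequence of single-index swaps along each boundary is compatible with the rank characterization of Proposition~\ref{rank-conditions} and Theorem~\ref{soliton-permutation}; the slope computation and the matroid-greedy argument are short, so this compatibility check is where the real work lies.
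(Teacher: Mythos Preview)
Your proposal is correct and takes essentially the same approach as the paper: cite the prior results (the paper invokes \cite{CK3} directly, you invoke the equivalent Theorem~\ref{perm-asymp} and Proposition~\ref{perm-prop}) to identify the unbounded line-solitons with excedances and nonexcedances of $\pi$, then read off the left-to-right ordering from the slope formula of Remark~\ref{slope}. You actually supply more than the paper does---the matroid-greedy argument for the $x\ll0$ region label and the Lemma~\ref{separating} propagation are left implicit in the paper's proof---and the asymptotic consistency worries you flag in your final paragraph are absorbed into the cited results of \cite{BC06,CK3} and need not be reverified.
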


\begin{proof}
The fact that the set of unbounded line-solitons are specified by 
the derangement $\pi$ 
comes from \cite[Theorem 5, page 125]{CK3} and 
\cite[Corollary 1, page 124]{CK3}.  It then follows 
from Remark \ref{slope} that for sufficiently
large $y$ (respectively, sufficiently small $y$), these solitons are ordered 
from left to right by decreasing (respectively, increasing)
order of their slopes $\kappa_i+\kappa_j$.  
\end{proof}

\begin{example}\label{ex4}
Consider the positroid cell corresponding to 
$(6,7,1,2,8,3,9,4,5)\in S_9$.  The algorithm of Theorem 
\ref{algo} gives rise to the picture in Figure \ref{671283945}.
Note that if one reads the dominant exponentials in counterclockwise order,
starting from the region at the left, then one recovers exactly
the Grassmann necklace from Examples \ref{ex1} and \ref{ex3}.
This correspondence will be generalized in 
Theorem \ref{necklace-soliton}.
\end{example}


\section{Grassmann necklaces and soliton asymptotics}\label{sec:necklace}

One particularly nice class of positroid cells is the 
{\it TP} or 
{\it totally positive Schubert cells}. A TP Schubert cell is 
a positroid cell $S_L^{tnn}$ which comes from a $\Le$-diagram $L$
such that all boxes of $L$ contain a $+$.  Note that 
the intersection of a usual Schubert cell with $\Grkn$ is a union
of positroid cells, of which the one with greatest dimension
is the TP Schubert cell.
When $A$ lies in a TP Schubert cell
$S_{\pi}^{tnn}$, 
we can make another link between the 
soliton solution $u_A(x,y,\t)$  and the combinatorics of 
$(Gr_{k,n})_{\geq 0}$.  Namely, 
the dominant exponentials labeling the unbounded regions of 
the contour plot $\CC(u_A,\ttt)$ form 
the Grassmann necklace associated to $S_{\pi}^{tnn}$.

It is easy to verify the following lemma.
\begin{lemma}\label{Schubert-derangement}
A positroid cell $S_{\pi}^{tnn} = S_{L}^{tnn}$ of $\Grkn$
is a TP Schubert cell if and only if  
the following
condition holds:\\
If 
$i_1<i_2<\dots<i_k$ 
and $j_1<j_2<\dots<j_{n-k}$ 
are the positions of the excedances
and nonexcedances, respectively, of $\pi$,
then $\pi(i_1)=n-k+1$, $\pi(i_2) = n-k+2$, \dots, $\pi(i_k) = n$ and 
$\pi(j_1)=1$, $\pi(j_2)=2$, \dots, $\pi(j_{n-k})=n-k$.
\end{lemma}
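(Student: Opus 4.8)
The plan is to prove the equivalence of the two bullet conditions with the geometric condition that $S_\pi^{tnn}$ is a TP Schubert cell, by showing all three are equivalent to a single explicit description of the matroid $\mathcal{M}$. I would proceed by first recalling that a Schubert cell in $Gr_{k,n}$ (with respect to the standard flag in the order $1<2<\dots<n$) is cut out by specifying a fixed pivot set $I = \{i_1 < \dots < i_k\}$; its intersection with $\Grkn$ is the unique TP Schubert cell whose matroid $\mathcal M$ consists of exactly those $J \in \binom{[n]}{k}$ that dominate $I$ coordinatewise (i.e. $J \geq I$ in the Gale/componentwise order). The key observation is that this matroid is the one whose reduced-row-echelon representatives are the matrices whose nonzero entries are entirely unconstrained to the right of each pivot and below no later pivot — precisely the matrices whose $\Le$-diagram has every box filled with a $+$.

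First I would establish the equivalence with the $\Le$-diagram condition. Using the bijection between $\Le$-diagrams and positroid cells, I would argue that filling every box with $+$ corresponds exactly to the positroid in which the set of nonzero Plücker coordinates is the order filter $\{J : J \geq I\}$ generated by the pivot set $I$ in componentwise order. Since having an unconstrained (all-$+$) filling means imposing no vanishing among the relevant minors beyond those forced by the shape $\lambda$, the matroid is the Schubert matroid on pivot set $I$, and this is exactly the statement that the cell is a TP Schubert cell. The $\Le$-property is automatic when every box is $+$ (there are no $0$'s to violate it), so the all-$+$ diagrams are exactly the $\Le$-diagrams of rectangular-staircase Schubert type, one for each choice of $\lambda \subseteq k \times (n-k)$, equivalently each pivot set.

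Next I would establish the equivalence with the second bullet, the explicit formula for $\pi$. I would apply Definition \ref{Le2permutation} to an all-$+$ $\Le$-diagram: since every box is an ``elbow'' and there are no ``crosses,'' each pipe turns at the first opportunity, so tracing the pipes is completely mechanical. The pipe entering at a vertical edge on the southeast border exits at the topmost horizontal edge of its column, and vice versa; carrying out this trace shows the excedance values are assigned to the nonpivot columns in increasing order $n-k+1, \dots, n$ and the nonexcedance values are assigned to the pivot columns in increasing order $1, \dots, n-k$, which is precisely the asserted formula $\pi(i_s) = n-k+s$ and $\pi(j_s) = s$. Conversely, any derangement of this form, when run backwards through the bijection, must produce an all-$+$ diagram, since any $0$ in the filling would reroute a pipe and destroy the monotone assignment.

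The main obstacle will be pinning down the pipe-tracing computation in Definition \ref{Le2permutation} cleanly enough to read off the monotone assignment, since the southeast-border labeling and the elbow/cross convention require care to match indices on both sides of the rectangle; the rank/matroid reasoning and the $\Le$-property triviality are routine. Indeed the statement is flagged as ``easy to verify,'' so I would keep the argument brief: the heart is simply observing that an all-$+$ filling makes every pipe turn immediately, forcing the order-preserving pairing of pivots with small values and nonpivots with large values, which is simultaneously the combinatorial shadow of the Schubert-matroid (order-filter) condition.
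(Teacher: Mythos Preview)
The paper does not actually prove this lemma: it is introduced with the phrase ``It is easy to verify the following lemma'' and no argument is given. So there is no paper proof to compare against, and your sketch is a perfectly reasonable way to carry out the verification.

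Two small points of care. First, your sentence ``the excedance values are assigned to the nonpivot columns in increasing order $n-k+1,\dots,n$'' has the roles reversed: the excedance \emph{positions} are the pivot columns $i_1<\dots<i_k$, and it is there that $\pi$ takes the values $n-k+1,\dots,n$; the nonpivots $j_1<\dots<j_{n-k}$ are the nonexcedance positions with values $1,\dots,n-k$. The formula you write afterward is correct, so this is only a wording slip, but it is exactly the kind of index mix-up you flagged as the main obstacle. Second, for the implication ``all-$+$ $\Rightarrow$ TP Schubert cell,'' the phrase ``imposing no vanishing among the relevant minors beyond those forced by the shape $\lambda$'' is a bit soft. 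A cleaner one-line argument: the positroid cell indexed by an all-$+$ diagram of shape $\lambda$ has dimension equal to the number of $+$'s, namely $|\lambda|$; this is also the dimension of the Schubert cell with pivot set $I$ determined by $\lambda$, and since the positroid cell is contained in (the nonnegative part of) that Schubert cell, the two must coincide.
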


We have the following result.
\begin{theorem}\label{necklace-soliton}
Let $A$ be an element of a TP Schubert cell $\S_{\pi}^{tnn}$, and 
consider the contour plot $\CC(u_A,\t_0)$ for an arbitrary time
$\t_0$.  Let the index sets of the dominant exponentials of 
the unbounded regions of $\CC(u_A,\t_0)$ be denoted $R_1, \dots, R_n$,
where $R_1$ labels the region at $x\ll 0$, and $R_2,\dots,R_n$
label the regions in the counterclockwise direction from $R_1$.
Then $(R_1,\dots, R_n)$ is a Grassmann necklace $\mathcal I$,
and $\pi(\mathcal I)= \pi$.
\end{theorem}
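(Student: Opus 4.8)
The plan is to reduce everything to the explicit asymptotic data supplied by Theorem \ref{algo}, and then to propagate the region labels around the boundary of the disk using Lemma \ref{separating}. Since the ordering of unbounded line-solitons by slope (Remark \ref{slope}) is independent of $t_0$, the entire unbounded structure is $t_0$-independent, so it suffices to argue for one arbitrary time. Throughout, let $\mathcal{I}(\mathcal{M})=(I_1,\dots,I_n)$ denote the Grassmann necklace of the cell, which satisfies $\pi(\mathcal{I}(\mathcal{M}))=\pi$; I will prove the stronger statement $R_q=I_q$ for all $q$, from which both assertions follow at once.

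First I would record the explicit solitons. By Lemma \ref{Schubert-derangement}, a point of the TP Schubert cell $\mathcal{S}_{\pi}^{tnn}$ has excedances $i_1<\dots<i_k$ with $\pi(i_s)=n-k+s$ and nonexcedances $j_1<\dots<j_{n-k}$ with $\pi(j_s)=s$. Feeding this into Theorem \ref{algo}: at $y\ll0$ the unbounded solitons have types $[s,j_s]$ for $s=1,\dots,n-k$, at $y\gg0$ they have types $[i_s,n-k+s]$ for $s=1,\dots,k$, and $R_1=\{i_1,\dots,i_k\}$. Because both index sequences increase and $\kappa_1<\dots<\kappa_n$, the slopes $\kappa_s+\kappa_{j_s}$ and $\kappa_{i_s}+\kappa_{n-k+s}$ are strictly increasing in $s$. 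Traversing the boundary counterclockwise from the region $R_1$ at $x\ll0$, one first meets the $y\ll0$ solitons (left to right, hence in increasing-slope order) and then the $y\gg0$ solitons (right to left, hence again in increasing-slope order). Reading the types in this order, the $q$-th soliton crossed has type $\{q,\pi^{-1}(q)\}$ for every $q$: for $q=s\le n-k$ this is $\{s,j_s\}$ with $j_s=\pi^{-1}(s)$, and for $q=n-k+s$ it is $\{i_s,n-k+s\}$ with $i_s=\pi^{-1}(q)$.

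With this in hand I would prove $R_q=I_q$ by induction on $q$. For the base case, Lemma \ref{Postnikov-permutation} gives that the excedance positions of $\pi=\pi(\mathcal{I}(\mathcal{M}))$ are exactly $I_1$, so $I_1=\{i_1,\dots,i_k\}=R_1$. For the inductive step, note that because the cell is irreducible every column of $A$ is nonzero, so $q$ lies in the lexicographically minimal basis $I_q$ (Theorem \ref{necklace}); that is, $q\in I_q$ for all $q$. Since $\pi(\mathcal{I}(\mathcal{M}))=\pi$, Lemma \ref{Postnikov-permutation} gives $I_{q+1}=(I_q\setminus\{q\})\cup\{\pi^{-1}(q)\}$, and as this set has $k$ elements with $q\in I_q$ we must have $\pi^{-1}(q)\notin I_q$. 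Assuming $R_q=I_q$, the region $R_q$ thus contains $q$ but not $\pi^{-1}(q)$; since the $q$-th soliton has type $\{q,\pi^{-1}(q)\}$, Lemma \ref{separating} forces the crossing to remove $q$ and insert $\pi^{-1}(q)$, yielding $R_{q+1}=(R_q\setminus\{q\})\cup\{\pi^{-1}(q)\}=I_{q+1}$.

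This shows $(R_1,\dots,R_n)=\mathcal{I}(\mathcal{M})$, so $\mathcal{I}:=(R_1,\dots,R_n)$ is a Grassmann necklace, and $\pi(\mathcal{I})=\pi(\mathcal{I}(\mathcal{M}))=\pi$, as claimed. The step I expect to require the most care is the geometric one in the second paragraph: pinning down, with the conventions of Remark \ref{slope}, that the counterclockwise traversal from $x\ll0$ really visits the $y\ll0$ solitons left to right and then the $y\gg0$ solitons right to left, so that the crossing order matches the index order $1,2,\dots,n$ rather than its reverse. Once this orientation bookkeeping is settled, the identification $R_q=I_q$ is essentially forced by Lemma \ref{separating} together with the facts $q\in I_q$ and $\pi^{-1}(q)\notin I_q$ coming from irreducibility and the derangement property.
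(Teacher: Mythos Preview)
Your proof is correct and shares the same skeleton as the paper's: use Lemma \ref{Schubert-derangement} to write down the soliton types, order them by slope via Theorem \ref{algo}, and then propagate region labels around the boundary using Lemma \ref{separating}. Both arguments hinge on the observation that the $q$-th unbounded soliton met counterclockwise from $R_1$ has type $\{q,\pi^{-1}(q)\}$; you state this crisply, while in the paper it is implicit in Figure \ref{contour1} and the recursion for $R_\ell$. The difference is in how the induction is organized. The paper proves directly, by an element-tracking argument split into the ranges $\ell \le n-k+1$ and $\ell > n-k+1$ (with an intermediate claim computing $R_{n-k+1}$ explicitly), that $\ell \in R_\ell$ for every $\ell$, and then invokes Remark \ref{invert} to identify $(R_1,\dots,R_n)$ with the necklace of $\pi$. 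You instead compare $R_q$ with the known necklace element $I_q$ at each step, importing the general facts $q \in I_q$ and $\pi^{-1}(q) \notin I_q$ (valid for any irreducible necklace, via Theorem \ref{necklace} and the derangement property) to force the swap direction and conclude $R_{q+1}=I_{q+1}$. Your organization is a bit more streamlined---no case split and no intermediate explicit computation---at the modest cost of invoking the necklace axioms rather than verifying $\ell\in R_\ell$ from first principles.
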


Theorem \ref{necklace-soliton} is illustrated in Example \ref{ex4}.  See also Figure \ref{671283945}.

\begin{remark}
Theorem \ref{necklace-soliton} does not hold if we replace ``TP
Schubert cell" by ``positroid cell."  For example, 
the Grassmann necklace
associated to the derangement $\pi=(4,3,1,2)$ is $(12, 23, 34, 24)$.
However, if $\kappa_1=0, \kappa_2=1, \kappa_3=1.5, \kappa_4=1.75$, then the 
corresponding sequence of dominant exponentials labeling the unbounded
regions of any contour plot coming from the cell $\S_{\pi}^{tnn}$ is 
$(12, 23, 34, 13)$.
\end{remark}

\begin{remark}\label{invert}
To recover a Grassmann necklace $\mathcal I=(I_1,\dots,I_n)$ from a derangement
$\pi\in S_n$
(inverting the procedure of Lemma \ref{Postnikov-permutation}), 
we do the following:
\begin{itemize}
\item Set $I_1 = \{i_1,\dots,i_k\}$, the positions of the excedances of $\pi$.
\item For each $r\geq 1$, set $I_{r+1} = (I_r \setminus \{r\}) \cup \{\pi^{-1}(r)\}$.
\end{itemize}
\end{remark}

We now prove  Theorem \ref{necklace-soliton}.

\begin{proof}
Let $i_1<\dots<i_k$ be the positions of the excedances of $\pi$,
and let $j_1 < \dots < j_{n-k}$ be the positions of the nonexcedances.
By Lemma \ref{Schubert-derangement},
we have that 
$\pi(i_1) < \pi(i_2) < \dots < \pi(i_k)$, and $\pi(j_1) < \pi(j_2) < \dots < \pi(j_{n-k})$.
Define the partial order $\prec$ on pairs $(i,j)$ of integers in 
$\{1,2,\dots,n\}$ by setting 
$(i,j) \prec (i',j')$ if and only if $\kappa_i + \kappa_j <
\kappa_{i'} + \kappa_{j'}$.
Then the 
condition that $\kappa_1<\kappa_2<\dots < \kappa_n$ implies that
$(i_1,\pi(i_1)) \prec (i_2,\pi(i_2)) \prec \dots \prec (i_k,\pi(i_k))$ and 
$(j_1,\pi(j_1)) \prec \dots \prec (j_{n-k},\pi(j_{n-k}))$.
Using Theorem \ref{algo}, the asymptotic directions of the contour graph of the soliton solution
are as in Figure \ref{contour1}. 
\begin{figure}
\begin{center}
\includegraphics[height=4.5cm]{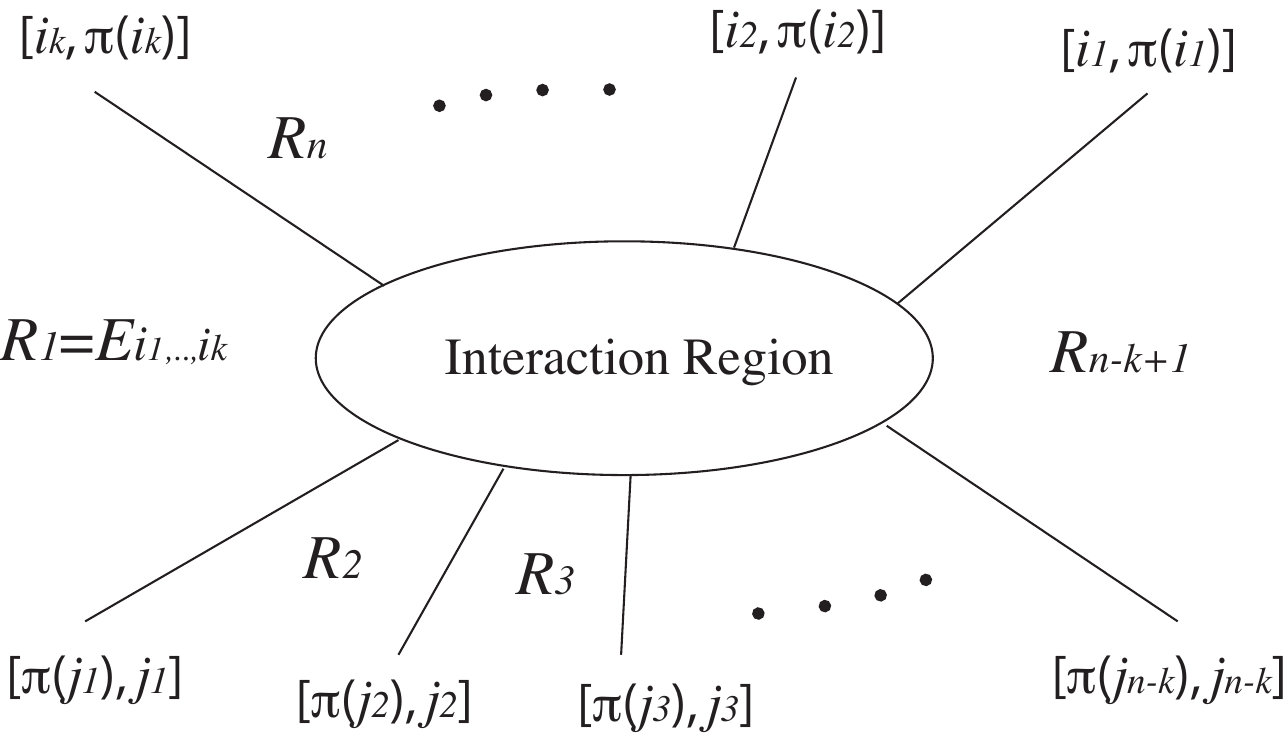}
\end{center}
\caption{Grassmann necklace $(R_1,R_2,\ldots,R_n)$ in a contour plot associated to the TP Schubert cell $\S_{\pi}^{tnn}$.}
\label{contour1}
\end{figure}

From the conditions on our permutation, we must have
$\pi(j_1)=1, \pi(j_2)=2,\dots,\pi(j_{n-k})=n-k$, and also
$\pi(i_1)=n-k+1,\pi(i_2)=n-k+2,\dots,\pi(i_k)=n$.
Therefore the unbounded line-solitons  of the contour plot of the soliton solution
are labeled as $[i_l,n-k+l]$ for $l=1,\ldots,k$ and $[m,j_m]$ for $m=1,\ldots,n-k$.

{\it Claim 1}. We claim that if 
$R_1,\dots,R_n$ are the index sets of the dominant exponentials in the unbounded
regions as in Figure \ref{contour1}, then 
$R_{\ell}$ contains $\ell$.  Therefore by Lemma 
\ref{separating}, $R_{\ell+1}$ is obtained from
$R_{\ell}$ by removing $\ell$ and adding one more index not already in $R_{\ell}$. 
By Remark \ref{invert}, Claim 1 implies 
$(R_1,\dots,R_n)$ is the Grassmann necklace associated to $\pi$,
and therefore implies Theorem \ref{necklace-soliton}.

We first prove Claim 1 for $\ell \leq n-k+1$.  Clearly $1\in R_1$,
since $1$ is always the position of an excedance of a derangement.  Suppose
by induction that the claim is true up through $\ell-1$.
Then 
$$R_{\ell} = (((((\{i_1,\dots,i_k\} \cup \{j_1\})\setminus \{1\})
     \cup \{j_2\}) \setminus \{2\}) \dots \cup \{j_{\ell_1}\}) \setminus \{\ell_1\}.$$
Suppose that $\ell \notin R_{\ell}$.  In steps $1$ through $\ell-1$, we have only
removed the numbers $\{1,2,\dots,\ell-1\}$, and so 
$\ell \notin \{i_1,\dots,i_k\}$.  And we have only added the numbers
$\{j_1,\dots,j_{\ell-1}\}$, and so $\ell \notin \{j_1,\dots,j_{\ell-1}\}$.
Since $\ell\notin \{i_1,\dots,i_k\}$, we have $\pi(\ell) < \ell$,
and so $\ell \in \{j_1,\dots,j_{n-k}\}$.  Since $\ell\notin \{j_1,\dots,j_{\ell-1}\}$,
we have $\ell \in \{j_{\ell},j_{\ell+1},\dots,j_{n-k}\}$.  But
$1 < j_1< j_2< \dots<j_{n-k}$ and so each element in 
$\{j_{\ell},j_{\ell+1},\dots,j_{n-k}\}$ is greater than $\ell$.
This is a contradiction.

{\it Claim 2}. $R_{n-k+1} = \{\pi(i_1),\dots,\pi(i_k)\}$.
Note that since Claim 1 is true for $\ell \leq n-k+1$, $R_{n-k+1}$  contains an index for
each excedance position $i_r$ such that $\pi^{-1}(i_r)<i_r<\pi(i_r)$.
(These are the elements of $R_1$ that remain in each $R_2,R_3,\dots,R_{n-k+1}$.)
$R_{n-k+1}$ also contains any nonexcedance position $j_r$ as long as it is not
the case that $j_r=\pi^{-1}(j_s)$ for some $s$.  That is,
$R_{n-k+1}$ contains any $j_r$ such that $\pi^{-1}(j_r) < j_r$.
Therefore we see that $R_{n-k+1}$ is equal to the set of values that $\pi$
takes at the excedance positions of $\pi$.  This proves Claim 2.

We now prove Claim 1 for $\ell > n-k+1$.  Again we use induction on $\ell$.
The claim is true for $n-k+1$.  Suppose that $\ell\notin R_{\ell}$ but 
Claim 1 is true for smaller $\ell$.  Certainly $\ell \in R_{n-k+1}$.
So $\ell\notin R_{\ell}$ means that $\ell$ must have been removed at some
earlier step -- say step $r$, for $n-k+1 \leq r < \ell$.  But the numbers
removed at these steps were precisely the numbers $n-k+1, n-k+2,\dots,\ell-1$.
This is a contradiction.
This finishes the proof of Claim 1 and hence of Theorem \ref{necklace-soliton}.
\end{proof}

\section{Soliton graphs are generalized plabic graphs}\label{sec:sol=plabic}

In this section we will show that we can think of soliton 
graphs as {\it generalized plabic graphs}.  
More precisely, we will
associate a  generalized plabic graph
$Pl(C)$ to each soliton graph $C$.  
We then show that from $Pl(C)$ -- whose only labels are on the boundary
vertices -- we can recover the labels of the line-solitons and 
dominant exponentials of $C$.
The upshot is that all edge and region labels
of a soliton graph $C$ may be reconstructed from a labeling of each
boundary vertex of $C$ by an integer.

\begin{definition}
A \emph{generalized plabic graph\/}
is an undirected graph $G$ drawn inside a disk
with $n$ \emph{boundary vertices\/} labeled $1,\dots,n$ placed 
in {\it any} order
around the boundary of the disk, such that each boundary vertex $i$ 
is incident
to a single edge. Each internal vertex is colored black or white,
and edges are allowed to cross each 
other in an $X$-crossing  (which is not considered to be a vertex). 
\end{definition}

\begin{definition}\label{soliton2plabic}
Fix an irreducible cell $\S_{\pi}^{tnn}$ of $(Gr_{k,n})_{\geq 0}$.
To each soliton graph $C$ coming from that cell we associate 
a generalized plabic graph
$Pl(C)$ by:
\begin{itemize}
\item labeling the boundary vertex 
incident to the edge $\{i,\pi_i\}$ 
by $\pi_i=\pi(i)$;
\item forgetting the labels of all edges
and regions.
\end{itemize}
\end{definition}
See Figure \ref{contour-soliton2} for a soliton graph $C$
(the same one from Figure \ref{contour-soliton}) together with the 
corresponding generalized plabic graph $Pl(C)$.

\begin{remark}\label{Schubert-labeling}
When $\pi$ indexes a TP Schubert cell 
in $(Gr_{k,n})_{\geq 0}$, the boundary vertices will be
labeled by $1,2,\dots,n$ in counterclockwise order, with 
$1,2,\dots,n-k$ labeling the boundary vertices
corresponding to the $y\ll 0$ part of the soliton graph.
\end{remark}

We now generalize the notion 
of trip from \cite[Section 13]{Postnikov}.

\begin{definition}\label{gen:trip}
Given a generalized plabic graph $G$,
the \emph{trip} $T_i$ is the directed path which starts at the boundary vertex 
$i$, and follows the ``rules of the road": it turns right at a 
black vertex,  left at a white vertex, and goes straight through the 
$X$-crossings. Note that $T_i$  will also 
end at a boundary vertex.  The \emph{trip permutation} $\pi_G$
is the permutation such that $\pi_G(i)=j$ whenever $T_i$
ends at $j$. 
\end{definition}

We use the trips to label the  edges and regions
of each generalized plabic graph.

\begin{definition}\label{labels}
Given a generalized plabic graph $G$ with $n$ boundary vertices, 
start at each boundary
vertex $i$ and label every edge along trip $T_i$ with $i$.
Such a trip divides the disk containing $G$ into two parts: 
the part to the left of $T_i$, and the part to the right.
Place an $i$ in every region which is to the left of $T_i$.
After repeating this procedure for each boundary vertex,
each edge will be labeled by up to two numbers (between $1$ and $n$),
and each region will be labeled by a collection of numbers.
Two regions separated by an edge labeled by both $i$ and $j$ will have 
region labels $S$ and $(S\setminus \{i\}) \cup \{j\}$.
When an edge is labeled by two numbers $i<j$, we write $[i,j]$
on that edge, or $\{i,j\}$ or $\{j,i\}$ if we do not wish to specify
the order of $i$ and $j$.  
\end{definition}

\begin{theorem}\label{soliton-plabic}
Consider a soliton graph $C$ coming from 
an irreducible positroid cell 
$\S_{\pi}^{tnn}$.
Then the trip permutation associated to $Pl(C)$ is $\pi$,
and by labeling edges and regions of $Pl(C)$ according
to Definition \ref{labels}, we will recover the original
labels in $C$.
\end{theorem}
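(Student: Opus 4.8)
The plan is to establish the theorem by reducing the two claims—that the trip permutation of $Pl(C)$ equals $\pi$, and that the trip-labeling recovers the original soliton labels—to the local structure of trivalent vertices and $X$-crossings that we already understand from the resonance analysis in Section \ref{resonance}. First I would examine what happens to a trip $T_i$ as it traverses the graph. The key observation is that the ``rules of the road'' (turn right at black, left at white, straight through $X$-crossings) are designed precisely so that a trip follows the unique line-soliton ``carrying'' the index $i$. Since each line-soliton has type $[a,b]$ with the two exponentials on either side differing exactly in the swap $a \leftrightarrow b$ (Lemma \ref{separating}), the index $i$ is transported along consistent edges, and the coloring convention from Definition \ref{soliton-graph} (black vertex = unique downward edge, white = unique upward edge) guarantees the turning rule matches how the type-$[a,b]$ soliton continues through each trivalent resonant vertex.

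The central step is a local analysis at a single trivalent vertex. By the resonance discussion around equation \eqref{balancing}, the three line-solitons meeting at a trivalent vertex have types $[i,j]$, $[j,\ell]$, $[i,\ell]$ with $i<j<\ell$, arranged in counterclockwise order, and the regions around the vertex are labeled by exponentials differing by the appropriate single-index swaps. I would verify directly that a trip entering along one of these three edges exits along the correct one under the right-turn/left-turn rule, and that in doing so the index carried by the trip (hence the edge label) is preserved in a way compatible with the type $[a,b]$ of the edges. A completely analogous check at an $X$-crossing—where the trip goes straight and the two crossing solitons retain their distinct types—handles that case. Combined with the definition of $Pl(C)$, which labels each boundary vertex incident to the soliton $\{i,\pi(i)\}$ by $\pi(i)$, these local checks show that the trip $T_i$ starting at the boundary vertex labeled $i$ must terminate at the boundary vertex labeled $\pi(i)$, giving $\pi_{Pl(C)} = \pi$.

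Once the trip permutation is identified, recovering the labels is essentially a bookkeeping argument. I would argue that the region-labeling rule in Definition \ref{labels}—place $i$ in every region to the left of $T_i$—reproduces the dominant-exponential index set $J$ of each region, by checking it against the already-established asymptotic region labels from Theorem \ref{algo} and then propagating inward across line-solitons using Lemma \ref{separating}: crossing an edge of type $[i,j]$ changes the region label by swapping $i$ and $j$, which is exactly what happens to the set of trips having a given region on their left versus right. Similarly the edge labels $[i,j]$ are recovered because an edge lies on exactly the two trips $T_i$ and $T_j$ whose indices are the two exponentials balancing across it. The asymptotic boundary data serves as the base case anchoring the induction, and the local swap behavior provides the inductive step.

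The main obstacle I expect is the local verification at trivalent vertices: one must carefully match the \emph{geometric} data (which edge extends upward/downward, fixing the black/white coloring) with the \emph{combinatorial} turning rule, and confirm that for all orientations of the resonant triple $[i,j],[j,\ell],[i,\ell]$ the trips thread through consistently so that indices are neither lost nor duplicated. This requires a genuine case analysis of how the three solitons can be oriented around a black versus a white vertex relative to the $\kappa_i+\kappa_j$ slopes. Getting the conventions exactly right—so that ``left of $T_i$'' corresponds to ``$i$ belongs to the dominant index set''—is where the care is needed, but once the single-vertex picture is pinned down, the global statement follows by connectivity and the asymptotic base case.
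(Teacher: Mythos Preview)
Your proposal is correct and follows essentially the same strategy as the paper: reduce to a local analysis at trivalent vertices, use the slope ordering $\kappa_i+\kappa_j<\kappa_i+\kappa_m<\kappa_j+\kappa_m$ to pin down the two possible (black/white) configurations, and then verify by cases that the path of edges carrying a fixed index $r$ obeys the rules of the road and connects boundary vertices $r$ and $\pi(r)$. The paper organizes the case analysis via the invariant ``the path travels \emph{down} along an edge $[q,r]$ iff $q<r$, and \emph{up} iff $q>r$,'' which is exactly the kind of orientation-versus-index bookkeeping you flagged as the main obstacle.
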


\begin{figure}[t]
\begin{center}
\includegraphics[height=1.7in]{Figure4bP}
\hskip 1.5cm
\includegraphics[height=1.8in]{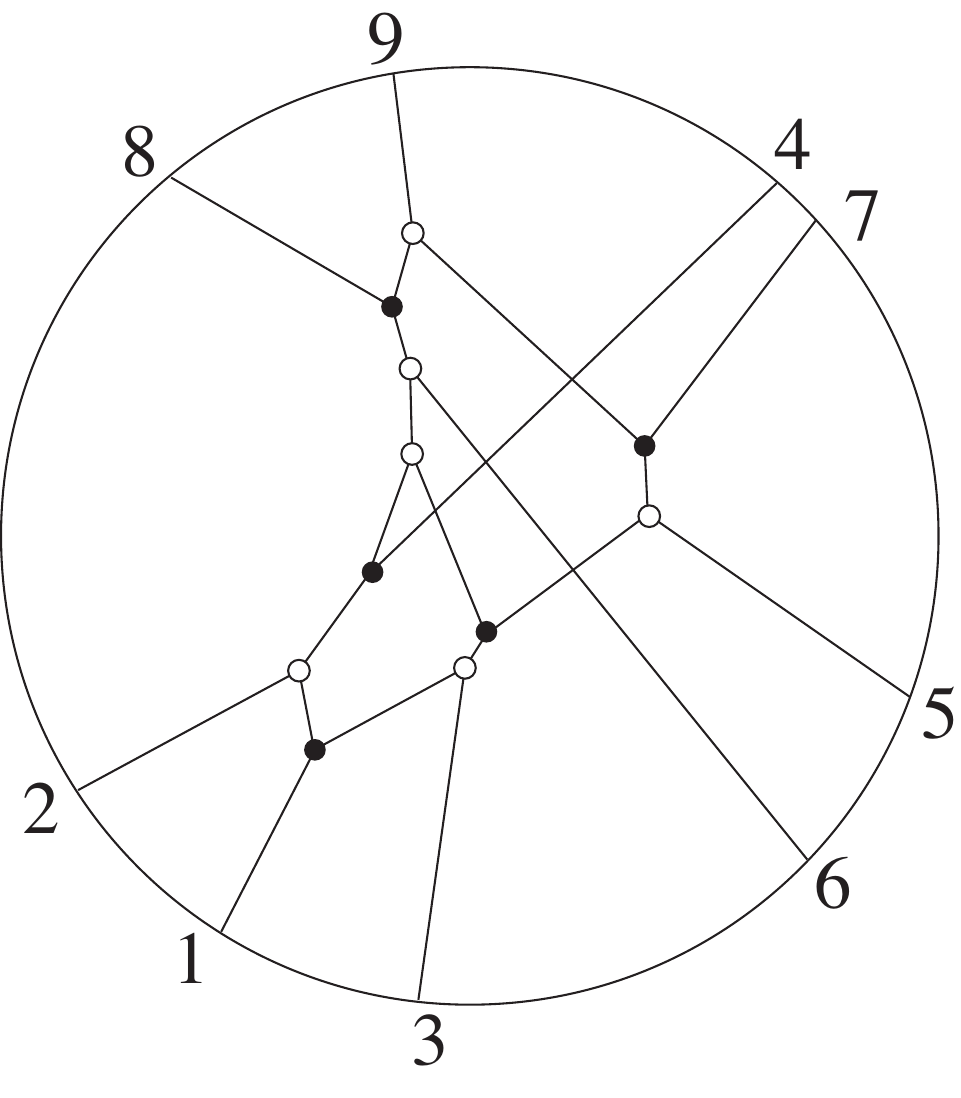}
\end{center}
\caption{A soliton graph $C$ and  generalized plabic graph $Pl(C)$
for $\pi=(7,4,2,9,1,3,8,6,5)$.}
\label{contour-soliton2}
\end{figure}

We invite the reader to verify Theorem \ref{soliton-plabic} for 
the graphs in Figure \ref{contour-soliton2}.

\begin{remark} By Theorem \ref{soliton-plabic}, we can identify 
each soliton graph $C$ with its generalized plabic graph $Pl(C)$.
From now on, we will often ignore the labels of edges and regions
of a soliton graph, and simply record the labels on boundary vertices.
\end{remark}

In the  proof below, 
we will sometimes refer to the contour plot
from which the soliton graph came; 
it is useful to think about whether edges are directed
up or down.


\begin{proof}
We begin by analyzing the edge labels around a trivalent vertex
in a soliton graph.  They must have edge labels 
$[i,j]$, $[i,m]$, and $[j,m]$ in some order, where without loss
of generality $i<j<m$.  
Recall that the slope of a line-soliton labeled $[i,j]$
is $\kappa_i+\kappa_j$. Also recall that we fixed 
$\kappa_1<\kappa_2<\dots<\kappa_n$.  Therefore
we know that the slopes of these three line-solitons are ordered by 
$\kappa_i+\kappa_j < \kappa_i+\kappa_m < \kappa_j+\kappa_m$.  
It follows that a trivalent vertex in the contour plot
with a unique edge directed down (respectively, up) from the vertex must have
line-solitons labeled as in the left (respectively, right) of 
Figure \ref{blackwhite}.  
\begin{figure}[h]
\centering
\includegraphics[height=.8in]{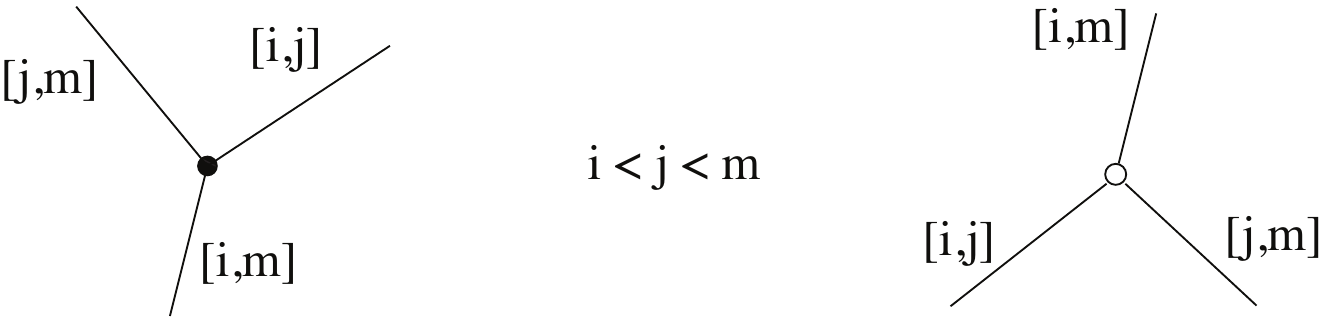}
\caption{Contour plots of resonant interactions of three line-solitons}
\label{blackwhite}
\end{figure}

We now fix $r$ between $1$ and $n$,
and analyze the set of all edges in the soliton graph $C$
whose label contains an $r$.  We aim to show that 
this set of edges is a {\it trip}. 

If $r$ is an excedance value of $\pi$, then we know from 
Theorem \ref{algo} 
that there is an edge incident to the boundary of $C$
which is labeled $[\ell,r]$, where $\ell<r$.
This is an unbounded edge going to $y\to \infty$ in the contour plot.
And if $r$ is a nonexcedance value, there is 
an edge incident to the boundary of $C$
which is labeled $[r,\ell]$ where $r<\ell$.
This is an unbounded edge going to $y \to -\infty$ in the contour plot.

Considering Figure \ref{blackwhite} and 
Definition \ref{soliton2plabic}, it is clear
that the set of all edges containing an $r$ in $C$
will be a path between boundary vertices
$r$ and $\pi_r$ in $Pl(C)$.
We call this the {\it soliton path}.

We now claim that if we start at vertex $r$
and follow the soliton path to vertex $\pi_r$,
then the path will have the following property:
the path travels {\it down} along an edge
with labels $q$ and $r$ if and only if $q<r$, 
and the path travels {\it up} along an edge
with labels $q$ and $r$ if and only if $q>r$.

This claim is clearly true for the first edge of 
each soliton path. 
Now we just need
to check that the claim remains true as we pass 
through black and white vertices.

Suppose that we are traveling down along an edge
with labels $i$ and $r$ where $i<r$, and we 
get to a white vertex.  Then, looking at the 
right side of Figure \ref{blackwhite},
we must have $m=r$, so the next edge that 
we traverse must be the edge $[j,m]$ in the figure
(that is, $[j,r]$).  Note that we will continue
to go down along an edge with labels $j$ and $r$,
with $j<r$.

Suppose that we are traveling down along an edge
with labels $q$ and $r$ where $q<r$, and we 
get to a black vertex.  Then, looking at the 
left side of Figure \ref{blackwhite}, there are two 
possibilities.  Either we are traveling down 
along the left edge (labeled $[j,m]$ in the figure,
so that $j=q$ and $m=r$), or we are traveling
down along the right edge (labeled $[i,j]$ in the figure,
so that $i=q$ and $j=r$).  In the first case,
the next edge we traverse will be the edge labeled
$[i,m]$ in the figure, i.e. $[i,r]$, so we will continue
to go down along an edge with labels $i$ and $r$,
with $i<r$.  In the second case, the next edge we
traverse will be the edge labeled $[j,m]$ in the figure,
i.e. $[r,m]$.  So in this case, our next edge in the path
will go {\it up} along an edge with labels $[r,m]$,
where $m>r$. In all cases, the claim continues to hold. 

There are also three cases to analyze if we go {\it up} 
along an edge.  These three cases are completely analogous.
Therefore the claim is true by induction.

Finally we note that in all of the above cases,
every sequence of edges in the soliton path obeys the 
``rules of the road".
This shows that the soliton paths agree with the trips,
completing the proof of Theorem \ref{soliton-plabic}.
\end{proof}

\section{A construction for asymptotic contour plots}
\label{plabic-soliton}

In this section we will explicitly compute the asymptotic
contour plots $\CC_{\pm}(\mathcal{M})$. That is, 
we have the scaled coordinates $(\bar{x},\bar{y},\a)$ with
$\a=(\pm1,0,\ldots,0)$.
In particular, we will 
provide an algorithm that constructs the associated soliton graphs,
and we will give coordinates for all the trivalent vertices in the $\bar{x}\bar{y}$-plane, 
which then allows one to completely describe the asymptotic contour plot.
Most of this section will be devoted to the case when $a_3=-1$ (i.e.
$t=t_3\to -\infty)$, and then we will explain how the same
ideas can be applied to the case $a_3=1$ (i.e. $t=t_3\to \infty$).

Since we consider the coordinates $(\bar{x},\bar{y},\a)=(\bar{x},\bar{y},-1,0,\ldots,0)$, we first define
 \[
\phi_i(\bar{x}, \bar{y}) :=\theta_i(x,y,-1,0,\ldots,0)= \kappa_i \bar{x} + 
\kappa_i^2 \bar{y} - \kappa_i^3.
\] 
Then
from Definition \ref{contourplot-infinity}, 
the asymptotic contour plot
$\CC_{-}(\mathcal{M})$ is defined
to be the locus in $\R^2$ where
\begin{equation*}
\underset{J\in\mathcal{M}}\max \left\{
                     \sum_{i=1}^k \phi_{j_i}(\bar{x},\bar{y}) \right\}
\end{equation*}
is not linear.  

To compute $\mathcal{C}_+(\mathcal{M})$, we need to work with the functions
$\phi'_i(\bar{x}, \bar{y}) :=\theta_i(x,y,+1,0,\ldots,0)= 
\kappa_i \bar{x} + 
\kappa_i^2 \bar{y} + \kappa_i^3$ 
instead of $\phi_i$.  Note that 
$\kappa_i \bar{x} + 
\kappa_i^2 \bar{y} + \kappa_i^3$ is maximized if and only if
$\kappa_i (-\bar{x}) + 
\kappa_i^2 (-\bar{y}) - \kappa_i^3$ is minimized.
Therefore
$\mathcal{C}_+(\mathcal{M})$ can be computed
as the $180^{\circ}$ rotation of the locus where 
\[
\underset{J\in\mathcal{M}}\min \left\{
                     \sum_{i=1}^k \phi_{j_i}(\bar{x},\bar{y}) \right\}
\]
is not linear.

\begin{definition}\label{def:v}
For $1 \leq i < j \leq n$, let $L_{ij}$ be the line in the $\bar{x}\bar{y}$-plane
where $\phi_i(\bar{x},\bar{y})=\phi_j(\bar{x},\bar{y})$.  
And let  $v_{i,\ell,m}$  be the point  where 
$\phi_i(\bar{x},\bar{y}) = 
\phi_{\ell}(\bar{x},\bar{y}) = 
\phi_m(\bar{x},\bar{y}).$
\end{definition}

The following lemma is easy to check.
\begin{lemma}\label{lem:v}
$L_{ij}$ has the equation
$$\bar{x} + (\kappa_i + \kappa_j) \bar{y} - (\kappa_i^2+\kappa_i \kappa_j + \kappa_j^2) = 0,$$ 
and the points $v_{i,\ell,m}$ have coordinates 
$$v_{i,\ell,m} = (-(\kappa_i \kappa_{\ell} + \kappa_i \kappa_m +
\kappa_{\ell} \kappa_m),\, \kappa_i+\kappa_{\ell}+\kappa_m) \in \R^2.$$
\end{lemma}

Some of the points $v_{i,\ell,m}\in \mathbb{R}^2$ will be trivalent vertices in the contour plots we construct; such a point corresponds to the resonant interaction of three line-solitons of types $[i,\ell]$, $[\ell,m]$ and $[i,m]$
(see Theorem \ref{t<<0} below).

\subsection{Main results on 
 $\mathcal{C}_{\pm}(\mathcal{M})$ and their soliton graphs}

Consider a positroid cell $S_{\mathcal{M}}^{tnn} = 
S_L^{tnn}$ where $L$ is the $\Le$-diagram indexing the cell.
We will explain how to use $L$ to construct a generalized
plabic graph $G_-(L)$.

\begin{algorithm}  \label{LeToPlabic} From a $\Le$-diagram $L$ to 
the graph $G_-(L)$:
\begin{enumerate}
\item Start with a $\Le$-diagram $L$ contained in a $k\times (n-k)$
rectangle, and use the construction of Definition \ref{Le2permutation}
to replace $0$'s and $+$'s by crosses and elbows,
and to label its border.
\item Add an edge, and one white and one black vertex  
to each elbow, as shown
in the upper right of Figure \ref{LePlabic}.  Forget the labels
of the southeast border.  If there is an endpoint of a pipe on the east or south border whose pipe
starts by going straight, then erase the straight portion preceding the first elbow.
\item Forget any degree $2$ vertices, and forget
any edges of the graph which end 
at the southeast border of the diagram.
Denote the resulting
graph  $G_-(L)$.  
\item After embedding the graph in a disk 
with $n$ boundary vertices
(this is just a cosmetic change which we sometimes omit),
we obtain a generalized plabic graph,
which we also denote $G_-(L)$.
If desired, stretch and rotate $G_-(L)$ so that the boundary vertices
at the west side of the diagram are at the north instead.
\end{enumerate}
\end{algorithm}

Figure \ref{LePlabic} illustrates the steps of Algorithm \ref{LeToPlabic}, 
starting
from the $\Le$-diagram of the positroid 
cell  $\S_{\pi}^{tnn}$ where $\pi=(7,4,2,9,1,3,8,6,5)$.  
\begin{figure}[h]
\centering
\includegraphics[height=3in]{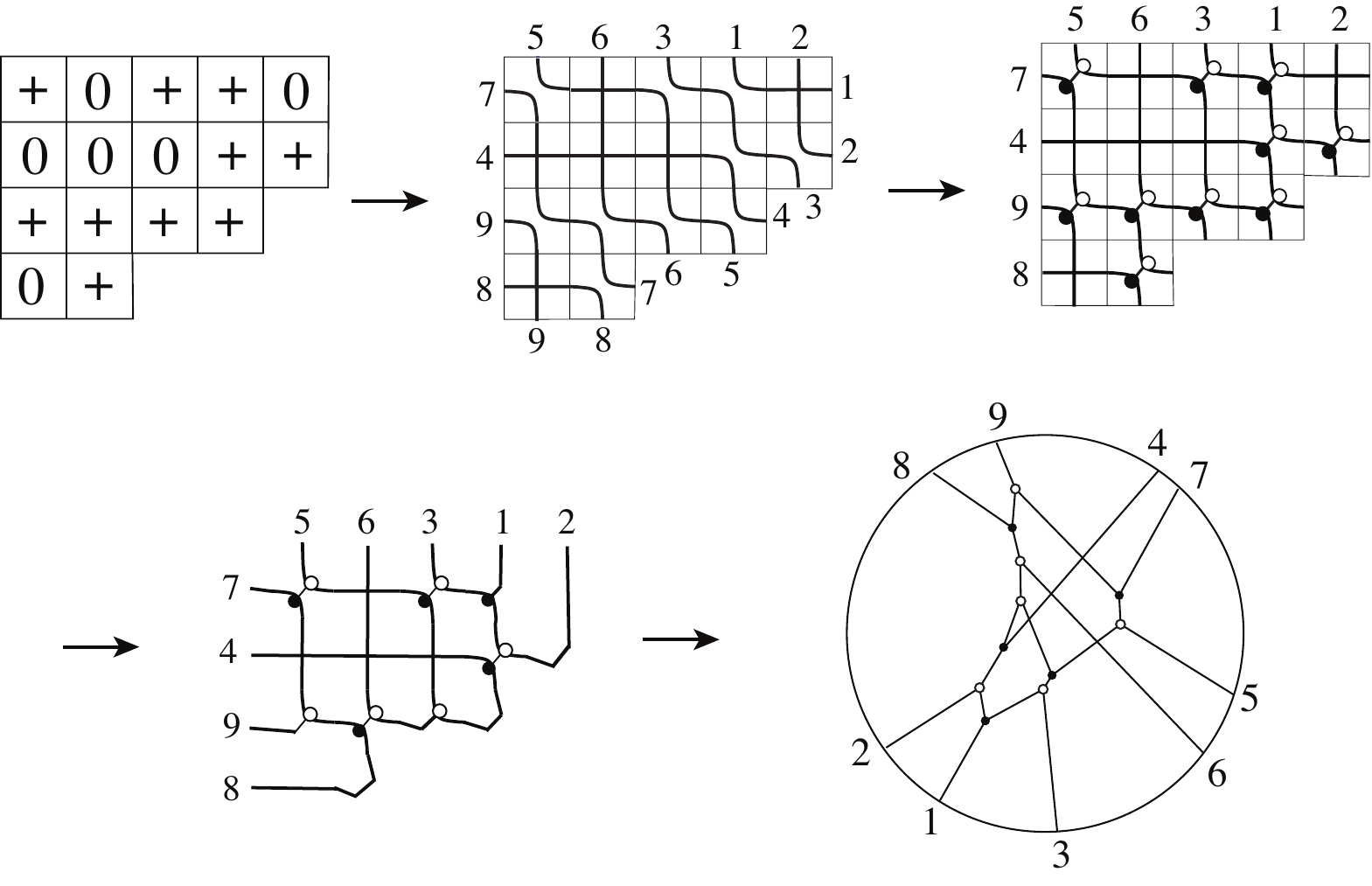}
\caption{Construction of $G_-(L)$ where $\pi(L) = 
(7,4,2,9,1,3,8,6,5)$. The top left figure is $L$.
\label{LePlabic}}
\end{figure}
After labeling the edges according to the rules of the road,
we will produce the graph from Figure 
\ref{contour-soliton}.
\begin{remark}
If every box of $L$ contains a $+$ (that is,
$\S_{L}^{tnn}$ is a TP Schubert cell),
then $G_-(L)$ will not contain
any $X$-crossings. 
\end{remark}

The following is the main result of this section.  The proof will be given in the next subsection.
\begin{theorem}\label{t<<0}
Choose a positroid cell $S_{\mathcal{M}}^{tnn} = 
S_{L}^{tnn} = S_{\pi}^{tnn}.$
Use Algorithm \ref{LeToPlabic} 
to obtain $G_-(L)$.   
Then $G_-(L)$ has trip permutation $\pi$, and we can use it to explicitly
construct $\CC_{-}(\mathcal{M})$ as follows.
Label the  edges of $G_-(L)$ 
according to the rules of the road.  Label each trivalent
vertex incident to solitons $[i,\ell]$, $[i,m]$, and $[\ell,m]$
by $x_{i,\ell,m}$ and give that point the coordinates
$(\bar{x},\bar{y}) = (-(\kappa_i \kappa_{\ell}+\kappa_i \kappa_m + \kappa_{\ell} \kappa_m),\,
\kappa_i+\kappa_{\ell}+\kappa_m)$.
Place each unbounded line-soliton of type $[i,j]$ so that it has
slope $\kappa_i + \kappa_j$.
(Each bounded line-soliton of type $[i,j]$ will automatically have slope
$\kappa_i + \kappa_j$.)  
\end{theorem}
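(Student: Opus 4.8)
The plan is to establish Theorem \ref{t<<0} by analyzing the piecewise-linear function $f_{\mathcal{M}}(\bar{x},\bar{y}) = \min_{J\in\mathcal{M}}\sum_{i=1}^k \phi_{j_i}(\bar{x},\bar{y})$ directly, and showing that its non-linearity locus is exactly the graph produced by Algorithm \ref{LeToPlabic} with vertices placed at the prescribed coordinates. First I would observe that each $\phi_i$ is an affine function of $(\bar{x},\bar{y})$ with gradient $(\kappa_i,\kappa_i^2)$, so that $\sum_{i\in J}\phi_{j_i}$ is affine with gradient $\sum_{i\in J}(\kappa_i,\kappa_i^2)$. Since $\kappa_1<\dots<\kappa_n$, distinct index sets $J$ give distinct gradients (this uses the strict monotonicity and the genericity assumption on the $\kappa_i$), so the arrangement behaves like a genuine tropical/regular subdivision problem. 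The non-linearity locus of the minimum is a subset of the lines $L_{ij}$ of Definition \ref{def:v}, and by Lemma \ref{lem:v} the triple intersection points are exactly the $v_{i,\ell,m}$.

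The key structural step is to identify, for each region of the complement, which index set $J\in\mathcal{M}$ achieves the minimum, and to verify the adjacency pattern. Here I would invoke Lemma \ref{separating}: two adjacent regions differ in exactly one index, so crossing an edge of the contour plot replaces some $i$ by some $j$, and that edge lies on $L_{ij}$ and is a line-soliton of type $[i,j]$ with slope $\kappa_i+\kappa_j$ as in Remark \ref{slope}. The resonance analysis of Section \ref{resonance}, specifically the balancing relation \eqref{balancing}, forces every trivalent meeting of line-solitons to be of the form $[i,\ell],[\ell,m],[i,m]$, which by Lemma \ref{lem:v} sits precisely at $v_{i,\ell,m}$. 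Thus the combinatorial type of $\CC_{-\infty}(\mathcal{M})$ is determined once we know which minors of $\mathcal{M}$ label the unbounded regions and how the bounded line-solitons resonate.

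To pin down the full graph I would match this contour plot against $G_-(L)$. The asymptotic data at $|y|\gg0$ is controlled by the derangement $\pi$ via Theorem \ref{algo}, so the unbounded line-solitons of $\CC_{-\infty}(\mathcal{M})$ agree with the boundary behavior dictated by $\pi$; after the $180^\circ$ rotation of Remark \ref{rem:rotate} these are precisely the unbounded edges of $G_-(L)$. The interior matching comes from the trip permutation: I would verify that the pipe-dream construction of Algorithm \ref{LeToPlabic} yields a graph whose trip permutation is $\pi$ (via the identification in Definition \ref{Le2permutation} and the rules of the road of Definition \ref{gen:trip}), and then apply Theorem \ref{soliton-plabic}, which guarantees that labeling edges by the rules of the road recovers exactly the line-soliton types $[i,j]$ of the true contour plot. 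Matching the $[i,j]$-labeled vertices of $G_-(L)$ to the coordinates $v_{i,\ell,m}$ then completes the correspondence, since slope plus one vertex position determines each line entirely.

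The main obstacle I anticipate is the \emph{consistency} of the vertex placement: one must check that assigning each trivalent vertex $x_{i,\ell,m}$ the coordinate $v_{i,\ell,m}$ produces a realizable planar embedding in which every bounded line-soliton of type $[i,j]$ automatically has slope $\kappa_i+\kappa_j$ and connects the correct pair of trivalent vertices without spurious crossings. In other words, the combinatorial graph $G_-(L)$ must be geometrically realized by the explicit coordinates coming from the $\kappa_i$, and one must rule out the possibility that a line-soliton predicted by $\mathcal{M}$ either fails to appear or degenerates as $t\to-\infty$. I would handle this by an induction on the structure of the $\Le$-diagram (peeling off rows or columns), using at each step that the relevant minors in $\mathcal{M}$ remain nonzero and that the rank conditions of Proposition \ref{rank-conditions} persist, so the resonant vertices survive in the limit. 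The genericity of the $\kappa_i$ ensures no three of the lines $L_{ij}$ coincide unexpectedly, which is what makes the placement unambiguous.
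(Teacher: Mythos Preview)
Your overall architecture is reasonable, and the induction on rows of the $\Le$-diagram that you mention at the end is in fact the main engine of the paper's proof. But the way you propose to match the interior of $\CC_{-\infty}(\mathcal{M})$ with $G_-(L)$ has a genuine gap.

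You appeal to Theorem~\ref{soliton-plabic} to do the interior matching: you show $G_-(L)$ has trip permutation $\pi$, and you know the actual soliton graph also has trip permutation $\pi$, and you conclude the interiors agree. This does not follow. Theorem~\ref{soliton-plabic} says that \emph{if $C$ is a soliton graph}, then labeling $Pl(C)$ by the rules of the road recovers the edge labels of $C$. It does not say that any two generalized plabic graphs with trip permutation $\pi$ are the same graph; they need only be move-equivalent (and even that requires reducedness, which you have not established for the limiting contour plot). So trip-permutation matching cannot by itself identify $\CC_{-\infty}(\mathcal{M})$ with $G_-(L)$.

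What the paper actually does is make the induction the heart of the argument, not a consistency check. Let $L'$ be $L$ with the top row removed and $\mathcal{M}'=\mathcal{M}(L')$. The paper proves a key lemma (Lemma~\ref{dominant}): at any triple point $v_{a,b,c}$ with $1\notin\{a,b,c\}$ one has $\phi_1<\phi_a=\phi_b=\phi_c$. This forces every region of $\CC_{-\infty}(\mathcal{M}')$ incident to a trivalent vertex to survive as a region of $\CC_{-\infty}(\mathcal{M})$ (with a $1$ adjoined to its label), so all trivalent vertices of $\CC_{-\infty}(\mathcal{M}')$ persist and the only new vertices are of the form $v_{1,b,c}$, lying on the trip $T_1$. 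One then characterizes these new vertices via the nonexcedances that change between $\pi(L')$ and $\pi(L)$, and finally shows that the \emph{order} in which the $v_{1,b,c}$ occur along $T_1$ is $\kappa$-independent: if two new vertices had indices $v_{1,i,\ell}$ and $v_{1,j,k}$ with $i<j<k<\ell$, the corresponding pipes would cross in a way violating the $\Le$-property. Your proposal is missing both ingredients---the $\phi_1$-inequality at triple points, and the $\Le$-property argument ruling out interleaved indices---and without them the induction cannot close.
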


\begin{remark}\label{rem:slide}
Although Theorem \ref{t<<0} dictates which collections of line-solitons
meet at a trivalent vertex, it does not determine which pairs of line-solitons
form an $X$-crossing.  Which line-solitons form an $X$-crossing 
is determined by the parameters $(\kappa_1,\dots,\kappa_n)$.  See
Figure \ref{3realizations} for three contour plots based on 
three different choices of $(\kappa_1,\dots,\kappa_n)$.  All of them
can be constructed using the graph $G_-(L)$ from Figure \ref{LePlabic},
together with Theorem \ref{t<<0}.
\end{remark}
\begin{figure}[h]
\centering
\includegraphics[height=1.6in]{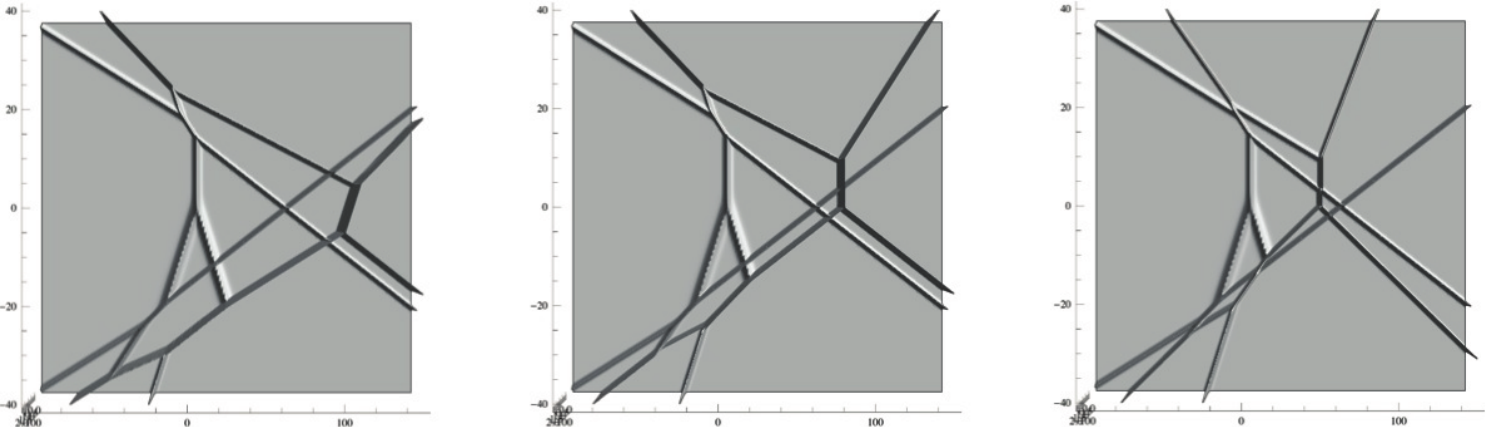}
\caption{Three different asymptotic contour plots 
$\CC_{-\infty}(\M)$ where 
$\pi(\M) = 
(7,4,2,9,1,3,8,6,5)$, based on different choices for 
$(\kappa_1,\dots,\kappa_9)$. The left panel corresponds to the contour plot with
$(\kappa_1,\ldots,\kappa_9)=(-4,-3,-2,-1,0,1,2,3,4)$;  note that this is the same as 
 $G_-(L)$ (cf. Figure \ref{LePlabic}).}
\label{3realizations}
\end{figure}

We can use a very similar algorithm to construct $G_+(L)$ from the ``dual'' $\Le$-diagram of $L$.
\begin{definition}
Given $\mathcal{M} \subset {[n] \choose k}$, we define its \emph{dual} 
$\mathcal{M^*}$ to be the collection 
$$\mathcal{M^*} = \{ \{n+1-j_1, n+1-j_2,\dots,n+1-j_k\} \ \vert \ \{j_1,\dots,j_k\} \in \mathcal{M} \}.$$

Given $\pi \in S_n$, we define its \emph{dual} 
to be the permutation
$\pi^* = \iota \circ \pi^{-1}$, where $\iota$ is the involution in $S_n$ such that 
$\iota(j) = n+1-j$.  

Given a $\Le$-diagram $L$, we define its \emph{dual}
to be the $\Le$-diagram $L^*$ such that 
$\pi(L^*) = \pi(L)^*$.
\end{definition}

\begin{remark}
Note that which positroid cell 
$S_{\mathcal{M}}^{tnn} = S_{\pi}^{tnn} = S_{L}^{tnn}$ a fixed element of 
$(Gr_{k,n})_{\geq 0}$ lies in depends on 
a choice of ordered basis $(e_1,e_2,\dots,e_n)$ for $\R^n$.  If we relabel each 
basis element $e_i$ by $e_{n+1-i}$, then $\mathcal{M}$, $\pi$, and $L$ are replaced by 
their duals $\mathcal{M^*}$, $\pi^*$, and $L^*$. 
\end{remark}


\begin{figure}[h]
\centering
\includegraphics[height=3in]{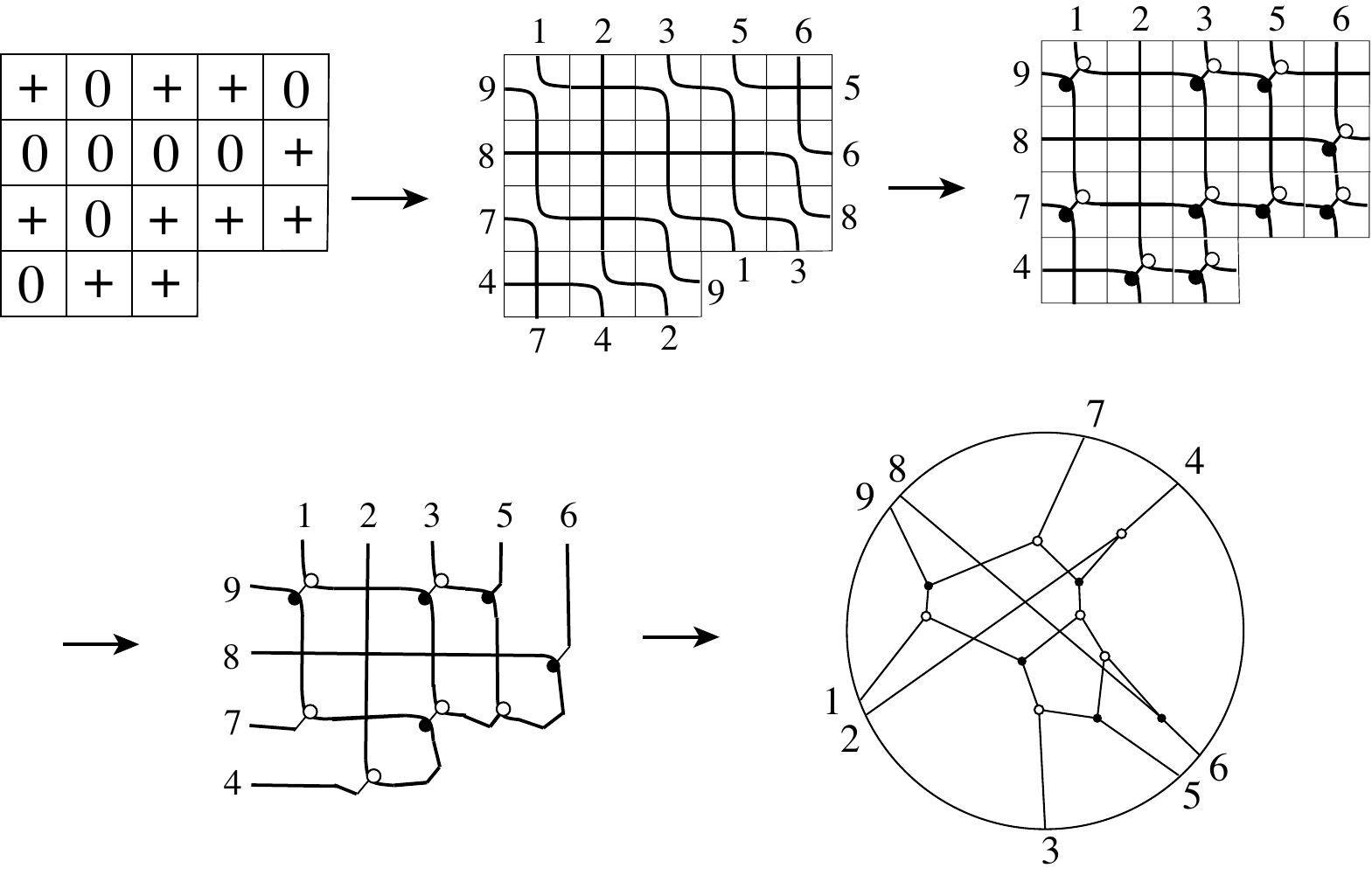}
\caption{Construction of $G_+(L)$ where $\pi(L) = 
(7,4,2,9,1,3,8,6,5)$.  The top left figure shows $L^*$.
If we compare the unbounded solitons in $G_+(L)$ and $G_-(L)$, they appear to be in a different order.
However, their order will be the same 
at $|y|\gg0$.
\label{LePlabic+}}
\end{figure}

\begin{theorem}\label{t>>0}
Choose a positroid cell $S_{\mathcal{M}}^{tnn} = 
S_{L}^{tnn} = S_{\pi}^{tnn}$ where $\pi \in S_n$.
Apply Algorithm \ref{LeToPlabic} to $L^*$, but replace every label $j$ around the 
boundary of the $\Le$-diagram and plabic graph with $\pi(n+1-j)$.  This produces  
a graph we call  $G_+(L)$.   
Then we can explicitly construct $\CC_{+}(\mathcal{M})$ from 
$G_+(L)$, just as in Theorem \ref{t<<0}.
See Figure \ref{LePlabic+}.
\end{theorem}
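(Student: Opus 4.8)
The plan is to deduce the $t\to+\infty$ statement from the already-proven $t\to-\infty$ statement (Theorem \ref{t<<0}) by a duality that reverses both the parameters $\kappa_i$ and the ground set $[n]$. Introduce the \emph{dual parameters} $\tilde\kappa_i := -\kappa_{n+1-i}$; since $\kappa_1<\dots<\kappa_n$ we again have $\tilde\kappa_1<\dots<\tilde\kappa_n$, and they remain generic. Writing $\tilde\phi_i(\bar x,\bar y)=\tilde\kappa_i\bar x+\tilde\kappa_i^2\bar y+\tilde\kappa_i^3$ for the dual phases, the first step is the elementary identity
\[ \tilde\phi_i(\bar x,-\bar y)=-\phi_{n+1-i}(\bar x,\bar y), \]
which holds because $\kappa\mapsto-\kappa$ flips the sign of the linear and cubic terms while $\bar y\mapsto-\bar y$ flips the quadratic term.

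Next I would globalize this to the two optimization problems of Definition \ref{contourplot-infinity}. For $J\in\mathcal{M}$ with $\iota(J)=\{n+1-j:j\in J\}\in\mathcal{M}^*$, summing the identity gives $\sum_{j^*\in\iota(J)}\tilde\phi_{j^*}(\bar x,-\bar y)=-\sum_{j\in J}\phi_j(\bar x,\bar y)$, so that $\min_{J^*\in\mathcal{M}^*}\sum\tilde\phi_{j^*}(\bar x,-\bar y)=-\max_{J\in\mathcal{M}}\sum\phi_j(\bar x,\bar y)$. A piecewise-linear max fails to be linear at exactly the points where its negative (a min) does, so this shows $(\bar x,\bar y)\in\CC_{\infty}(\mathcal{M})$ if and only if $(\bar x,-\bar y)\in\CC_{-\infty}(\mathcal{M}^*)$ computed with the dual parameters $\tilde\kappa$. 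In other words, $\CC_\infty(\mathcal{M})$ is the image of $\CC_{-\infty}(\mathcal{M}^*)$ under the reflection $\bar y\mapsto-\bar y$. (As a consistency check, this is compatible with the soliton counts: $\CC_\infty(\mathcal{M})$ has $k$ unbounded solitons at $\bar y\gg0$ and $n-k$ at $\bar y\ll0$, while the reflection of $\CC_{-\infty}(\mathcal{M}^*)$ reverses the corresponding counts of the rank-$k$ dual.) I would then invoke Theorem \ref{t<<0} for the dual cell to produce $\CC_{-\infty}(\mathcal{M}^*)$ explicitly from the graph built by Algorithm \ref{LeToPlabic} out of the dual $\Le$-diagram $L^*$.

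The geometric content then transports cleanly through the reflection. If $a<b<c$ abbreviate $n+1-m,\,n+1-\ell,\,n+1-i$, then Lemma \ref{lem:v} applied with the dual parameters places the trivalent vertex $v_{i,\ell,m}$ of $\CC_{-\infty}(\mathcal{M}^*)$ at coordinates which, after substituting $\tilde\kappa_r=-\kappa_{n+1-r}$ and reflecting $\bar y\mapsto-\bar y$, become exactly $(\kappa_a\kappa_b+\kappa_a\kappa_c+\kappa_b\kappa_c,\,-(\kappa_a+\kappa_b+\kappa_c))$, the point assigned to $x_{a,b,c}$ in Theorem \ref{t<<0}; likewise a dual soliton of type $[i,j]$ has slope $\tilde\kappa_i+\tilde\kappa_j$, which negates under the reflection to $\kappa_{n+1-j}+\kappa_{n+1-i}$, the correct slope of the original type $[n+1-j,n+1-i]$. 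Since the three solitons $[i,\ell],[i,m],[\ell,m]$ at a dual vertex map to the three solitons $[a,b],[a,c],[b,c]$ of the original vertex, the whole resonant structure matches, and the construction ``just as in Theorem \ref{t<<0}'' applies verbatim once the graph is correctly labeled.

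The main obstacle is precisely this labeling bookkeeping. One must verify that composing the index reversal $j\mapsto n+1-j$ (forced by $\tilde\kappa_i=-\kappa_{n+1-i}$) with the conventions relating the southeast border of $L^*$ and its pipe-labeling to the trip permutation produces, on each boundary vertex, exactly the relabeling $j\mapsto\pi(n+1-j)$ asserted in the theorem, and that the relabeled graph $G_+(L)$ has edge labels recovering the unbounded solitons of $\CC_\infty(\mathcal{M})$. Because the duals $\mathcal{M}^*,\pi^*,L^*$ are governed by the (inverse-of-Postnikov) permutation conventions recorded earlier, I expect the cleanest way to pin down the relabeling is to compare the reflected unbounded solitons against those read directly off $\pi$ via Theorem \ref{algo}: matching these at $|y|\gg0$ determines the boundary relabeling uniquely and confirms that $G_+(L)$ does construct $\CC_\infty(\mathcal{M})$. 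This convention-tracking, rather than any of the geometry above, is where the real care is required.
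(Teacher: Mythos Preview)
Your approach is essentially the same as the paper's: introduce the dual parameters $\tilde\kappa_i=-\kappa_{n+1-i}$ (the paper calls them $\lambda_i$), use the identity that negates the phases under $j\mapsto n+1-j$ together with $\bar y\mapsto-\bar y$, and thereby identify $\CC_{\infty}(\mathcal{M})$ with the reflection of $\CC_{-\infty}(\mathcal{M}^*)$ so that Theorem~\ref{t<<0} applied to $L^*$ does the work. The only minor difference is in how the boundary relabeling $j\mapsto\pi(n+1-j)$ is justified: the paper observes that the $\bar y$-reflection swaps black and white vertices, which for a plabic graph is equivalent to composing the boundary labeling with $\pi$, whereas you propose instead to pin down the relabeling by matching the unbounded solitons against Theorem~\ref{algo}; either route is fine and neither is more than a bookkeeping check once the duality is established.
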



\subsection{The proof of Theorem \ref{t<<0}}

In this section we present the proof of Theorem \ref{t<<0}.  The main
strategy is to use induction on the number of rows in the 
$\Le$-diagram $L$.  More specifically,
let $L'$ denote the $\Le$-diagram $L$ with its
top row removed.  
In Lemma \ref{lem:algo} we will explain that $G_-(L')$ can be seen 
as a labeled subgraph of $G_-(L)$.  In Theorem \ref{induction}, we will explain
that if $\mathcal{M'} = \mathcal{M}(L')$, then there is a polyhedral
subset of $\CC_{-}(\mathcal{M})$ which coincides with 
$\CC_{-}(\mathcal{M'})$.  And moreover, every vertex of 
$\CC_{-}(\mathcal{M'})$ appears as a vertex of 
$\CC_{-}(\mathcal{M})$.  By induction we 
can assume that Theorem \ref{t<<0} correctly computes 
$\CC_{-}(\mathcal{M'})$, which in turn provides us with 
a description of 
``most" of $\CC_{-}(\mathcal{M})$, including all 
line-solitons and vertices whose indices do not include $1$.  
On the other hand, Theorem \ref{algo} gives
a complete description of the unbounded solitons of both 
$\CC_{-}(\mathcal{M'})$ and
$\CC_{-}(\mathcal{M})$ in terms of $\pi(L')$ and $\pi(L)$.  
In particular, 
$\CC_{-}(\mathcal{M})$ contains one more unbounded soliton
at $\bar{y}\gg 0$ than does 
$\CC_{-}(\mathcal{M})$,
and $\CC_{-}(\mathcal{M})$ contains $\ell$ more unbounded solitons at $\bar{y}\ll 0$
where $\ell$ is the difference in length of the first two rows.
This information together with 
the resonance property allows us to complete the description of 
$\CC_{-}(\mathcal{M})$ and match it up with the combinatorics
of $G_-(L)$.

\begin{lemma}\label{lem:asymptotics}
Let $\pi = \pi(L)$ be the derangement associated to $L$.  Then 
Algorithm \ref{LeToPlabic} produces a generalized plabic graph $G_-(L)$ 
whose trip permutation is $\pi$.  
\end{lemma}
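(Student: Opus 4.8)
The plan is to prove the lemma by identifying the \emph{pipes} of the pipe dream used in Definition \ref{Le2permutation} to define $\pi = \pi(L)$ with the \emph{trips} of the generalized plabic graph $G_-(L)$. Since Step (1) of Algorithm \ref{LeToPlabic} builds $G_-(L)$ directly out of this very pipe dream---replacing each $0$ by a cross and each $+$ by an elbow and labelling the southeast border by $1,\dots,n$---both objects live on essentially the same planar diagram. It therefore suffices to check that ``following a pipe from southeast to northwest'' and ``following a trip according to the rules of the road'' trace out the same paths, and that the boundary labels are matched so that the trip beginning at the boundary vertex $i$ terminates at the boundary vertex $\pi(i)$.

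The heart of the argument is a purely local comparison at the two types of boxes. At a cross (coming from a $0$), the two pipes passing through go straight, and by Definition \ref{gen:trip} a trip also goes straight through the resulting $X$-crossing, so these agree immediately. At an elbow (coming from a $+$), which Step (2) replaces by the edge carrying one white and one black vertex shown in the upper right of Figure \ref{LePlabic}, I would verify that the rules of the road route a trip through the gadget exactly as the elbow routes a pipe: a westward pipe that the elbow bends to the north makes a right turn, hence must meet a black vertex, while a northward pipe bent to the west makes a left turn, hence must meet a white vertex, and the coloring in the gadget is chosen precisely so that both directions are consistent. Since there are only finitely many local configurations (a turning pipe together with a pipe crossing it), this can be checked case by case.

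It then remains to account for the boundary and the cleanup steps. Forgetting the southeast-border labels, erasing the straight portion of a pipe preceding its first elbow, and deleting the degree-$2$ vertices and the edges that terminate on the southeast border (Steps (2)--(3)) only trim initial or terminal segments of pipes without changing which pair of boundary points a given pipe connects; I would check each such modification separately and confirm it preserves endpoints, so that after embedding in the disk the trips of $G_-(L)$ connect the same boundary vertices that the pipes of the pipe dream do. Combined with the local analysis, this shows that the trip starting at boundary vertex $i$ ends at the boundary vertex labelled $\pi(i)$, so that the trip permutation of $G_-(L)$ is exactly $\pi$.

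I expect the main obstacle to be the local verification at the elbow gadget: one must get the chirality right, checking that the specific black/white coloring of Figure \ref{LePlabic} composes, under the rules of the road, to reproduce the elbow's turn for both the turning pipe and the pipe crossing it. Closely related is the bookkeeping needed to guarantee that the resulting trip permutation equals $\pi$ rather than its inverse $\pi^{-1}$; given the inverse-convention subtleties already flagged in the footnotes to Lemma \ref{Postnikov-permutation} and Definition \ref{Le2permutation}, this orientation check deserves special care.
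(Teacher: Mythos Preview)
Your overall strategy---reduce to a local analysis at crosses and elbows---is also the paper's, but your central claim is wrong, and this is a genuine gap rather than a detail to be filled in. A trip through the elbow gadget does \emph{not} behave like the elbow pipe from all four sides: with the coloring of Figure~\ref{LePlabic}, a trip entering from two of the four sides follows the elbow, while a trip entering from the other two sides passes \emph{straight through} the box. (You only checked the two entry directions that agree with the pipe and declared the gadget ``consistent.'') Consequently a trip starting at a boundary vertex of $G_-(L)$ does not simply trace a single pipe end to end; it first follows one pipe to where that pipe meets the southeast border, and then---because the southeast edges have been deleted---it is forced to turn and travel straight across that row or column, exiting at a different northwest boundary vertex. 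This two-phase description, ``follow a pipe, then go straight across the row or column where that pipe ended,'' is exactly the paper's proof, and it matches Definition~\ref{Le2permutation} on the nose, since $\pi(i)$ is defined there not as the far end of pipe $i$ but as the northwest label on the row or column containing the southeast label $i$.

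Your statement that the cleanup steps ``only trim initial or terminal segments of pipes without changing which pair of boundary points a given pipe connects'' is therefore the wrong picture: deleting the southeast edges removes one endpoint of every pipe, and it is precisely this deletion that deflects the trip off the pipe and onto the straight row/column segment. Without this observation your argument would at best show that the trip starting at $j$ tries to return to $j$ along pipe $j$, not that it ends at $\pi(j)$. The $\pi$ versus $\pi^{-1}$ worry you flag is real, but it is downstream of this structural point; once you see the trip as ``pipe then straight'' (or ``straight then pipe,'' depending on the gadget's chirality), comparing with Definition~\ref{Le2permutation} settles which one you get.
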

\begin{proof}
It is clear from the construction that 
$G_-(L)$ is a generalized plabic graph.
Note that if we follow the rules of the road starting from
a boundary vertex of $G_-(L)$, we will first follow a
``pipe" northwest (see the top right picture in Figure \ref{LePlabic}),
and then travel straight across the row or column where that pipe
ended.  This has the same effect as the bijection
of Definition \ref{Le2permutation}.
\end{proof}

We now present a lemma which 
explains the relationship between $G_-(L)$ and $G_-(L')$,
where $L'$ is the $\Le$-diagram $L$ with the top row removed.

\begin{lemma}\label{lem:algo}
Let $L$ be a $\Le$-diagram with $k$ rows and $n-k$ columns, 
and let $G$ denote the generalized
plabic graph associated to $L$
via Algorithm \ref{LeToPlabic}.
Recall that Algorithm \ref{LeToPlabic} uses
Definition \ref{Le2permutation} to label the boundary vertices 
of $G$; we then use the rules of the road to label edges of $G$
by pairs of integers.
Form a new $\Le$-diagram $L'$ from  $L$ by removing the top row of $L$; 
suppose that $\ell$ is the sum of the 
number of rows and columns in $L'$.
Let $G'$ denote the edge-labeled plabic graph associated to $L'$,
but instead of using the labels $\{1,2,\dots,\ell\}$, use the 
labels $\{n-\ell+1, n-\ell+2,\dots,n\}$.  
Let $h$ denote the label of the top row of $L$.  Then 
$G'$ is obtained from $G$ by removing the trip $T_h$ starting 
at $h$, together with any edges to the right of the trip
which have a trivalent vertex on $T_h$.
\end{lemma}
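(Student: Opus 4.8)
The plan is to prove the statement by a direct, box-by-box comparison of the output of Algorithm \ref{LeToPlabic} applied to $L$ and to $L'$, exploiting the pipe-dream (wiring-diagram) model of Definition \ref{Le2permutation} on which the algorithm is built. The guiding principle, already recorded in Lemma \ref{lem:asymptotics}, is that in $G_-(L)$ the trips follow the pipes: each $+$ yields an elbow and hence a trivalent vertex (once the white/black pair is inserted), while each $0$ yields an X-crossing through which a trip passes straight. I would first make precise the identification of $h$ and $T_h$: the label $h$ of the top row is the topmost south-step of the southeast border of $L$ (for an irreducible $L$ one has $\lambda_1 = n-k$, so $h=1$), and by Lemma \ref{lem:asymptotics} the trip $T_h$ enters at the boundary vertex $h$ and traces the pipe associated with the top row of the diagram.

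The second ingredient is the relabeling bookkeeping. Writing $L'$ for $L$ with its top row deleted, the southeast border of $L'$ is obtained from that of $L$ by deleting the topmost south-step together with the $\lambda_1-\lambda_2$ west-steps bounding the top row on the left; labeling the border of $L'$ by $1,\dots,\ell$ from its northeast corner therefore agrees, after the shift sending each label to its original value in $L$, with the labeling by $\{n-\ell+1,\dots,n\}$ demanded in the statement. I would check that this shift is exactly the one induced by deleting the top strip, so that the surviving boundary vertices of $G$ are correctly matched with the boundary vertices of $G'$.

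With these identifications in hand, the heart of the argument is the geometric comparison. Deleting the top row of $L$ removes the top horizontal strip of the pipe dream: the top-row pipe --- i.e. the trip $T_h$ --- disappears entirely, and the portion of every other pipe lying in row $1$ is erased. In the language of $G_-(L)$ this says precisely that we delete $T_h$ and, at each $+$ of the top row where another pipe meets $T_h$ in a trivalent vertex, we sever the edge leading from that vertex into the part of the diagram below; these are exactly the edges to the right of $T_h$ having a trivalent vertex on $T_h$. After erasing the resulting degree-$2$ vertices (step (3) of Algorithm \ref{LeToPlabic}), one reads off that the remaining edge-labeled graph is $G_-(L')$, and the trip-permutation and edge-label claims then follow from Lemma \ref{lem:asymptotics} applied to $L'$ together with the locality of the rules of the road.

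I expect the main obstacle to be exactly this local matching along the top row and at the two ends of $T_h$. One must keep careful track of three things: the distinction between $+$'s (trivalent vertices, whose attaching edge is deleted) and $0$'s (X-crossings, where the column pipe is merely truncated, so no trivalent vertex and no deleted edge); the ``erase the straight portion preceding the first elbow'' clause of Algorithm \ref{LeToPlabic}, which governs how the truncated pipes re-attach to the new top boundary; and the endpoints of $T_h$, where one must verify that after the deletion and the degree-$2$ suppression the vertex $\pi(h)$ --- and the extra $y\ll 0$ boundary edges produced when $\lambda_1>\lambda_2$ --- is re-attached to the correct shifted boundary vertex. Establishing that this correspondence holds uniformly, independently of the particular filling $D$, and is compatible with the relabeling of the second step, is the crux of the proof.
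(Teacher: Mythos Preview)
The paper does not actually prove this lemma: immediately after stating it, the authors write ``We omit the proof of Lemma~\ref{lem:algo}; it should be clear after the following example,'' and then display Figure~\ref{building}, which shows the chain of $\Le$-diagrams obtained by adding rows one at a time together with the nested family of edge-labeled plabic graphs they produce. So there is no proof in the paper to compare against; what you have written is precisely the kind of argument the authors are inviting the reader to supply.

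Your approach is the natural one and matches what the example illustrates: the box-by-box comparison via the pipe-dream model, the relabeling bookkeeping for the southeast border after deleting the top strip, and the identification of $T_h$ with the top-row pipe via Lemma~\ref{lem:asymptotics}. The one place where I would urge a little more care is in your description of which edges get severed. You write that at each $+$ in the top row one severs ``the edge leading from that vertex into the part of the diagram below,'' but recall that $T_h$ is not confined to the top row: by the proof of Lemma~\ref{lem:asymptotics}, the trip first follows the pipe northwest and then travels straight across the row or column where that pipe ended, so $T_h$ has a vertical portion as well. The edges ``to the right of $T_h$ with a trivalent vertex on $T_h$'' include those along this vertical segment, not only those hanging off the horizontal top-row portion. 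This is visible in Figure~\ref{building} and is exactly what your ``endpoints of $T_h$'' caveat is gesturing at, but it deserves to be made explicit rather than folded into the boundary analysis.
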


We omit
the proof of Lemma \ref{lem:algo}; it should be clear
after the following  example.  

\begin{example} 
Figure \ref{building} illustrates Lemma \ref{lem:algo}
 with the example of $\pi=(7,4,2,9,1,3,8,6,5)$
as in 
Figure \ref{LeToPlabic}.  It
illustrates the result of Algorithm \ref{LeToPlabic}, applied
to the chain of $\Le$-diagrams obtained by successively adding rows from the bottom
of the diagram. 
We suggest that the reader use the 
rules of the road to fill in all edge labels on these (generalized)
plabic graphs.  The middle part of Figure \ref{building} gives the 
permutation associated to the corresponding $\Le$-diagram.
  Notice
the relationship between the excedances in these permutations
and the labeled line-solitons on the right side of the figure, e.g. the excedances
$(1,2,4,7)$ and the soliton index  $[1,7], [2,4],[4,9],[7,8]$ in the top figure.
It follows immediately from  the rules of the road that the 
sequence of (edge-labeled) plabic graphs on the right side of the figure
are nested within each other.
\begin{figure}[h]
\centering
\includegraphics[height=3.6in]{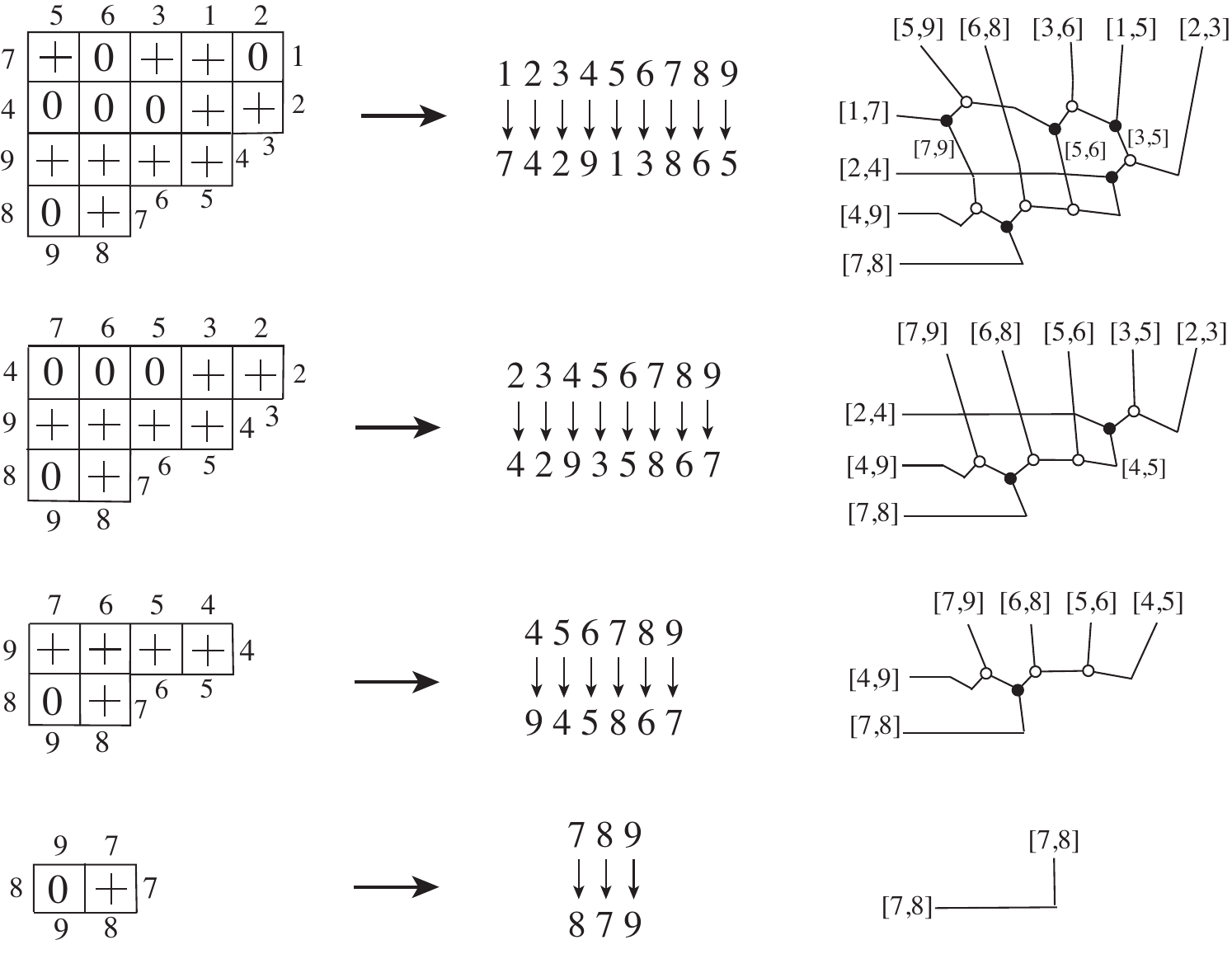}
\caption{Inductive construction of the edge-labeled
generalized plabic graph $G_-(L)$ of the case $\pi=(7,4,2,9,1,3,8,6,5)$.
\label{building}}
\end{figure}
\end{example}

Let $\{i_1,\dots,i_k\}$ denote the lexicographically 
minimal element of $\mathcal{M}$.  (This corresponds to the collection
of pivots for any $A\in S_{\mathcal{M}}^{tnn}$.)  
To simplify
the notation, we will {\it assume without loss of generality that $i_1 = 1$.}  
Now set $\mathcal{M'} = \mathcal{M}(L')$.  We can also describe
$\mathcal{M'} = \{ J\setminus \{1\} \ \vert \ 1\in J \text{ and } J\in \mathcal{M} \}.$
Our next goal is to explain in Theorem \ref{induction} the relationship between 
$\CC_{-}(\mathcal{M})$ and 
$\CC_{-}(\mathcal{M'})$.  However, we first prove a useful 
lemma.

\begin{lemma}\label{dominant}
Consider the point $v_{a,b,c}$ where $1 \notin \{a,b,c\}$.  Then at this 
point, we have that $\phi_1 > \phi_a = \phi_b = \phi_c$.
It follows that every region $R$ in $\CC_{-}(\mathcal{M})$
incident to the point $v_{a,b,c}$ is labeled by a 
dominant exponential $E_J$ such that $1\in J$.
\end{lemma}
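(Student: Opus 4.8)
The plan is to prove the two assertions separately, the first by a direct computation and the second by reducing to a purely matroid-theoretic statement about the minimizing basis. For the inequality $\phi_1 < \phi_a = \phi_b = \phi_c$, I would substitute the coordinates of $v_{a,b,c}$ from Lemma \ref{lem:v} into the definition of $\phi_i$. Writing $\bar{x} = \kappa_a\kappa_b + \kappa_a\kappa_c + \kappa_b\kappa_c$ and $\bar{y} = -(\kappa_a+\kappa_b+\kappa_c)$, I would recognize the resulting expression as a shifted cubic:
\[
\phi_i(v_{a,b,c}) = \kappa_i^3 - (\kappa_a+\kappa_b+\kappa_c)\kappa_i^2 + (\kappa_a\kappa_b+\kappa_a\kappa_c+\kappa_b\kappa_c)\kappa_i = (\kappa_i-\kappa_a)(\kappa_i-\kappa_b)(\kappa_i-\kappa_c) + \kappa_a\kappa_b\kappa_c.
\]
For $i \in \{a,b,c\}$ the product term vanishes, giving $\phi_a = \phi_b = \phi_c = \kappa_a\kappa_b\kappa_c$. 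For $i=1$, since $1 \notin \{a,b,c\}$ and $1$ is the globally smallest index (so $\kappa_1 < \kappa_a, \kappa_b, \kappa_c$), all three factors $(\kappa_1-\kappa_a), (\kappa_1-\kappa_b), (\kappa_1-\kappa_c)$ are negative, hence their product is negative and $\phi_1 < \kappa_a\kappa_b\kappa_c$. This settles the first sentence.

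For the second sentence, recall from Definition \ref{contourplot-infinity} that a region of $\CC_{-\infty}(\mathcal{M})$ is labeled by the $J \in \mathcal{M}$ \emph{minimizing} $\sum_{i\in J}\phi_i$. By Lemma \ref{separating} together with the resonance structure at a trivalent vertex, the regions incident to $v_{a,b,c}$ have index sets of the form $S\cup\{a\}$, $S\cup\{b\}$, $S\cup\{c\}$ for a common $(k-1)$-set $S$ with $S\cup\{a\},S\cup\{b\},S\cup\{c\}\in\mathcal{M}$. Since $1\notin\{a,b,c\}$, the statement ``$1\in J$ for every incident region'' is equivalent to the single claim $1\in S$. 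So the plan reduces the whole assertion to showing $1 \in S$, given that all three sets are minimizers of $\sum_{i\in J}\phi_i$ at $v_{a,b,c}$ (equivalently, by continuity, that they attain the common minimal value $\sum_{i\in S}\phi_i + \kappa_a\kappa_b\kappa_c$).

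To prove $1\in S$, I would argue by contradiction via a basis exchange. Suppose $1\notin S$. Because $1$ is a pivot column (we have assumed $i_1=1$), the lexicographically minimal set $I_1\in\mathcal{M}$ contains $1$. Applying the matroid exchange property to $I_1$ and $S\cup\{a\}$, there is an element $y\in (S\cup\{a\})\setminus I_1$ with $(S\cup\{a\}\setminus\{y\})\cup\{1\}\in\mathcal{M}$; minimality of $S\cup\{a\}$ then forces $\phi_y \le \phi_1$ at $v_{a,b,c}$. If $y=a$ this already contradicts $\phi_1<\phi_a$, so the difficulty is confined to the case $y\in S$. Here I would use the monotonicity of the cubic $p(\kappa)=(\kappa-\kappa_a)(\kappa-\kappa_b)(\kappa-\kappa_c)$: on $(-\infty,\kappa_a)$ it is strictly increasing, so $\phi_y \le \phi_1$ together with $\kappa_1 < \kappa_y$ rules out $\kappa_y < \kappa_a$ and both intervals where $p>0$, leaving only $\kappa_b < \kappa_y < \kappa_c$. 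The main obstacle is precisely this residual case: I would close it by showing that the simultaneous minimality of $S\cup\{a\}$, $S\cup\{b\}$, $S\cup\{c\}$ forces any fundamental circuit of $1$ with respect to $S\cup\{a\}$ to lie inside $\{1\}\cup\{z : b<z<c\}$, and then invoke the positroid structure (the fact that column $1$ is the leftmost pivot, so that $e_1$ cannot be spanned by a set of columns all lying strictly between positions $b$ and $c$) to conclude that no such circuit through $1$ exists. This contradiction gives $1\in S$ and completes the lemma. I expect this positroid circuit-exclusion step, rather than the computation, to be the genuine crux.
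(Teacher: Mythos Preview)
Your treatment of the inequality $\phi_1 < \phi_a = \phi_b = \phi_c$ is correct and in fact cleaner than the paper's: the factorization
\[
\phi_i(v_{a,b,c}) = (\kappa_i-\kappa_a)(\kappa_i-\kappa_b)(\kappa_i-\kappa_c)+\kappa_a\kappa_b\kappa_c
\]
makes the claim immediate, whereas the paper reaches the same conclusion through a chain of elementary manipulations starting from $(\kappa_b-\kappa_1)(\kappa_c-\kappa_1)>0$ and multiplying through by $(\kappa_a-\kappa_1)$. The content is identical; your packaging is nicer.

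For the second sentence you diverge sharply from the paper. The paper's proof of the lemma establishes \emph{only} the inequality and does not argue the ``It follows'' clause at all; in the subsequent proof of Theorem~\ref{induction} only the bare inequality $\phi_1<\phi_a$ is invoked, and the statement about region labels is handled there in a different setting (for vertices of $\CC_{-\infty}(\mathcal{M'})$, where $J_1$ is already known to minimize over $\mathcal{M'}$, so adding $1$ and comparing against the sets of $\mathcal{M}$ that contain $1$ suffices). You instead treat the second sentence as a standalone claim, tacitly add the hypothesis that $v_{a,b,c}$ is a trivalent vertex of $\CC_{-\infty}(\mathcal{M})$, and reduce to a matroid exchange argument.

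That reduction is sound up to the point you yourself flag as the ``genuine crux'': the assertion that, because $1$ is the leftmost pivot, column $1$ cannot lie in the span of a set of columns whose indices are all strictly between $b$ and $c$. This is not proved, and it is not a general fact about positroids --- the index $1$ being a pivot says only that column $1$ is nonzero, which does not preclude it from being a linear combination of later columns. So as written, this step is a genuine gap. The paper never attempts such an argument; it sidesteps the issue by working relative to $\mathcal{M'}$ rather than proving the region-label claim directly for $\mathcal{M}$.
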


\begin{proof}
Recall that $\phi_i(\bar{x},\bar{y}) = \kappa_i
\bar{x} + \kappa_i^2 \bar{y} - \kappa_i^3.$
A calculation shows that 
$\phi_a(v_{a,b,c}) = 
\phi_b(v_{a,b,c}) = 
\phi_c(v_{a,b,c}) = 
-\kappa_a \kappa_b \kappa_c$, while
$\phi_1(v_{a,b,c}) =- \kappa_1(\kappa_a \kappa_b + \kappa_a \kappa_c +
\kappa_b \kappa_c) + \kappa_1^2 (\kappa_a+\kappa_b + \kappa_c) - \kappa_1^3.$

Without loss of generality suppose $a<b<c$, so 
then  $\kappa_1 < \kappa_a < \kappa_b < \kappa_c$.
It follows that $(\kappa_b - \kappa_1) (\kappa_c-\kappa_1) > 0$,
which implies that $\kappa_1 \kappa_b + \kappa_1 \kappa_c -\kappa_1^2 <
\kappa_b \kappa_c$.
Multiplying both sides by $(\kappa_a-\kappa_1)$, which is positive,
we get 
$$\kappa_1 \kappa_a \kappa_b + \kappa_1 \kappa_a \kappa_c - \kappa_1^2 \kappa_1
-\kappa_1^2 \kappa_b - \kappa_1^2 \kappa_c + \kappa_1^3 < \kappa_a \kappa_b \kappa_c - \kappa_1 \kappa_b \kappa_c.$$
Therefore 
$$\kappa_1(\kappa_a \kappa_b + \kappa_a \kappa_c + \kappa_b \kappa_c) -
\kappa_1^2(\kappa_a + \kappa_b + \kappa_c) + \kappa_1^3 < \kappa_a \kappa_b \kappa_c,$$ which implies that 
$\phi_1(v_{a,b,c}) > \phi_a(v_{a,b,c}) = \phi_b(v_{a,b,c}) = \phi_c(v_{a,b,c}).$
\end{proof}

\begin{theorem}\label{induction}
There is an unbounded polyhedral subset
$\mathcal{R}$ of $\R^2$   whose boundary
is formed by line-solitons of $\CC_{-}(\mathcal{M})$, such that 
every region in $\mathcal{R}$ is labeled by 
a dominant exponential 
$E_J$ for some $J$ containing $1$.  In $\mathcal{R}$,
$\CC_{-}(\mathcal{M})$ coincides with  
$\CC_{-}(\mathcal{M'})$. 
Moreover, every region of 
$\CC_{-}(\mathcal{M'})$ which is incident to a trivalent
vertex and labeled by $E_{J'}$ corresponds to a region of 
$\CC_{-}(\mathcal{M})$ which is labeled by 
$E_{J' \cup \{1\}}$.
\end{theorem}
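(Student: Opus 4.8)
The plan is to split $\mathcal{M}$ according to whether an index set contains the pivot $1$, and then to recognize the part of $\CC_{-\infty}(\mathcal{M})$ governed by sets containing $1$ as a copy of $\CC_{-\infty}(\mathcal{M'})$ translated by a linear function. Write $\Phi_J(\bar{x},\bar{y}) = \sum_{i\in J}\phi_i(\bar{x},\bar{y})$ and, for a collection $\mathcal{N}$, set $g_{\mathcal{N}} = \min_{J\in\mathcal{N}}\Phi_J$, so that $\CC_{-\infty}(\mathcal{M})$ is the non-linearity locus of $g_{\mathcal{M}}$. Put $\mathcal{M}_1 = \{J\in\mathcal{M}:1\in J\}$ and $\mathcal{M}_0 = \{J\in\mathcal{M}:1\notin J\}$, so that $g_{\mathcal{M}} = \min(g_{\mathcal{M}_0},g_{\mathcal{M}_1})$. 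Since $\mathcal{M'} = \{J\setminus\{1\}:J\in\mathcal{M}_1\}$ and $\Phi_{J} = \phi_1 + \Phi_{J\setminus\{1\}}$ for $J\in\mathcal{M}_1$, we obtain the key identity $g_{\mathcal{M}_1} = \phi_1 + g_{\mathcal{M'}}$. I would then define $\mathcal{R}$ to be the closure of $\{g_{\mathcal{M}_1} < g_{\mathcal{M}_0}\}$; equivalently, $\mathcal{R}$ is the union of the closed regions of $\CC_{-\infty}(\mathcal{M})$ whose dominant-exponential label contains $1$, together with the line-solitons bounding them. Being a finite union of closed regions, $\mathcal{R}$ is polyhedral, and this description immediately gives the first assertion about the labels.

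First I would prove the coincidence inside $\mathcal{R}$. On $\mathrm{int}\,\mathcal{R}$ we have $g_{\mathcal{M}} = g_{\mathcal{M}_1} = \phi_1 + g_{\mathcal{M'}}$, and since $\phi_1$ is linear, the locus where $g_{\mathcal{M}}$ fails to be linear agrees there with the locus where $g_{\mathcal{M'}}$ fails to be linear. Moreover a region on which $g_{\mathcal{M'}}$ is minimized by $E_{J'}$ lifts to the region on which $g_{\mathcal{M}}$ is minimized by $E_{J'\cup\{1\}}$. This yields that $\CC_{-\infty}(\mathcal{M})$ and $\CC_{-\infty}(\mathcal{M'})$ agree in $\mathcal{R}$ with labels related by $J'\mapsto J'\cup\{1\}$.

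Next I would analyze the boundary and the size of $\mathcal{R}$. Each boundary segment lies on $\{g_{\mathcal{M}_1}=g_{\mathcal{M}_0}\}$ and so separates a region of $\CC_{-\infty}(\mathcal{M})$ labeled by some $J$ with $1\in J$ from one labeled by some $J''$ with $1\notin J''$; by Lemma \ref{separating} these labels are $\{1,m_2,\dots,m_k\}$ and $\{p,m_2,\dots,m_k\}$, so $\partial\mathcal{R}$ consists of genuine line-solitons of type $[1,p]$. For unboundedness I would exhibit an explicit ray: along $(\bar{x},\bar{y}) = r(1,-\epsilon)$ with $\epsilon>0$ small enough that $1-\epsilon(\kappa_1+\kappa_i)>0$ for every $i\geq 2$, a short computation shows that $\phi_1$ is the strict minimum of $\phi_1,\dots,\phi_n$ once $r\gg 0$. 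Taking $J_0\in\mathcal{M}_0$ minimizing $g_{\mathcal{M}_0}$ and applying the basis-exchange axiom with any basis of $\mathcal{M}$ containing the pivot $1$ (such a basis exists since $1=i_1$ is not a loop), I obtain $p\in J_0$ with $(J_0\setminus\{p\})\cup\{1\}\in\mathcal{M}_1$; as $\phi_1<\phi_p$ this forces $g_{\mathcal{M}_1}<g_{\mathcal{M}_0}$, so the tail of the ray lies in $\mathcal{R}$ and $\mathcal{R}$ is unbounded.

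Finally, the third assertion follows from Lemma \ref{dominant}. Every trivalent vertex of $\CC_{-\infty}(\mathcal{M'})$ is a point $v_{a,b,c}$ with $1\notin\{a,b,c\}$, since every index occurring in $\mathcal{M'}$ is at least $2$. By Lemma \ref{dominant} every region of $\CC_{-\infty}(\mathcal{M})$ incident to $v_{a,b,c}$ is labeled by a set containing $1$, so $v_{a,b,c}$ lies in $\mathrm{int}\,\mathcal{R}$, where the coincidence proved above applies; thus the three regions $S\cup\{a\}$, $S\cup\{b\}$, $S\cup\{c\}$ of $\CC_{-\infty}(\mathcal{M'})$ meeting at $v_{a,b,c}$ correspond to the regions $S\cup\{1,a\}$, $S\cup\{1,b\}$, $S\cup\{1,c\}$ of $\CC_{-\infty}(\mathcal{M})$. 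I expect the main obstacle to be the boundary-and-unboundedness analysis of $\mathcal{R}$ rather than the translation identity, which is formal; the one genuinely delicate input is Lemma \ref{dominant}, which is exactly what guarantees that no trivalent vertex of the smaller contour plot escapes to $\partial\mathcal{R}$.
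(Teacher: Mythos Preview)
Your proof is correct and follows essentially the same strategy as the paper: partition $\mathcal{M}$ by membership of $1$, use the translation identity $g_{\mathcal{M}_1} = \phi_1 + g_{\mathcal{M'}}$ to match the non-linearity loci inside $\mathcal{R}$, and invoke Lemma~\ref{dominant} to place every trivalent vertex of $\CC_{-\infty}(\mathcal{M'})$ in the interior of $\mathcal{R}$. You supply more detail than the paper does---an explicit definition of $\mathcal{R}$, the boundary description via Lemma~\ref{separating}, and the basis-exchange argument for unboundedness---whereas the paper simply observes that for any fixed $\bar{y}$ and $\bar{x}\gg 0$ one has $\phi_1 \ll \phi_2 \ll \cdots \ll \phi_n$ and asserts the first two claims from that.
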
 

\begin{proof}
The proof of the first part of the theorem is straightforward.  Note that
for any value of $\bar{y}$, there is an $\bar{x}$
sufficiently large such that 
$$\phi_1(\bar{x},\bar{y}) \gg
\phi_2(\bar{x},\bar{y}) \gg  \dots \gg
\phi_n(\bar{x},\bar{y}).$$
This proves the existence of the subset $\mathcal{R}$,
where every dominant exponential $E_J$ has the property
that $1\in J$.  Therefore the asymptotic contour plot within $\mathcal{R}$
depends only on the information of $\mathcal{M'}$, and hence 
coincides with 
$\CC_{-}(\mathcal{M'})$.  (More specifically, the positions of points
and line-solitons are identical, and each region label is identical
to the one from 
$\CC_{-}(\mathcal{M'})$ except that a $1$ is added to 
the index set.)

We have now shown that $\mathcal{R}$ exists, but do not yet have
any information about how large it is.  What we'll show next is that
$\mathcal{R}$ contains ``most" of $\CC_{-}(\mathcal{M'})$.  More 
specifically,
every region of 
$\CC_{-}(\mathcal{M'})$ which is incident to 
at least one trivalent vertex also corresponds to a region of 
$\CC_{-}(\mathcal{M})$.\footnote{In theory 
$\CC_{-}(\mathcal{M'})$ could e.g. have 
an unbounded region incident to an $X$-crossing but not incident
to any
trivalent vertices, which does not correspond to a region in 
$\CC_{-}(\mathcal{M})$.} 
For this we need Lemma \ref{dominant}.

By definition, all points $v_{a,b,c}$ that appear in 
$\CC_{-}(\mathcal{M'})$ have the property that
$1 \notin \{a,b,c\}$.  The three regions 
$R_1$, $R_2$, $R_3$ incident to $v_{a,b,c}$ in 
$\CC_{-}(\mathcal{M'})$ are labeled by $E(J_1)$, $E(J_2)$, and $E(J_3)$.
In particular, this means that at region $R_1$, $J_1$
is the subset $\{j_1,\dots,j_{k-1}\}$ of $\mathcal{M'}$
which maximizes the value
$\phi_{j_1} + \dots + \phi_{j_{k-1}}$.
Without loss of generality we can assume that $a\in J_1$, $b\in J_2$,
and $c\in J_3$.  By Lemma \ref{dominant}, there is a neighborhood $N$
of $v_{a,b,c}$ where $\phi_1 > \phi_a$.
It follows that in $N \cap R_1$, $J_1 \cup \{j_k=1\}$ is 
the subset of $\mathcal{M}$ that maximizes the value  
$\phi_{j_1} + \dots + \phi_{j_{k}}$.
Therefore the region $R_1$ of $\CC_{-}(\mathcal{M'})$ 
which is labeled by $E_{J_1}$ 
corresponds to a region of $\CC_{-}(\mathcal{M})$ which is 
labeled by $E_{J_1 \cup \{1\}}$.  Similarly for $R_2$ and $R_3$.
This completes the proof of the theorem.
\end{proof}
\begin{figure}
\centering
\includegraphics[height=6cm]{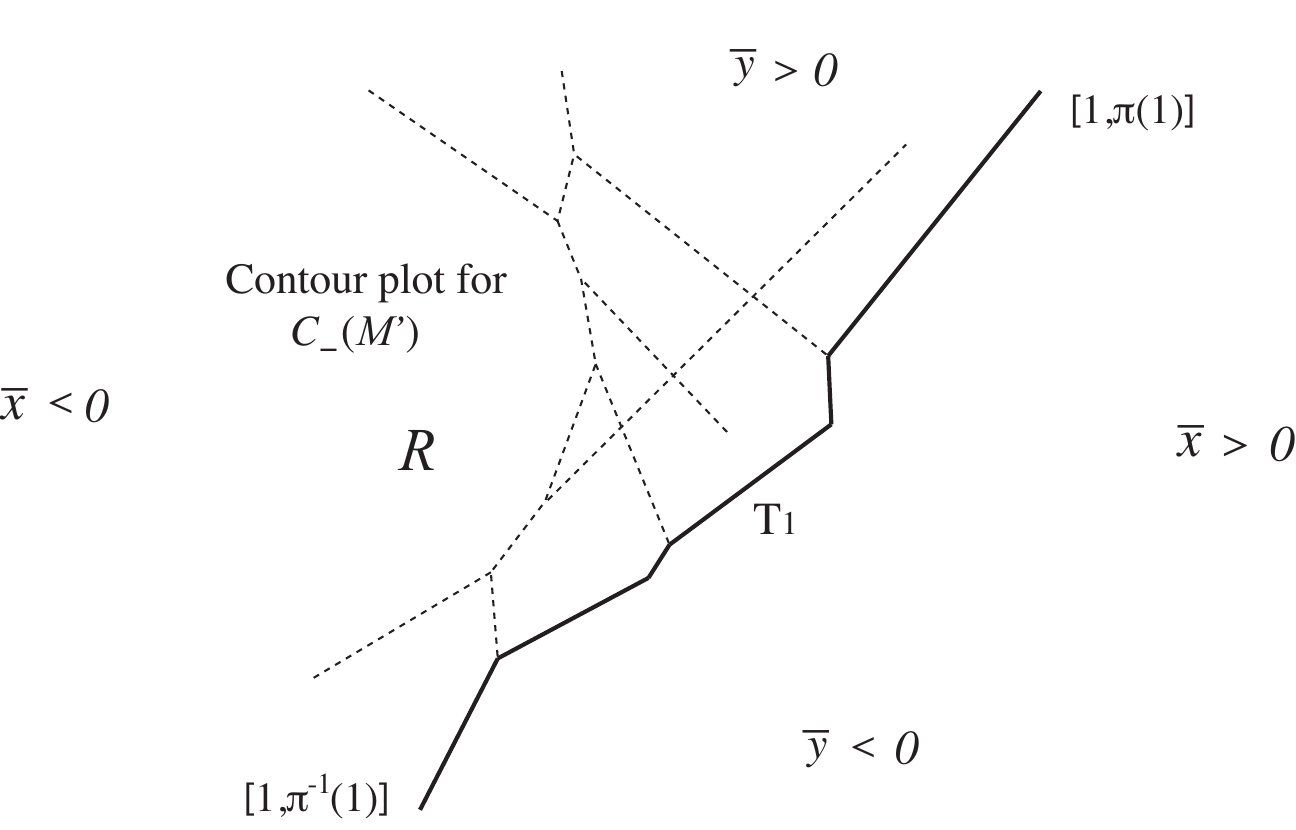}
\caption{The asymptotic contour plot $\CC_{-}(\mathcal{M'})$ within
the asymptotic contour plot $\CC_{-}(\mathcal{M}).$} \label{fig:induction}
\end{figure}
Figure \ref{fig:induction} illustrates how 
the asymptotic contour plot $\CC_{-}(\mathcal{M'})$ sits inside
the asymptotic contour plot $\CC_{-}(\mathcal{M}).$  
Recall that $T_1$ represents the {\it trip} consisting of 
all line-solitons labeled $[1,j]$ for any $j$ (cf. Figure \ref{contour-soliton}).

Theorem \ref{induction} immediately implies the following.
\begin{corollary}\label{cor:trip}
The set of trivalent vertices in $\CC_{-}(\mathcal{M})$ is equal to 
the set of trivalent vertices in $\CC_{-}(\mathcal{M'})$  together
with some vertices of the form $v_{1,b,c}$.  These vertices are the 
vertices along the trip $T_1$.  In particular, every line soliton
in $\CC_{-}(\mathcal{M})$ which was not present in 
$\CC_{-}(\mathcal{M'})$ and is not along the trip $T_1$ 
must be unbounded.  And every new bounded line-soliton in
$\CC_{-}(\mathcal{M})$ that did not come from a line-soliton 
in $\CC_{-}(\mathcal{M'})$ is of type $[1,j]$ for some $j$.
\end{corollary}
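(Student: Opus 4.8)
The plan is to classify the trivalent vertices of $\CC_{-\infty}(\mathcal{M})$ according to whether or not the index $1$ occurs among the three indices of the vertex, and then to read off the statements about line-solitons from this classification. By Lemma \ref{lem:v} every trivalent vertex has the form $v_{a,b,c}$, and by the resonance structure the three regions meeting there are labeled $E_{\{a\}\cup T}$, $E_{\{b\}\cup T}$, $E_{\{c\}\cup T}$ for a common $(k-1)$-subset $T$ with $a,b,c\notin T$. First I would treat the vertices $v_{a,b,c}$ with $1\notin\{a,b,c\}$: by Lemma \ref{dominant} all three incident regions then contain $1$, so a neighborhood of such a vertex sees only index sets containing $1$ and hence lies in the polyhedral subset $\mathcal{R}$ of Theorem \ref{induction}. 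Within $\mathcal{R}$ the plot $\CC_{-\infty}(\mathcal{M})$ coincides with $\CC_{-\infty}(\mathcal{M'})$ after deleting $1$ from every label, so these vertices are exactly the trivalent vertices of $\CC_{-\infty}(\mathcal{M'})$; conversely, by the ``moreover'' clause of Theorem \ref{induction}, every trivalent vertex of $\CC_{-\infty}(\mathcal{M'})$ reappears. This gives the first sentence, the remaining vertices being precisely those of the form $v_{1,b,c}$.

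Next I would check that the vertices $v_{1,b,c}$ are exactly the vertices lying on the trip $T_1$. Two of the three line-solitons incident to $v_{1,b,c}$ have type $[1,b]$ and $[1,c]$, so they are edges labeled $1$ and belong to $T_1$ by Definition \ref{labels}; conversely any vertex on $T_1$ is incident to an edge of type $[1,j]$ and therefore carries $1$ among its indices. This identifies the new vertices with the vertices along $T_1$.

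Finally, for the assertions about line-solitons I would analyze the third soliton at each new vertex. At $v_{1,b,c}$ the regions are $E_{\{1\}\cup T}$, $E_{\{b\}\cup T}$, $E_{\{c\}\cup T}$ with $1\notin T$, so the soliton of type $[b,c]$ separates the two regions $E_{\{b\}\cup T}$ and $E_{\{c\}\cup T}$, neither containing $1$. By Lemma \ref{dominant} such a soliton cannot be incident to any vertex $v_{a,b,c}$ with $1\notin\{a,b,c\}$, since every region at those vertices contains $1$. Now suppose a bounded soliton has type $[i,j]$ with $1\notin\{i,j\}$: it would have a trivalent vertex at each end, each of which (by the previous remark) must be a new vertex, while the only new vertex incident to an $[i,j]$-soliton is $v_{1,i,j}$ itself; hence the soliton could meet at most one trivalent vertex, a contradiction. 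Therefore every new line-soliton not of type $[1,j]$ is unbounded, and a bounded soliton whose two adjacent regions both contain $1$ comes from $\CC_{-\infty}(\mathcal{M'})$ through $\mathcal{R}$; this is exactly the pair of ``in particular'' assertions.

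The deduction itself is short once Theorem \ref{induction} is in hand, so the main obstacle I anticipate is the bookkeeping in the last step: one must track precisely which regions contain the index $1$ (equivalently, which side of $T_1$ each feature lies on) in order to rule out a new non-$T_1$ soliton joining two trivalent vertices. The essential leverage is Lemma \ref{dominant}, which forces every region at an index-$1$-free vertex to contain $1$ and thereby confines all genuinely new structure to the trip $T_1$ together with the region to its right.
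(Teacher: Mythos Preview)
Your proposal is correct and follows the same approach the paper takes: the paper's proof is simply the one-line remark that Theorem~\ref{induction} immediately implies the corollary, and what you have written is a careful unpacking of that implication via Lemma~\ref{dominant} and the ``moreover'' clause of Theorem~\ref{induction}. One small expository point: in your final paragraph you should make the logical order explicit---first observe that a bounded $[i,j]$-soliton with $1\notin\{i,j\}$ has adjacent region labels $\{i\}\cup S$ and $\{j\}\cup S$, so either both contain $1$ (in which case the soliton lies in $\mathcal{R}$ and hence in $\CC_{-\infty}(\mathcal{M'})$) or neither does; only in the latter case do you invoke the ``previous remark'' to force both endpoints to be $v_{1,i,j}$ and reach a contradiction. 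You do say this, but it comes after the contradiction step rather than before it.
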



We can now complete the proof of Theorem \ref{t<<0}.
This proof will repeatedly
use the characterization of unbounded line-solitons
given by Theorem \ref{algo}.

\begin{proof}
Recall that $\mathcal{M} = \mathcal{M}(L)$ and 
$\mathcal{M'} = \mathcal{M}(L')$, where $L'$ is $L$ with 
the top row removed.
By Theorem \ref{induction}, we can construct the asymptotic contour plot
$\CC_{-}(\mathcal{M})$ inductively from the $\Le$-diagram 
$L$:
we start by drawing the asymptotic contour plot
associated with its bottom row, and then consider what happens when
we add back one row at a time.  On the other hand, by Lemma 
\ref{lem:algo},
the construction of Algorithm \ref{LeToPlabic} 
can also be viewed as an inductive procedure which involves 
adding one row at a time to the $\Le$-diagram.
Using Lemma \ref{lem:asymptotics} 
and Theorem \ref{algo}, we see that Algorithm \ref{LeToPlabic} produces
a (generalized) plabic graph whose labels on unbounded edges agree with 
the labels of the unbounded line-solitons for the soliton graph
of any $A\in S_{L}^{tnn}$.  The same is true for $A' \in S_{L'}^{tnn}$.

Let us now characterize the new vertices and line-solitons
which $\CC_{-}(\mathcal{M})$ contains, but which 
$\CC_{-}(\mathcal{M'})$ did not.  In particular,
we will show that the set of new vertices is precisely 
the set of $v_{1,b,c}$
(where $1<b<c$), such that 
either $c\to b$ is a nonexcedance of
$\pi = \pi(\mathcal{M})$, or $c\to b$ is a nonexcedance of 
$\pi' = \pi(\mathcal{M'})$, but not both.
Moreover, if $c\to b$ is a nonexcedance of $\pi$, then 
$v_{1,b,c}$ is white, while if $c\to b$ is a nonexcedance of 
$\pi'$, then $v_{1,b,c}$ is black.

By Corollary \ref{cor:trip}, 
all new vertices have the form $v_{1,b,c}$ and lie on the trip 
$T_1$.  Additionally, all new line-solitons 
which begin at some point $v_{1,b,c}$ and 
which are not on the trip $T_1$ must be unbounded.  
Since the points $v_{1,b,c}$ are trivalent, each one is incident
to either an unbounded line-soliton in 
$\CC_{-}(\mathcal{M})$,
which lies in $\mathcal{R}^c$,
or is incident to a bounded soliton of type $[i,j]$ which lies
in $\mathcal{R}$.
(Possibly both  are true when $i=1$).

If $v_{1,b,c}$ is incident to a bounded line-soliton $[i,j]$
which lies in $\mathcal{R}$, that soliton must have been unbounded in  
$\CC_{-}(\mathcal{M'})$, and hence came from 
a nonexcedance $j\to i$ in $\pi'$.
(All excedances of $\pi'$ are also excedances in $\pi$.)
In particular, $i\neq 1$, so we can conclude that $v_{1,b,c} = v_{1,i,j}$.
Conversely, if $j\to i$ is a nonexcedance of $\pi'$ which
is not a nonexcedance of $\pi$, then the corresponding 
unbounded line-soliton $[i,j]$ from 
$\CC_{-}(\mathcal{M'})$
becomes a bounded line-soliton $[i,j]$ in 
$\CC_{-}(\mathcal{M})$ which is incident to $v_{1,i,j}$.
This characterizes the new points $v_{1,b,c}$ which are incident to 
a bounded line-soliton $[i,j]$ contained in $\mathcal{R}$.

Each other new point $v_{1,b,c}$ will be incident to either:
\begin{itemize}
\item one unbounded
  line-soliton $[i,j]$ of 
$\CC_{-}(\mathcal{M})$ which lies in  
$\mathcal{R}^c$
(plus two bounded line-solitons of $T_1$), or 
\item two unbounded line-solitons of 
$\CC_{-}(\mathcal{M})$ which lie in $\mathcal{R}^c$ 
(plus one bounded line-soliton of $T_1$).
\end{itemize}
Either way, it follows that $v_{1,b,c}$ is incident to an 
unbounded line-soliton $[i,j]$ where $i \neq 1$, such that
$j\to i$ is a nonexcedance of 
$\pi$ but not a nonexcedance
of $\pi'$.  Therefore  $v_{1,b,c} = v_{1,i,j}$.

Conversely, each nonexcedance $j\to i$ of $\pi$ (respectively, $\pi'$) 
such that $1<i<j$,
and such that $j\to i$ is not a nonexcedance of $\pi'$ (respectively, $\pi$), 
gives
rise to a point $v_{1,i,j}$ of 
$\CC_{-}(\mathcal{M})$.  This is simply because 
these line-solitons must have an endpoint 
in $\CC_{-}(\mathcal{M})$ which did not appear 
in $\CC_{-}(\mathcal{M'})$.

Also note that if $v_{1,b,c}$ is a new vertex such that 
$c\to b$ is a nonexcedance of $\pi$, then 
the line-soliton $[b,c]$ must go down (towards $\bar{y}<0$)
from $v_{1,b,c}$.  However, remembering the 
resonant condition (see Figure \ref{blackwhite}),
and using the fact that $1<b<c$, we see that $[b,c]$ cannot 
be the only line-soliton going down from $v_{1,b,c}$.
Therefore $v_{1,b,c}$ must have two line-solitons going down 
from it and one line-soliton going up from it, so it is a 
white vertex.

Similarly, if $v_{1,b,c}$ is a new vertex such that 
$c \to b$ is a nonexcedance of $\pi'$, then
the line-soliton $[b,c]$ must go up (towards $\bar{y} >0$)
from $v_{1,b,c}$.  By the resonant condition,
we see that $[b,c]$ cannot 
be the only line-soliton going up from $v_{1,b,c}$.
Therefore $v_{1,b,c}$ must have two line-solitons going up
from it and one line-soliton going down from it, so it is a 
black vertex.

Using the bijection from Definition \ref{Le2permutation},
it is straightforward to verify that the above description
also characterizes the set of new vertices 
which Algorithm \ref{LeToPlabic} associates to the top row of the 
$\Le$-diagram $L$.

Finally, let us discuss the order in which the vertices 
$v_{1,b,c}$ occur along the trip $T_1$ in the asymptotic contour plot.  
First note that 
the trip $T_1$ starts at $\bar{y}<0$ and along each 
line-soliton it always heads up (towards $\bar{y}> 0$).  
This follows from the resonance condition -- see Figure 
\ref{blackwhite} and take $i=1$.  Therefore the 
order in which we encounter the vertices $v_{1,b,c}$ along the trip
is given by the total order on the $\bar{y}$-coordinates of the vertices,
namely $\kappa_1 + \kappa_b + \kappa_c$.

We now claim that this total order is identical 
to the total order on the positive integers $1<b<c$,
that is, it does not depend on the 
choice of $\kappa_i$'s, as long as 
$\kappa_1 < \dots < \kappa_n$.  If we can show this,
then we will be done, because this is precisely the order
in which the new vertices occur along the trip 
$T_1$ in the graph $G_-(L)$.

 \begin{figure}[h]
\centering
\includegraphics[height=4cm]{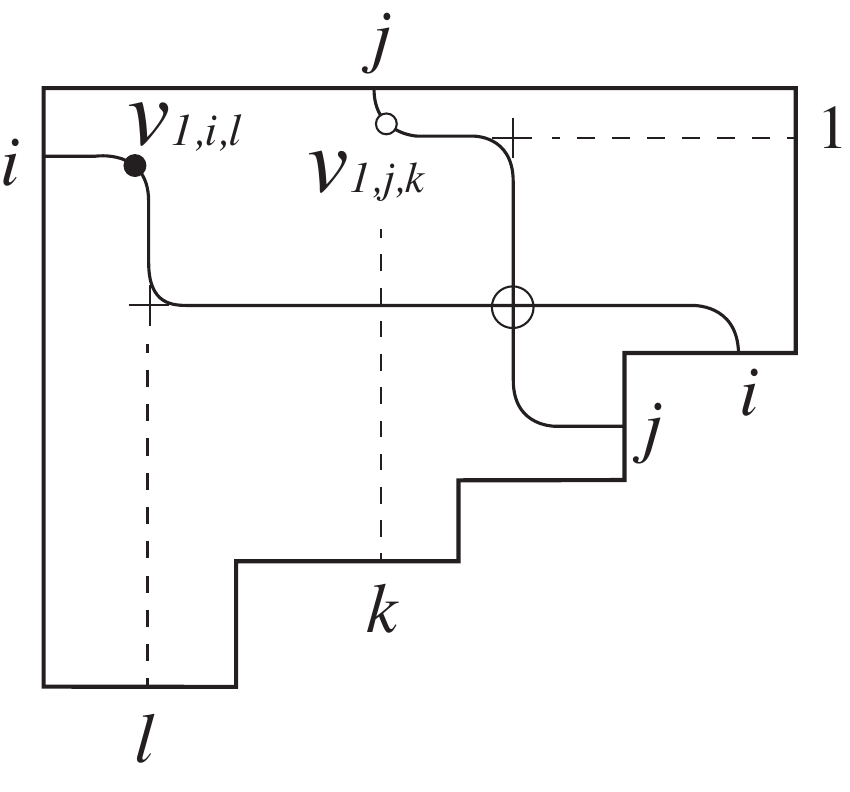}
\caption{}
\label{fig:pipecrossing}
\end{figure}

To prove the claim,
it is enough to show that among the set of new vertices
$v_{1,b,c}$, there are not two of the form 
$v_{1,i,\ell}$ and $v_{1,j,k}$ where 
$i<j<k<\ell$.  To see this, note that the 
indices $b$ and $c$ of the new vertices $v_{1,b,c}$ 
can be easily read off from the algorithm in 
Definition \ref{Le2permutation}: $c$ will come from
the bottom label of the corresponding column, while
$b$ will come from the northwest endpoint of the pipe
that $v_{1,b,c}$ lies on.  Therefore, 
if there are two new vertices $v_{1,i,\ell}$
and $v_{1,j,k}$, then they must come from 
a pair of crossing pipes, as in Figure 
\ref{fig:pipecrossing}.  Note that the crossing
of the pipes must have come from a $0$ in the 
$\Le$-diagram.  From the figure it is clear that
the pipe heading north from the crossing must 
turn west at some point, while the pipe heading 
west from the crossing must turn north at some point.
Both of these turning points must have come from a $+$
in the $\Le$-diagram, but now we see that the 
$\Le$-diagram violates the $\Le$-property.
This is a contradiction, and completes the proof.
\end{proof}

\subsection{The proof of Theorem \ref{t>>0}}

Theorem \ref{t>>0} can be seen as a corollary of Theorem \ref{t<<0}.
\begin{proof}
Recall that $\kappa_1 < \dots < \kappa_n$.
We define $\lambda_i = -\kappa_{n+1-i}$.  Then 
$\lambda_1 < \dots < \lambda_n$.  Set $\bar{y}' = -\bar{y}$.
Then \begin{align*}
\underset{J\in\mathcal{M}}\max \left\{
                     \sum_{i=1}^k \phi_{j_i}(\bar{x},\bar{y}) \right\}  
&= \underset{J\in\mathcal{M}}\max \left\{
                     \sum_{i=1}^k \kappa_{j_i} \bar{x} + \kappa_{j_i}^2 \bar{y} - \kappa_{j_i}^3 \right\}  \\
&= \underset{J\in\mathcal{M}}\min \left\{
                     \sum_{i=1}^k -\kappa_{j_i} \bar{x} - \kappa_{j_i}^2 \bar{y} + \kappa_{j_i}^3 \right\}  \\
&= \underset{J\in\mathcal{M}}\min \left\{
                     \sum_{i=1}^k \lambda_{n+1-j_i} \bar{x} - \lambda_{n+1-j_i}^2 \bar{y} -\lambda_{n+1-j_i}^3 \right\}  \\
&= \underset{J\in\mathcal{M^*}}\min \left\{
                     \sum_{i=1}^k \lambda_{j_i} \bar{x} - \lambda_{j_i}^2 \bar{y} -\lambda_{j_i}^3 \right\}  \\
&= \underset{J\in\mathcal{M^*}}\min \left\{
                     \sum_{i=1}^k \lambda_{j_i} \bar{x} + \lambda_{j_i}^2 \bar{y}' -
\lambda_{j_i}^3 \right\}.  \\
\end{align*}
Therefore 
$\CC_{+}(\mathcal{M})$ is the locus of $\R^2$ where 
the last equation above is not linear.  Comparing this with the definition of 
$\CC_{-}(\mathcal{M})$, we see that 
$\CC_{+}(\mathcal{M})$  can be constructed from 
$\CC_{-}(\mathcal{M^*})$, with each label $j$ replaced by $n+1-j$, and with 
an involution replacing $\bar{y}$ by $-\bar{y}$.
The effect of the involution is to switch the colors of the black and white vertices
in the plabic graph, or equivalently, to replace every boundary vertex $i$ of the plabic graph
by $\pi(i)$.  This completes the proof of the theorem.
\end{proof}

\begin{example}\label{exercise}
 We invite readers to reconstruct the asymptotic contour plots in Figure \ref{contour-example}.
The plots correspond to the TP Schubert cell $S_{\pi}^{tnn}$ with $\pi=(4,5,1,2,6,3)$.
Take the $\kappa$-parameters as $(\kappa_1,\ldots,\kappa_6)=(-1,-\frac{1}{2},0,\frac{1}{2},1,\frac{3}{2})$.
Calculate the trivalent vertices $v_{i,j,k}=(\mp(\kappa_i\kappa_j+\kappa_j\kappa_k+\kappa_i\kappa_k),\pm(\kappa_i+\kappa_j+\kappa_k))$ obtained from the $\Le$-diagram and its dual.
There are 8 trivalent vertices for both $t\gg 0$ and $t\ll0$ as shown in Figure \ref{fig:exercise}.
Then following Theorem \ref{t<<0}, one obtains the asymptotic contour plots for $\CC_{\pm}(\mathcal M)$
which approximate the plots in Figure \ref{contour-example}.
 \begin{figure}[h]
\centering
\includegraphics[height=3cm]{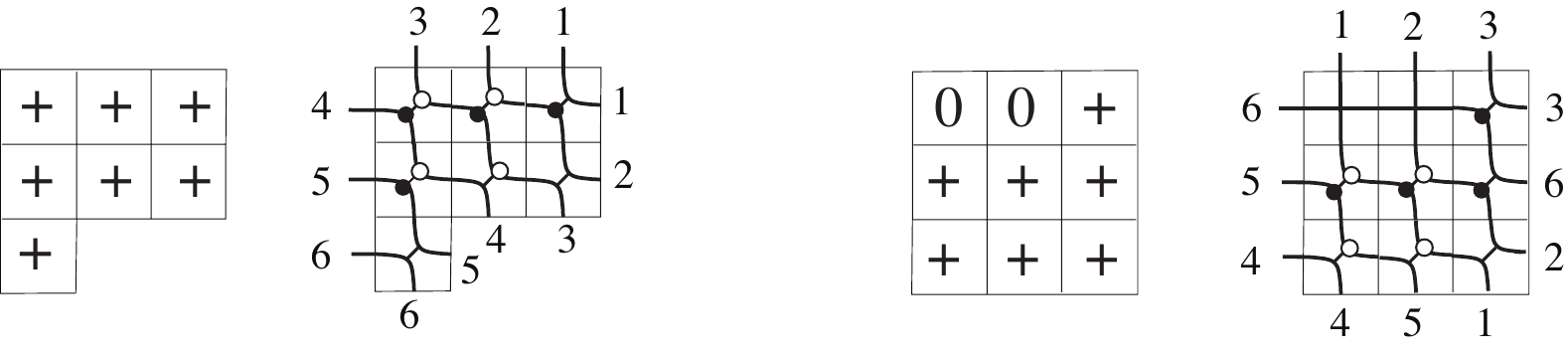}
\caption{The Le-diagrams $L$, $L^*$, and the plabic graphs for $\pi=(4,5,1,2,6,3)$.
Note that there are two X-crossings corresponding to the 0's in $L^*$ (cf. Figure \ref{contour-example}).
} \label{fig:exercise}
\end{figure}
\end{example}


\section{X-crossings and vanishing Pl\"ucker coordinates} \label{X-crossing}


In this section we show that for an arbitrary matroid stratum
$S_{\mathcal{M}}$, and for an arbitrary vector $\a$,
each $X$-crossing 
in the asymptotic contour plot 
$\CC_{\a}(\mathcal{M})$ corresponds to a vanishing 
Pl\"ucker coordinate, i.e. an index $J$ such that $J \notin \mathcal{M}$.
This implies that for any $A \in S_{\mathcal{M}}$,
the four Pl\"ucker coordinates corresponding to the dominant exponentials
incident to that $X$-crossing satisfy a 
``two-term" Pl\"ucker relation.  Note that in this 
section we are working over the real Grassmannian, as opposed to 
restricting to
$(Gr_{k,n})_{\geq 0}$.  One consequence of our main result
(Theorem \ref{th:X-crossing}) 
is that the asymptotic contour plots (and hence the soliton graphs)
coming from the totally positive Grassmannian $(Gr_{k,n})_{>0}$
have no $X$-crossings.


Before stating Theorem \ref{th:X-crossing}, 
we need some notation.
Let $h_j(x_1,\ldots,x_r)$ be the \emph{complete homogeneous symmetric 
polynomial} of degree $j$ defined by
\[
h_j(x_1,\ldots,x_r)=\sum_{n_1+\cdots+n_r=j}x_1^{n_1}x_2^{n_2}\cdots x_r^{n_r}.
\]
Then for each $\a=(a_3,\ldots,a_m)$, we define
\begin{equation}\label{gamma}
\gamma_{\a}(x_1,\ldots,x_r):=\sum_{j=1}^{m-2}h_{j-1}(x_1,\ldots,x_r)a_{j+2},
\end{equation}

\begin{theorem}\label{th:X-crossing}
Let $S_{\mathcal{M}}$ be a matroid stratum in $Gr_{k,n}$, and consider the 
corresponding asymptotic contour plot 
$\CC_{\a}(\mathcal{M})$
for fixed $\a$. 
Choose $1 \leq h < i < j < \ell \leq n$, and set $\gamma_{\a}(\kappa):=\gamma_{\a}(\kappa_h,\kappa_i,\kappa_j,\kappa_{\ell})$.  In the statements below,
$S$ is a $(k-2)$-element subset of $\{1,2,\dots,n\}$ which is 
disjoint from $\{h,i,j,\ell\}$.
\begin{enumerate}
\item Suppose there is an $X$-crossing 
involving 
line-solitons $[h,\ell]$ and $[i,j]$. 
  \begin{enumerate}
  \item 
   Then if $\kappa_h+\kappa_{\ell} > \kappa_i + \kappa_j$, 
   the dominant exponentials around the $X$-crossing 
in $\CC_{\a}(\mathcal{M})$ 
    are as in Figure \ref{fig:X-crossing} (a). 
   If $\gamma_{\a}(\kappa) <0$ then
  $S \cup \{h,\ell\} \notin \mathcal{M}$, and 
   if $\gamma_{\a}(\kappa)>0$ then
 $S \cup \{i,j\} \notin \mathcal{M}$.
   \item  
   Then if $\kappa_i+\kappa_{j} > \kappa_h + \kappa_{\ell}$, 
   the dominant exponentials around the $X$-crossing 
in $\CC_{\a}(\mathcal{M})$ 
    are as in Figure \ref{fig:X-crossing} (b).  
   If $\gamma_{\a}(\kappa)<0$
 then $S \cup \{i,j\} \notin \mathcal{M}$, and 
   if $\gamma_{\a}(\kappa)>0$ 
  then $S \cup \{h,\ell\} \notin \mathcal{M}$.
  \end{enumerate}
\item  Suppose there is an $X$-crossing involving line-solitons
$[h,i]$ and $[j,\ell]$.  
   Then the dominant exponentials around the $X$-crossing 
in $\CC_{\a}(\mathcal{M})$ 
    are as in Figure \ref{fig:X-crossing} (c).  
   If $\gamma_{\a}(\kappa)<0$
 then $S \cup \{j,\ell\} \notin \mathcal{M}$, and 
 if $\gamma_{\a}(\kappa)>0$ then
 $S \cup \{h,i\} \notin \mathcal{M}$.
\item  Suppose there is an $X$-crossing involving line-solitons
$[h,j]$ and $[i,\ell]$.  
   Then the dominant exponentials around the $X$-crossing 
in $\CC_{\a}(\mathcal{M})$ 
    are as in Figure \ref{fig:X-crossing} (d).  
   If $\gamma_{\a}(\kappa)<0$ then
  $S \cup \{h,j\} \notin \mathcal{M}$, and 
 if $\gamma_{\a}(\kappa)>0$ then
  $S \cup \{i,\ell\} \notin \mathcal{M}$.
\end{enumerate}
It follows that  in each of the above cases, we get a 
``two-term" Pl\"ucker relation for any $A \in S_{\mathcal{M}}$: 
\begin{itemize}
\item In Case (1), we have 
$\Delta_{i\ell S}(A) \Delta_{hj S}(A) = \Delta_{hiS}(A) \Delta_{j\ell S}(A)$.
\item In Case (2), we have 
$\Delta_{h\ell S}(A) \Delta_{ijS}(A) = \Delta_{hjS}(A) \Delta_{i\ell S}(A).$
\item In Case (3), we have
$\Delta_{h\ell S}(A) \Delta_{ij S}(A) = - \Delta_{hiS}(A) \Delta_{j\ell S}(A).$
\end{itemize}
where $\Delta_{hiS}$ is shorthand for $\Delta_{\{h,i\}\cup S}$, etc.
\end{theorem}
\begin{figure}[h]
\centering
\includegraphics[height=1.5in]{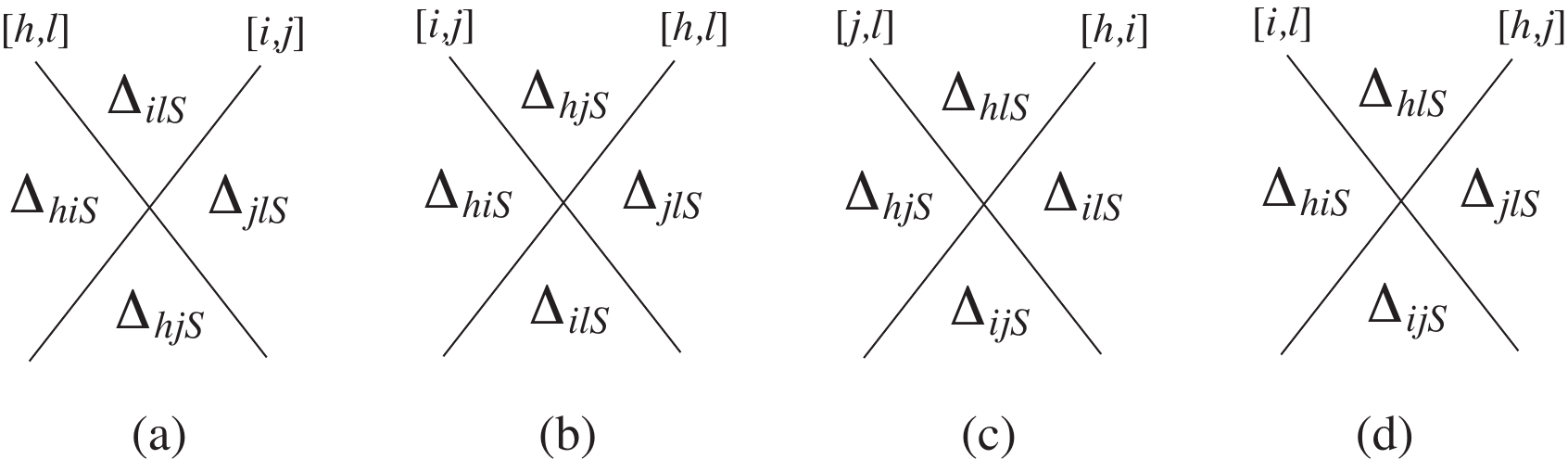}
\caption{Different types of $X$-crossings.}
\label{fig:X-crossing}
\end{figure}

Corollary \ref{no-X} follows immediately from Theorem 
\ref{th:X-crossing}.
\begin{corollary}\label{no-X}
Let $\mathcal{M} = {[n] \choose k}$  
and consider the corresponding \emph{uniform matroid stratum}
$S_{\mathcal{M}} \subset Gr_{k,n}$. 
Then for any vector $\a$,
there are no $X$-crossings
in the
asymptotic contour plot 
$\CC_{\a}(\mathcal{M})$.  In particular,
the asymptotic contour plots coming from the totally positive
Grassmannian $(Gr_{k,n})_{>0}$ have no $X$-crossings.
\end{corollary}

\begin{remark}
Note that Case (3) (i.e. Figure \ref{fig:X-crossing} (d)) is impossible for 
$A \in (Gr_{kn})_{\geq 0}$, since 
the relation 
$\Delta_{h\ell S}(A) \Delta_{ij S}(A) = - \Delta_{hiS}(A) \Delta_{j\ell S}(A)$
implies that one of these four Pl\"ucker coordinates must have a sign
which is different from the other three.
Therefore asymptotic contour plots associated to positroid cells
cannot contain $X$-crossings involving line-solitons $[h,j]$
and $[i,\ell]$ for $h<i<j<\ell$.
\end{remark}

\subsection{Proof of Theorem \ref{th:X-crossing}}

First note that if we are considering the neighborhood of 
an $X$-crossing formed by two line-solitons on indices
${h,i,j,\ell}$, then we may as well assume that 
$\{h,i,j,\ell\}=\{1,2,3,4\}$ and $S = \emptyset$.

Recall that $\kappa_1 < \dots < \kappa_n$.  Also recall
that 
\[
\theta_i(\bar{x}, \bar{y},\a) = \kappa_i \bar{x} + 
\kappa_i^2 \bar{y} + \sum_{p=3}^m\kappa_i^pa_p
\] 
and the asymptotic contour plot 
$\CC_{\a}(\mathcal{M})$ is defined to be the locus in $\R^2$ where
\begin{equation*}
f_{\mathcal{M}}(\bar{x},\bar{y},\a)=\underset{J\in\mathcal{M}}\max \left\{
                     \sum_{i=1}^k \theta_{j_i}(\bar{x},\bar{y},\a) \right\}
\end{equation*}
is not linear.  

Recall from \eqref{eq-soliton} that a line-soliton of type $[i,j]$ in 
 $\CC_{\a}(\mathcal{M})$ lies on the line $L_{ij}$ whose equation is
\begin{equation}\label{Lij}
\bar{x}+(\kappa_i+\kappa_j)\bar{y}+\sum_{p=1}^{m-2}h_{p+1}(\kappa_i,\kappa_j)a_{p+2}=0.
\end{equation}
\begin{lemma}\label{lem:v1}
Let $\kappa_a<\kappa_b<\kappa_c$.
Then the $\bar y$-coordinate of the trivalent vertex $v_{abc}=(v^{\bar x}_{abc},v^{\bar y}_{abc})$ where the lines $L_{a,b}$, $L_{b,c}$, and $L_{a,c}$
mutually intersect
is given by
\[
v_{abc}^{\bar y}=-\sum_{p=1}^{m-2}h_p(\kappa_a,\kappa_b,\kappa_c)a_{p+2}.
\]
\end{lemma}

\begin{proof}
 From the intersection between $L_{a,b}$ and $L_{b,c}$, we have
 \[
 (\kappa_c-\kappa_a)\bar{y}+\sum_{p=1}^{m-2}[h_{p+1}(\kappa_b,\kappa_c)-h_{p+1}(\kappa_a,\kappa_b)]a_{p+2}=0.
 \]
 So we need to show that
 \[
 h_{p+1}(\kappa_b,\kappa_c)-h_{p+1}(\kappa_a,\kappa_b)=(\kappa_c-\kappa_a)h_p(\kappa_a,\kappa_b,\kappa_c).
 \]
 This follows from Lemma \ref{lem:h-function} below.
\end{proof}

\begin{lemma}\label{lem:h-function}
For each $i\ge1$, we have
\[
h_i(x_1,\ldots,x_r,x_{\alpha})-h_i(x_1,\ldots,x_r,x_{\beta})=(x_{\alpha}-x_{\beta})h_{i-1}(x_1,\ldots,x_r,x_{\alpha},x_{\beta}).
\]
\end{lemma}
\begin{proof}
Recall that the generating function for the 
homogeneous symmetric polynomials  is given by
\[
\exp\left[\sum_{i=1}^{\infty}\frac{1}{i}\left(\sum_{j=1}^rx_j^i\right)\lambda^i\right]=\sum_{i=0}^{\infty}h_i(x_1,\ldots,x_r)\lambda^i.
\]
 Then we have
 \begin{align*}
 &\sum_{i=1}^{\infty}\left[h_i(x_1,\ldots,x_r,x_\alpha)-h_i(x_1,\ldots,x_r,x_\beta)\right] \lambda^i \\
= &\exp\left[\sum_{i=1}^{\infty}\frac{1}{i}\left(\sum_{j=1}^{r}x_j^i+x_{\alpha}^i\right)\lambda^i\right] -
  \exp\left[\sum_{i=1}^{\infty}\frac{1}{i}\left(\sum_{j=1}^{r}x_j^i+x_{\beta}^i\right)\lambda^i\right]  \\
= &\exp\left[\sum_{i=1}^{\infty}\frac{1}{i}\left(\sum_{j=1}^{r}x_j^i+x_{\alpha}^i+x_{\beta}^i\right)\lambda^i\right]
\left(e^{-\sum\frac{1}{i}x_{\beta}^i\lambda^i}-e^{-\sum\frac{1}{i}x_{\alpha}^i\lambda^i}\right)\\
=&\left[\sum_{i=0}^{\infty}h_i(x_1,\ldots,x_r,x_\alpha,x_\beta)\lambda^i\right](x_{\alpha}-x_{\beta})\lambda \\
=&(x_{\alpha}-x_{\beta})\sum_{i=1}^{\infty}h_{i-1}(x_1,\ldots,x_r,x_\alpha,x_\beta)\lambda^i.
\end{align*}
Here we have used the formula
\[
\sum_{i=1}^{\infty}\frac{1}{i}x^i=-\ln(1-x).
\]
\end{proof}

\begin{lemma}\label{lem:2}
Recall from \eqref{gamma} and Theorem \ref{th:X-crossing} the
definition of  $\gamma_{\a}(\kappa)=\gamma_{\a}(\kappa_h,\kappa_i,\kappa_j,\kappa_{\ell})$.  Then
using Lemma \ref{lem:v1}, we have
\begin{itemize}
\item[(i)] if $\gamma_{\a}(\kappa)<0$, then $v_{123}^{\bar y}<v_{124}^{\bar y}<v_{134}^{\bar y}<v_{234}^{\bar y}$, and
\item[(ii)] if $\gamma_{\a}(\kappa)>0$, then $
v_{123}^{\bar y}>v_{124}^{\bar y}>v_{134}^{\bar y}>v_{234}^{\bar y}.$
\end{itemize}
\end{lemma}
\begin{proof}
Using Lemma \ref{lem:h-function},
we compute
\begin{align*}
v_{abd}^{\bar y}-v_{abc}^{\bar y}&=-\sum_{p=1}^{m-2}\left[h_p(\kappa_a,\kappa_b,\kappa_d)-
h_p(\kappa_a,\kappa_b,\kappa_c)\right]\,a_{p+2}\\
&=-(\kappa_d-\kappa_c)\left[\sum_{p=1}^{m-2}h_{p-1}(\kappa_a,\kappa_b,\kappa_c,\kappa_d)a_{p+2}\right]\\
&=-(\kappa_d-\kappa_c)\gamma_{\a}(\kappa).
\end{align*}
Then it is straightforward to show the assertion.
\end{proof}

We have the following total
order on the slopes of the lines $L_{ij}$ for $1 \leq i < j \leq 4$:
\begin{itemize}
\item[(a)] If $\kappa_1+\kappa_4 > \kappa_2+\kappa_3$ then 
$$\kappa_1 + \kappa_2 < \kappa_1 + \kappa_3 < \kappa_2+\kappa_3 < \kappa_1 + \kappa_4 < \kappa_2 + \kappa_4 < \kappa_3 + \kappa_4.$$
\item[(b)] If
$\kappa_1+\kappa_4 < \kappa_2+\kappa_3$ then 
$$\kappa_1 + \kappa_2 < \kappa_1 + \kappa_3 < \kappa_1+\kappa_4 < \kappa_2 + \kappa_3 < \kappa_2 + \kappa_4 < \kappa_3 + \kappa_4.$$
\end{itemize}

\begin{proposition}
Suppose $\gamma_{\a}(\kappa)=\gamma_{\a}(\kappa_1,\kappa_2,\kappa_3,\kappa_4)<0$.
If $\kappa_1 + \kappa_4 > \kappa_2 + \kappa_3$ then the configuration 
of lines $L_{ij}$ for $1 \leq i < j \leq 4$ is as in the left
of Figure \ref{fig:6lines} (up to perturbing the $\kappa_i$'s, which
perturbs the slopes of lines while keeping the total order as 
shown above).  And if 
$\kappa_1 + \kappa_4 < \kappa_2 + \kappa_3$ then the configuration 
of lines is as in the right of Figure \ref{fig:6lines}. 

For the other cases with
$\gamma_{\a}(\kappa)>0$, the configurations of lines $L_{ij}$ can be  obtained by
a  $180^{\circ}$ rotation of those figures.
\end{proposition}
\begin{figure}[h]
\centering
\includegraphics[height=1.7in]{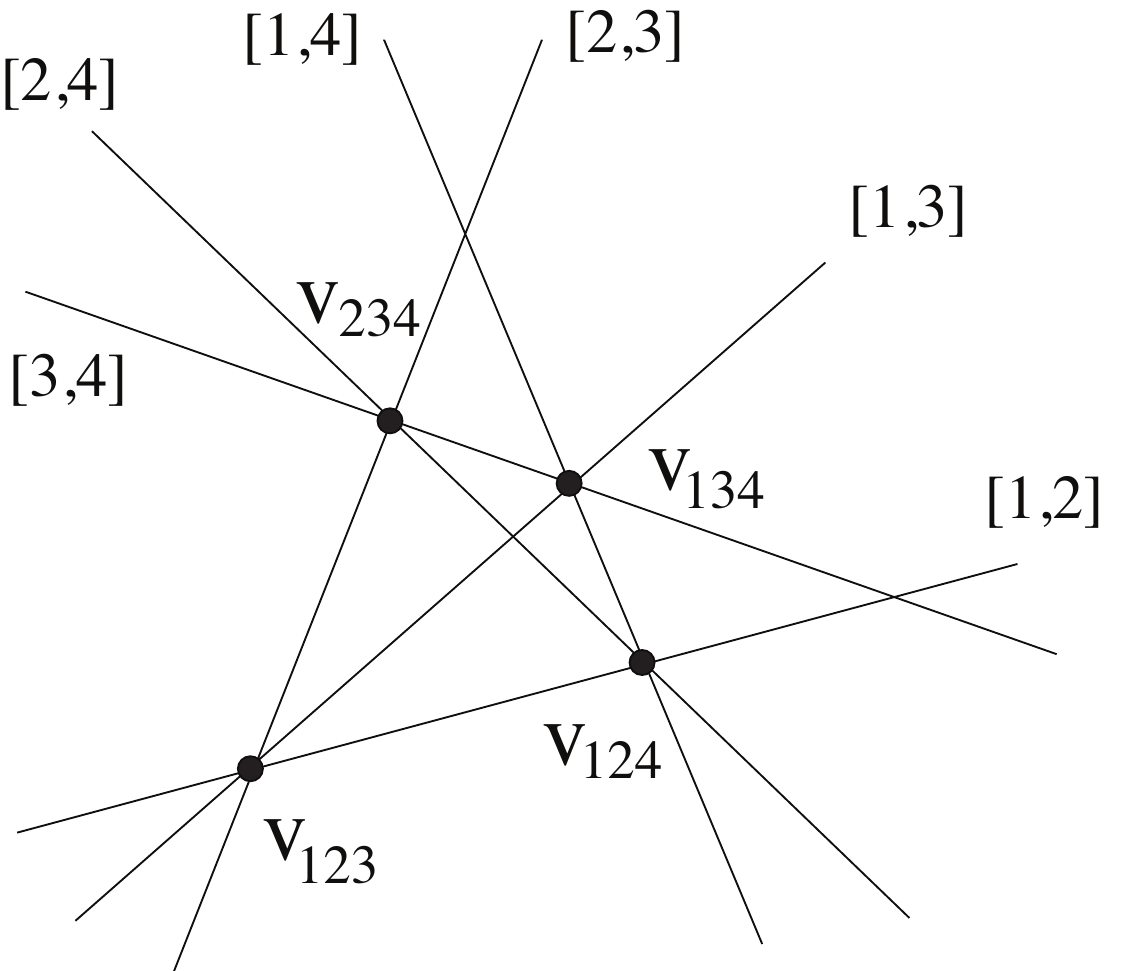}
\hskip 1cm
\includegraphics[height=1.7in]{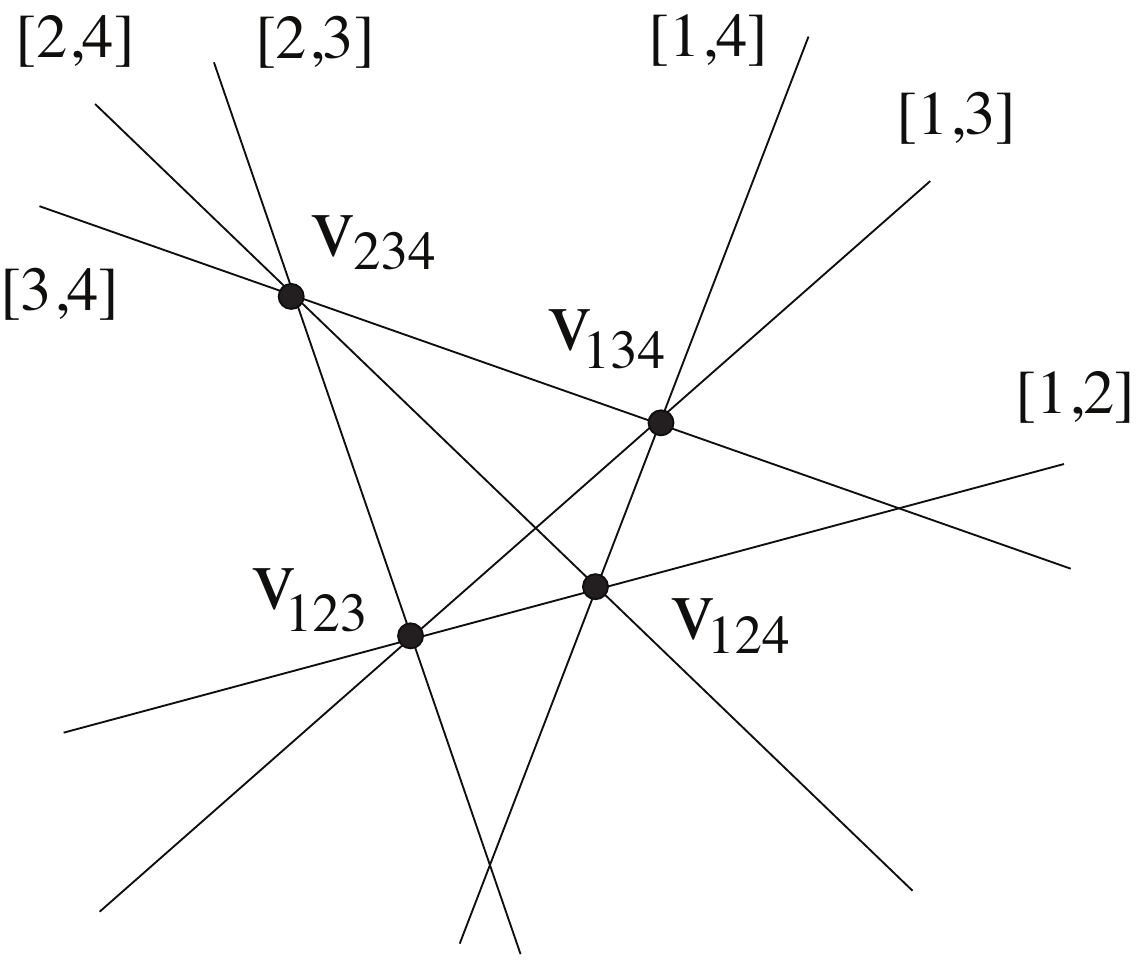}
\caption{The configuration of the lines $L_{ij}$  for $\gamma_{\a}(\kappa)<0$, based on whether 
$\kappa_1+\kappa_4>\kappa_2+\kappa_3$ or 
$\kappa_1+\kappa_4<\kappa_2+\kappa_3$.}
\label{fig:6lines}
\end{figure}

\begin{proof}
Let $x$ be the point where $L_{14}$ and $L_{23}$ meet.
If $\kappa_1+\kappa_4 > \kappa_2 + \kappa_3$, then 
$L_{24}$ must intersect both 
$L_{14}$ and $L_{23}$ \emph{below} $x$.  This follows from the fact
that $\kappa_2+\kappa_4 > \kappa_1 + \kappa_4$ and 
$v_{124}^{\bar y} < v_{234}^{\bar y}$ (from Lemma \ref{lem:2}).  
While if $\kappa_1+\kappa_4 < \kappa_2 + \kappa_3$, then 
$L_{24}$ must intersect both 
$L_{14}$ and $L_{23}$ \emph{above} $x$.  This follows from the fact
that $\kappa_2+\kappa_4 > \kappa_2 + \kappa_3$ and 
$v_{124}^{\bar y} < v_{234}^{\bar y}$.  
In either case, we can now draw $L_{24}$, and so have locations
for the points $v_{124}$ and $v_{234}$.

Now consider the placement of $L_{13}$.  If 
$\kappa_1+\kappa_4 > \kappa_2 + \kappa_3$ 
(respectively $\kappa_1+\kappa_4 < \kappa_2 + \kappa_3$)
then $\kappa_1 + \kappa_3< \kappa_2 + \kappa_3$
(respectively, 
$\kappa_1 + \kappa_3< \kappa_1 + \kappa_4$).
And $L_{13}$ intersects $L_{14}$ and $L_{23}$ in 
$v_{134}$ and $v_{123}$, which must satisfy 
$v_{234}^{\bar y} > v_{134}^{\bar y} > v_{124}^{\bar y} > v_{123}^{\bar y}$.
So $L_{13}$ must be as shown in Figure \ref{fig:6lines}.
We now have locations for all four points $v_{ijk}$,
so we can draw in all six lines $L_{ij}$.
\end{proof}

Now for each region in the two figures, we will compute the total
order on $\{\theta_1, \theta_2, \theta_3, \theta_4\}$.
If $\theta_a(\bar{x}, \bar{y},\a) > 
\theta_b(\bar{x}, \bar{y},\a) > 
\theta_c(\bar{x}, \bar{y},\a) > 
\theta_d(\bar{x}, \bar{y},\a)$ then we will write
$abcd$ as shorthand for this order.  Also note that if 
$\bar{y}$ is finite then for $\bar{x} \ll0$, we have
$\theta_1(\bar{x},\bar{y},\a) >  
\theta_2(\bar{x},\bar{y},\a) >  
\theta_3(\bar{x},\bar{y},\a) >  
\theta_4(\bar{x},\bar{y},\a).$  This allows us to compute the total orders
on the $\theta_i$'s, as shown in Figure \ref{fig:linesorder}. 
\begin{figure}[h]
\centering
\includegraphics[height=1.8in]{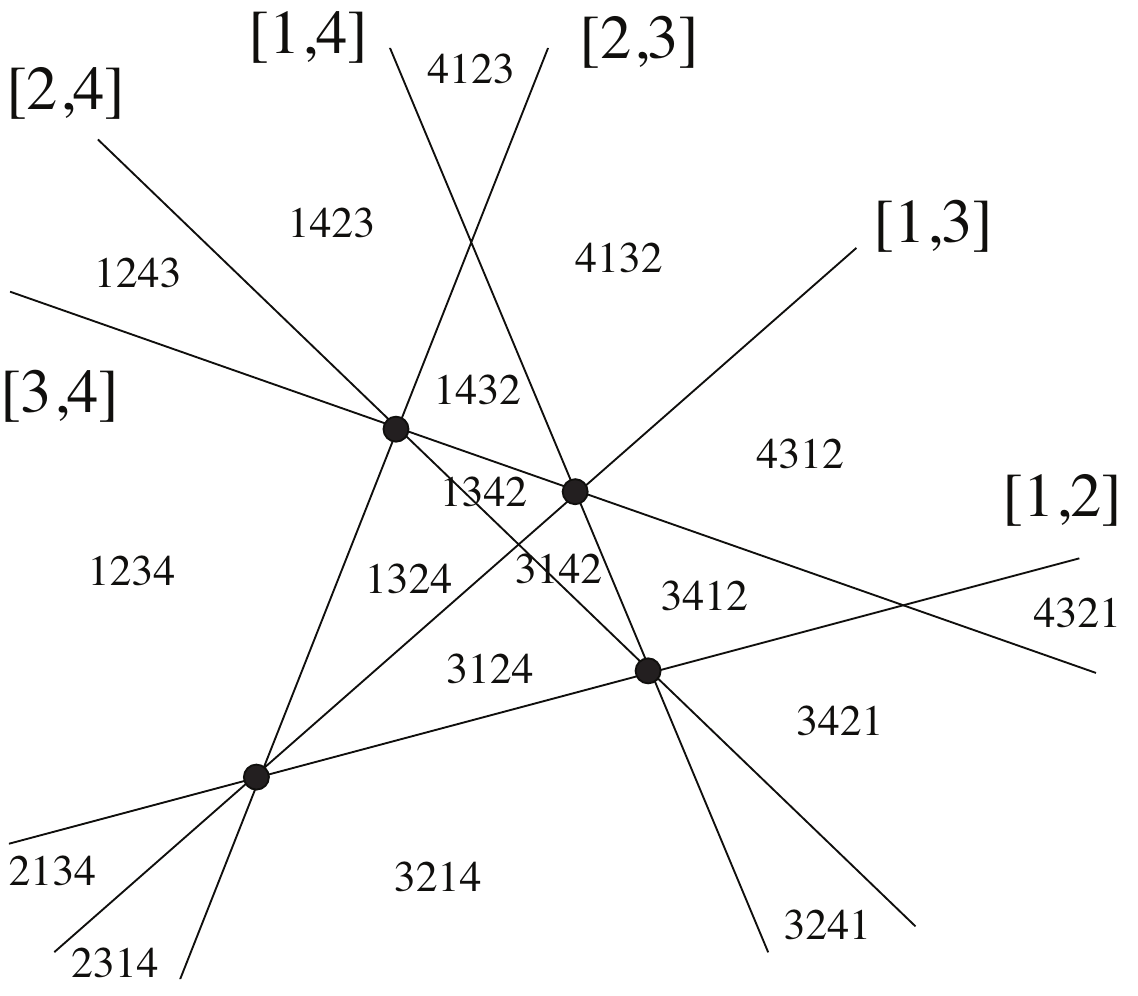}
\hskip 1cm
\includegraphics[height=1.8in]{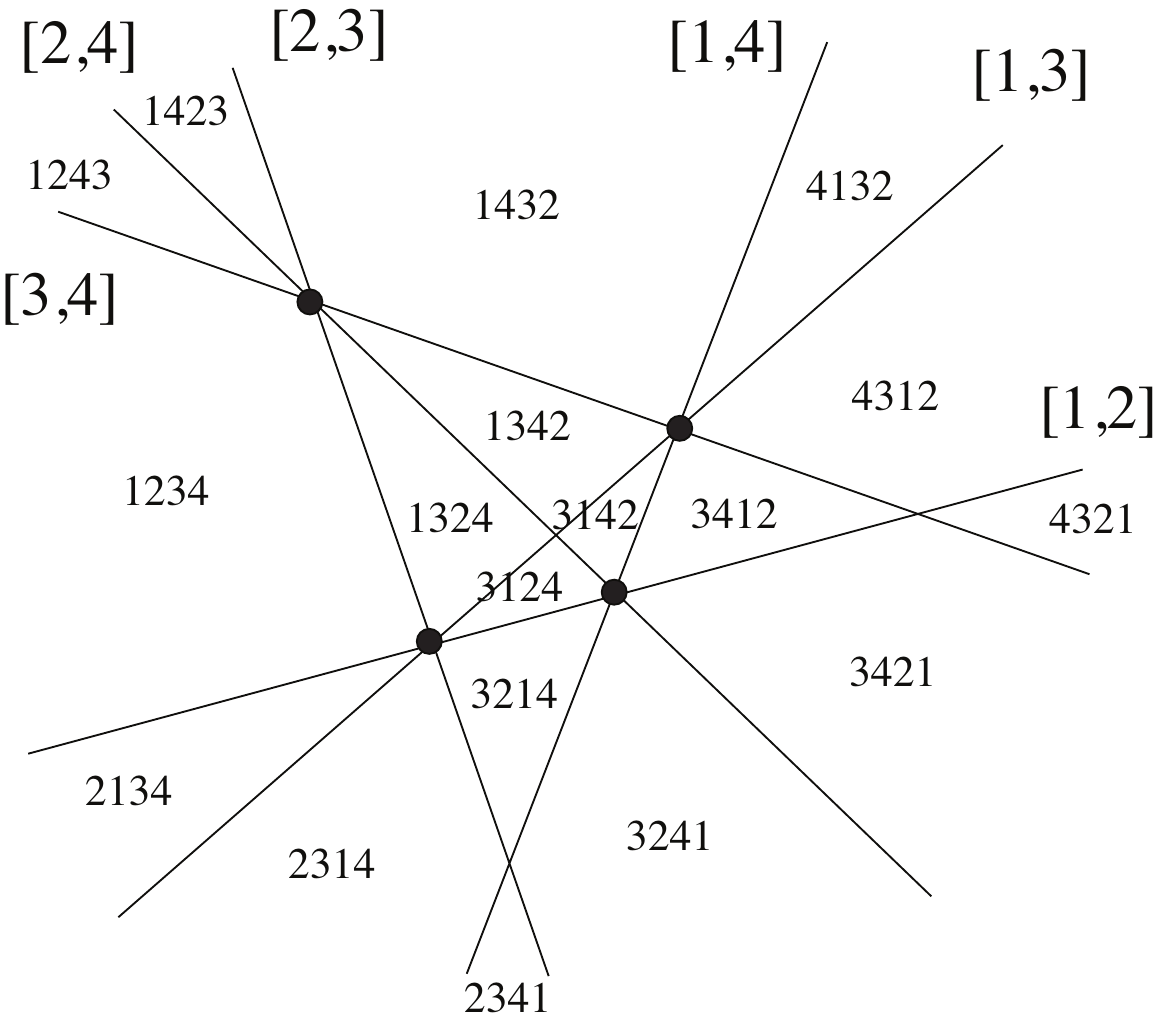}
\caption{The total order on the $\theta_i(\bar{x},\bar{y},\a)$'s  for $\gamma_{\a}(\kappa)<0$, 
based on whether 
$\kappa_1+\kappa_4>\kappa_2+\kappa_3$ or 
$\kappa_1+\kappa_4<\kappa_2+\kappa_3$.}
\label{fig:linesorder}
\end{figure}

Using Figure \ref{fig:linesorder} for $\gamma_{\a}(\kappa)<0$, we can 
compute the dominant exponentials.  To compute
the dominant exponentials  in a given region, we consider the 
region label $abcd$ and choose the leftmost two indices
 such that the corresponding
Pl\"ucker coordinate is nonzero.

We now prove Theorem \ref{th:X-crossing}.
\begin{proof}
Consider Part (1a) of the theorem.  Suppose that we see an 
$X$-crossing in the contour plot involving line-solitons of types
$[1,4]$ and $[2,3]$. Let us consider the local neighborhood
of this $X$-crossing, looking at the left of Figure \ref{fig:linesorder}.
Note that in all four regions immediately incident to the $X$-crossing,
we have that each of $\theta_1$ and $\theta_4$ is greater than each of 
$\theta_2$ and $\theta_3$.  So at $\gamma_{\a}(\kappa)<0$, if $\Delta_{14}(A) \neq 0$,
then this $X$-crossing would not appear in the contour plot
($E_{14}$ would be the dominant exponential in a neighborhood of 
the $X$-crossing).  Therefore we must have $\Delta_{14}(A) = 0$.

Similarly, at $\gamma_{\a}(\kappa)>0$, if $\Delta_{23}(A) \neq 0$, then this 
$X$-crossing would not appear in the contour plot
($E_{23}$ would be the dominant exponential in a neighborhood of the 
$X$-crossing.)  Therefore we must have $\Delta_{23}(A) = 0$.

Proving Part (1b) of the theorem is precisely analogous, but 
we look at the right of Figure \ref{fig:linesorder}.  
Proving Parts (2) and (3) are very similar, and we leave them to the 
reader.
\end{proof}


\section{TP Schubert cells, reduced plabic graphs,  and cluster algebras}\label{Reduced-Cluster}

The most important plabic graphs are those which are {\it reduced}
\cite[Section 12]{Postnikov}. Although it is not 
easy to characterize reduced plabic graphs (they are defined to 
be plabic graphs whose {\it move-equivalence class} contains no 
graph to which one can apply a {\it reduction}), they are 
very important because of their application to cluster
algebras \cite{Scott} and parameterizations of cells \cite{Postnikov}.

In this section, after recalling definitions,
we will state and prove a new characterization of reduced plabic graphs.
We then use this characterization to prove that soliton graphs
for TP Schubert cells which have no $X$-crossings
are in fact reduced plabic graphs.  
Using Corollary \ref{no-X}, 
we deduce that the set of dominant exponentials labeling any soliton
graph for the TP Grassmannian is a cluster for the cluster algebra associated
to the Grassmannian.  We conjecture that the coordinate ring
of each Schubert variety has a cluster algebra structure
in which the set of dominant exponentials
labeling a soliton graph without $X$-crossings for the corresponding TP Schubert cell is a 
cluster.

\subsection{Reduced plabic graphs}\label{sec:moves}

We will always assume that a plabic graph is {\it leafless}, i.e. that 
it has no non-boundary leaves, and that it has no isolated components.
In order to define {\it reduced}, we first
define some local transformations of plabic graphs.

(M1) SQUARE MOVE.  If a plabic graph has a square formed by
four trivalent vertices whose colors alternate,
then we can switch the
colors of these four vertices.
\begin{figure}[h]
\centering
\includegraphics[height=.5in]{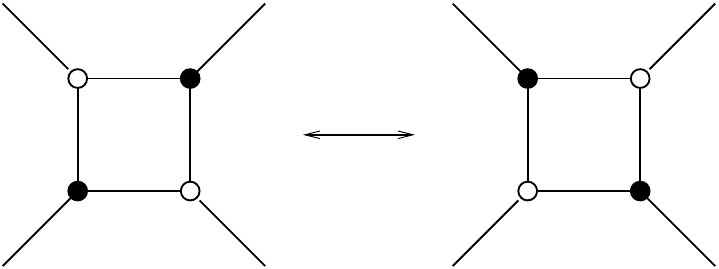}
\caption{Square move}
\label{M1}
\end{figure}

(M2) UNICOLORED EDGE CONTRACTION/UNCONTRACTION.
If a plabic graph contains an edge with two vertices of the same color,
then we can contract this edge into a single vertex with the same color.
We can also uncontract a vertex into an edge with vertices of the same
color.
\begin{figure}[h]
\centering
\includegraphics[height=.3in]{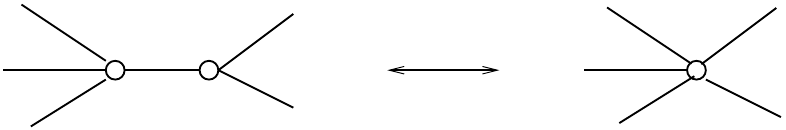}
\caption{Unicolored edge contraction}
\label{M2}
\end{figure}

(M3) MIDDLE VERTEX INSERTION/REMOVAL.
If a plabic graph contains a vertex of degree 2,
then we can remove this vertex and glue the incident
edges together; on the other hand, we can always
insert a vertex (of any color) in the middle of any edge.

\begin{figure}[h]
\centering
\includegraphics[height=.07in]{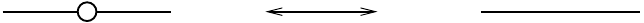}
\caption{Middle vertex insertion/ removal}
\label{M3}
\end{figure}

(R1) PARALLEL EDGE REDUCTION.  If a network contains
two trivalent vertices of different colors connected
by a pair of parallel edges, then we can remove these
vertices and edges, and glue the remaining pair of edges together.

\begin{figure}[h]
\centering
\includegraphics[height=.25in]{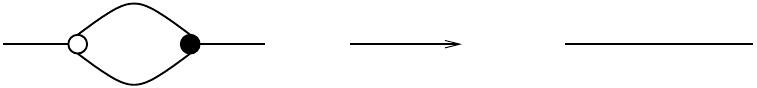}
\caption{Parallel edge reduction}
\label{R1}
\end{figure}

\begin{definition}\cite{Postnikov}
Two plabic graphs are called \emph{move-equivalent} if they can be obtained
from each other by moves (M1)-(M3).  The \emph{move-equivalence class} 
of a given plabic graph $G$ is the set of all plabic graphs which are move-equivalent
to $G$.
A leafless plabic graph without isolated components
is called \emph{reduced} if there is no graph in its move-equivalence 
class to which we can apply (R1).
\end{definition}

\begin{theorem}\cite[Theorem 13.4]{Postnikov}
Two reduced plabic graphs which each have $n$ boundary vertices
are move-equivalent if and only if they have the same 
trip permutation.
\end{theorem}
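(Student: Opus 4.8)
The plan is to prove the two implications separately; the forward implication (move-equivalent $\Rightarrow$ equal trip permutation) is routine, while the converse carries all the content.

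For the forward implication I would check that each of the moves (M1)--(M3) preserves the trip permutation, by a local analysis of how the rules-of-the-road trips pass through the affected region. At a degree-$2$ vertex any trip exits along the unique remaining edge regardless of the vertex color, so (M3) leaves every $T_i$ unchanged. Contracting an edge between two vertices of the same color (M2) does not alter the sequence of left/right turns experienced by any trip, so it too fixes $\pi_G$. For the square move (M1) I would enumerate the four local strands entering the external edges of the alternating square and verify directly that recoloring the four vertices fixes the pairing of their endpoints; since no trip outside the square is disturbed, $\pi_G$ is unchanged. This shows that move-equivalent graphs share a trip permutation.

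The converse is the substance: two reduced plabic graphs $G$ and $G'$ with the same trip permutation $\pi$ must be move-equivalent. First I would normalize using (M2) and (M3) so that both graphs are trivalent, which is harmless since these moves fix $\pi$ and any plabic graph is move-equivalent to a trivalent one. I would then argue by induction on the length $\ell(\pi)$ of the trip permutation, viewed as a decorated permutation. When $\pi$ is not the identity, choose an adjacent pair $(i,i+1)$ of boundary vertices for which $\pi s_i$ has smaller length, and expose a \emph{bridge}: a single edge joining the strands entering at $i$ and $i+1$ whose removal yields a reduced trivalent plabic graph with trip permutation $\pi'=\pi s_i$ (in the appropriate decorated sense) and one fewer face. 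Performing this reduction for the same $i$ on both $G$ and $G'$ produces reduced graphs with the common, shorter trip permutation $\pi'$; by the inductive hypothesis these are move-equivalent, and reattaching the bridge and transporting the move-equivalence back through the normalization yields $G \sim G'$.

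The hard part will be the bridge step, which has two components. First, one must show that when $\pi$ is nontrivial such a length-reducing bridge can always be \emph{exposed} near the boundary using only (M1)--(M3); this is where the square move, acting on the underlying strand diagram as a braid-type reconfiguration of crossings, is essential, and it is the plabic-graph analogue of the Tits--Matsumoto theorem that any two reduced words for $\pi$ differ by braid moves. Second, one must show that the bridge operation is well defined up to move-equivalence, independently of which admissible bridge is removed, so that the induction is consistent; this requires checking the reduction against all the local configurations produced by the moves and is the most delicate point. Once both facts are in place, the induction on $\ell(\pi)$ closes, with reducedness guaranteeing at each stage both that the minimal crossing number is realized and that bridge removal returns a reduced graph.
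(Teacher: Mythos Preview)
This statement is not proved in the present paper at all: it is quoted verbatim as \cite[Theorem 13.4]{Postnikov} and then \emph{used} (notably inside the proof of Theorem~\ref{th:reduced}), but no argument for it is given here. So there is no ``paper's own proof'' to compare your proposal against.

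As to the proposal itself as a sketch of Postnikov's theorem: the forward direction is fine and your local analysis of (M1)--(M3) is the standard one. For the converse, the bridge-induction strategy you outline is a known and viable route (it is close in spirit to the approach via BCFW/bridge decompositions that appears in later literature), but it is not how Postnikov proves it in \cite{Postnikov}; his argument passes through alternating strand diagrams and a normal-form/factorization statement rather than an induction on $\ell(\pi)$ with bridge removal. The two points you flag as ``hard'' --- that a length-decreasing bridge can always be exposed at the boundary using only (M1)--(M3), and that the reduction is well defined up to move-equivalence --- are exactly where the real work lies, and as written they are assertions rather than arguments. In particular, ``exposing a bridge'' requires showing that in a reduced graph with $\ell(\pi s_i)<\ell(\pi)$ one can move the crossing of strands $T_i$ and $T_{i+1}$ adjacent to the boundary using only square moves; this is the plabic analogue of Matsumoto's theorem and is not automatic. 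If your goal is only to use the theorem, citing \cite{Postnikov} as the paper does is appropriate; if your goal is to supply an independent proof, those two steps need to be filled in.
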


\subsection{A new characterization of reduced plabic graphs}

\begin{definition}\label{def:resonance}
We say that a (generalized) plabic graph has the 
\emph{resonance property}, if after labeling edges via Definition 
\ref{labels}, the set $E$ of edges incident to a given vertex  has
the following property:
\begin{itemize}
\item  there exist numbers $i_1<i_2<\dots<i_m$ such that when 
we read the labels of $E$,  we see the labels
$[i_1,i_2],[i_2,i_3],\dots,[i_{m-1},i_m],[i_1,i_m]$ appear in 
counterclockwise order.
\end{itemize}
\end{definition}

We call this the {\it resonance property} by  analogy with the resonance
of solitons (see Section \ref{resonance}).

\begin{remark}
Note that the  graphs in 
Figure \ref{blackwhite} satisfy the resonance property.
\end{remark}

\begin{theorem}\label{th:reduced}
A plabic graph is reduced 
if and only if it has the resonance property.\footnote{Recall 
from Definition \ref{def:plabic} that our convention is to label 
boundary vertices of a plabic graph $1,2,\dots,n$ in counterclockwise
order.  If one chooses the opposite convention,
then one must replace the word \emph{counterclockwise} in 
Definition \ref{def:resonance}
by \emph{clockwise}.}
\end{theorem}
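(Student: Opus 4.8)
The plan is to prove both implications by reducing to Postnikov's classification of reduced plabic graphs by their trip permutation (\cite[Theorem 13.4]{Postnikov}), supplemented by two local computations: that the resonance property is preserved under the moves (M1)--(M3), and that a graph admitting a parallel edge reduction (R1) can never be resonant. First I would record the precise local meaning of resonance in terms of strands. Following the rules of the road, at an internal vertex each strand exits along an edge adjacent to the one it entered (the clockwise neighbor at a black vertex, the counterclockwise neighbor at a white vertex). Hence, listing for each edge around a degree-$d$ vertex, in counterclockwise order, the label of the strand \emph{leaving} the vertex along that edge, one obtains a cyclic word $(a_1,\dots,a_d)$ in which the edge between consecutive positions is labeled $\{a_s,a_{s+1}\}$. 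Matching this against Definition \ref{def:resonance} shows that the vertex is resonant if and only if this cyclic word is a cyclic rotation of the increasing sequence, i.e. it has exactly one cyclic descent; for a trivalent vertex this is precisely the pattern $[i,j],[j,\ell],[i,\ell]$ with $i<j<\ell$ appearing in Figure \ref{blackwhite} (with the mirror statement for white vertices).

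Next I would establish that resonance, understood as the assertion that \emph{every} internal vertex is resonant, is a move-invariant. For (M3) this is immediate: a degree-$2$ vertex has both incident edges carrying the same pair $\{i,j\}$, and the pattern $[i,j],[i,j]$ is exactly the $m=2$ case of Definition \ref{def:resonance}, while insertion or deletion of such a vertex changes no other label. For (M2) and (M1) one checks that splicing or recoloring the affected vertices carries ``unique cyclic descent'' words to words with the same property, using that the contracted edge (respectively the four square edges) carry labels gluing the local increasing cycles consistently. Granting move-invariance, the reducible direction is quick: if a graph admits (R1) it contains a bigon, i.e. a black and a white trivalent vertex joined by two parallel edges $a,b$. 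Tracing the two strands that enter the bigon shows that each traverses both $a$ and $b$, so $a$ and $b$ carry the same pair $\{p,q\}$; then each trivalent vertex of the bigon has two incident edges with identical labels, which cannot match the all-distinct resonant pattern $[i_1,i_2],[i_2,i_3],[i_1,i_3]$. Hence a reducible graph is not resonant, and since resonance is move-invariant, a resonant $G$ has no reducible graph in its move-equivalence class, i.e. $G$ is reduced.

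For the converse I would exhibit, for each attainable trip permutation, one resonant reduced representative and transport resonance along moves. Postnikov's construction assigns to each $\Le$-diagram $L$ a reduced plabic graph with trip permutation $\pi(L)$ (Definition \ref{Le2permutation}); a local inspection — each internal vertex is a trivalent elbow whose three incident strands are labeled $[i,j],[j,\ell],[i,\ell]$ with $i<j<\ell$, exactly as in the resonance analysis of Section \ref{resonance} and Figure \ref{blackwhite} — shows this graph is resonant. Now let $G$ be any reduced plabic graph with trip permutation $\pi$. By \cite[Theorem 13.4]{Postnikov}, $G$ is move-equivalent to the $\Le$-diagram graph with permutation $\pi$, which is resonant; by move-invariance, $G$ is resonant. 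Combining the two implications proves Theorem \ref{th:reduced}.

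The main obstacle is the move-invariance of resonance, and within it the square move (M1) and the unicolored contraction (M2), since these are the moves that genuinely reshuffle interior strand segments and merge vertices of differing degrees; the cleanest route is to phrase resonance as the ``unique cyclic descent'' condition on the outgoing-strand word and verify by a bounded case analysis that each move preserves it. The remaining ingredients — the degenerate (M3) case, the bigon computation, and the base $\Le$-graph check — are routine applications of the rules of the road once the outgoing-strand description is in hand, and one must only remember to state the white-vertex mirror of that description so that both vertex colors are covered.
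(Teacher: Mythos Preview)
Your proposal is correct and follows essentially the same route as the paper's proof: establish move-invariance of resonance under (M1)--(M3), exhibit for each trip permutation a resonant reduced base graph (the $\Le$-diagram graph, which the paper verifies in Proposition \ref{Le-slopes}), transport resonance across move-equivalence classes via \cite[Theorem 13.4]{Postnikov}, and rule out (R1) by the bigon-label computation. Your ``unique cyclic descent'' reformulation is a convenient repackaging but not a departure; the paper carries out the same (M1), (M2), (R1) checks directly from Definition \ref{def:resonance} using labeled local pictures.
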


\begin{remark}
In fact, our proof below also proves that if a generalized plabic
graph has the resonance property, then it is reduced.
\end{remark}

\begin{proof}
By Proposition \ref{Le-slopes} below, for every positroid cell $\S_{L}^{tnn}$
there is 
a reduced plabic graph $G_L^{\Le}$ satisfying the resonance property,
whose trip permutation equals $\pi(L)$.
We will show that the moves (M1), (M2), and (M3)
preserve the resonance property.  This will show that 
the entire move-equivalence class of $G_L$ satisfies the resonance property.

\begin{figure}[h]
\centering
\includegraphics[height=.8in]{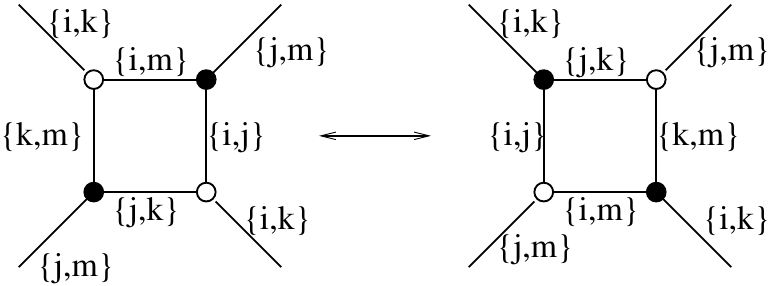}
\caption{The edge-labeling of the square move}
\label{LabeledM1}
\end{figure}
By consideration of the rules of the road, the edges of 
a plabic graph with a local configuration as in the left of 
Figure \ref{M1} must be labeled by the pairs of integers 
as shown in the left of Figure \ref{LabeledM1}, for some
$i,j,k,m$.  The edge labeling after a square move is shown
in the right of Figure \ref{LabeledM1}.  
But now note that if we compare the top left vertex of the left 
figure with the bottom right vertex of the right figure,
their edge labels together with the circular order on them concide.
Similarly we can match the other three vertices of the left figure
with the other three vertices of the right figure in Figure \ref{LabeledM1}.
Therefore the labels around each vertex 
at the left of Figure \ref{LabeledM1}
satisfy the resonance property if and only if the labels around each 
vertex at the right of Figure \ref{LabeledM1} satisfy the resonance property.

\begin{figure}[h]
\centering
\includegraphics[height=.6in]{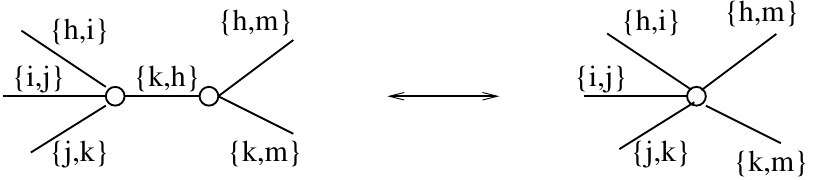}
\caption{The edge-labeling of the unicolored edge contraction}
\label{LabeledM2}
\end{figure}
Similarly, the edge labels of a plabic graph with a local configuration
as in the left of Figure \ref{M2} must be as in the left of 
Figure \ref{LabeledM2}, for some integers $h,i,j,k,m$.  
The right of Figure \ref{LabeledM2} show the new edge labels we'd get
after a unicolored edge contraction.
Note that in order for the configuration on the left 
to satisfy the resonance property,
we must have $h,i,j,k,m$ be cyclically ordered, e.g.
$h<i<j<k<m$ or $i<j<k<m<h$ or $j<k<m<h<i$ or .... 
Similarly, in order for 
the configuration at the 
right of Figure \ref{LabeledM2} to satisfy the resonance property,
we must have $h,i,j,k,m$ be cyclically ordered.
Therefore the move (M2) preserves the resonance property.

The move (M3) trivially preserves the resonance property.
All edges in Figure \ref{M3} will be labeled $[i,j]$ for some $i$ and $j$.
Therefore we have shown that moves (M1), (M2), and (M3) preserve
the resonance property.

By \cite[Theorem 13.4]{Postnikov}, for any two  reduced plabic graphs 
$G$ and $G'$ with 
the same number of boundary vertices, the following claims are 
equivalent:
\begin{itemize}
\item $G$ can be obtained from $G'$ by moves (M1)-(M3)
\item $G$ and $G'$ have the same (decorated) trip permutation.
\end{itemize}
Therefore it follows that all reduced plabic graphs with the (decorated) trip
permutation $\pi$ satisfy the resonance property.  Letting $L$ vary
over all $\Le$-diagrams, we see that all reduced plabic graphs
satisfy the resonance property.

Now we need to show that if a plabic graph $G$ has the resonance property, 
then it must be reduced.
Assume for the sake of contradiction that it is not reduced.  
Then by \cite[Lemma 12.6]{Postnikov},
there is another graph $G'$ in its move-equivalence class to which one can
apply a parallel edge reduction (R1).  
Since applications of (M1), (M2), (M3) preserve the resonance property,
$G'$ has the resonance property.
But then, it is impossible to apply (R1).
\begin{figure}[h]
\centering
\includegraphics[height=.6in]{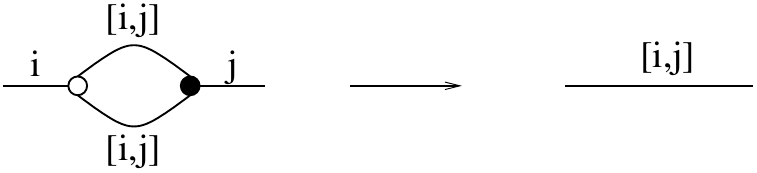}
\caption{The edge-labeling of the parallel edge reduction}
\label{LabeledR1}
\end{figure}
This is because an edge-labeling of a local configuration to which 
one can apply (R1) must be as in the left hand side of Figure \ref{LabeledR1}.
However, this local configuration violates the resonance property: 
it has a trivalent vertex with two incident edges which
 have the same edge-label.
Therefore one cannot apply (R1) to $G'$ so $G$ must be reduced.
\end{proof}

We now provide an algorithm from \cite[Section 20]{Postnikov} 
for associating a reduced plabic graph $G_L^{\Le}$ to 
any $\Le$-diagram $L$.  The plabic graph $G_L^{\Le}$ will have
the trip permutation $\pi(L)$.
\begin{algorithm}\label{Post-plabic}
\cite[Section 20]{Postnikov}
\begin{enumerate}
\item Start with a $\Le$-diagram $L$ contained in a $k\times (n-k)$
rectangle, and label its southeast border from $1$ to $n$,
starting from the northeast corner of the rectangle.
Reflect the figure over the horizontal axis.  
\item From the center of each box containing a $+$, drop a ``hook" up
and to the right, so that the arm and leg of the hook extend beyond
the east and north boundary of the Young diagram.  Consider the 
{\it hook graph} $H(L)$
formed by the set of all such hooks.
\item 
Make local modifications to $H(L)$ as 
in Figure \ref{local2}.
\begin{figure}[h]
\centering
\includegraphics[height=.7in]{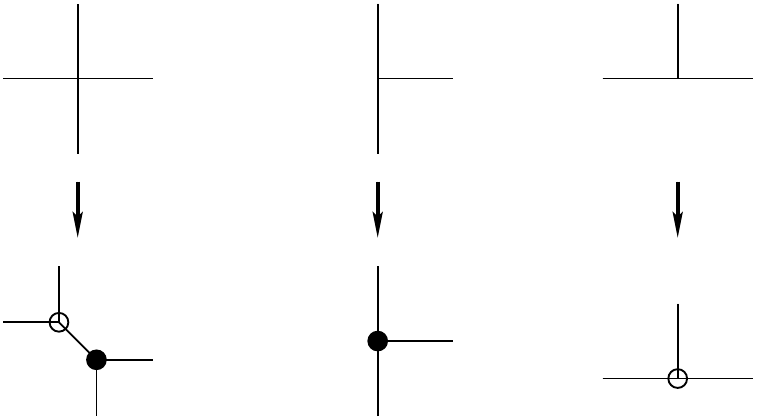}
\caption{}
\label{local2}
\end{figure}
\item The border labels from the first picture become labeled 
 ``boundary vertices;" they are labeled 
$1$ to $n$ in counterclockwise
order.  
After embedding the figure in a disk,
we have a plabic graph which we denote by $G_L^{\Le}$.
\end{enumerate}
\end{algorithm}
Figure \ref{LePlabic2} illustrates the steps of Algorithm \ref{Post-plabic}.
\begin{figure}[h]
\centering
\includegraphics[height=2.5in]{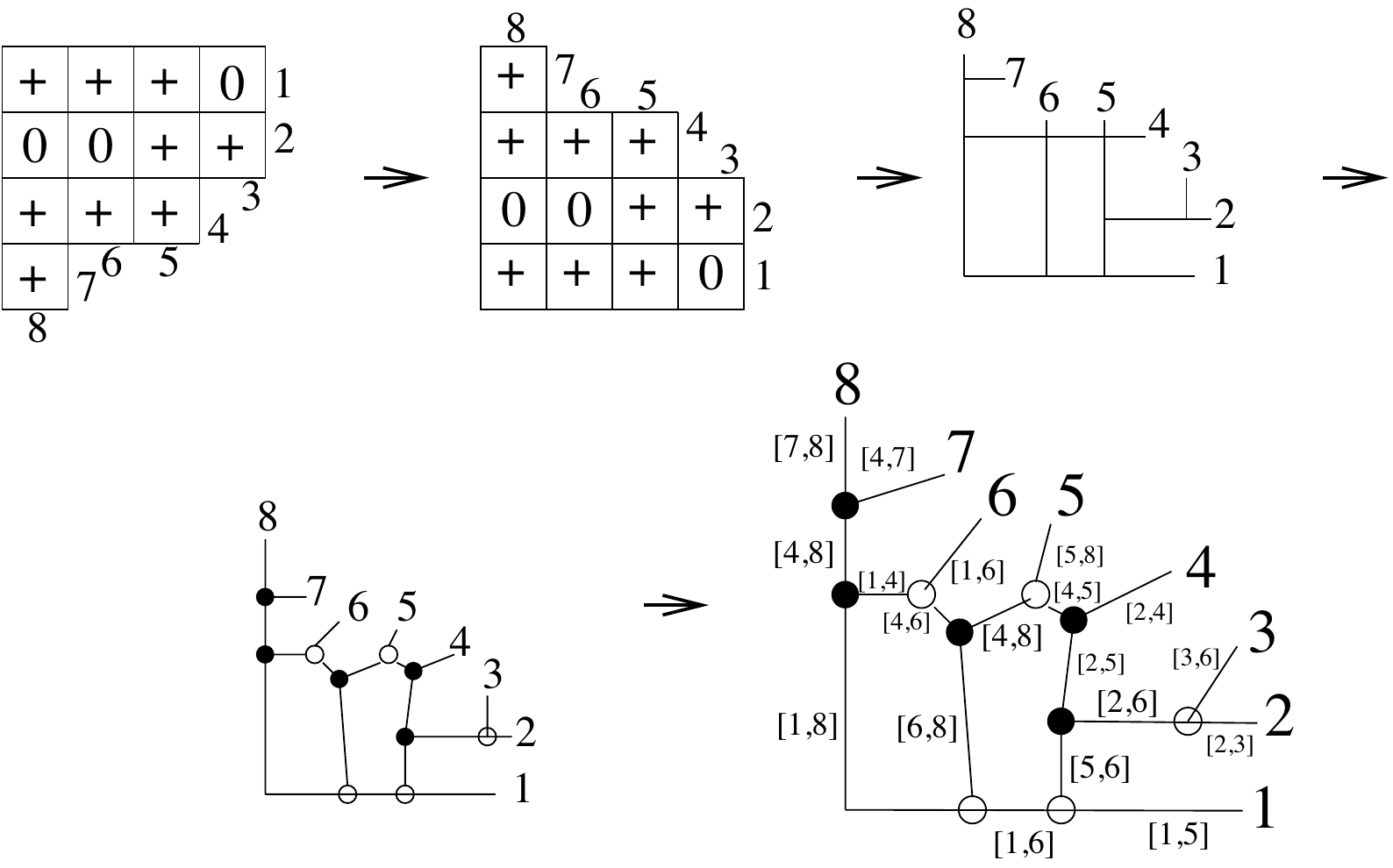}
\caption{A plabic graph for $\S_{\pi}^{tnn}$ with 
$\pi=(6,4,2,7,1,3,8,5)$}
\label{LePlabic2}
\end{figure}

\begin{proposition}\label{Le-slopes}
The plabic graph $G_L^{\Le}$
has the resonance property.
\end{proposition}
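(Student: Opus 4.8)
The plan is to verify the resonance property locally at each internal vertex of $G_L^{\Le}$. By construction (Algorithm \ref{Post-plabic} together with the local modifications of Figure \ref{local2}), every internal vertex of $G_L^{\Le}$ is trivalent, so the resonance property at a vertex (the case $m=3$ of Definition \ref{def:resonance}) amounts to showing that its three incident edges carry labels $[i,j]$, $[j,k]$, $[i,k]$ for some $i<j<k$, read in counterclockwise order. Thus it suffices to compute, via the rules of the road (Definition \ref{labels}), the two trip-labels on each edge at each vertex, and to check their cyclic arrangement.

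First I would set up the correspondence between trips and the pipes of the hook graph $H(L)$. Each edge of $G_L^{\Le}$ lies on a unique maximal horizontal segment (an ``arm'') and/or a unique maximal vertical segment (a ``leg'') of $H(L)$, and following the rules of the road shows that the two labels on an edge are exactly the label of the vertical pipe and the label of the horizontal pipe passing through it---precisely as in the bijection of Definition \ref{Le2permutation}, which governs how pipes connect the boundary labels. The reflection in Step $1$ of Algorithm \ref{Post-plabic}, together with the counterclockwise labeling $1,\dots,n$ of the boundary, fixes the orientation: a vertex whose unique distinguished edge points upward is white and one whose distinguished edge points downward is black, matching the two local pictures of Figure \ref{blackwhite}.

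Next I would establish the monotonicity of the pipe labels needed to pin down $i<j<k$. Reading off labels as in Definition \ref{Le2permutation}, at a corner or a resolved crossing the three indices attached to the three edges are determined by the northwest endpoints of the incoming pipes and by the bottom labels of the relevant columns. The key point---exactly the $\Le$-property---is that two pipes cannot form a ``bad crossing'': if they did, the diagram would contain a $0$ with a $+$ above it in the same column and a $+$ to its left in the same row, as in Figure \ref{fig:pipecrossing}. This is the same no-bad-crossing argument used at the end of the proof of Theorem \ref{t<<0}, and it guarantees that the labels around each vertex are cyclically nested rather than interleaved.

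Finally I would carry out the local check. There are only a couple of vertex types produced by Figure \ref{local2} (the white vertex at the corner of a hook, and the black vertex at a resolved crossing). For each, the previous two steps supply the three edge labels $[i,j]$, $[j,k]$, $[i,k]$, while the white/black coloring forces the unique ``odd'' edge to point up or down; combining this with the counterclockwise boundary orientation shows that the labels occur in the counterclockwise order $[i,j],[j,k],[i,k]$ demanded by Definition \ref{def:resonance}. I expect the main obstacle to be getting the chirality right---that the labels appear counterclockwise and not clockwise---since this is where the reflection, the boundary orientation, and the black/white distinction must all be reconciled at once; the $\Le$-property, via the no-bad-crossing argument, is what makes this bookkeeping go through.
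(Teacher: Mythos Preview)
Your overall strategy---verify resonance locally at each internal vertex by computing the trip labels---is the same as the paper's, but the execution conflates two different constructions and this creates genuine gaps. You identify the two labels on an edge with ``the label of the vertical pipe and the label of the horizontal pipe passing through it---precisely as in the bijection of Definition \ref{Le2permutation}.'' But Definition \ref{Le2permutation} is the pipe-dream (elbows and crosses), whereas $G_L^{\Le}$ comes from the hook graph $H(L)$ of Algorithm \ref{Post-plabic}; the trips in $G_L^{\Le}$ under the rules of the road do \emph{not} follow the pipe-dream pipes. The paper shows that a trip in $G_L^{\Le}$ first goes straight (west or south) as far as possible and then zigzags, turning at every opportunity (Figure \ref{Zigzag}). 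So only one of the two labels on a given edge is the ``obvious'' boundary label of its row or column; the other comes from a zigzag trip whose origin has to be located separately. Similarly, the ``no-bad-crossing'' argument you invoke from the end of the proof of Theorem \ref{t<<0} is about yet another graph, $G_-(L)$ from Algorithm \ref{LeToPlabic}, and does not transfer without further work.

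Your vertex taxonomy is also incomplete. You list ``white vertex at the corner of a hook, black vertex at a resolved crossing,'' but $H(L)$ has both degree-$3$ vertices (where the end of one hook's leg lands on another hook's arm, or vice versa---these give trivalent black or trivalent white vertices) and degree-$4$ crossings, and the local modification of Figure \ref{local2} resolves a degree-$4$ crossing into a \emph{pair} of trivalent vertices, one black and one white. The paper treats these three cases separately (Figures \ref{Trips1} and \ref{Trips2}); in the degree-$4$ case there are four incoming trips with labels $i,j,k,\ell$, and one must argue from the zigzag description that neither $k$ nor $\ell$ falls strictly between $i$ and $j$, then check that each of the resulting orderings satisfies Definition \ref{def:resonance}. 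Your sketch does not account for this case.
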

\begin{proof}
We prove this directly by analyzing the trips in the graph.
One can collapse the plabic graph $G_L^{\Le}$ back to the 
hook graph $H(L)$, and consider how the trips look in $H(L)$.
In $H(L)$, the trips have the following form:
\begin{itemize}
\item a trip which starts from the label of a vertical edge
in the original $\Le$-diagram first goes west as far as possible,
and then takes a zigzag path north and east, turning whenever 
possible.
\item a trip which starts from the label of a horizontal edge
in the original $\Le$-diagram first goes south as far as possible,
and then takes a zigzag path east and north, turning whenever
possible.
\end{itemize}
See Figure \ref{Zigzag} for a depiction of the general form
of the trips, as well as the trip which begins at $8$ in 
the example from Figure \ref{LePlabic2}.
\begin{figure}[h]
\centering
\includegraphics[height=.85in]{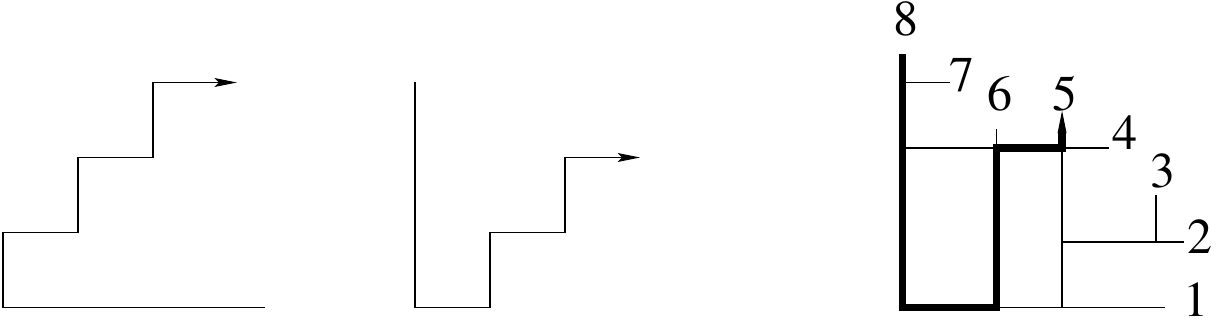}
\caption{}
\label{Zigzag}
\end{figure}

We now analyze the edge labeling around a black vertex in 
$G_L^{\Le}$  which comes from a trivalent vertex in the 
hook graph $H(L)$, see Figure \ref{Trips1}.  
By consideration of the zigzag shape of the trips, 
the trip which approaches the black vertex from above must come
straight south from some boundary vertex labeled $j$, while the trip
which approaches the black vertex from the right must come straight
west from some boundary vertex $i$. The trip which approaches the 
black vertex from below could have started from a boundary vertex $k$
which is either southeast of $i$ or west of $j$, see the first
two pictures in Figure \ref{Trips1}.  Either way, the resulting 
edge labeling will be as shown in the third picture in Figure \ref{Trips1}.
Clearly $i<j$, and either $k<i$ or $k>j$.  Therefore the edge labeling
in the third picture in Figure \ref{Trips1} satisfies the resonance
property.  

\begin{figure}[h]
\centering
\includegraphics[height=.8in]{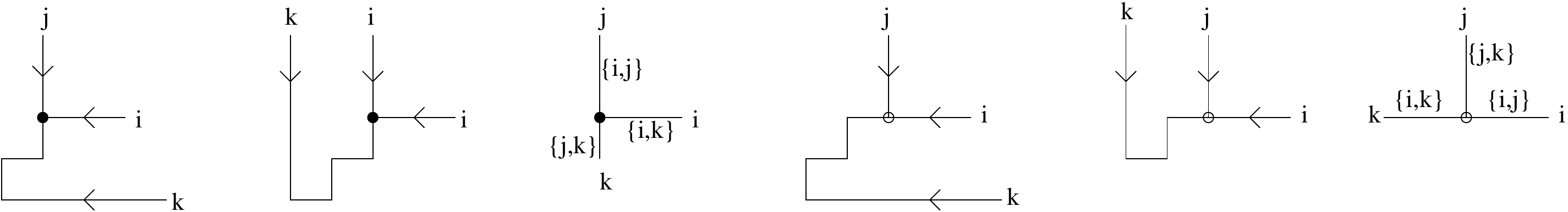}
\caption{}
\label{Trips1}
\end{figure}

The argument for a white vertex in
$G_L^{\Le}$  which comes from a trivalent vertex in the
hook graph $H(L)$ is analogous; see the fourth, fifth, and sixth 
pictures in Figure \ref{Trips1}.

Finally we analyze the edge labeling around a pair of vertices in 
$G_L^{\Le}$ which came from a degree $4$ vertex in $H(L)$.
In $H(L)$, the trip which approaches the vertex from above must come
straight south from some boundary vertex labeled $j$, while the trip
which approaches the vertex from the right must come straight
west from some boundary vertex $i$. As before, $i<j$.  
Let $k$ and $\ell$ denote the boundary vertices
whose trips approach the degree $4$ vertex from the left and below,
respectively.  
The resulting edge-labeling is shown 
at the right of Figure \ref{Trips2}.  

There are multiple possibilities for the trips
starting from $k$ and $\ell$;  
Figure \ref{Trips2} shows several of them.
The only restriction is that 
neither $k$ nor $\ell$
lies in between $i$ and $j$.  
In particular, one of the following must be true:
\begin{itemize}
\item $k<i<j$ and $\ell<i<j$
\item $i<j<k$ and $i<j<\ell$
\item $k<i<j<\ell$
\item $\ell<i<j<k$
\end{itemize}
In all cases, the edge labeling  in
Figure \ref{Trips2} satisfies the resonance property.
\begin{figure}[h]
\centering
\includegraphics[height=1.2in]{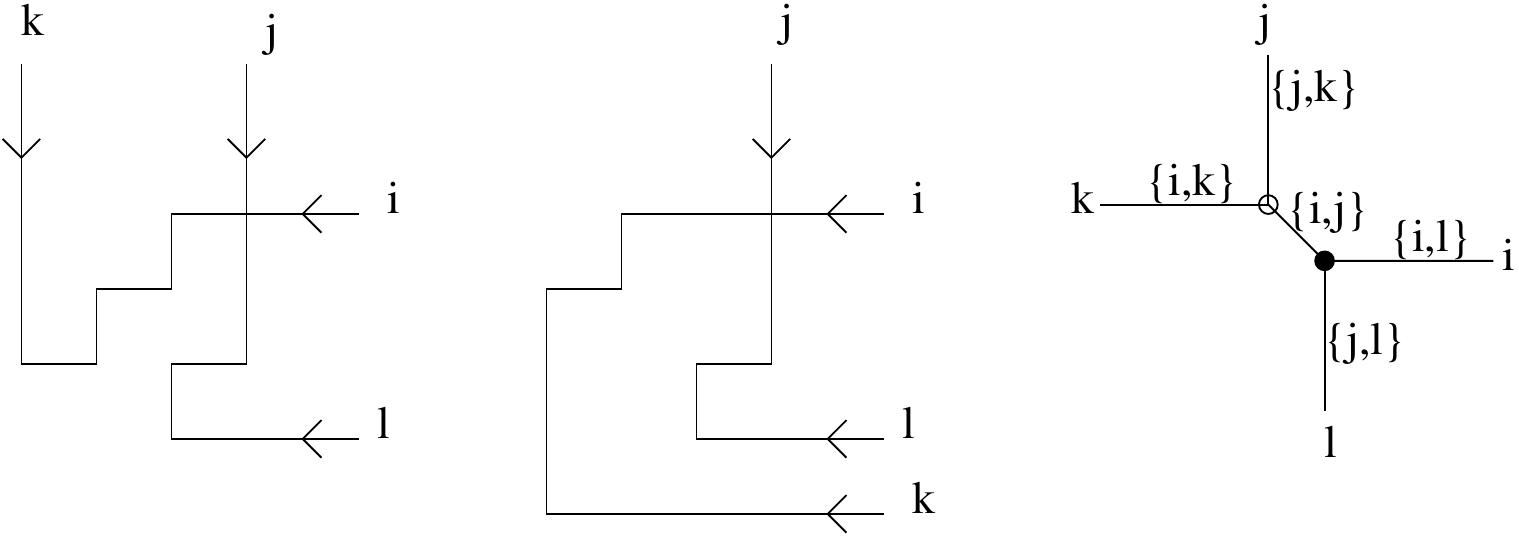}
\caption{}
\label{Trips2}
\end{figure}
\end{proof}


\begin{corollary}\label{cor:reduced}
Let $S_{\mathcal{M}}^{tnn}$ be a TP Schubert cell.  Then if 
the soliton graph $G_{\a}(\mathcal{M})$ is generic and has no $X$-crossings, 
it is a 
reduced plabic graph.  Moreover, \emph{every} generic soliton graph 
coming from the TP Grassmannian $(Gr_{k,n})_{>0}$ is a reduced plabic graph.
\end{corollary}

\begin{proof}
By Remark \ref{Schubert-labeling} and Theorem \ref{soliton-plabic},
every such graph is a plabic graph which satisfies the resonance property.  
The second statement is now a consequence of Corollary \ref{no-X}, which says
that a soliton graph from the TP Grassmannian has no $X$-crossings.
\end{proof}

\subsection{The connection to cluster algebras}
Cluster algebras are a class of commutative rings, introduced
by Fomin and Zelevinsky \cite{FZ}, which have a 
remarkable combinatorial structure.
Many coordinate rings of homogeneous spaces have
a cluster algebra structure: as shown by Scott \cite{Scott},
the Grassmannian is one such example.

\begin{theorem} \cite{Scott}\label{cluster-Grassmannian}
The coordinate ring of 
(the affine cone over) $Gr_{k,n}$  has  a 
cluster algebra structure.  Moreover, the set of Pl\"ucker coordinates
whose indices come from the labels of the regions
of a reduced plabic graph for $(Gr_{k,n})_{>0}$ 
comprises a \emph{cluster} for this cluster algebra.
\end{theorem}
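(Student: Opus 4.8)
The plan is to follow Scott's strategy \cite{Scott}, focusing on the part that directly concerns reduced plabic graphs. First I would fix one explicit seed for the coordinate ring of the affine cone over $Gr_{k,n}$: take as frozen variables the $n$ cyclically consecutive Pl\"ucker coordinates $\Delta_{\{i,i+1,\dots,i+k-1\}}$ (indices read mod $n$), and as mutable variables a ``rectangular'' family of Pl\"ucker coordinates, with a quiver whose arrows record the adjacencies of the corresponding faces. A direct computation shows each single mutation of this seed reproduces a three-term Pl\"ucker relation, so the cluster algebra it generates sits inside $\mathbb{C}[Gr_{k,n}]$; a normality/dimension argument then forces equality. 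The number of variables in the seed is $n + (k-1)(n-k-1) = k(n-k)+1$, which equals the dimension of the affine cone, so the cluster is a genuine transcendence basis.

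Next I would match seeds with reduced plabic graphs for $\Grkntop$. Each such graph has exactly $k(n-k)+1$ faces, and the trip-labeling of Definition \ref{labels} assigns to every face a $k$-subset, hence a Pl\"ucker coordinate; so the face labels are a candidate cluster of the correct size. For the base case I would check that the explicit seed above is exactly the collection of face labels of one particular reduced plabic graph, e.g.\ the graph $G_L^{\Le}$ built from the rectangular $\Le$-diagram. The inductive engine is the observation that a square move (M1) acts on face labels precisely as a cluster mutation: the four labels around the square are preserved, while the central label $\Delta_{Sac}$ is replaced by the other root $\Delta_{Sbd}$ of the three-term Pl\"ucker relation $\Delta_{Sac}\Delta_{Sbd} = \Delta_{Sab}\Delta_{Scd} + \Delta_{Sad}\Delta_{Sbc}$, and the quiver transforms by quiver mutation at the corresponding vertex. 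The moves (M2) and (M3) leave all face labels unchanged.

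I would then invoke \cite[Theorem 13.4]{Postnikov}: any two reduced plabic graphs with the same trip permutation are connected by a sequence of moves (M1)--(M3). Applied to the trip permutation of the top cell, this means every reduced plabic graph for $\Grkntop$ is reachable from $G_L^{\Le}$ by such moves. Since (M2) and (M3) fix face labels and each (M1) is a mutation, the face labels of an arbitrary reduced plabic graph are obtained from the initial seed by a sequence of mutations, and therefore form a cluster. This yields the second assertion of the theorem.

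The hard part will be the first assertion -- producing the cluster algebra structure on $\mathbb{C}[Gr_{k,n}]$ and proving it is the whole coordinate ring rather than a proper subalgebra. This needs the fact that the Pl\"ucker ideal is generated by the exchange relations together with their consequences, plus an upper-cluster-algebra (``Starfish''-type) argument to exclude spurious regular functions. By comparison, the combinatorial core relevant to this paper -- that square moves are mutations and hence that all reduced plabic graphs give clusters -- is comparatively routine; the one point demanding care is verifying that the quiver attached to a plabic graph mutates in lockstep with the square move, including the orientation conventions at the frozen (boundary) vertices.
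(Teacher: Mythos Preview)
The paper does not prove this theorem at all: it is stated as a citation of Scott \cite{Scott}, with no accompanying argument beyond a remark that Scott phrased his result in terms of alternating strand diagrams, which are in bijection with reduced plabic graphs \cite{Postnikov}. So there is no ``paper's own proof'' to compare against.

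Your sketch is a reasonable outline of the Scott argument and of the translation to plabic-graph language. The one point where you go beyond what the paper needs is the first assertion (that the cluster algebra equals the full coordinate ring): the paper only uses the second assertion, namely that the face labels of any reduced plabic graph form a cluster, as input to Theorem \ref{soliton-cluster}. If you were actually writing this up, the delicate steps would be exactly the ones you flag --- showing the upper cluster algebra coincides with $\mathbb{C}[Gr_{k,n}]$, and checking that the dual quiver of a plabic graph mutates correctly under (M1) --- but none of that is carried out in the present paper.
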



\begin{remark}
In fact \cite{Scott} used the combinatorics of 
{\it alternating strand diagrams}, not reduced plabic graphs,
to describe clusters.
However, alternating strand 
diagrams are easily seen to be 
in bijection with reduced plabic graphs \cite{Postnikov}.
\end{remark}

\begin{theorem}\label{soliton-cluster}
The set of Pl\"ucker coordinates labeling regions of a
generic soliton graph for the TP Grassmannian
is a cluster for the cluster algebra
associated to the Grassmannian.
\end{theorem}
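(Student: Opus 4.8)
The plan is to reduce the statement to Scott's theorem (Theorem \ref{cluster-Grassmannian}) by identifying a generic trivalent soliton graph for the totally positive Grassmannian with a reduced plabic graph. Since Theorem \ref{cluster-Grassmannian} guarantees that the Pl\"ucker coordinates indexing the regions of a reduced plabic graph for $\Grkntop$ comprise a cluster, it suffices to show that the soliton graph $G = G_t(u_A)$ is (or corresponds to) such a reduced plabic graph and that each of its region labels is a genuine, nonzero Pl\"ucker coordinate. Throughout, the point $A$ lies in $\Grkntop$, so every maximal minor $\Delta_J(A)$ is strictly positive, and hence each region label $J$ is an honest Pl\"ucker coordinate of the cluster algebra.

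First I would observe that $\Grkntop$ is itself a TP Schubert cell: by Lemma \ref{Schubert-derangement} it is the cell whose $\Le$-diagram is the full $k \times (n-k)$ rectangle with every box filled by a $+$. The key structural input is then that $G$ satisfies the resonance property of Definition \ref{def:resonance}. This follows from the resonance of solitons: at each trivalent vertex the three incident line-solitons have types $[i,j]$, $[j,\ell]$, $[i,\ell]$ with $i<j<\ell$ by \eqref{balancing}, and by Remark \ref{slope} the slopes $\kappa_i+\kappa_j$ order these edges so that the labels $[i,j],[j,\ell],[i,\ell]$ appear in counterclockwise order around the vertex, which is exactly the local picture of Figure \ref{blackwhite}. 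Consequently, when $G$ has no X-crossings it is an honest plabic graph with the resonance property, so Theorem \ref{th:reduced} (equivalently Corollary \ref{cor:reduced}) shows that $G$ is reduced. Its $k(n-k)+1$ region labels are then positive Pl\"ucker coordinates, and Theorem \ref{cluster-Grassmannian} completes the argument in this case.

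The remaining and most delicate point is the treatment of X-crossings, since a generic trivalent soliton graph can still contain them while Corollary \ref{cor:reduced} was stated only in their absence. Here I would invoke the stronger form of Theorem \ref{th:reduced} recorded in the remark following it: a \emph{generalized} plabic graph satisfying the resonance property is already reduced. A soliton graph with X-crossings is precisely such a generalized plabic graph, and its trips are the strands of an associated alternating strand diagram whose faces carry exactly the region labels of $G$. Because \cite{Scott} formulates the cluster structure using (reduced) alternating strand diagrams, which are in bijection with reduced plabic graphs, the face labels of this diagram form a cluster, and these are again the region labels of $G$. I expect the reconciliation between the X-crossing (strand-diagram) picture and the honest reduced plabic graph picture to be the main obstacle: one must verify that the generalized plabic graph determined by $G$ carries the same multiset of region labels as the corresponding reduced plabic graph, and that the resonance property together with the strict order $\kappa_1 < \dots < \kappa_n$ forces the associated strand diagram to be genuinely reduced, so that Scott's theorem applies verbatim.
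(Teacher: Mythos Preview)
Your core argument---the first two paragraphs---is exactly the paper's proof. The paper's proof is a single sentence citing Corollary \ref{cor:reduced} and Theorem \ref{cluster-Grassmannian}: a generic soliton graph for a TP Schubert cell with no $X$-crossings is a reduced plabic graph, and Scott's theorem then identifies its region labels as a cluster.

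Your third paragraph, attempting to handle $X$-crossings, goes beyond what the paper does. The paper does not treat this case: its proof invokes Corollary \ref{cor:reduced}, which carries the explicit hypothesis ``no $X$-crossings,'' so the theorem as proved in the paper implicitly inherits that restriction (compare the parallel explicit hypothesis in Theorem \ref{inverse2}). Your caution here is reasonable, but the speculative route through generalized plabic graphs and alternating strand diagrams is not pursued in the paper, and the remark following Theorem \ref{th:reduced} only asserts that a generalized plabic graph with the resonance property is reduced---it does not establish that its region labels coincide with those of an honest reduced plabic graph for $\Grkntop$, which is what Scott's theorem would need. So this part of your proposal remains a genuine gap rather than a completed argument; the paper simply sidesteps the issue.
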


\begin{proof}
This follows from Corollary \ref{cor:reduced} and Theorem \ref{cluster-Grassmannian}.
\end{proof}

Conjecturally, every positroid cell $\S_{\pi}^{tnn}$
of the totally non-negative
Grassmannian also carries a cluster algebra structure, 
and the Pl\"ucker coordinates labeling the regions of any reduced
plabic graph for $\S_{\pi}^{tnn}$ should be a cluster for that cluster
algebra.  In particular, the TP Schubert cells should carry cluster algebra
structures.  Therefore we conjecture that  Theorem
\ref{soliton-cluster} holds with ``Schubert cell" replacing 
``Grassmannian."  Finally, there should be a suitable generalization
of Theorem \ref{soliton-cluster} for arbitrary 
positroid cells.

\section{The inverse problem for soliton graphs}\label{sec:inverse}
\label{inverse}
The {\it inverse} problem for soliton solutions of the KP equation
is the following:
given a time $\t$ together with the contour plot 
$\CC(u_A, \t)$ of a soliton 
solution, can one reconstruct the point $A$ of 
$Gr_{k,n}$ which gave rise to the solution?
Note that solving for $A$ is desirable, 
because this information would allow us to 
completely reconstruct the soliton solution.

In order to address the inverse problem we must 
work with the contour plots 
$\CC(u_A,\t)$ for finite times $\t$, as opposed to their limits, 
the asymptotic contour plots,  because
the former include information regarding
the values of the Pl\"ucker coordinates.
However, our results on the corresponding asymptotic contour plots
will be a crucial tool in the proofs of our results.

We will use the notation $|\t|\gg 0$ to indicate that 
 $\|\t\|$ is ``large enough" so that 
the contour plot
$\CC(u_A,\t)$ has the same topology as 
the corresponding asymptotic contour plot
$\CC_{\t}(\M)$ for $A \in S_{\M}$, i.e.
there is a bijection between the regions of the complements 
of the contour plots with the property that corresponding regions are labeled
by the same dominant exponential.


We will solve the inverse problem in two different situations:
\begin{itemize}
\item When $A \in (Gr_{k,n})_{>0}$ and $|\t| \gg 0$
(see Theorem \ref{inverse2}), and 
\item When $A \in (Gr_{k,n})_{\geq 0}$, $|t_3| \gg 0$, and 
$t_4 = \dots = t_m = 0$ (see Theorem \ref{inverse1}).
\end{itemize}

We will write $t$ instead of $\t$ when we are assuming that 
$t_i = 0$ for $i \geq 4$.

\begin{lemma}\label{lem:reconstructPlucker}
Fix generic real parameters $\kappa_1<\dots<\kappa_n$, and 
consider a generic contour plot $\CC(u_A, \t)$ of a soliton solution
coming from a point $A$ of $(Gr_{k,n})_{\geq 0}$ with $|\t |\gg 0$.
Then from the $\kappa_i$'s, 
the contour plot, and  $\t$, we 
can identify what cell $S_{\pi}^{tnn}$ the element $A$ comes from,
and reconstruct the labels of the dominant exponentials and 
the values of all Pl\"ucker coordinates corresponding
to these dominant exponentials in the contour plot.
\end{lemma}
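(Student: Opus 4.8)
The plan is to reconstruct the data from the contour plot in three stages, reading off progressively finer information. First I would recover the permutation $\pi$, hence the cell $S_{\pi}^{tnn}$. By Theorem \ref{algo}, the $k$ unbounded line-solitons at $y\gg 0$ and the $n-k$ unbounded line-solitons at $y\ll 0$ are precisely the excedance and nonexcedance pairs of $\pi$; since each such line-soliton of type $[i,j]$ carries the slope $\kappa_i+\kappa_j$ (Remark \ref{slope}) and the $\kappa_i$'s are generic so that all pairwise sums $\kappa_i+\kappa_j$ are distinct, the slope of each unbounded soliton determines the unordered pair $\{i,j\}$ uniquely. Thus from the visible slopes of the unbounded solitons alone we recover all the pairs $[i,\pi(i)]$ and $[\pi(j),j]$, and therefore $\pi$ itself. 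By Proposition \ref{perm-prop} and the preceding results, knowing $\pi$ is equivalent to knowing the cell $S_{\pi}^{tnn} = S_{\mathcal M}^{tnn}$, so in particular we know which Pl\"ucker coordinates $\Delta_I(A)$ are nonzero.

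Second, I would reconstruct the index sets $J$ labeling all regions of the contour plot. Once $\pi$ is known, Theorem \ref{soliton-plabic} applies: the contour plot, viewed as a generalized plabic graph $Pl(C)$, has trip permutation $\pi$, and labeling its edges and regions by the rules of the road (Definition \ref{labels}) recovers the original region labels $E_J$ and edge types $[i,j]$. Concretely, one labels the unbounded region at $x\ll 0$ by the excedance positions of $\pi$ (as in Theorem \ref{algo}), and then propagates labels across edges using Lemma \ref{separating}: crossing a line-soliton of type $[i,j]$ changes the region label $S$ to $(S\setminus\{i\})\cup\{j\}$. This determines the dominant exponential $\Delta_J(A)E_J$ attached to every region.

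Third, I would recover the actual numerical values of the Pl\"ucker coordinates $\Delta_J(A)$ for the $J$ labeling regions. Here I use the location data encoded in the finite line-solitons, which is the reason for working with $\CC_t(u_A)$ rather than the limit $\CC_{\pm\infty}(\mathcal M)$. By equation \eqref{eq-soliton}, a line-soliton of type $[i,j]$ separating regions labeled $I$ and $J$ satisfies
\[
x+(\kappa_i+\kappa_j)y+(\kappa_i^2+\kappa_i\kappa_j+\kappa_j^2)t=-\frac{1}{\kappa_j-\kappa_i}\ln\frac{\Delta_J(A)K_J}{\Delta_I(A)K_I},
\]
so measuring the position of that soliton (together with the known $\kappa_i$, the constants $K_I,K_J$, and the time $t$) determines the ratio $\Delta_J(A)/\Delta_I(A)$. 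Starting from any one region and traversing a spanning tree of the adjacency graph of regions, these ratios propagate to give every $\Delta_J(A)$ up to a single global scalar; since a point of $Gr_{k,n}$ is defined only projectively, this global scalar is exactly the expected ambiguity, and we may normalize (e.g. setting the lexicographically minimal nonzero $\Delta_I(A)=1$). The main obstacle to address is consistency: I must check that the ratios obtained along different paths between two regions agree, i.e. that the product of ratios around any closed cycle of regions is $1$. I expect this to follow because the left-hand side of \eqref{eq-soliton} is an exact differential in the region-label data, so the logarithms telescope around any cycle; equivalently, the resonance relation \eqref{balancing} at each trivalent vertex forces the three governing equations to be linearly dependent in exactly the way needed for the ratios around a vertex to be compatible. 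Verifying this compatibility is the crux, and once it is established the reconstruction of all Pl\"ucker coordinates corresponding to dominant exponentials is complete.
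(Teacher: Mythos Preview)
Your approach is essentially the same as the paper's: recover $\pi$ from the slopes of the unbounded solitons, recover region labels via Theorem~\ref{soliton-plabic} (or equivalently Lemma~\ref{separating} starting from the excedance set at $x\ll 0$), and then recover Pl\"ucker ratios from line positions via \eqref{eq-soliton}, normalizing the lexicographically minimal nonzero coordinate to $1$. Two minor remarks: the paper adds a sentence on how to recognize and discard phase-shift edges in $\CC_t(u_A)$ (they occur precisely at interactions of $[i,j]$ and $[\ell,m]$ solitons with $i<j<\ell<m$ or $i<\ell<m<j$) before reading off soliton types, and your worry about consistency of ratios around cycles is not actually an obstacle---since the line positions in $\CC_t(u_A)$ are produced by an actual $A$, the ratios you extract from \eqref{eq-soliton} are the true ratios $\Delta_J(A)/\Delta_I(A)$ and hence automatically compatible.
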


\begin{proof}
Since the pairwise sums of the $\kappa_i$'s are all distinct, 
we can determine from the contour plot precisely how to label
each line-soliton with a pair $[i,j]$.   
Note here that some of the edges in $\CC(u_A,t)$
correspond to the phase shifts. However, those can be easily identified by 
checking the types of solitons at the intersection point, since
a phase shift appears as the interaction of
two solitons of $[i,j]$- and $[\ell,m]$-types with either
$i<j<\ell<m$ or $i<\ell<m<j$, and its length goes to $0$ when
we take the limit $\lim_{s \to \infty} \CC(u_A, s\a)$.
 From the labels of the 
unbounded line-solitons, we can use Theorem \ref{perm-asymp}
to determine
which positroid cell $S_{\pi}^{tnn}$ the element $A$ comes from.
Finally, we can label the dominant exponentials by using Lemma \ref{separating},
together with the fact that for 
$x\ll0$, the dominant exponential is $E_I$, where $\Delta_I$ is the lexicographically
minimal Pl\"ucker coordinate which is nonzero on $S_{\pi}^{tnn}$ ($I$ is the
set of excedance positions of $\pi$).

Now recall that the equation of each line-soliton
is given by (\ref{eq-soliton}). Since each line-soliton
has been labeled by $[i,j]$, and we know the $\kappa_i$'s,
we can solve for all ratios of Pl\"ucker coordinates
which appear as labels of adjacent regions of the contour plot.
The Pl\"ucker coordinates are only defined up to a simultaneous
scalar, so without loss of generality we set the Pl\"ucker coordinate $\Delta_I$ 
with lexicographically
minimal index set $I$ equal to $1$.
\end{proof}

Using the cluster algebra structure 
for Grassmannians, we can now  prove the following.

\begin{theorem}\label{inverse2}
Consider a generic contour plot $\CC(u_A,\t)$ of a soliton solution
which
comes from a point $A$ of the TP Grassmannian
and a multi-time vector $\t$ such that $|\t|\gg 0$.
Then from the contour plot together with $\t$ we 
can uniquely reconstruct the point $A$.
\end{theorem}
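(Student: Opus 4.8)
The plan is to parlay the local, region-by-region data that one can read off from a contour plot into the global datum of the point $A$, using the cluster algebra structure on the Grassmannian as the bridge.

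First I would apply Lemma \ref{lem:reconstructPlucker}. From $\CC_t(u_A)$ together with the fixed parameters $\kappa_1<\dots<\kappa_n$ and the time $t$, that lemma lets us read off the positroid cell containing $A$ --- which here is the top cell $\Grkntop$ --- as well as the index set of the dominant exponential in every region and the value of each corresponding Pl\"ucker coordinate. Since Pl\"ucker coordinates are defined only up to a common scalar, we normalize by setting the lexicographically minimal one (the label of the $x\ll 0$ region) equal to $1$; Lemma \ref{lem:reconstructPlucker} then determines the numerical value of \emph{every} region-labeling Pl\"ucker coordinate.

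Second I would recognize these region labels as a cluster. Because $A\in \Grkntop$ lies in a TP Schubert cell (its $\Le$-diagram is the full $k\times(n-k)$ rectangle filled with $+$'s, so Lemma \ref{Schubert-derangement} applies) and $\CC_t(u_A)$ is generic with no $X$-crossings, Corollary \ref{cor:reduced} shows that $G_t(u_A)$ is a reduced plabic graph. Hence by Theorem \ref{soliton-cluster} (via Theorem \ref{cluster-Grassmannian}) the Pl\"ucker coordinates labeling its regions comprise a cluster, together with its frozen variables, for the cluster algebra on the affine cone over $Gr_{k,n}$.

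Third --- and this is the crux --- I would use the fact that such a cluster is a transcendence basis for the field of rational functions on the cone over $Gr_{k,n}$, so that every Pl\"ucker coordinate $\Delta_I$ is a rational function of the cluster variables; by the Laurent phenomenon and total positivity these expressions are subtraction-free, hence free of cancellation ambiguities. Evaluating them at the known positive cluster values yields all $\Delta_I(A)$, and the Pl\"ucker embedding then pins down $A$ uniquely. Equivalently, the cluster chart restricts to a bijection between $\Grkntop$ and the positive orthant (modulo the overall scaling we have fixed), so the recovered positive values single out exactly one $A$. The main obstacle is precisely this last step: one must justify that the extended cluster determines the point \emph{uniquely} rather than up to a finite ambiguity. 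I expect to lean on the established parametrization of $\Grkntop$ by the region labels of a reduced plabic graph (Postnikov, Scott), which makes the passage from a totally positive point to its tuple of positive cluster values a genuine bijection, so that the positivity guaranteed throughout is exactly what removes any branch or sign ambiguity when inverting.
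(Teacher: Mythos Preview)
Your proposal is correct and follows essentially the same approach as the paper: apply Lemma~\ref{lem:reconstructPlucker} to extract the region labels and their Pl\"ucker values, invoke Theorem~\ref{soliton-cluster} to recognize these as a cluster, and then use the cluster algebra structure (specifically, that every Pl\"ucker coordinate is a cluster variable, hence a Laurent polynomial in any fixed cluster) to recover all of $A$. The paper's proof is terser on the last step---it simply cites that the coordinate ring is a cluster algebra whose cluster variables include all Pl\"ucker coordinates---whereas you spell out the Laurent phenomenon and worry explicitly about uniqueness; but the argument is the same, and your extra care about positivity removing sign ambiguities is a reasonable elaboration rather than a different route.
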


\begin{proof}
By Lemma \ref{lem:reconstructPlucker}, we can 
compute the labels of the dominant exponentials in the contour plot and 
the values of the corresponding Pl\"ucker coordinates.
And by Theorem \ref{soliton-cluster}, the set of dominant exponentials
labeling $\CC(u_A,t)$ forms a cluster $\mathbf c$ for the cluster algebra 
$\mathcal A$
associated to the Grassmannian.  Since the coordinate ring of the 
Grassmannian is a cluster algebra (whose cluster variables include 
the set of all Pl\"ucker coordinates), it follows that we can 
express each Pl\"ucker coordinate of $A$ as a Laurent polynomial
in the elements of $\mathbf c$.  Therefore we can determine 
the element $A \in (Gr_{k,n})_{>0}$ itself.  
\end{proof}

\begin{remark}
We have written the proof of Theorem \ref{inverse2} using the language
of cluster algebras, because we expect  it should be possible
to generalize some of the results of this paper to other integrable systems
associated to cluster algebras.  However, the combinatorial heart
of the proof is the following argument (which indeed is part of 
Scott's proof \cite{Scott} 
that the Grassmannian has a cluster algebra structure): 
any two reduced plabic graphs for a given positroid cell
are connected via a sequence of moves (M1), (M2), (M3)
\cite[Theorem 12.7]{Postnikov}, and the non-trivial move (M1)
corresponds to a three-term Pl\"ucker relation.  Also, every Pl\"ucker
coordinate occurs in some reduced plabic graph for the TP Grassmannian
\cite{OPS}.  
Therefore from the values of the Plucker coordinates
$\Delta_I(A)$ for all $I$ labeling the faces of some reduced plabic
graph for the TP Grassmannian, we can reconstruct $A$.
\end{remark}

\begin{theorem}\label{inverse1}
Fix generic real parameters $\kappa_1<\dots<\kappa_n$.
Consider a generic contour plot $\CC(u_A,t)$ of a soliton solution
coming from a point $A$ of a positroid cell $S_{\pi}^{tnn}$,
for $|t|\gg0$.  Then from the contour plot together with $t$ we 
can uniquely reconstruct the point $A$.
\end{theorem}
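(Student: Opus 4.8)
The plan is to reduce the problem, via Lemma~\ref{lem:reconstructPlucker}, to showing that the Pl\"ucker coordinates labeling the regions of the contour plot at $t\ll 0$ determine the point of the cell, and then to invoke Postnikov's parametrization of a positroid cell by the face labels of a reduced plabic graph. First I would feed the given plot $\CC_t(u_A)$ into Lemma~\ref{lem:reconstructPlucker}: because the sums $\kappa_i+\kappa_j$ are distinct, every line-soliton acquires a type $[i,j]$, the labels of the unbounded solitons together with Theorem~\ref{perm-asymp} pin down the cell $S_{\pi}^{tnn}$ containing $A$, and equation~(\ref{eq-soliton}) lets me solve for the ratio of the two Pl\"ucker coordinates labeling the regions on either side of each line-soliton. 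Normalizing the lexicographically minimal nonzero coordinate to $1$, I thereby recover the value $\Delta_J(A)$ for every index set $J$ that labels a region of $\CC_t(u_A)$.

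Next I would identify exactly which index sets $J$ occur. By Theorem~\ref{t<<0} together with Remark~\ref{rem:rotate}, for $t\ll 0$ the combinatorial type of $\CC_t(u_A)$ agrees (after the $180^{\circ}$ rotation) with that of $\CC_{-\infty}(\mathcal{M})$, whose regions are precisely those of the generalized plabic graph $G_-(L)$ produced by Algorithm~\ref{LeToPlabic}, labeled by the rules of the road. Hence the data extracted in the first step is exactly the tuple of values $\Delta_J(A)$ as $J$ ranges over the face labels of $G_-(L)$. The theorem will follow once I show that this tuple determines $A$ within $S_{\pi}^{tnn}$.

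For that final step the plan is to appeal to Postnikov's theory of plabic-graph parametrizations~\cite{Postnikov}. For a reduced plabic graph representing a cell of dimension $d$ the number of faces is $d+1$; the cell is the bijective image of the associated positive face-weight parametrization, and each face weight is recovered from the face Pl\"ucker coordinates, so the tuple of face labels (taken up to a common scalar) determines the point. When every box of $L$ carries a $+$ (the TP Schubert case), $G_-(L)$ has no $X$-crossings and is already a reduced plabic graph whose faces are the regions of the contour plot; the recovered coordinates are then the complete set of face labels and injectivity finishes the argument.

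The hard part will be the case where $L$ contains $0$'s, so that $G_-(L)$ has $X$-crossings and is only a \emph{generalized} plabic graph. Here I must check that the recovered region labels still coincide with the face labels of an honest reduced plabic graph for $S_{\pi}^{tnn}$---for instance the graph $G_L^{\Le}$ of Algorithm~\ref{Post-plabic}, which has trip permutation $\pi$. The tool is Theorem~\ref{th:X-crossing}: at each $X$-crossing it identifies the Pl\"ucker coordinate that vanishes on the cell, confirming that every region of $\CC_{-\infty}(\mathcal{M})$ is labeled by a coordinate in $\mathcal{M}$ and that an $X$-crossing does not hide a missing face label. Once the region labels of $\CC_{-\infty}(\mathcal{M})$ are matched with the faces of $G_L^{\Le}$, the injectivity of the face-label map recovers $A$ uniquely, completing the proof. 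The delicate verification is precisely this matching of the crossing-bearing graph $G_-(L)$ with a reduced plabic graph, which rests on the $\Le$-property exploited in the proof of Theorem~\ref{t<<0} and on the resonance characterization of reduced plabic graphs in Theorem~\ref{th:reduced}.
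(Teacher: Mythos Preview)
Your overall architecture---apply Lemma~\ref{lem:reconstructPlucker} to read off the cell and the Pl\"ucker values on the visible regions, then argue those values determine $A$---matches the paper's. But there is a genuine gap in your second step. You assert that for $t\ll0$ the contour plot has ``regions precisely those of the generalized plabic graph $G_-(L)$,'' citing Theorem~\ref{t<<0}. That theorem fixes the trivalent vertices and the incidences among them, but it does \emph{not} fix which pairs of line-solitons form $X$-crossings; see Remark~\ref{rem:slide} and Figure~\ref{3realizations}. Depending on the $\kappa$-parameters, the actual contour plot $\CC_{-\infty}(\mathcal{M})$ (and hence $\CC_t(u_A)$) can differ from $G_-(L)$ by a sequence of \emph{slides} (Definition~\ref{def:slide}, Lemma~\ref{lem:slide}), and a slide can create or destroy a region. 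So the set of region labels you extract need not equal the face labels of $G_-(L)$. The paper closes this gap with Lemma~\ref{lem:slide2}: the two-term Pl\"ucker relations at $X$-crossings (Theorem~\ref{th:X-crossing}) let one pass between the Pl\"ucker data before and after any slide, so one can always reconstruct the values on the regions of $G_-(L)$ from those on the actual plot.

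Your final step also diverges from the paper and is left at the level of a plan. You propose to match the region labels of $G_-(L)$ with the face labels of the reduced plabic graph $G_L^{\Le}$ and then invoke injectivity of Postnikov's face-weight parametrization; but you do not carry out this matching, and it is not obvious, since $G_-(L)$ can have more faces than $G_L^{\Le}$ when $X$-crossings are present. The paper instead proves (Theorem~\ref{identifyPlucker}, via Lemmas~\ref{inv1}--\ref{inv3}) that the region labels of $G_-(L)$ contain Talaska's totally positive base $T(L)$, and then cites Talaska's result (Theorem~\ref{destination-set}) that $T(L)$ determines every nonzero Pl\"ucker coordinate by subtraction-free formulas. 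This gives a concrete, verified bridge from the recovered data to the point $A$, whereas your route through $G_L^{\Le}$ would still require an explicit identification of which of its faces appear among the $G_-(L)$ regions.
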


Theorem \ref{t<<0} will be instrumental in proving 
Theorem \ref{inverse1}.  
However, we first remind the reader (see Remark \ref{rem:slide})
that the combinatorics of the $X$-crossings in the contour 
plot may differ from the combinatorics of the $X$-crossings in the 
graph $G_-(L)$.  We will describe these differences using the 
following notion of \emph{slide}.

\begin{definition}\label{def:slide}
Consider a generalized plabic graph $G$ with at least one $X$-crossing.
Let $v_{a,b,c}$ be a trivalent vertex 
(with edges labeled $[a,b]$, $[a,c]$, and $[b,c]$)
which has a small neighborhood $N$ containing one or two $X$-crossings
with a line labeled $[i,j]$, but no other trivalent vertices or $X$-crossings.
Here $\{a,b,c\}$ and $\{i,j\}$ must be disjoint.  
Then a \emph{slide} is a local deformation of the graph $G$ which moves 
the line so that it intersects a different set of edges of $v_{a,b,c}$,
creating or destroying at most one region in the process.
\end{definition}

\begin{figure}[h]
\centering
\includegraphics[height=1.6in]{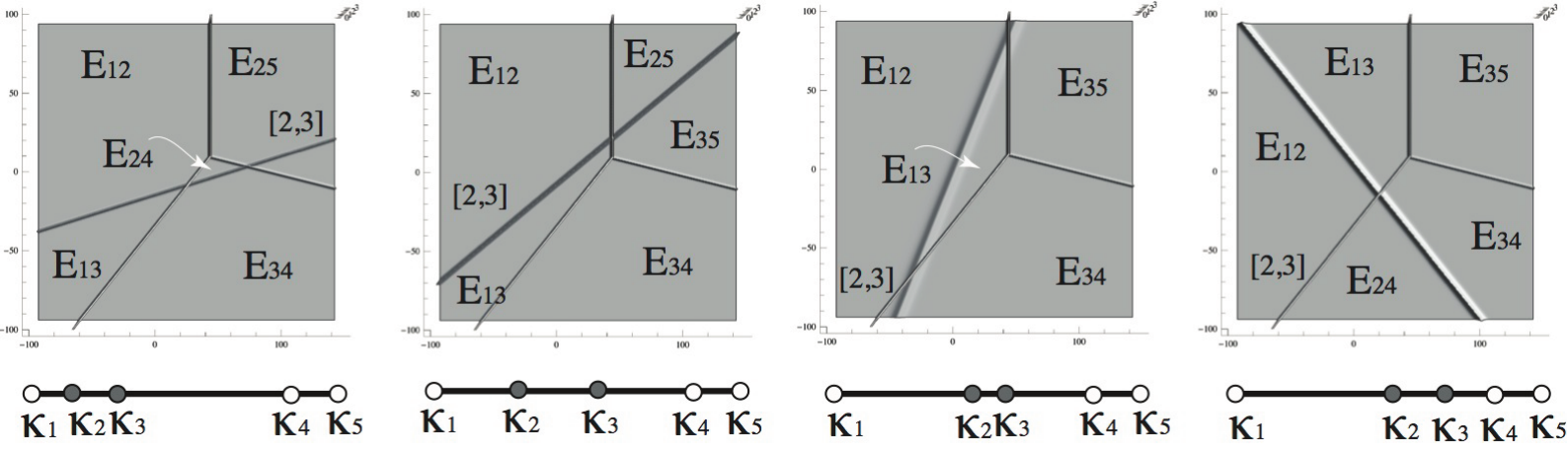}
\caption{Examples of slides. These contour plots correspond to the same Le-diagram $L$ with $\pi(L) = (5,3,2,1,4)$, 
but these plots all differ from $G_-(L)$.}
\label{fig:slide}
\end{figure}

\begin{lemma}\label{lem:slide}
Consider the contour plot $\CC(u_A,t)$ of a soliton solution
coming from $A\in S_{L}^{tnn}$ at $t\ll0$.  Then its soliton graph 
either coincides with $G_-(L)$ or differs from it via a series of slides.
Similarly, if one considers two contour plots of a soliton solution 
coming from $A\in S_{L}^{tnn}$ at $t\ll0$, which are computed using different
sets of parameters
$(\kappa_1,\dots,\kappa_n)$, 
then one can be obtained from the other 
via a sequence of slides.
\end{lemma}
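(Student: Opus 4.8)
The plan is to derive both statements from one principle: as the continuous data defining the contour plot vary, the trivalent-vertex structure together with the bounded and unbounded line-solitons stays combinatorially fixed, so the soliton graph can change only through its $X$-crossings, and each such change is a slide in the sense of Definition \ref{def:slide}. First I would reduce the finite-$t$ assertion to the limit $\CC_{-\infty}(\mathcal{M})$. For fixed $\kappa$ and $t\ll0$, each line-soliton of type $[i,j]$ has slope $\kappa_i+\kappa_j$ by \eqref{eq-soliton}, and its position differs from the line $L_{ij}$ of Lemma \ref{lem:v} only by the phase-shift constant $-\tfrac{1}{\kappa_j-\kappa_i}\ln\tfrac{\Delta_J(A)K_J}{\Delta_I(A)K_I}$; after rescaling by $t$ this is an $O(1/|t|)$ perturbation, so (after the $180^\circ$ rotation of Remark \ref{rem:rotate}) the soliton graph of $\CC_t(u_A)$ converges to that of $\CC_{-\infty}(\mathcal{M})$. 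The crucial point is that the three line-solitons bounding any three mutually adjacent regions remain concurrent for \emph{every} finite $t$, since their common point is exactly where the three corresponding exponentials of the $\tau$-function coincide; hence the phase shifts neither create nor destroy a trivalent vertex, and only the $X$-crossings of $\CC_t(u_A)$ can differ from those of $\CC_{-\infty}(\mathcal{M})$.

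Next I would run a wall-crossing argument. By Theorem \ref{t<<0} the set of trivalent vertices of $\CC_{-\infty}(\mathcal{M})$, their incident soliton types, and the bounded and unbounded line-solitons are all determined combinatorially by $L$, independently of $\kappa$; only the coordinates $v_{i,\ell,m}$ and the slopes $\kappa_i+\kappa_j$ vary (Remark \ref{rem:slide}). I would view the plot inside the arrangement of the lines $L_{ij}$, noting the special feature that $L_{i\ell}, L_{im}, L_{\ell m}$ are automatically concurrent at $v_{i,\ell,m}$, so that triple points are robust under deformation. The region $U=\{\kappa_1<\dots<\kappa_n\}$ is convex, so any two choices $\kappa,\kappa'$ are joined by a straight-line path along which the soliton graph is locally constant on an open dense set and changes only at codimension-one walls. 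I would then argue that the relevant wall is the event where a line $L_{ij}$ (with $\{i,j\}$ disjoint from $\{a,b,c\}$) passes through a triple point $v_{a,b,c}$: crossing it moves that line-soliton to the other side of the vertex, altering exactly which edges of $v_{a,b,c}$ it meets, which is precisely one slide. To assemble the two statements, for the second I compose the finitely many walls crossed along a path in $U$ from $\kappa$ to $\kappa'$; for the first I concatenate a path in $t$ (sending $\CC_t(u_A)$ to $\CC_{-\infty}(\mathcal{M})$ with the same $\kappa$) with a path in $U$ deforming $\kappa$ to the equally-spaced parameters for which $\CC_{-\infty}(\mathcal{M})$ realizes $G_-(L)$.

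The main obstacle is to verify that slides are genuinely the \emph{only} way the soliton graph can change, i.e. to rule out or reinterpret the other codimension-one event in $U$, namely two slopes coinciding, $\kappa_i+\kappa_j=\kappa_k+\kappa_\ell$. At such a wall two line-solitons become parallel, so an $X$-crossing could in principle migrate toward infinity rather than across a trivalent vertex, which would reorder unbounded solitons (cf. Theorem \ref{algo}) with no evident local slide. I expect to handle this using the constraint of Theorem \ref{th:X-crossing}: for $A$ in a fixed cell the set of vanishing Pl\"ucker coordinates is fixed by $\mathcal{M}$, and an $X$-crossing of $[i,j]$ and $[k,\ell]$ forces one of these to vanish, which I would use to show that any such migrating crossing must in fact sweep across a trivalent vertex before escaping to infinity, so that the reordering decomposes into slides. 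Alternatively, one may perturb the path so that the slope-walls are crossed one at a time and check directly that each admissible slope-swap is realized by a slide near the relevant vertex; establishing this local dictionary is the step I anticipate requiring the most care.
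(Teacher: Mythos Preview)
Your approach is genuinely different from the paper's, and more elaborate than necessary.  The paper does not construct a continuous deformation at all.  Instead it argues statically: Theorem~\ref{t<<0} determines, for any admissible $\kappa$ and any $A\in S_L^{tnn}$ with $t\ll0$, the complete list of trivalent vertices $v_{a,b,c}$, the types of the three line-solitons at each, and the combinatorics of which vertices are joined to which (including the cyclic order of edges at each vertex); this data coincides with $G_-(L)$.  The only feature \emph{not} fixed by Theorem~\ref{t<<0} is which pairs of line-solitons happen to form $X$-crossings in a given geometric realization (Remark~\ref{rem:slide}).  Since a slide is by definition the local move that changes which edges of a trivalent vertex a nearby line meets, any two realizations with identical trivalent data but different $X$-crossing patterns differ by slides.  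No path in $\kappa$-space, and hence no wall analysis, is needed.

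Your wall-crossing route is reasonable in spirit and would yield an explicit sequence of slides, but the ``main obstacle'' you flag---the codimension-one locus $\kappa_i+\kappa_j=\kappa_k+\kappa_\ell$ where an $X$-crossing might escape to infinity and reorder unbounded solitons---is a self-inflicted difficulty that the paper's argument never encounters.  Your proposed fixes (invoking Theorem~\ref{th:X-crossing}, or perturbing the path and checking a local dictionary) are plausible but not carried out, so as written there is a gap.  If you want to complete your argument, the cleanest way is probably to abandon the path altogether and observe, as the paper does, that once the trivalent skeleton is pinned down by Theorem~\ref{t<<0}, two planar realizations can differ only by moving lines across vertices---which are exactly slides.
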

\begin{proof}
Theorem \ref{t<<0} gives a precise description of the trivalent vertices
which appear in the contour plot, together with the topology of 
how they are connected to each other (including the circular order
of the three line-solitons 
$[a,b]$, $[a,c]$, and $[b,c]$ incident to a given vertex $v_{a,b,c}$).
In particular, the combinatorics of the trivalent vertices and their incident 
edges in the contour plot
is precisely that which appears in $G_-(L)$.  

The only feature of the contour plot which 
Theorem \ref{t<<0} does not determine is which pairs of line-solitons 
form an $X$-crossing.  Therefore  
the topology of the contour plot may differ from that of $G_-(L)$
via a sequence of slides.  In each slide, a line-soliton of type
$[i,j]$ may pass across a trivalent vertex $v_{a,b,c}$, changing the 
location of the $X$-crossing, or possibly replacing one $X$-crossing
with two $X$-crossings (or vice-versa).
Note that the indices $\{i,j\}$ and $\{a,b,c\}$ must be disjoint, 
since an $X$-crossing involves two lines-solitons with disjoint indices.
\end{proof}

\begin{lemma}\label{lem:slide2}
Consider two contour plots which differ by a single slide.
Let $S_1$ and $S_2$ denote the two sets of Pl\"ucker coordinates
corresponding to the dominant exponentials in the two contour plots.
Then from the values of the Pl\"ucker coordinates in $S_1$,
one can reconstruct the values of the Pl\"ucker coordinates in $S_2$,
and vice-versa.
\end{lemma}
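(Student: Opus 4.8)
The plan is to exploit the fact that a slide is, by Definition~\ref{def:slide}, a \emph{local} deformation: it is supported in a small neighborhood $N$ of a single trivalent vertex $v_{a,b,c}$, so the two contour plots are identical outside $N$. Consequently $S_1$ and $S_2$ differ only in the labels of those regions inside $N$ whose labels are altered by the slide, and --- because both plots arise from the same point $A$ --- every index set appearing in both $S_1$ and $S_2$ carries the same value $\Delta_I(A)$. It therefore suffices to recover the value of each coordinate in $S_2\setminus S_1$ from the values recorded in $S_1$ (and symmetrically for $S_1\setminus S_2$).

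First I would pin down the local combinatorics. By Lemma~\ref{separating} the three regions meeting at $v_{a,b,c}$ carry labels $\Delta_{T\cup\{a\}}$, $\Delta_{T\cup\{b\}}$, $\Delta_{T\cup\{c\}}$ for a single $(k-1)$-subset $T$ disjoint from $\{a,b,c\}$, while crossing the sliding line $[i,j]$ (with $\{i,j\}$ disjoint from $\{a,b,c\}$) exchanges $i$ and $j$ in a region label. Hence every region label occurring in $N$, before or after the slide, has the form $\Delta_{U\cup\{x,y\}}$ with $x\in\{a,b,c\}$, $y\in\{i,j\}$, and $U$ a fixed $(k-2)$-subset disjoint from $\{a,b,c,i,j\}$. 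As the line passes over the vertex it forms one or two $X$-crossings with the edges of $v_{a,b,c}$, and the regions whose labels change are exactly the corners of these $X$-crossings.

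The key step is to invoke Theorem~\ref{th:X-crossing}. Since we work at $t\ll 0$ (or $t\gg 0$), each $X$-crossing arising during the slide, say between $[i,j]$ and an edge $[x,x']$ with $x,x'\in\{a,b,c\}$, forces a two-term Pl\"ucker relation of the form $\Delta_P\Delta_Q=\Delta_R\Delta_{R'}$ among the four coordinates $\Delta_{U\cup\{x,i\}}$, $\Delta_{U\cup\{x,j\}}$, $\Delta_{U\cup\{x',i\}}$, $\Delta_{U\cup\{x',j\}}$; as $A$ is totally non-negative, only Cases~(1) and~(2) of the theorem can occur. In each slide configuration the coordinate newly visible in $S_2$ is one of the four terms of such a relation, while the other three terms label regions common to both plots and hence lie in $S_1\cap S_2$. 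These three coordinates label genuine regions of the contour plot, so they are strictly positive and in particular nonzero; solving the relation expresses the unknown coordinate as a ratio, namely a product of two known coordinates divided by a third. Running this over the at most two altered regions reconstructs all of $S_2$ from $S_1$, and interchanging the roles of the two plots gives the reverse reconstruction.

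The main obstacle I anticipate is the case bookkeeping: one must verify, for each way the line $[i,j]$ can slide across $v_{a,b,c}$ (crossing one versus two edges, and for each cyclic order of $a,b,c,i,j$, which selects Case~(1) or~(2) of Theorem~\ref{th:X-crossing}), that the three ``helper'' coordinates of the governing two-term relation genuinely occur as region labels shared by both contour plots. Once this finite check is in place the algebra is immediate, and the non-vanishing of the denominator is automatic from the positivity of the Pl\"ucker coordinates attached to dominant exponentials.
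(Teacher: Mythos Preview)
Your approach is essentially the same as the paper's: both arguments hinge on the observation that a slide creates or destroys at most one region, and that the new (or lost) Pl\"ucker coordinate can be recovered from the two-term Pl\"ucker relation of Theorem~\ref{th:X-crossing} attached to the relevant $X$-crossing, using the positivity of the surrounding dominant exponentials. The paper's proof is in fact terser than yours --- it simply invokes Theorem~\ref{th:X-crossing}, asserts that the verification is ``by inspection,'' and refers the reader to Figure~\ref{fig:slide} --- so your more explicit analysis of the local labels $\Delta_{U\cup\{x,y\}}$ and your identification of the case bookkeeping as the only remaining (finite) check is a welcome elaboration rather than a departure.
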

\begin{proof}
Recall from Theorem \ref{th:X-crossing} that the four Pl\"ucker coordinates
incident to an $X$-crossing are dependent, in particular they satisfy 
a ``two-term" Pl\"ucker relation.
Now it is easy to verify the lemma by inspection, since each slide only creates or removes 
one region, and there is a dependence among the Pl\"ucker coordinates
labeling the dominant exponentials.  The reader may wish to check this 
by looking at the first and second, or the second and third, or the third and fourth 
contour plots in Figure 
\ref{fig:slide}.
\end{proof}

We now turn to the proof of Theorem \ref{inverse1}.

\begin{proof}
We consider the case that $t \ll 0$.
By Lemma \ref{lem:reconstructPlucker}, we can 
identify the cell $S_{\pi}^{tnn} = S_{L}^{tnn}$ containing $A$.
We can also compute the labels of the dominant exponentials 
in the contour plot and 
the values of the corresponding Pl\"ucker coordinates.

Define 
\[
\mathcal{T} = \left\{~ I \in {[n] \choose k}~\Big|~
E_I \text{ is a dominant exponential in }
\CC(u_A,t)~\right\}.
\]
We now claim that from the values of each 
$\Delta_I(A)$ for $I \in \mathcal{T}$,  we can determine
$A$. 

To prove the claim, note that 
since $t\ll 0$ sufficiently small, the contour plot 
$\CC(u_A,t)$ has the same topology as $\CC_{-}(\mathcal{M})$.
Therefore $\CC(u_A,t)$ will either have the same topological structure
as the graph $G_-(L)$, or by Lemma \ref{lem:slide}, it will differ from
$G_-(L)$ via a sequence of slides.  And so by Lemma \ref{lem:slide2}, 
we can determine the values of the Pl\"ucker coordinates labeling the regions 
in $G_-(L)$.  

We now use
Theorem  \ref{identifyPlucker}, below, 
which shows that from the values of the 
Pl\"ucker coordinates labeling the regions 
in $G_-(L)$, 
one can reconstruct all nonzero Pl\"ucker coordinates.
This determines the point $A \in 
(Gr_{k,n})_{\geq 0}$.

This completes the proof of Theorem \ref{inverse1} for $t \ll 0$.
The proof of the theorem for $t \gg 0$ just follows the 
same argument, with the dual $\Le$-diagram (which is a relabeled
$\Le$-diagram) replacing the $\Le$-diagram in the construction of the 
asymptotic contour plot.
\end{proof}


\subsection{Reconstructing a point of $(Gr_{k,n})_{\geq 0}$ from a minimal 
set of Pl\"ucker coordinates}

Talaska \cite{Talaska} studied the problem of 
how to reconstruct an element $A\in (Gr_{k,n})_{\geq 0}$ from a subset
of its Pl\"ucker coordinates $\Delta_I(A)$.
For each cell $S_{\mathcal M}^{tnn}$, she 
characterized a minimal 
set of Pl\"ucker coordinates  which suffice to reconstruct 
the corresponding element of $S_{\mathcal M}^{tnn}$.  Her 
proof worked by explicitly inverting Postnikov's {\it boundary
measurement map} \cite{Postnikov}.
In this section we review some of Talaska's work.  

Given a $\Le$-diagram $L$ of shape $\lambda$ which fits inside a 
$k \times (n-k)$ rectangle, we construct a planar
network $N_L$ as follows.  
First we draw a disk whose boundary consists of  the north
and west edges of the $k \times (n-k)$ box and the path 
determining the southeast boundary of $\lambda$.  Place a vertex, 
called a {\it boundary source}, at the end of each row,
and a vertex, called a {\it boundary sink}, at the end of each column
of $\lambda$.  Label these in sequence 
with the integers $\{1,2,\dots,n\}$, following the path from 
the northeast corner to the southwest corner which determines
$\lambda$.  Let $I=\{i_1<i_2<\dots<i_k\}$ be the set of boundary 
sources, so that $[n]\setminus I = \{j_1<j_2<\dots<j_{n-k}\}$
is the set of boundary sinks.

Given a box $b$ in $L$ which contains a $+$,
we drop a hook down and to the 
right, extending the two segments comprising the hook all the way to 
the boundary of the disk.  We direct the horizontal segment left and 
the vertical segment down.  After forgetting the $+$'s and $0$'s in $L$,
we now have a planar directed network.  There is exactly one face
for each box $b$ in $\lambda$ containing a $+$, and in addition,
there is one face whose northwest boundary is the boundary of the disk.
See the first two pictures in 
Figure \ref{network}, which shows this construction
applied to the $\Le$-diagram from Figure \ref{LePlabic}.
\begin{figure}[h]
\centering
\includegraphics[height=1.3in]{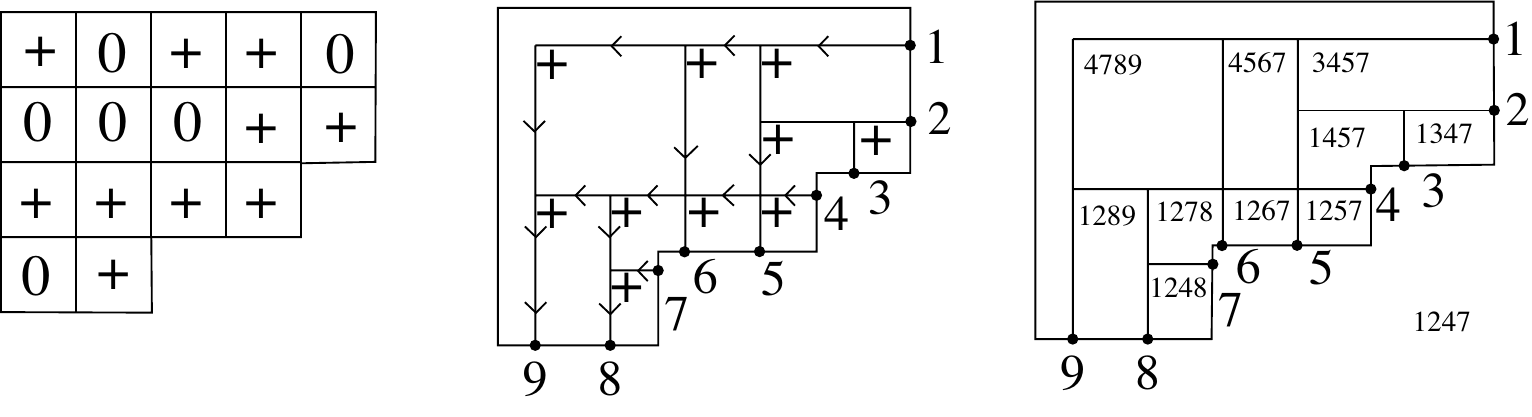}
\caption{A minimal set of Pl\"ucker coordinates for $\pi=(7,4,2,9,1,3,8,6,5)$.}
\label{network}
\end{figure}

Talaska proved the following.

\begin{theorem}\label{destination-set} \cite[Corollary 4.2]{Talaska}, 
\cite[Lemma 2.1]{Talaska}
Let $L$ be the $\Le$-diagram of a positroid cell $S_{L}^{tnn}$,
and $A \in S_{L}^{tnn}$. 
Define a set of Pl\"ucker coordinates
$T(L):=\{\Delta_I(A)\} \cup \{\Delta_{J(b)}(A) \}_b$, where $b$ ranges over all boxes containing a $+$ in the 
$\Le$-diagram, and $J(b)$ is the destination set
of the northwest-most path collection
lying weakly southeast of the $b$ hook.
Then $T(L)$ is a \emph{totally positive base}; that is,
each nonzero Pl\"ucker coordinate on $S_{L}^{tnn}$ can be written as a 
subtraction-free expression in the elements of $T(L)$.  It follows that
one can reconstruct $A$ from $T(L)$.
\end{theorem}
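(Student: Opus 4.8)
The plan is to prove the statement by explicitly inverting Postnikov's boundary measurement map on the planar network $N_L$, which is essentially the route Talaska carries out. First I would invoke Postnikov's parametrization: assigning positive weights to the edges of $N_L$ (or, gauge-invariantly, to its bounded faces) gives a surjection onto $S_L^{tnn}$, and the induced map from face weights is a bijection. Since $N_L$ has exactly one bounded face for each box of $\lambda$ containing a $+$, the number of free face weights equals $\dim S_L^{tnn} = |T(L)|-1$. Thus it suffices to recover all the face weights from the data in $T(L)$ in a subtraction-free way; once the weights are known, the point $A$ is determined.

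Next I would record how each Pl\"ucker coordinate depends on the weights. Because $N_L$ is planar and acyclic, with its boundary sources and sinks arranged along the boundary of the disk in the cyclic order prescribed by $L$, the Lindstr\"om--Gessel--Viennot lemma expresses $\Delta_J(A)$ as a sum $\sum_{\mathcal P}\wt(\mathcal P)$ over vertex-disjoint path collections $\mathcal P$ from the source set $I$ to the sink set $J$, with all signs equal to $+1$. Hence every $\Delta_J(A)$ is a subtraction-free polynomial in the edge weights, equivalently a subtraction-free Laurent polynomial in the face weights.

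The heart of the argument — and the step I expect to be the main obstacle — is the combinatorial content of Talaska's Lemma 2.1: the special target $J(b)$, the destination set of the northwest-most vertex-disjoint path collection lying weakly southeast of the $b$-hook, is chosen precisely so that the face weights can be solved for without subtraction. Concretely, I would order the boxes of $\lambda$ from southeast to northwest and argue that the minors $\Delta_{J(b)}$ depend on the face weights in a triangular fashion: the northwest-most path collection contributing to $\Delta_{J(b)}$ is unique, its support is forced by the hook geometry, and it involves exactly the faces weakly southeast of $b$. This lets one back-solve for each face weight $y_b$ as a subtraction-free expression in the previously determined weights together with the minors of $T(L)$. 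The delicate point, where essentially all the work lies, is verifying that the $\Le$-property of $L$ forces these path collections to be unique and nested, so that the inversion genuinely avoids cancellation and stays subtraction-free.

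Finally, having expressed every face weight as a subtraction-free expression in $T(L)$, I would substitute back into the Lindstr\"om--Gessel--Viennot formula from the second step. Composing a subtraction-free polynomial in the face weights with subtraction-free expressions for those weights yields a subtraction-free expression for each nonzero $\Delta_J(A)$ in terms of $T(L)$. This shows that $T(L)$ is a totally positive base, and since the face weights are thereby determined by $T(L)$, the point $A \in \Grkn$ is reconstructed. (An alternative to the direct inversion would be to identify the $\Delta_{J(b)}$ as the face labels of a reduced plabic graph and appeal to Laurent positivity for the Grassmannian cluster algebra, but the direct combinatorial route above is self-contained and is what I would pursue.)
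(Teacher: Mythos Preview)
The paper does not prove this theorem; it cites it directly from Talaska, noting only that ``her proof worked by explicitly inverting Postnikov's \textit{boundary measurement map}.''  Your proposal is precisely a sketch of that inversion argument, so it matches the approach the paper attributes to Talaska.
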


The right side of Figure \ref{network} shows the Pl\"ucker coordinate
$\Delta_{J(b)}$ associated with each box $b$ which contained a $+$
in the $\Le$-diagram.

\begin{definition}
Given a $\Le$-diagram $L$ and a box $b$ containing a $+$, let 
$L_b$ be the $\Le$-diagram obtained from $L$ by changing the content
of each box to a $0$ if the box  lies in a row above $b$ or a column left of 
$b$.
\end{definition}

\begin{definition}
We define a total order on elements of ${[n] \choose k}$ 
by saying that 
$J^1 = \{j^1_1 > \dots > j^1_k\} > J^2 = \{j^2_1>\dots > j^2_k\}$
if in the first position $i$ where they differ, $j^1_i > j^2_i$.
Then if $\mathcal M \subset {[n] \choose k}$, we refer to the largest
element of $\mathcal M$ as \emph{lexicographically maximal}.
\end{definition}

The following lemma is implicit in \cite{Talaska}.
\begin{lemma}\label{inv1}
Let $\mathcal M:=\mathcal M(L_b)$ be the collection of Pl\"ucker coordinates which 
are positive on the cell $S_{L_b}^{tnn}$.  Then 
$\Delta_{J(b)}$ is the lexicographically maximal element of $\mathcal M$.
\end{lemma}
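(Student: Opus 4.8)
The plan is to interpret both sides of the claimed equality inside the planar network $N_L$ of Theorem \ref{destination-set} and to reduce everything to a single extremal path collection. First I would record the standard consequence of the Lindstr\"om--Gessel--Viennot lemma together with positivity of the edge weights: for any $\Le$-diagram $L'$, a Pl\"ucker coordinate $\Delta_J$ is nonzero on $S_{L'}^{tnn}$ if and only if there is a vertex-disjoint family of directed paths (a \emph{flow}) in $N_{L'}$ from the fixed source set $I$ to the destination set $J$. Hence $\mathcal M(L_b)$ is exactly the collection of destination sets of flows in $N_{L_b}$. The key geometric observation is that forming $L_b$ --- changing to $0$ every box in a row above $b$ or a column left of $b$ --- removes precisely the hooks of $N_L$ attached at boxes strictly northwest of $b$, i.e. the crossings lying strictly northwest of the $b$-hook. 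Since the boundary labeling and the positions of sources/sinks depend only on the shape $\lambda$, the network $N_{L_b}$ may be identified with the portion of $N_L$ lying weakly southeast of the $b$-hook, and the ``northwest-most path collection lying weakly southeast of the $b$-hook'' defining $J(b)$ is literally the northwest-most flow of $N_{L_b}$.

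It then remains to prove that the destination set of this northwest-most flow is the lexicographically maximal element of $\mathcal M(L_b)$. I would do this by a non-crossing exchange argument calibrated to the boundary labeling, in which the labels $1,\dots,n$ increase as one travels from the northeast corner to the southwest corner; consequently a sink nearer the southwest corner carries a larger label, and pushing a path as far to the northwest as the planarity/non-crossing condition allows forces its exit label to be as large as possible. The calibrating case is the full $k\times(n-k)$ rectangle, where $I=\{1,\dots,k\}$ and the northwest-most flow routes the topmost source along the north-then-west boundary into the southwest corner, yielding the destination set $\{n,n-1,\dots,n-k+1\}$, which is indeed the decreasing-order lexicographic maximum. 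In general, if some flow had a lexicographically larger destination set $J'$, then at the largest index where $J'$ and $J(b)$ differ the set $J'$ would use a strictly more southwestern sink, reachable only by a path lying strictly northwest of the corresponding path of $J(b)$; such a path could be installed without creating a crossing, contradicting the northwest-maximality of the collection defining $J(b)$. This identifies $J(b)$ with the output of the matroid greedy algorithm that scans the ground set from $n$ down to $1$, retaining each element whenever the current set stays independent in $\mathcal M(L_b)$ --- that is, with the lexicographically maximal base.

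Combining the two steps shows that $\Delta_{J(b)}$ is the Pl\"ucker coordinate indexed by the lex-maximal element of $\mathcal M(L_b)$, which is the content of Lemma \ref{inv1}. I expect the main obstacle to be the second step: making rigorous the equivalence between the \emph{geometric} extremality (northwest-most) of the path collection and the \emph{combinatorial} extremality (lexicographic maximality in decreasing order) of its destination set. This hinges on pinning down the orientation of the boundary labeling exactly and on a clean exchange lemma guaranteeing that every lexicographic improvement of the destination set can be realized by shifting a single path strictly to the northwest. Once this correspondence is in place, the remainder is bookkeeping with the definition of $L_b$ and the flow description of $\mathcal M(L_b)$.
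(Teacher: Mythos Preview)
The paper does not actually prove Lemma~\ref{inv1}; it simply records that the statement is ``implicit in \cite{Talaska}.''  Your proposal therefore supplies considerably more than the paper does: you are unpacking the content of Talaska's construction and explaining why it yields the lexicographic maximum.  The overall strategy---identify $\mathcal M(L_b)$ with destination sets of flows in $N_{L_b}$, identify $N_{L_b}$ with the portion of $N_L$ weakly southeast of the $b$-hook, and then argue that the northwest-most flow there has lex-maximal destination set---is correct and is precisely the way one would extract the lemma from \cite{Talaska}.

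One small point of phrasing: when you write that forming $L_b$ ``removes precisely the hooks of $N_L$ attached at boxes strictly northwest of $b$,'' the phrase ``strictly northwest of $b$'' is ambiguous; taken in the usual coordinatewise sense (row strictly above \emph{and} column strictly left) it is too small a set.  Your parenthetical ``i.e.\ the crossings lying strictly northwest of the $b$-hook'' is the correct description, since the region weakly southeast of the $b$-hook is exactly the set of boxes with row $\geq r_b$ \emph{and} column $\geq c_b$, and its complement is the set with row $< r_b$ \emph{or} column $< c_b$---which is exactly what gets zeroed in $L_b$.  So the identification of $N_{L_b}$ with the southeast subnetwork is fine once this is stated carefully.

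You are right that the substantive work is in the second step, matching geometric extremality of the flow with lexicographic extremality of its destination set under the given boundary labeling.  Your exchange argument is the standard one and goes through; the only thing to be careful about is that a flow in $N_{L_b}$ may leave some sources (those in rows above $r_b$) with trivial paths, and these source labels then sit in $J(b)$ automatically.  Once that is accounted for, the greedy/exchange description you give is exactly the matroid-theoretic characterization of the lex-max base, and the lemma follows.
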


The following lemma is obvious.
\begin{lemma}\label{inv2}
The generalized plabic graph $G_-(L_b)$ that Algorithm \ref{LeToPlabic}
associates to $L_b$ is contained inside $G_-(L)$.
\end{lemma}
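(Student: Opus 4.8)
The plan is to exploit the fact that Algorithm~\ref{LeToPlabic} is assembled pipe-by-pipe out of purely local data of the $\Le$-diagram, so that turning $+$'s into $0$'s in the northwest region merely deletes gadgets without disturbing the southeast portion of the graph.

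First I would record exactly which boxes survive. By the definition of $L_b$, the boxes of $L_b$ containing a $+$ are precisely those boxes of $L$ containing a $+$ that lie weakly below the row of $b$ and weakly to the right of the column of $b$ (all boxes strictly above that row or strictly left of that column are set to $0$). In particular the set of $+$'s of $L_b$ is a subset of the set of $+$'s of $L$. Next I would recall from the construction that each $+$ (elbow) contributes one trivalent gadget (an edge carrying one white and one black vertex) to $G_-$, while each $0$ (cross) contributes only an $X$-crossing; the edges and $X$-crossings of $G_-$ are then obtained by tracing the pipes, which turn at elbows and pass straight through crosses, exactly as in Definition~\ref{Le2permutation}.

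The key observation is that the routing of any pipe, while it lies inside the rectangle of boxes weakly southeast of $b$, depends only on the contents of those boxes, and these are identical in $L$ and in $L_b$. Hence the gadgets arising from the surviving $+$'s, together with the pipe segments and $X$-crossings connecting them inside this region, appear identically in $G_-(L_b)$ and in $G_-(L)$. It then remains to see what happens to a pipe of $L_b$ once it leaves this southeast rectangle: there every box of $L_b$ is a $0$, so the pipe only crosses other pipes and terminates on the border. After Step~(3) of Algorithm~\ref{LeToPlabic} (forgetting degree-$2$ vertices and edges ending on the southeast border), these portions become the unbounded edges of $G_-(L_b)$, each an initial segment of the corresponding pipe in $G_-(L)$. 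Therefore every vertex, edge, and $X$-crossing of $G_-(L_b)$ occurs in $G_-(L)$, which is the claimed containment.

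The only point requiring care, though it is a mild one consistent with the lemma being essentially combinatorial bookkeeping, is checking that the ``forgetting'' operations of Steps~(2)--(3) do not alter the southeast portion: one must verify that no gadget in the southeast rectangle is created or destroyed by the rerouting of pipes in the all-zero northwest region, and that the identification of the unbounded edges of $G_-(L_b)$ with initial segments of pipes of $G_-(L)$ respects the planar embedding. This follows because those operations are local to crosses and to the border, both of which lie outside the southeast rectangle, so I expect this to be the one step worth spelling out explicitly rather than a genuine obstacle.
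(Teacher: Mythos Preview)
Your argument is correct and is exactly the kind of ``locality of Algorithm~\ref{LeToPlabic}'' reasoning the paper has in mind; the paper itself offers no proof, simply declaring the lemma obvious. Your observation that the southeast rectangle is built identically from identical data, while the northwest all-zero region contributes nothing but straight pipe segments (hence only unbounded edges after Steps~(2)--(3)), is precisely the point, and your closing remark correctly identifies the only thing worth verifying.
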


\begin{lemma} \label{inv3}
Consider the graph $G_-(L)$ constructed
by Algorithm \ref{LeToPlabic}.  Let $J$ be the $k$-element 
subset of $[n]$ which labels the region $R$ that comes from the 
northwest corner of the $\Le$-diagram $L$ (see the 
bottom left picture in Figure \ref{LePlabic}).
Then $J$ is the lexicographically maximal element of $\mathcal M(L)$.
\end{lemma}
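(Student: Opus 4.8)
The plan is to identify the region $R$ with the unbounded region of $\CC_{-\infty}(\mathcal{M})$ lying in the direction $\bar{x}\to -\infty$, and then to read off its label directly from the description of $\CC_{-\infty}(\mathcal{M})$ as a pointwise minimizer. First I would recall from Theorem \ref{t<<0} that $G_-(L)$, with its regions labeled by the rules of the road, realizes $\CC_{-\infty}(\mathcal{M})$: each region carries the label $J\in\mathcal{M}(L)$ of the exponential dominant there, namely the $J$ minimizing $\sum_{i=1}^{k}\phi_{j_i}(\bar{x},\bar{y})$, where $\phi_i(\bar{x},\bar{y})=\kappa_i\bar{x}+\kappa_i^2\bar{y}+\kappa_i^3$. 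The left panel of Figure \ref{3realizations} (which is literally the $\kappa=(-4,\dots,4)$ realization of $G_-(L)$) and Figure \ref{LePlabic} make visible that the box in the northwest corner of $L$ sits at the far left of the plot, so that $R$ is precisely the region extending to $\bar{x}\to -\infty$.

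Next I would compute the label $J$ of this far-left region. Fix any value of $\bar{y}$ and let $\bar{x}\to -\infty$. Since $\phi_i(\bar{x},\bar{y})=\kappa_i\bar{x}+O(1)$ and $\kappa_1<\dots<\kappa_n$, for $\bar{x}$ sufficiently negative we obtain the strict total order $\phi_n<\phi_{n-1}<\dots<\phi_1$. Consequently the label $J$ of $R$ is the subset of $\mathcal{M}(L)$ minimizing $\sum_{j\in J}\phi_j$, that is, the minimum-weight basis of the positroid $\mathcal{M}(L)$ with respect to the weights $w_j=\phi_j$.

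Finally I would identify this minimum-weight basis with the lexicographically maximal element of $\mathcal{M}(L)$. Because $\mathcal{M}(L)$ is a positroid, and hence the set of bases of a matroid, the minimum-weight basis is produced by the greedy algorithm, which processes the ground set in order of increasing weight, here $n,n-1,\dots,1$, adjoining each element whenever the current set remains independent (equivalently, extends to a basis). This is exactly the recipe that selects the lexicographically maximal basis in the order defined just before Lemma \ref{inv1}: sorting each set in decreasing order, it includes the largest possible element, then the largest element extending it, and so on. Hence $J$ is the lexicographically maximal element of $\mathcal{M}(L)$, as claimed.

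The main obstacle is the geometric identification in the first paragraph: pinning down that the northwest-corner region $R$ is exactly the $\bar{x}\to -\infty$ region requires care with the two orientation conventions in play, namely the $180^{\circ}$ rotation of Remark \ref{rem:rotate} relating $\CC_{-\infty}(\mathcal{M})$ to the $t\ll 0$ contour plot, and the cosmetic rotation in step (4) of Algorithm \ref{LeToPlabic}. A clean way to settle this is to note that by Theorem \ref{algo} the region of excedance positions, which is lexicographically minimal, is the one at $\bar{x}\gg 0$, so the opposite far-left region at $\bar{x}\to -\infty$ must be the northwest corner $R$; the label is intrinsic and unaffected by either rotation. Once this identification is secured, the matroid-greedy argument of the last two paragraphs is entirely routine.
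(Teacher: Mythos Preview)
Your argument is correct and is essentially the paper's own ``soliton argument,'' transposed from $(x,y)$-coordinates on $\CC_t(u_A)$ to $(\bar{x},\bar{y})$-coordinates on $\CC_{-\infty}(\mathcal{M})$; the paper observes that for $x\gg 0$ one has $E_n>\dots>E_1$ so the lexicographically maximal $J$ dominates, and then identifies the $x\gg 0$ region with the northwest corner of $L$ via Lemma~\ref{lem:slide}.  Your version is slightly more explicit in one respect: where the paper simply asserts that the lex-max $J$ gives the dominant term, you spell out the matroid greedy argument needed to pass from the total order on the $\phi_i$ (equivalently, on the $E_i$) to the conclusion that the lex-max basis is the minimizer---and that step really is needed, since lex-max does not in general coincide with maximal index sum outside a matroid context.
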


\begin{proof}
One may prove directly that the label of $R$ coincides with the destination
set of the northwest-most path collection in $N_L$.

Alternatively, we may prove this using a soliton argument.
Let $A\in S_{L}^{tnn}$.
When $x \gg0$, we have that 
$E_n > E_{n-1} > \dots > E_1$.  Therefore if 
$J$ is the lexicographically maximal element of $\mathcal M(L)$,
then the term $\Delta_J(A) E_J$ dominates the $\tau$-function
$\tau_A$.  It follows that $E_J$ is the dominant exponential
of any contour plot $\CC(u_A,t)$ in the region where
$x\gg0$. By Lemma \ref{lem:slide}, for $t \ll 0$,
this contour
plot coincides  with $G_-(L)$ up to a series of slides,
none of which will change the label of the region at $x\gg 0$.
And this region corresponds to the region coming from the 
northwest corner of the $\Le$-diagram $L$.
\end{proof}

\begin{theorem}\label{identifyPlucker}
Let $L$ be a $\Le$-diagram.
Consider the set $S$ of Pl\"ucker coordinates
associated to the dominant exponentials in
the regions of the graph $G_-(L)$,
as constructed
in Algorithm \ref{LeToPlabic}. 
Then $S$ contains $T(L)$.
In particular,
one may reconstruct the element $A\in S_{L}^{tnn}$ from 
the values of the Pl\"ucker coordinates labeling 
$G_-(L)$ for $t\ll 0$.
\end{theorem}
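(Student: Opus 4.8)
The plan is to establish the set-theoretic containment $S \supseteq T(L)$, where $T(L) = \{\Delta_I\} \cup \{\Delta_{J(b)}\}_b$ is Talaska's totally positive base, $I$ being the pivot set and $b$ ranging over the boxes of $L$ containing a $+$. Once this containment is in hand, the final ``in particular'' clause is immediate: by Theorem \ref{destination-set} every nonzero Pl\"ucker coordinate on $S_{L}^{tnn}$ is a subtraction-free expression in the elements of $T(L)$, and since the values of all labels in $S$ (hence of all elements of $T(L) \subseteq S$) are known from $G_-(L)$, the point $A$ is determined. I would first dispose of $\Delta_I$. Here $I$ is the lexicographically minimal element of $\mathcal{M}(L)$, equal to the set of excedance positions of $\pi(L)$; as recorded in the proof of Lemma \ref{lem:reconstructPlucker} (and by the dominant-exponential argument of Lemma \ref{inv3}, run with $x\ll 0$ in place of $x\gg 0$), the region of $G_-(L)$ at $x\ll 0$ is labeled by $E_I$, so $\Delta_I \in S$.

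For each $+$-box $b$ I would chain together the three preceding lemmas. By Lemma \ref{inv1}, $J(b)$ is the lexicographically maximal element of $\mathcal{M}(L_b)$, where $L_b$ is obtained from $L$ by setting to $0$ every box strictly to the northwest of $b$ (every box in a higher row or an earlier column). Applying Lemma \ref{inv3} to the $\Le$-diagram $L_b$, this lexicographically maximal element is exactly the label of the region of $G_-(L_b)$ coming from the northwest corner. Finally, by Lemma \ref{inv2}, $G_-(L_b)$ sits inside $G_-(L)$ as a labeled subgraph. Because $L_b$ carries $0$'s in every box strictly northwest of $b$, Algorithm \ref{LeToPlabic} produces no edges there, so the northwest-corner region of $G_-(L_b)$ is a single face extending all the way down to $b$; this is the face Lemma \ref{inv3} labels by $J(b)$. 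Reading off this label in $G_-(L)$ will give $\Delta_{J(b)} \in S$.

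The hard part will be the last step: verifying that the inclusion $G_-(L_b) \hookrightarrow G_-(L)$ of Lemma \ref{inv2} carries the northwest-corner region of $G_-(L_b)$ to the face of $G_-(L)$ situated at $b$ \emph{with its label intact}. The subtlety is that passing from $L_b$ back to $L$ restores the $+$'s strictly northwest of $b$, and the edges they contribute subdivide the large northwest-corner face of $G_-(L_b)$ into several faces of $G_-(L)$. I would argue that these restored edges only refine the region structure strictly northwest of $b$, so that the sub-face still adjacent to $b$ on its southeast side is precisely the face of $G_-(L)$ at $b$, and then check that the rules-of-the-road labeling (Definition \ref{labels}) assigns this face the same index set $J(b)$ in $G_-(L)$ as in $G_-(L_b)$. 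The cleanest way to control the labeling is to track the trips through the restored northwest portion of the graph, using the $\Le$-property to exclude any trip crossing that would alter the label of the face at $b$; alternatively, one identifies this face directly with the face of Talaska's network $N_L$ at $b$, whose destination-set label is $J(b)$ by definition. Combining $\Delta_I \in S$ with $\Delta_{J(b)} \in S$ for every $+$-box $b$ yields $T(L) \subseteq S$, and the reconstruction statement then follows from Theorem \ref{destination-set}.
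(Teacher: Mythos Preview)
Your proposal is correct and follows essentially the same route as the paper: combine Lemmas \ref{inv1}, \ref{inv2}, \ref{inv3} to place each $\Delta_{J(b)}$ into $S$, handle $\Delta_I$ via the $x\ll 0$ region, and conclude by Theorem \ref{destination-set}. You are in fact more careful than the paper, which simply cites the three lemmas without addressing the label-preservation issue under the inclusion $G_-(L_b)\hookrightarrow G_-(L)$; your plan to control this via the trips and the $\Le$-property (or via the identification with Talaska's network face at $b$) is the right way to make that step rigorous.
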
 

\begin{proof}
By Lemmas \ref{inv1}, \ref{inv2}, and \ref{inv3},
for each box $b$ in $L$, 
$E_{J(b)}$ is a dominant exponential labeling a region of 
$G_-(L)$.  Also, if $I$ is the lexicographically minimal 
element of $\mathcal M(L)$, then $E_I$ labels 
the region at $x\ll 0$ in $G_-(L)$.  Therefore 
$T(L) \subset S$.  The proof now follows from 
Theorem \ref{destination-set}.
\end{proof}


\section{Triangulations of $n$-gon and soliton graphs for $(Gr_{2,n})_{>0}$} 
\label{sec:triangulation}

The main result of this section is
an algorithm for producing all generic soliton graphs
(up to the (M2)-equivalence of Section \ref{sec:moves})
that come from $(Gr_{2,n})_{>0}$.  We consider the generic 
asymptotic contour plot
$\CC_{\a}(\mathcal{M})$ for each fixed $\a=(a_3,\ldots,a_m)$, and
classify those plots by taking various choices of $\a$ in the $\R^{m-2}$-space.
Here we will show that it is sufficient to consider $m=n-1$ for $n\ge4$.
Note that by Corollary \ref{no-X}, these asymptotic contour plots 
do not have any $X$-crossings, and hence their generic soliton graphs have only trivalent vertices.
We state the algorithm and main theorem
in Section \ref{sec:theorem}, and then prove it in the rest of the section.

\subsection{Algorithm to produce soliton graphs}\label{sec:theorem}


\begin{algorithm}  \label{TriangulationA}
Let $T$ be a triangulation of an $n$-gon $P$, whose $n$ vertices are labeled
by the numbers $1,2,\dots,n$, in counterclockwise order. Therefore
each edge of $P$ and each diagonal of $T$ is specified by a pair of distinct integers
between $1$ and $n$. The following
procedure yields a labeled graph  $\Psi(T)$.
\begin{enumerate}
\item Put a black vertex in the interior of each triangle in $T$.
\item Put a white vertex at each of the $n$ vertices of $P$
which is incident to a diagonal of $T$; put a black vertex
at the remaining vertices of $P$.
\item Connect each vertex which is
inside a triangle of $T$ to the three vertices
of that triangle.
\item Erase the edges of $T$, and contract every pair of  adjacent vertices
which have the same color.  This produces a new graph $G$
with $n$ boundary vertices, in bijection with the vertices of the original
$n$-gon $P$.
\item Add one unbounded ray to each of the boundary vertices of
$G$, so as to produce a new (planar) graph  $\Psi(T)$.  Note that
$\Psi(T)$ divides the plane into regions; the bounded regions correspond to
the diagonals of $T$, and the unbounded regions correspond to the edges of $P$.
More specifically, a region of $\Psi(T)$ is labeled by $E_{ij}$,
where $i$ and $j$ are the endpoints of the corresponding diagonal
or edge of $T$.
\end{enumerate}
\end{algorithm}
See Figure \ref{Psi}.
Our main result in this section is the following.

\begin{theorem}\label{theorem:maintriangulation}
Up to (M2)-equivalence, the graphs $\Psi(T)$ constructed above are 
soliton graphs
for $(Gr_{2,n})_{>0}$, and conversely, up to (M2)-equivalence,
any generic soliton graph
for $(Gr_{2,n})_{>0}$ comes from this construction.
Moreover, one can realize each graph $\Psi(T)$ by 
choosing 
$\a=(a_3,\dots,a_n)$ appropriately.\end{theorem}
\begin{figure}[h]
\centering
\includegraphics[height=2.2in]{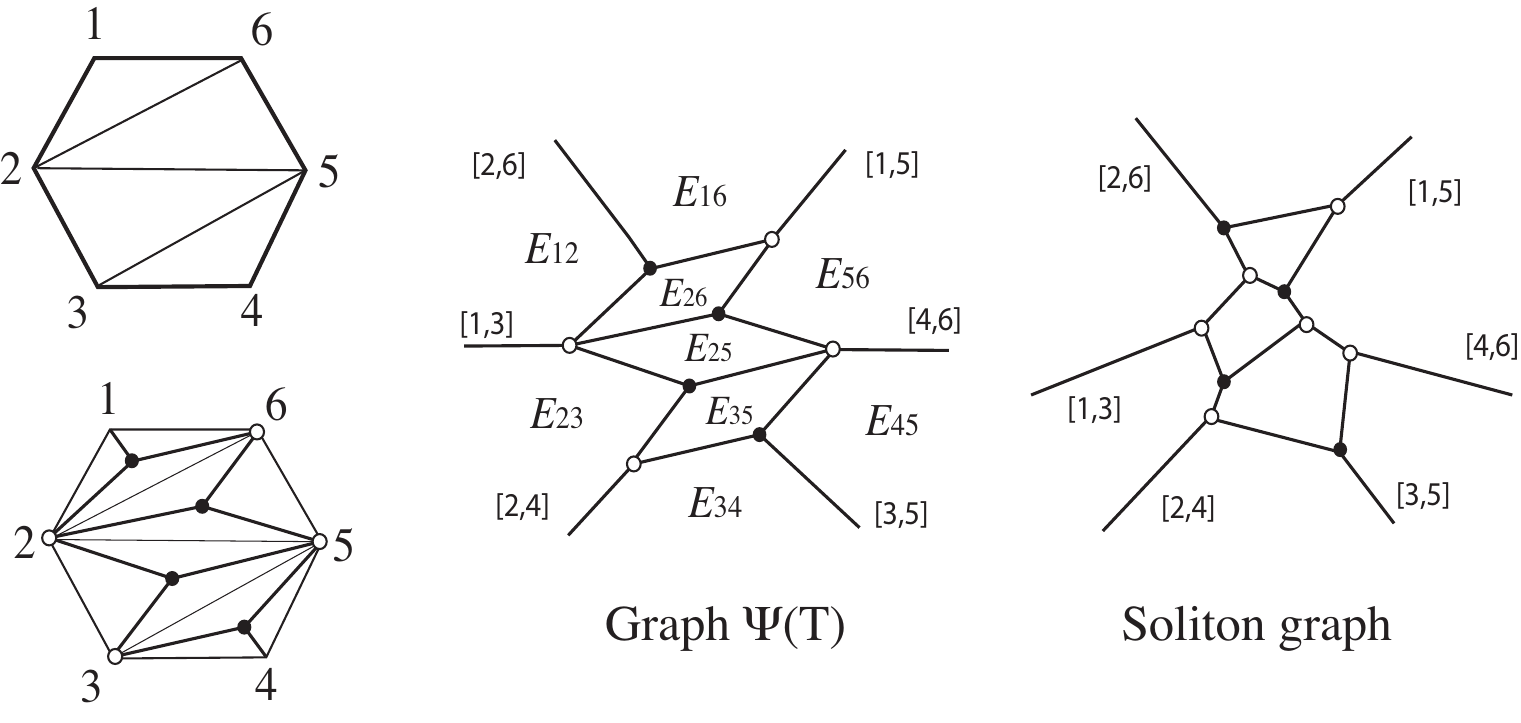}
\caption{Algorithm \ref{TriangulationA}, starting from a triangulation of
a hexagon.  The right figure shows the corresponding soliton graph 
from the contour plot for a soliton solution
from  $(Gr_{2,6})_{>0}$.  The graph $\Psi(T)$ is (M2)-equivalent to
the soliton graph.
\label{Psi}}
\end{figure}
\begin{remark}
The process of flipping a diagonal in the triangulation corresponds
to a mutation in the cluster algebra.  In the terminology of reduced plabic
graphs, a mutation corresponds to the square move (M1) (see Section \ref{sec:moves}).
In the setting of KP solitons, each mutation may be considered as an
evolution along a particular flow of the KP hierarchy.  For example,
the 
contour plot for $(Gr_{2,4})_{>0}$ for $t_3\ll 0$ has one bounded region with the dominant exponential $E_{1,3}$, and as $t_3$ increases,
the bounded region closes at some $t_3$, then for $t_3\gg 0$, the contour plot
has again one bounded region with the dominant exponential $E_{2,4}$.
(This can be easily verified using the construction
given in Section \ref{plabic-soliton}.)
Thus, the time $t_3$ can be considered as a mutation parameter for $A_1$
cluster, i.e. $\Delta_{1,3}\to\Delta_{2,4}$ as $t_3$ increases.
\end{remark}

\begin{remark}\label{rem:suffices}
It is known already that the reduced plabic graphs for 
$(Gr_{2,n})_{>0}$ all have the form given by Algorithm \ref{TriangulationA}
(up to (M2)-equivalence).  And 
by Corollary \ref{cor:reduced},
every generic soliton graph is a reduced plabic
graph.  Therefore 
it follows immediately that every
generic soliton graph for $(Gr_{2,n})_{>0}$ must have the form of Algorithm 
\ref{TriangulationA}.
Therefore in order to prove 
Theorem \ref{theorem:maintriangulation}, we must show that each 
graph that one can obtain from Algorithm \ref{TriangulationA} 
is realizable as a soliton
graph (up to (M2)-equivalence).
\end{remark}


\subsection{Sequence of asymptotic contour plots}

Let $\mathcal{M} = {[n] \choose 2}$, the set of all two-element subsets
of $[n]$.
Recall that for fixed $\a=(a_3,\ldots,a_m)$ the asymptotic contour plot $\CC_{\a}(\mathcal{M})$ 
for  $S_{\mathcal{M}}^{tnn} = (Gr_{2,n})_{>0}$ is defined as the 
locus in the $\bar{x}\bar{y}$-plane where 
\[
f_{\mathcal{M}}(\bar{x},\bar{y},\a):=\underset{1\le i<j\le n}{\text{max}}\left\{\Theta_{i,j}(\bar{x},\bar{y},\a)\right\}
\]

is not linear.  Here $\Theta_{i,j}=\theta_i+\theta_j$ and
\[
\theta_{j}(\bar{x},\bar{y},\a)=\kappa_j\bar{x}+\kappa_j^2\bar{y}+\sum_{p=3}^m\kappa_j^pa_p,\qquad\text{for}\quad j=1,\ldots,n.
\]
Since $\mathcal{M}= {[n] \choose 2}$ will be fixed throughout this section,
we will hereafter  refer to 
$\CC_{\a}(\mathcal{M})$ as simply $\CC_{\a}$.

For each fixed $\a$, we 
consider the $\binom{n}{2}$ planes defined by $z=\Theta_{i,j}(\bar{x},\bar{y},\a)$
in the $(\bar{x},\bar{y},z)$-space. 
Then the asymptotic contour plot $\CC_{\a}$ represents
the $\bar{x}\bar{y}$-projection of the intersection lines of those planes 
which take the largest values of $z$.  We call those planes the \emph{dominant planes}. Note that:
\begin{itemize}
\item If a point 
$(\bar{x},\bar{y})$ 
is not in $\CC_{\a}$
then there is a unique dominant plane at that point.
\item Let 
$L_{i,j}$ be the line in the $\bar{x} \bar{y}$-plane where
$\theta_i = \theta_j$.  Every line segment
of $\CC_{\a}$ (either bounded or unbounded)
is a portion of some $L_{i,j}$.
\item Given $\{i,j,k\} \subset [n]$,
let $v_{i,j,k} = (v_{i,j,k}^x,v_{i,j,k}^y)$ be the point
where the lines $L_{i,j}$, $L_{i,k}$, and $L_{j,k}$ mutually intersect.
All trivalent vertices of $\CC_{\a}$ have the form
$v_{i,j,k}$ for some $i,j,k$.
\item If a point 
lies on a line of $\CC_{\a}$ then there are
two dominant planes whose intersection gives
the line, which breaks the linearity of $f_{\mathcal{M}}(\bar{x},\bar{y},\a)$.
\end{itemize}

\begin{lemma}\label{lem:order}
Let $(r_1,\dots,r_n)$ be any point in $\R^n$.  Then we can find a unique point $(\bar{x}, \bar{y}, \mathbf{a}) =
(\bar{x},\bar{y}, a_3,\dots,a_{n-1}) \in \R^{n-1}$ and a 
constant $a_0$ such that 
$\theta_i(\bar{x}, \bar{y}, \a) = r_i-a_0$ for all $1 \leq i \leq n$.
\end{lemma}

\begin{proof}
Define the function $\ell_i: \R \times \R^{n-1} \to \R$ by 
$$\ell_i(a_0,\bar{x}, \bar{y}, \a)  = a_0 + \theta_i(\bar{x},\bar{y},\a)
= (a_0,\bar{x},\bar{y},a_3,\dots, a_{n-1}) \cdot \mathsf E_i^{\bf 0},$$ where 
$\mathsf E_i^{\bf 0}$ is the $i$th column vector defined in \eqref{eq:E}.
Recall that the matrix $E^{\bf 0}=(\mathsf E_1^{\bf 0},\ldots,\mathsf E_n^{\bf 0})$ is the Vandermonde matrix in the $\kappa_j$'s, and it is invertible.
Therefore  we can find a unique solution 
$(a_0,\bar{x},\bar{y},\a)$
to the equations $(a_0,\bar{x},\bar{y},\a)\cdot E^{\bf 0}=(r_1,\ldots,r_n)$, and
hence 
$a_0+ \theta_i(\bar{x},\bar{y},\a) = r_i$ for all $i$.
\end{proof}

Lemma \ref{lem:order}
implies that  the relative positions of  the planes $z=\Theta_{i,j}(\bar{x},\bar{y},\a)$ at each point $(\bar{x},\bar{y})\in \R^2$
are determined by the  $n-3$ parameters $\a=(a_3,\ldots,a_{n-1})\in\R^{n-3}$.

In Theorem \ref{thm:triangle} below we will give a procedure
for realizing each graph $\Psi(T)$ as a soliton graph 
(up to (M2)-equivalence).  Note that by Remark \ref{rem:suffices}, Theorem 
\ref{theorem:maintriangulation} is a consequence of Theorem 
\ref{thm:triangle}.

First we need some definitions.
Consider an $n$-gon, whose vertices
are labeled from $1$ to $n$ in counterclockwise order,
and let $I \subset [n]$.
Choose some $j \in [n]\setminus I$.  We say that the
\emph{clockwise} (respectively, \emph{counterclockwise}) \emph{neighbor} 
of $j$ in $I$ is the element of
$I$ which is closest to $j$ when we travel from $j$ in clockwise
(respectively, counterclockwise)
order around the boundary of the polygon.

\begin{definition}
Let $\mathcal{T}_n$ denote the triangulations of an $n$-gon, whose
vertices are labeled from $1$ to $n$ in counterclockwise order.
We define a surjective map $\lambda:S_n \to \mathcal{T}_n$ as follows.
Let $(i_1,\dots,i_n) \in S_n$.  We construct an element of
$\mathcal{T}_n$ by:
\begin{itemize}
\item creating a triangle with vertices $\{i_1,i_2,i_3\}$
\item For $4 \leq k \leq n$, add two edges connecting
$i_k$ to its clockwise and counterclockwise neighbors from
$\{i_1,i_2,\dots,i_{k-1}\}$.
\end{itemize}
\end{definition}

\begin{theorem}\label{thm:triangle}
Choose a point $(r_1,\dots,r_n)\in \R^n$ such that 
\[
0=r_{i_1}=r_{i_2}=r_{i_3}\gg r_{i_4}\gg r_{i_5}\gg\cdots\gg r_{i_n}.
\]
More specifically,
choose some large real number $R$ such that
\begin{align*} 
R &\gg \underset{i\ne j\ne k}{\max}\left\{1,~ \frac{1}{|\kappa_k-\kappa_i||\kappa_k-\kappa_j|}, ~\frac{|\kappa_i+\kappa_j|}{|\kappa_k-\kappa_i||\kappa_k-\kappa_j|} \right\}.
\end{align*}
We then take
\[
r_{i_{\ell}}=-R^{\ell}\qquad\text{for}\quad \ell=4,\ldots n.
\]
Using Lemma \ref{lem:order},  choose the unique point
$(a_0, \overline{x}_0, \overline{y}_0, a_3,\dots,a_{n-1})$ such that
$\theta_i(\overline{x}_0, \overline{y}_0, \a) = r_i - a_0$
for all $1 \leq i \leq n$.
Then the soliton graph associated to 
the asymptotic contour plot $\CC_{\a}$ 
is (M2)-equivalent to the
graph $\Psi(\lambda(i_1,i_2,\dots,i_n))$.
\end{theorem}

\begin{remark}
Other choices of the $r_i$'s in Theorem \ref{thm:triangle} are possible,
but the above choice will be convenient for our proofs.
\end{remark}

From now on we use the hypotheses of Theorem \ref{thm:triangle}.
\begin{definition}
For $\ell = 3,4, \dots, n$, we define
\[
I_\ell:=\{i_1,i_2,\ldots,i_{\ell}\},
\]
and 
\[
f^{(\ell)}(\bar{x},\bar{y},\a):=\underset{i,j\in I_\ell}{\text{max}}\left\{
\Theta_{i,j}(\bar{x},\bar{y},\a)\right\}.
\]
And we define $\CC_{\a}^{(\ell)}$
to be the locus in the $\bar{x}\bar{y}$-plane where $f^{(\ell)}(\bar{x},\bar{y},\a)$ is not linear.
\end{definition}

We will prove Theorem \ref{thm:triangle} by induction on $n$.
In order to justify the fact that we can build each contour plot
inductively, we need the following result.

\begin{proposition}\label{prop:induction}
There is a sequence of  regions $\mathcal{R}_{I_\ell}$
\[
\mathcal{R}_{I_3}\subset \mathcal{R}_{I_4}\subset\cdots\subset \mathcal{R}_{I_n}=\R^2
\]
in the $\bar{x} \bar{y}$-plane
so that at any point $(\bar{x},\bar{y})\in \mathcal{R}_{I_\ell}$, 
we have the order
\begin{equation}\label{eqn:order}
\theta_{j}(\bar{x},\bar{y},\a)~ \gg~
\theta_{i_{\ell+1}}(\bar{x},\bar{y},\a)~\gg ~\dots~ \gg~
\theta_{i_n}(\bar{x},\bar{y},\a)
\end{equation}
for any $j\in I_\ell$.
Furthermore, every trivalent vertex $v_{i,j,k}$  of 
$\CC_{\a}$ is contained in the region 
$\mathcal{R}_{I_k}$.
\end{proposition}

Note that since $\theta_i(\bar{x}_0,\bar{y}_0,{\bf a})=r_i-a_0$,
we have \[
\theta_{i}(\bar{x},\bar{y},{\bf a})=\kappa_i(\bar{x}-\bar{x}_0)+\kappa_i^2(\bar{y}-\bar{y}_0)+r_i-a_0.
\]
Since each function $\theta_i$ is linear in $\bar{x}$ and 
$\bar{y}$, and because the asymptotic contour plots
only depend on the orders of the values of the 
functions $\Theta_{i,j}$, we can assume without loss of generality
that $\bar{x}_0=\bar{y}_0 = a_0 = 0$.
Or equivalently, we can always shift our coordinate system
so that 
$(\bar{x}_0,\bar{y}_0,-a_0)$
is the origin in the three-dimensional $\bar{x}\bar{y}z$-space.
Therefore from now on we simply write
\begin{equation}\label{eq:newcoords}
z=\theta_i(\bar{x},\bar{y})=\theta_i(\bar{x},\bar{y},\a) = 
\kappa_i\bar{x}+\kappa_i^2\bar{y}+r_i.
\end{equation}

Using \eqref{eq:newcoords}, the following lemma is easily verified.
\begin{lemma}\label{lem:linespoints}
The line $L_{i,j}$ in the $\bar{x} \bar{y}$-plane at which 
$\theta_i = \theta_j$ has the equation
\begin{equation*}
\bar{x} + (\kappa_i+\kappa_j)\bar{y} + \frac{r_i-r_j}{\kappa_i-\kappa_j} = 0.
\end{equation*}
Given $\{i,j,k\} \subset [n]$, 
the vertex $v_{i,j,k} = (v_{i,j,k}^x,v_{i,j,k}^y)$
where the lines $L_{i,j}$, $L_{i,k}$, and $L_{j,k}$ mutually
intersect has coordinates given by 
\begin{align*}
v_{i,j,k}^x&=\left(\frac{\kappa_j+\kappa_k}{(\kappa_i-\kappa_j)(\kappa_i-\kappa_k)}r_{i}+\frac{\kappa_k+\kappa_i}{(\kappa_{j}-\kappa_i)(\kappa_{j}-\kappa_k)}r_{j}+\frac{\kappa_i+\kappa_j}{(\kappa_{k}-\kappa_i)(\kappa_{k}-\kappa_j)}r_{k}\right)\\
v_{i,j,k}^y&=-\left(\frac{r_i}{(\kappa_i-\kappa_j)(\kappa_i-\kappa_k)}+\frac{r_j}{(\kappa_j-\kappa_i)(\kappa_j-\kappa_k)}+\frac{r_k}{(\kappa_k-\kappa_i)(\kappa_k-\kappa_j)}\right).
\end{align*}
\end{lemma}

We now prove Proposition \ref{prop:induction}.
\begin{proof}
By our choice of the constant $R$ and the $r_i$'s in Theorem \ref{thm:triangle},
Lemma \ref{lem:linespoints} implies  that the distance 
$\|v_{i,j,k}\|$ from the point $v_{i,j,k}$ to the origin
is approximately
\[
\|v_{i,j,k}\|\approx  \text{max}\{|r_i|,|r_j|,|r_k|\}.
\]
Then for $3 \leq \ell \leq n-1$, we let $\mathcal{R}_{I_{\ell}}$ denote the circle around the origin
with radius equal to 
$R^{\ell+\frac{1}{2}}.$
(We let $\mathcal{R}_{I_n}=\R^2$.)
Then it follows that each vertex
$v_{i,j,k}$ for $i,j,k\in I_{\ell}$ lies in $\mathcal{R}_{I_{\ell}}$,
and all others are outside this region.
It is also immediate that in $\mathcal{R}_{I_{\ell}}$, 
 \eqref{eqn:order} holds
for any $j\in I_\ell$.
\end{proof}
\begin{figure}[h]
\begin{centering}
\includegraphics[height=5.7cm]{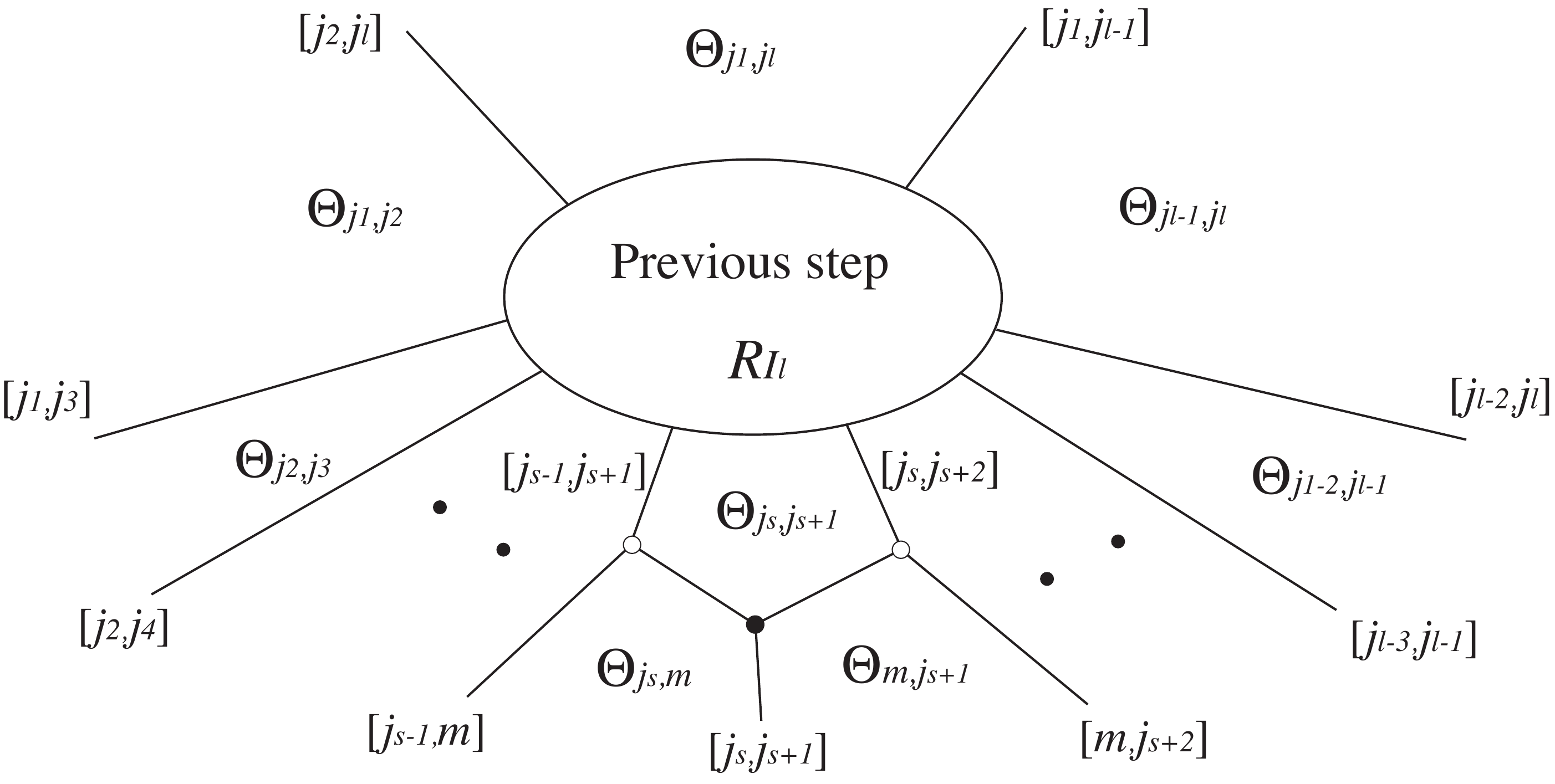}
\par\end{centering}
\caption{The unbounded region labeled by $\Theta_{j_s,j_s+1}$ in $\CC_{\a}^{(\ell)}$ becomes bounded in
$\CC_{\a}^{(\ell+1)}$.  \label{fig:L2}
Here we have
$ j_1 < \dots < j_s < m < j_{s+1} < \dots < j_{\ell}$
with $I_{\ell}=\{i_1,\ldots,i_\ell\}$ and $I_{\ell+1}=I_{\ell}\cup\{m\}$.}
\end{figure}

We're now ready to prove Theorem \ref{thm:triangle}.
\begin{proof}
Note that 
$\CC_{\a}^{(\ell)}$ is an asymptotic contour plot
for $(Gr_{2,\ell})_{>0}$, where the indices of the line-solitons come 
from the set $I_{\ell}$.
By Proposition \ref{prop:induction}, we see 
the ``whole" contour plot
for $\CC_{\a}^{(\ell)}$ within the region 
$\mathcal{R}_{I_{\ell}}$, i.e. we see 
every trivalent vertex and a portion of every unbounded line-soliton.
Moreover, \eqref{eqn:order} guarantees
that if $\Theta_{i,j}$ is a dominant plane 
in some region of $\CC_{\a}^{(\ell)}$, then 
it remains a dominant plane
in some region of $\CC_{\a}^{(\ell+1)}$.

Write $I_{\ell} = \{j_1, j_2, \dots, j_{\ell}\}$,
where $j_1 < j_2 < \dots < j_{\ell}$, and write $m=i_{\ell+1}$.
Since  $\CC_{\a}^{(\ell+1)}$ is an asymptotic contour plot
for $(Gr_{2,\ell+1})_{>0}$ with labels on the line-solitons
and regions coming from the set $I_{\ell+1}$, the dominant 
planes labeling the unbounded regions of 
$\CC_{\a}^{(\ell+1)}$ 
correspond to all cyclically adjacent pairs in the set
$I_{\ell} \cup \{m\}$.  For example, if 
$j_s < m < j_{s+1}$, then those dominant planes are
$\{\Theta_{j_1,j_2}, \Theta_{j_2,j_3}, \dots, 
\Theta_{j_s, m}, \Theta_{m,j_{s+1}}, \dots, \Theta_{j_{\ell-1}, j_{\ell}},
\Theta_{j_1, j_{\ell}}\}$.
Among these, 
$\CC_{\a}^{(\ell+1)}$ 
contains two new dominant planes that 
$\CC_{\a}^{(\ell)}$ 
did not, namely 
$\Theta_{j_s, m}$ and $\Theta_{m,j_{s+1}}$ -- 
see Figure \ref{fig:L2}.

Now we know that:
\begin{itemize}
\item the soliton graph for 
$\CC_{\a}^{(\ell+1)}$ has the form $\Psi(T)$
for some triangulation $T$ of a polygon with vertices $I_{\ell+1}$.
\item If $T' = \lambda(i_1,\dots,i_{\ell})$, then since
$\CC_{\a}^{(\ell)} = \Psi(T')$ up to (M2)-equivalence
(by induction), and each dominant plane of 
$\CC_{\a}^{(\ell)}$ remains a dominant plane of 
$\CC_{\a}^{(\ell+1)}$, it follows that 
$T$ contains all the edges and diagonals that $T'$ does.
\item 
$\CC_{\a}^{(\ell+1)}$  contains 
$\Theta_{j^-, m}$ and $\Theta_{m,j^+}$ as dominant planes,
where $j^-$ and $j^+$ are the clockwise and counterclockwise
neighbors of $m$ in the set $I_{\ell}$, and hence
$T$ contains two edges or diagonals labeled by 
$(j^-,m)$ and $(m,j^+)$.  (In Figure \ref{fig:L2},
where we have $j_s < m < j_{s+1}$, we have
$\{j^+, j^-\} = \{j_s, j_{s+1}\}$.)
\end{itemize}
\begin{figure}[h]
\begin{centering}
\includegraphics[height=3.8cm]{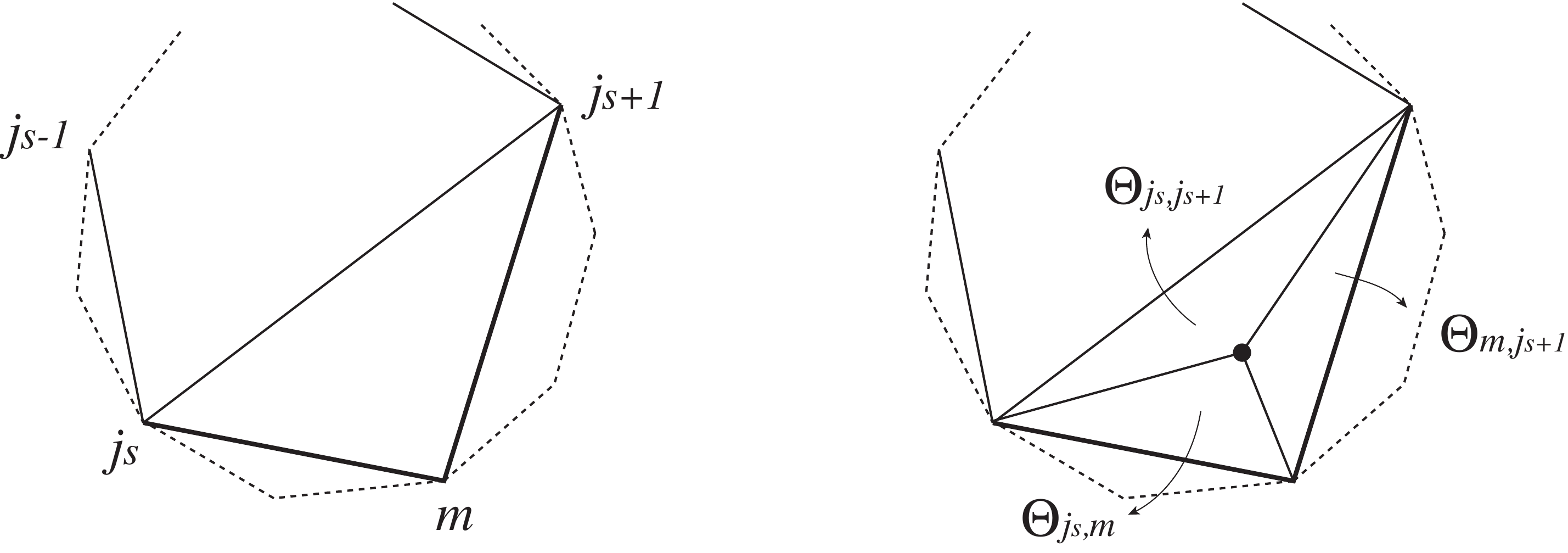}
\par\end{centering}
\caption{Adding a new vertex $m$ to the triangulation together
with one new triangle with the vertices $\{j_s,m,j_{s+1}\}$; and the effect on the  graph $\Psi(T)$ produced
from Algorithm \ref{TriangulationA}.  The regions
 are labeled by the dominant planes $\Theta_{i,j}$.
\label{fig:newtriangulation}}
\end{figure}

The only $T$ with all the properties above is the triangulation obtained
from $T'$ by inserting a new vertex $m=i_{\ell+1}$ in between
its clockwise and counterclockwise neighbors $j^-$ and $j^+$,
and connecting it by edges to those two vertices, see
Figure \ref{fig:newtriangulation}.
This completes the proof.
\end{proof}


\raggedright

\end{document}